\title[Algebroids and Twistor Spaces]{Algebroids and Twistor Spaces}
\author{Steven Gindi}
\newtheorem{thm}{Theorem}[section]
\newtheorem{lemma}[thm]{Lemma}
\newtheorem{prop}[thm]{Proposition}
\newtheorem{rmk}[thm]{Remark} 
\newtheorem{nota}[thm]{Notation}
\newtheorem{note}[thm]{Note}
\theoremstyle{definition} \newtheorem{example}[thm]{Example}
\theoremstyle{definition}  \newtheorem{defi}[thm]{Definition}
\numberwithin{equation}{section}
\begin{document}
\begin{large}
\maketitle
\vspace*{-.45cm}
\tableofcontents

\section{Introduction}

In my previous papers, I introduced a new way of studying bihermitian manifolds via their twistor spaces (see \cite{Gindi1,Gindi2} and \cite{Rocek1, Hitchin1, Gualt1, Apost1}). The main idea was to first demonstrate that the twistor space of a bihermitian manifold admits integrable complex structures and holomorphic sections. I then used these sections to geometrically interpret certain holomorphic objects in the bihermitian manifold as intersections of holomorphic subvarieties inside the twistor space. This point of view led, for instance, to new insights into the holomorphic Poisson structures on bihermitian manifolds. 


The purpose of this paper and \cite{Gindi5} is to develop and enhance my twistor approach to bihermitian geometry. To do so, I will first present in this paper various skew algebroids on twistor spaces of vector spaces and will describe the corresponding foliations. These structures will be used in \cite{Gindi5} to build skew algebroids on twistor bundles fibered over bihermitian manifolds. By studying certain embeddings, I will then determine, for instance, how a bihermitian manifold interacts with the different associated foliations of its twistor space. This will consequently lead to results about the local geometry of bihermitian manifolds. Other applications, including those to Lie groups, will be given in \cite{Gindi5} and \cite{Gindi6}. (Note that skew algebroids are related to Lie algebroids and Poisson structures; see Definition \ref{defSA1} and \cite{Dufour}.)

 A general twistor space that I consider in this paper is the space of complex structures of a vector space that are compatible with some metric. While my algebroid constructions are based, in particular, on using a certain diagonalizable bundle map over the twistor space. They are also closely related to a representation theory of a pair of  complex structures that I have worked on in \cite{Gindi3}.  

 In Section \ref{secFTSA}, I give my first set of skew algebroids. The leaves of the corresponding foliations of the twistor space are isomorphic to Schubert cells in a certain homogeneous maximal isotropic space. Using a  diagonalizable bundle map, to be specified below, I then construct in Section \ref{secMA} a plethora of other algebroids on the twistor space. All of these algebroids yield rich foliations of the Schubert cells. 

Letting $(V,g)$ be a vector space with a metric, the first step in building these algebroids is to fix a complex structure $J$ in the associated twistor space, $\mathcal{T}$. The next step is to consider the diagonalizable bundle map $\{J,\phi\} \in \Gamma(\mathfrak{gl}(V) \times \mathcal{T})$, where $\phi|_{K}=K$. I have then constructed  algebroids whose integrable distributions interact, as endomorphisms, with the eigenspaces of $\{J,\phi \}$ in different ways (see Sections \ref{secALRES}, \ref{secSPE}, \ref{secID2} and \ref{secICD}). To do so, I have used, in particular, polynomials in the map $\{J, \phi \}$ of the form $Q= a_{k}\{J,\phi\}^{k}+ ...+a_{1}\{J,\phi\}+ a_{0}\mathbb{1}$, where the $a_{i}$ are \textit{functions} on $\mathcal{T}$ that satisfy certain differential equations. As an example, the $a_{i}$ can be chosen to be any elements in $\mathbb{R}[f_{1},...,f_{2n}]$, where $f_{j}|_{K}$ is the $j^{th}$ symmetric polynomial in the eigenvalues of $\{J,K\}$.

These algebroids then lead to an abundance of generalized foliations of $\mathcal{T}$, which I describe in detail in Sections \ref{secFCF} and \ref{secSCF}. The leaves of all the foliations are smooth embedded submanifolds and the leaves of a large class of them are complex submanifolds. Moreover, the manifold type of the leaves passing through $K\in \mathcal{T}$ depends on the subset of eigenvalues of $\{J, \phi\}|_{K}$ that are roots of the  chosen polynomial $Q|_{K}$ (Section \ref{secCMTQS}, Example \ref{exCMT}). As for the foliations that I introduce in Section \ref{secSPEFOL}, the manifold type of the leaves depends on whether the eigenvalues of $\{J, \phi\}$ satisfy certain \textit{systems of polynomial equations} (see especially Example \ref{exSPE}).  

In Section \ref{secFW}, I then describe some additional results that I will present in an updated version of this paper \cite{Gindi4}. 

\begin{rmk}
\label{rmkUPDATED}
This paper serves as an announcement of my results and contains only a few proofs. I will present all the proofs in \cite{Gindi4}.
\end{rmk} 

\section{Preliminaries}
\subsection{Twistor Spaces} 
\label{secBTS}
Let $(V,g)$ be a $2n$ dimensional real vector space equipped with a positive definite metric and let $\mathcal{T}(V,g)=\{J \in EndV | \ J^{2}=-1, \ g(J\cdot,J\cdot)=g(\cdot,\cdot)\}$ be one of its twistor spaces. To describe some of the properties of $\mathcal{T}(V,g)$, consider the action of $O(V,g)$ on $EndV$ via conjugation: $B \cdot A= BAB^{-1}.$  As  $\mathcal{T}(V,g)$ is a particular orbit of this action, it is isomorphic to 
\[O(V,g)/U(I),\] where $I\in \mathcal{T}(V,g)$ and $U(I)= \{B \in O(V,g)| \ [B,I]=0\} \cong U(n)$. It then follows that the dimension of $\mathcal{T}(V,g)$ is $n(n-1)$ and that if we consider $\mathcal{T}$ as a submanifold of $EndV$ then \[ T_{J}\mathcal{T}= [\mathfrak{o}(V,g),J]=\{A \in \mathfrak{o}(V,g) | \ \{A,J\}=0 \}.\]
With this, we may define a natural almost complex structure on $\mathcal{T}(V,g)$ that is well known to be integrable:
\[I_{\mathcal{T}}A = JA,  \text{ for } A \in T_{J}\mathcal{T}.\] 
\begin{nota}
 As was done above and will be continued below, we will at times denote $\mathcal{T}(V,g)$ by $\mathcal{T}(V)$ or just by $\mathcal{T}$.
\end{nota}

The goal then is to construct skew algebroids on $\mathcal{T}$. Here is the definition.

\begin{defi}
\label{defSA1}
\mbox{}
A \textit{skew algebroid} $(E,[\cdot,\cdot],\psi)$ on $\mathcal{T}(V,g)$ is a vector bundle equipped with a skew bracket on $\Gamma(E)$ together with a bundle map $\psi: E \longrightarrow T\mathcal{T}$ that satisfies
\begin{align*}
&a)\ \psi([A,B])= [\psi(A),\psi(B)]_{Lie}\\
&b) \ [A,fB]= f[A,B] + \psi(A)[f]B, 
\end{align*}
for all $A,B\in \Gamma(E)$ and $f\in C^{\infty}(\mathcal{T})$. 
\end{defi}

One of the main consequences of having skew algebroids is that they yield integrable distributions on $\mathcal{T}$.

\begin{prop}
Given a skew algebroid $(E,[\cdot,\cdot], \psi)$ on $\mathcal{T}$, the $Im\psi$ is an integrable distribution. 
\end{prop}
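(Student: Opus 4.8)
The plan is to exhibit $\mathrm{Im}\,\psi$ as the generalized distribution associated to an involutive, locally finitely generated module of vector fields, and then to invoke the Stefan--Sussmann integrability theorem for singular distributions. The reason one cannot simply quote the Frobenius theorem is that $\psi$ need not have constant rank, so $\mathrm{Im}\,\psi$ is in general a \emph{generalized} distribution $J \mapsto (\mathrm{Im}\,\psi)_J := \psi(E_J) \subseteq T_J\mathcal{T}$ whose dimension may jump; that rank-jumping is the only genuine subtlety in the argument.

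First I would introduce $\mathcal{D} \subseteq \Gamma(T\mathcal{T})$, the $C^{\infty}(\mathcal{T})$-submodule generated by $\{\psi(A) : A \in \Gamma(E)\}$, and check that its pointwise span $\mathcal{D}(J) := \{X_J : X \in \mathcal{D}\}$ coincides with $(\mathrm{Im}\,\psi)_J$. Indeed, for $v \in E_J$ pick a global section $A$ with $A_J = v$, so $\psi(v) = \psi(A)_J \in \mathcal{D}(J)$; conversely any $X = \sum_i f_i\,\psi(A_i) \in \mathcal{D}$ satisfies $X_J = \psi\big(\sum_i f_i(J)(A_i)_J\big) \in \psi(E_J)$. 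Thus $\mathrm{Im}\,\psi$ and $\mathcal{D}$ define the same generalized distribution, and it suffices to show $\mathcal{D}$ is integrable in the Stefan--Sussmann sense.

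Next comes involutivity. For $A,B \in \Gamma(E)$, axiom (a) gives $[\psi(A),\psi(B)]_{Lie} = \psi([A,B]) \in \mathcal{D}$. For general elements $f\,\psi(A),\,h\,\psi(B) \in \mathcal{D}$ with $f,h \in C^{\infty}(\mathcal{T})$, the Leibniz rule for the Lie bracket of vector fields yields
\[
[\,f\psi(A),\,h\psi(B)\,]_{Lie} = fh\,[\psi(A),\psi(B)]_{Lie} + f\,(\psi(A)[h])\,\psi(B) - h\,(\psi(B)[f])\,\psi(A),
\]
and every term on the right lies in $\mathcal{D}$ (the first by the previous sentence together with $C^{\infty}$-linearity, the other two manifestly). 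Hence $[\mathcal{D},\mathcal{D}]_{Lie} \subseteq \mathcal{D}$. Note that only axiom (a) is used here; the Jacobi identity for $[\cdot,\cdot]$ plays no role, which is why the statement holds for skew algebroids and not merely Lie algebroids.

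Finally, local finite generation: around any $J \in \mathcal{T}$ choose a local frame $e_1,\dots,e_r$ of $E$, so that $\mathcal{D}$ is locally generated by the vector fields $\psi(e_1),\dots,\psi(e_r)$. Thus $\mathcal{D}$ is an involutive, locally finitely generated $C^{\infty}(\mathcal{T})$-module of vector fields, and the Stefan--Sussmann theorem on integrability of singular distributions applies: through every point of $\mathcal{T}$ there passes a maximal connected integral submanifold of $\mathcal{D}(J) = (\mathrm{Im}\,\psi)_J$. I expect the bookkeeping around the non-constant rank — i.e.\ making precise that "integrable distribution" must be read in the generalized-foliation sense and verifying the hypotheses of Stefan--Sussmann rather than Frobenius — to be the only point requiring care; the involutivity itself is the one-line computation displayed above.
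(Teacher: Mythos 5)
The paper itself gives no proof of this proposition (all proofs are deferred to the updated version, per Remark \ref{rmkUPDATED}), so there is nothing to compare against; but your argument is correct and is the standard one: axiom (a) of Definition \ref{defSA1} gives involutivity of the $C^{\infty}(\mathcal{T})$-module generated by $\mathrm{Im}\,\psi$, a local frame of $E$ gives local finite generation, and Hermann/Stefan--Sussmann then yields integrability in the generalized (non-constant-rank) sense, which is indeed the sense intended here since the paper repeatedly notes the leaves have varying dimension. Your observation that only the anchor axiom (a) is used, and not any Jacobi identity, is exactly why the statement holds for skew algebroids and not just Lie algebroids.
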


\begin{rmk}
The leaves of the associated foliation of $\mathcal{T}$ will in general have varied dimensions.
\end{rmk}

We will now specify the bundles and connections that we will be using to construct our algebroids. First consider the following vector bundles fibered over $\mathcal{T}$:
\[ \mathbb{V}:= V \times \mathcal{T} \text{ \ \ and \ \ } \mathfrak{o}(\mathbb{V},g):= \mathfrak{o}(V,g) \times \mathcal{T}.\]
To specify the others, let $\phi$ be the section of $\mathfrak{o}(\mathbb{V},g)$ defined by $\phi|_{K}=K$. 

\begin{defi}
\mbox{}

\vspace{.1cm}
 For $J \in \mathcal{T}(V)$, define
\begin{itemize}
\item[1)] $\mathfrak{gl}(\mathbb{V})= End(\mathbb{V})$
\item[2)] $\mathfrak{u}(\mathbb{V},J)= \{A \in \mathfrak{o}(\mathbb{V},g)\ | \ [A,J]=0 \}$
\item[3)] $\mathfrak{u}(\mathbb{V},\phi)= \{A \in \mathfrak{o}(\mathbb{V},g)\ | \ [A,\phi]=0 \}$
\item[4)]$\mathfrak{o}_{\{J\}}(\mathbb{V})= \{A \in \mathfrak{o}(\mathbb{V},g) \ |\  \{A,J\}=0 \}$
\item[5)] $\mathfrak{o}_{\{\phi\}}(\mathbb{V})= \{A \in \mathfrak{o}(\mathbb{V},g) \ | \ \{A,\phi\}=0 \}.$
\end{itemize}
In the above, one should evaluate both sides of the equations at a $K\in \mathcal{T}$. 
\end{defi}

\begin{nota}
 At times, we will denote $\mathfrak{u}(\mathbb{V},\phi)$ by $\mathfrak{u}(\phi)$ and $\mathfrak{o}_{\{\phi\}}(\mathbb{V})$ by $\mathfrak{o}_{\{\phi\}}$. Moreover, we will respectively denote any particular fiber of $\mathfrak{u}(\mathbb{V},J)$ and $\mathfrak{o}_{\{J\}}(\mathbb{V})$ by $\mathfrak{u}(V,J)$ and $\mathfrak{o}_{\{J\}}(V)$.  
\end{nota}

\begin{rmk}
Using this notation, $T\mathcal{T}$ will be identified with $[\mathfrak{o}(\mathbb{V},g),\phi]=\mathfrak{o}_{\{\phi\}}$.
\end{rmk}
Given these bundles, we will also be using the connection $\nabla=d+\frac{1}{2}(d\phi)\phi$ on $\mathbb{V}$, where $d$ is the trivial one. 

\begin{prop}
$\nabla$ is a torsionless connection on $T\mathcal{T}=[\mathfrak{o}(\mathbb{V},g),\phi]$ and satisfies $\nabla \phi=0$ and $\nabla I_{\mathcal{T}}=0$.
\end{prop}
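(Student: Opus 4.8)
The plan is to work throughout with $\mathcal{T}$ realized as the submanifold of $\mathrm{End}\,V$ described above, so that $\phi$ is literally the inclusion $\mathcal{T}\hookrightarrow\mathfrak{o}(V,g)\subset\mathrm{End}\,V$, a section of $\mathbb{V}$ or of $\mathfrak{gl}(\mathbb{V})$ is a $V$- or $\mathrm{End}\,V$-valued function on $\mathcal{T}$, and $d$ is the ordinary directional derivative of such functions along $\mathcal{T}$. First I would record three elementary facts. (i) Since $\phi^{2}=-\mathbb{1}$ is constant, differentiating gives $\{d\phi,\phi\}=0$. (ii) Since $\phi(K)=K$, the differential of $\phi$ restricted to $T_{K}\mathcal{T}=\mathfrak{o}_{\{K\}}(V)$ is the identity, i.e.\ $d\phi(X)=X$ for all $X\in\Gamma(T\mathcal{T})$, viewed inside $\mathfrak{o}(\mathbb{V},g)$. (iii) Writing $\alpha=\tfrac12(d\phi)\phi$ for the connection one-form of $\nabla$, a one-line computation with $g$-adjoints (using $\phi^{*}=-\phi$, $(d\phi)^{*}=-d\phi$, and $\{d\phi,\phi\}=0$) gives $\alpha^{*}=-\alpha$; hence $\alpha$ is $\mathfrak{o}(V,g)$-valued and $\nabla$ is a metric connection on $(\mathbb{V},g)$. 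Combining (ii) with $\{X,\phi\}=0$ gives the formula to be used repeatedly: on $T\mathcal{T}$, $\alpha(X)=\tfrac12\,X\phi=-\tfrac12\,\phi X=-\tfrac12\,I_{\mathcal{T}}X$, where $I_{\mathcal{T}}$ acts on $T\mathcal{T}$ as left multiplication by $\phi$.

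Next I would dispatch the two parallelism statements, which are formal. The connection induced on $\mathfrak{gl}(\mathbb{V})$ is $\nabla T=dT+[\alpha,T]$, so using $\phi^{2}=-\mathbb{1}$ and $\{d\phi,\phi\}=0$,
\[
\nabla\phi=d\phi+\tfrac12\big[(d\phi)\phi,\phi\big]=d\phi+\tfrac12\big((d\phi)\phi^{2}-\phi(d\phi)\phi\big)=d\phi+\tfrac12(-d\phi-d\phi)=0 .
\]
Being metric, $\nabla$ preserves the subbundle $\mathfrak{o}(\mathbb{V},g)$ of $g$-skew endomorphisms, and together with $\nabla\phi=0$ it also preserves $\mathfrak{o}_{\{\phi\}}=T\mathcal{T}$; hence $\nabla$ restricts to a genuine connection on $T\mathcal{T}$. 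For $\nabla I_{\mathcal{T}}=0$: since $I_{\mathcal{T}}$ is left multiplication by $\phi$, the Leibniz rule for the induced connection with respect to composition together with $\nabla\phi=0$ gives $\nabla(I_{\mathcal{T}}A)=(\nabla\phi)A+\phi\,\nabla A=I_{\mathcal{T}}(\nabla A)$ for all $A\in\Gamma(T\mathcal{T})$, that is $\nabla I_{\mathcal{T}}=0$.

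The only substantive point is torsion-freeness. For $X,Y\in\Gamma(T\mathcal{T})$, regarded as $\mathrm{End}\,V$-valued functions on $\mathcal{T}$ with $X|_{K},Y|_{K}\in\mathfrak{o}_{\{K\}}(V)$, the Lie bracket of vector fields on the submanifold $\mathcal{T}\subset\mathrm{End}\,V$ is $[X,Y]=dY(X)-dX(Y)$, the difference of ambient derivatives taken along $\mathcal{T}$. Since $\nabla_{X}Y=dY(X)+[\alpha(X),Y]$, the first-order terms cancel and
\begin{align*}
T(X,Y)&=\nabla_{X}Y-\nabla_{Y}X-[X,Y]=[\alpha(X),Y]-[\alpha(Y),X]\\
&=-\tfrac12\big([I_{\mathcal{T}}X,Y]-[I_{\mathcal{T}}Y,X]\big),
\end{align*}
the brackets on the right being commutators in $\mathrm{End}\,V$. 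Evaluating at $K$ and using $XK=-KX$, $YK=-KY$ gives $[I_{\mathcal{T}}X,Y]|_{K}=[KX,Y]=K(XY+YX)=K\{X,Y\}$ and likewise $[I_{\mathcal{T}}Y,X]|_{K}=K\{X,Y\}$, so the two terms cancel and $T(X,Y)=0$.

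I expect the main obstacle to be bookkeeping rather than conceptual content: one must correctly identify the Lie bracket of tangent vector fields on $\mathcal{T}$ with the difference $dY(X)-dX(Y)$ of ambient derivatives (this is exactly what makes the first-order part of the torsion cancel) and keep signs straight across $\{X,\phi\}=\{Y,\phi\}=0$ and $\{d\phi,\phi\}=0$. Everything else follows from (i)--(iii); in particular the fact that $\alpha$ takes values in $\mathfrak{o}(V,g)$ is what guarantees that $\nabla$ genuinely restricts to $T\mathcal{T}$, so that ``torsionless connection on $T\mathcal{T}$'' is meaningful and not merely a formal identity.
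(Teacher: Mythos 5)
Your argument is correct and complete: the identities $\{d\phi,\phi\}=0$, $d\phi(X)=X$ on $T_K\mathcal{T}=\mathfrak{o}_{\{K\}}(V)$, and $\alpha(X)=\tfrac12 X\phi=-\tfrac12 I_{\mathcal{T}}X$ are all right, the computation $\nabla\phi=0$ and the resulting preservation of $\mathfrak{o}_{\{\phi\}}$ and of $I_{\mathcal{T}}$ are correct, and the torsion computation $[\alpha(X),Y]-[\alpha(Y),X]=-\tfrac12\bigl(K\{X,Y\}-K\{X,Y\}\bigr)=0$ is the key point and is handled properly, including the identification of the Lie bracket with $dY(X)-dX(Y)$ for ambient-valued tangent fields. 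The paper itself states this proposition without proof (deferring all proofs to the announced updated version), so there is no argument to compare against; yours is the natural direct verification and I see no gap.
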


\subsection{Background on a Pair of Complex Structures}
\label{secBOPCS}
Before we construct our algebroids, we will introduce certain decompositions of $V$ that are induced by two complex structures $J,K \in \mathcal{T}(V,g)$. These  decompositions encode the different possible algebraic interactions between a pair of complex structures and are closely related to a representation theory of an algebra which we call $iD_{\infty}$. In Section \ref{secMOT} we will describe how these decompositions have led us to build various algebroids on twistor spaces. 

\begin{prop}
\label{propDECOM}
\mbox{}
\vspace{.1cm}

\noindent 1) Let $J,K \in \mathcal{T}(V,g)$. The following are orthogonal and $J,K-$invariant decompositions of $V$:
  \[V= Im[J,K] \oplus Ker(J+K) \oplus Ker(J-K)\]
 
  and 
\[V= V_{e_{1}} \oplus ... \oplus V_{e_{l}} \oplus V_{1} \oplus V_{-1},\]

where
\[ V_{\epsilon}= Ker (\{J,K\}-2\epsilon\mathbb{1}) \text{ and }  e_{i} \in (-1,1).\]

\vspace{.4cm}
\noindent
2) a) $Im[J,K]= V_{e_{1}} \oplus ... \oplus V_{e_{l}}.$\\ 

b) $Ker(J+K)=V_{1}$ and $Ker(J-K)=V_{-1}.$\\

c) $Ker[J,K]= Ker(J+K) \oplus Ker(J-K)$.

\vspace{.4cm}
\noindent
3) a) Each $V_{e_{i}}$, where $e_{i} \in (-1,1)$, is an $\mathbb{H}$-module.\\

     b) $Rank[J,K]=4k$.
\end{prop}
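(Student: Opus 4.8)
The plan is to reduce the whole statement to the spectral theory of the single operator $A:=\{J,K\}=JK+KJ$. Since $J,K$ are $g$-compatible, their $g$-adjoints satisfy $J^{\top}=-J$ and $K^{\top}=-K$, so $A^{\top}=K^{\top}J^{\top}+J^{\top}K^{\top}=KJ+JK=A$: the operator $A$ is $g$-symmetric, hence $\mathbb{R}$-diagonalizable with mutually $g$-orthogonal eigenspaces. A one-line computation gives $JA=JKJ-K=AJ$ and $KA=KJK-J=AK$, so each eigenspace $V_{\epsilon}:=\mathrm{Ker}(A-2\epsilon\mathbb{1})$ is $J$- and $K$-invariant. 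Thus, once we know $\mathrm{Spec}(A)\subseteq[-2,2]$, the second decomposition of part~1 and all of its stated properties follow, with the eigenvalues other than $\pm2$ listed as $2e_1,\dots,2e_l$, $e_i\in(-1,1)$ (and with the convention that $V_{\pm1}=0$ when $\pm2$ is not an eigenvalue).

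The engine for the remaining parts is the identity $[J,K]^{2}=\{J,K\}^{2}-4\mathbb{1}$, which follows by expanding both squares and using $JK\cdot KJ=KJ\cdot JK=\mathbb{1}$ (so $[J,K]^{2}=(JK)^{2}+(KJ)^{2}-2\mathbb{1}$ and $\{J,K\}^{2}=(JK)^{2}+(KJ)^{2}+2\mathbb{1}$). Because $[J,K]^{\top}=KJ-JK=-[J,K]$, the operator $-[J,K]^{2}=[J,K]^{\top}[J,K]$ is positive semidefinite, i.e.\ $A^{2}\le 4\mathbb{1}$; this gives $\mathrm{Spec}(A)\subseteq[-2,2]$ and completes the previous paragraph. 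Restricting the identity to $V_{\epsilon}$ yields $[J,K]^{2}|_{V_{\epsilon}}=(4\epsilon^{2}-4)\mathbb{1}$, an invertible scalar when $|\epsilon|<1$ and $0$ when $|\epsilon|=1$; in the latter case skewness of $[J,K]$ forces $[J,K]|_{V_{\pm1}}=0$. Since $[J,K]$ commutes with $A$ and hence preserves each $V_{\epsilon}$, we conclude $\mathrm{Ker}[J,K]=V_{1}\oplus V_{-1}$ and $\mathrm{Im}[J,K]=V_{e_1}\oplus\dots\oplus V_{e_l}$, and skewness of $[J,K]$ makes these orthogonal complements. This is parts~2a and~2c, and combined with part~2b it gives the first decomposition of part~1.

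For part~2b, note $(J\pm K)^{\top}(J\pm K)=-(J\pm K)^{2}=2\mathbb{1}\mp A$, which vanishes on $V_{\pm1}$; hence $\|(J\pm K)v\|^{2}=g\big(v,(2\mathbb{1}\mp A)v\big)=0$ for $v\in V_{\pm1}$, so $V_{\pm1}\subseteq\mathrm{Ker}(J\pm K)$. Conversely, $(J+K)v=0$ gives $JKv=-J^{2}v=v$ and $KJv=-K^{2}v=v$, so $Av=2v$, and symmetrically for $J-K$; hence equality. For part~3a, fix $e=e_i\in(-1,1)$ and work on $V_e$, where $J,K$ restrict with $J^{2}=K^{2}=-\mathbb{1}$ and $\{J,K\}=2e\,\mathbb{1}$; set $\hat K:=(1-e^{2})^{-1/2}(K+eJ)$. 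Then $\{J,\hat K\}=(1-e^{2})^{-1/2}(\{J,K\}-2e\,\mathbb{1})=0$ and $\hat K^{2}=(1-e^{2})^{-1}(K^{2}+e\{J,K\}+e^{2}J^{2})=(1-e^{2})^{-1}(-(1-e^{2})\mathbb{1})=-\mathbb{1}$, so $(J,\hat K,J\hat K)$ satisfy the quaternion relations and $V_e$ is an $\mathbb{H}$-module; in particular $\dim_{\mathbb{R}}V_{e_i}=4m_i$. Part~3b then follows at once: $\mathrm{Rank}[J,K]=\dim\mathrm{Im}[J,K]=\sum_i\dim V_{e_i}=4\sum_i m_i$.

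I do not expect a genuine obstacle here; the argument is organizational rather than hard. The two nonroutine ingredients are the identity $[J,K]^{2}=\{J,K\}^{2}-4\mathbb{1}$, which simultaneously bounds the spectrum, identifies $\mathrm{Ker}[J,K]$ and computes $\mathrm{Im}[J,K]$, and the explicit $J$-anticommuting complex structure $\hat K$ on $V_{e_i}$ that realizes the quaternionic structure; everything else is bookkeeping with $J^{2}=K^{2}=-\mathbb{1}$ and the $g$-adjoint relations $J^{\top}=-J$, $K^{\top}=-K$. The one point needing care is the degenerate behaviour of the eigenvalues $\pm2$: one should phrase the decompositions of part~1 with the convention that $V_{1}$ (resp.\ $V_{-1}$) is the zero subspace precisely when $2$ (resp.\ $-2$) is not an eigenvalue of $\{J,K\}$.
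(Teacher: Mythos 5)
Your proof is correct, and it follows essentially the route the paper indicates: the paper defers the details to its reference [Gindi2], but the key lemma it cites (Proposition \ref{propK'}, producing $K'=\frac{JK-e}{f}$ with $\{J,K'\}=\{K,K'\}=0$) is, up to composition with $J$, exactly your rotated complex structure $\hat K=(1-e^{2})^{-1/2}(K+eJ)$ used to realize the quaternionic structure on each $V_{e_{i}}$. The remaining ingredients --- diagonalizing the $g$-symmetric operator $\{J,K\}$, the identity $[J,K]^{2}=\{J,K\}^{2}-4\mathbb{1}$ to bound the spectrum and identify $\mathrm{Ker}[J,K]$ and $\mathrm{Im}[J,K]$ --- are all sound and match the intended spectral-decomposition argument.
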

\begin{proof} Please see \cite{Gindi2}.
%
%
%
%
\end{proof}

To prove the above proposition, we used the following result:  
\begin{prop}
\label{propK'}
Let $J,K \in \mathcal{T}(V,g)$ such that $\{J,K\}=2e\mathbb{1}$, for $e\in(-1,1)$. Also let $K'=\frac{JK-e}{f}$, where $e^{2} +f^{2}=1$. Then $K'\in \mathcal{T}$, $\{J,K'\}=0$ and $\{K,K'\}=0$.
\end{prop}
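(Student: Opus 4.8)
The plan is to work directly with the algebraic identities, treating $J$, $K$, $JK$, and $KJ$ as endomorphisms of $V$ and exploiting that everything is built from these. First I would record the basic relations: $J^2 = K^2 = -\mathbb{1}$, and the hypothesis $JK + KJ = 2e\mathbb{1}$. From the last identity I would immediately derive $KJ = 2e\mathbb{1} - JK$, so that $(JK)(KJ) = JK(2e\mathbb{1}-JK) = 2e\,JK - (JK)^2$; but also $(JK)(KJ) = J K^2 J = -J^2 = \mathbb{1}$, which gives the key polynomial relation $(JK)^2 - 2e\,JK + \mathbb{1} = 0$, i.e.\ $(JK)^2 = 2e\,JK - \mathbb{1}$. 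This is the one computational fact everything else reduces to.

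Next I would verify $K' \in \mathcal{T}(V,g)$. Since $e^2 + f^2 = 1$, we have $f^2 = 1 - e^2$, and I would compute
\[
(fK')^2 = (JK - e\mathbb{1})^2 = (JK)^2 - 2e\,JK + e^2\mathbb{1} = (2e\,JK - \mathbb{1}) - 2e\,JK + e^2\mathbb{1} = (e^2 - 1)\mathbb{1} = -f^2\mathbb{1},
\]
so $K'^2 = -\mathbb{1}$. For the metric compatibility, $g(K'\cdot,K'\cdot) = g(\cdot,\cdot)$ follows because $g(JK\cdot,JK\cdot)=g(\cdot,\cdot)$ and, more carefully, one checks $K'$ is $g$-orthogonal; alternatively, since $J,K \in \mathfrak{o}(V,g)$ act skew-adjointly, $JK - KJ$ is skew-adjoint while $JK + KJ = 2e\mathbb{1}$ is symmetric, so $JK = e\mathbb{1} + \tfrac12(JK-KJ)$ has symmetric part $e\mathbb{1}$; hence $(JK - e\mathbb{1})^* = -(JK-e\mathbb{1})$ up to the sign bookkeeping, giving $K' \in \mathfrak{o}(V,g)$, and combined with $K'^2 = -\mathbb{1}$ this yields $K' \in \mathcal{T}(V,g)$.

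Then I would check the two anticommutation relations. For $\{J,K'\}$: $f\{J,K'\} = J(JK - e\mathbb{1}) + (JK - e\mathbb{1})J = J^2K - eJ + JKJ - eJ = -K - 2eJ + J(KJ) = -K - 2eJ + J(2e\mathbb{1} - JK) = -K - 2eJ + 2eJ - J^2K = -K + K = 0$, so $\{J,K'\} = 0$. For $\{K,K'\}$: $f\{K,K'\} = K(JK - e\mathbb{1}) + (JK - e\mathbb{1})K = KJK - eK + JK^2 - eK = (KJ)K - 2eK - J = (2e\mathbb{1} - JK)K - 2eK - J = 2eK - JK^2 - 2eK - J = J - J = 0$, so $\{K,K'\} = 0$.

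The only subtle point — and thus the one place to be careful rather than the "hard part" in any deep sense — is the metric statement $g(K'\cdot,K'\cdot)=g(\cdot,\cdot)$: one must note that on each primary subspace $V_e$ the element $JK$ is a normal operator whose symmetric part is the scalar $e$, so that $JK - e\mathbb{1}$ is skew-adjoint and $\tfrac{1}{f}(JK - e\mathbb{1})$ is therefore orthogonal; combined with $K'^2 = -\mathbb{1}$ this pins down $K' \in \mathcal{T}$. (Here the hypothesis $\{J,K\}=2e\mathbb{1}$ globally means we are effectively inside a single $V_e$, so no decomposition is even needed — the identity $(JK)^2 - 2e\,JK + \mathbb{1}=0$ does all the work.) Everything else is a short manipulation of the single relation $(JK)^2 = 2e\,JK - \mathbb{1}$ together with $KJ = 2e\mathbb{1} - JK$.
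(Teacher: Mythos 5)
Your proof is correct. The paper itself gives no proof of this proposition (it is stated as an auxiliary fact whose verification is deferred to the references), so there is nothing to compare against, but your direct verification is complete: the relation $(JK)^2 = 2e\,JK - \mathbb{1}$ follows exactly as you derive it, the computation $(fK')^2 = -f^2\mathbb{1}$ is right, and both anticommutator calculations check out. The one place where your write-up wobbles is the orthogonality of $K'$: the clean statement is simply that $J^* = -J$ and $K^* = -K$ give $(JK)^* = KJ = 2e\mathbb{1} - JK$, hence $(JK - e\mathbb{1})^* = -(JK - e\mathbb{1})$, so $K'$ is skew-adjoint; together with $K'^2 = -\mathbb{1}$ this gives $K'^*K' = \mathbb{1}$. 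Your final paragraph says essentially this, and you are right that no eigenspace decomposition is needed since the hypothesis $\{J,K\} = 2e\mathbb{1}$ already holds on all of $V$; the phrase ``up to the sign bookkeeping'' in your second paragraph should just be replaced by that two-line computation. Note also that $e \in (-1,1)$ guarantees $f \neq 0$, so the division defining $K'$ is legitimate.
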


Given the splittings in Proposition \ref{propDECOM}, we can use the fact that each $V_{e_{i}}$ is an $\mathbb{H}$-module to obtain a finer decomposition of $V$.

\begin{prop}
\label{propDECOM2}
 Let $J,K \in \mathcal{T}(V,g)$. $V$ admits the following orthogonal and $J,K-$invariant decomposition:
\[V= W^{1}_{e_{1}} \oplus ... \oplus W^{q}_{e_{q}} \oplus V_{1} \oplus V_{-1},\]
where 
\begin{itemize}
\item $W^{i}_{e_{i}}$ is either $\{0\}$ or a four dimensional $J,K-$invariant subspace such that $\{J,K\}|_{W^{i}_{e_{i}}}=2e_{i}\mathbb{1}$, for $e_{i}\in (-1,1)$
\item $V_{1}= Ker(J+K)$ and $V_{-1}= Ker(J-K)$.
\end{itemize}
\end{prop}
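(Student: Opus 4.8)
The plan is to combine the two decompositions already available: the eigenspace splitting $V = V_{e_1}\oplus\cdots\oplus V_{e_l}\oplus V_1\oplus V_{-1}$ from Proposition \ref{propDECOM}, together with the fact (Proposition \ref{propDECOM} 3a) that each $V_{e_i}$ with $e_i\in(-1,1)$ is an $\mathbb{H}$-module, hence has real dimension divisible by $4$. The only thing left to do is to refine each $V_{e_i}$ into an orthogonal direct sum of four-dimensional $J,K$-invariant pieces. So the real content is a single lemma: if $W$ is an $\mathbb{H}$-module on which $\{J,K\}=2e\mathbb{1}$ with $e\in(-1,1)$, $J,K$ act orthogonally as complex structures, and $\dim_{\mathbb R}W\geq 8$, then $W$ splits orthogonally into two nonzero $J,K$-invariant subspaces on each of which $\{J,K\}=2e\mathbb{1}$ still holds; iterating gives the claimed decomposition of $V_{e_i}$ into four-dimensional summands.

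To prove that splitting lemma, I would first invoke Proposition \ref{propK'}: set $K' = \frac{JK-e}{f}$ with $e^2+f^2=1$, $f>0$, so that $K'\in\mathcal T$ and $J,K'$ anticommute, as do $K,K'$. Then $J$ and $K'$ together generate a quaternionic structure on $W$: indeed $JK'$ squares to $-\mathbb{1}$ and anticommutes with both, so $(J,K',JK')$ is an $\mathbb H$-triple of orthogonal complex structures, and $W$ is a left $\mathbb H$-module under this triple. Pick any unit vector $w\in W$ and let $W_0 = \mathrm{span}_{\mathbb R}\{w, Jw, K'w, JK'w\}$; this is four-dimensional, $J,K'$-invariant, and orthogonal (the four vectors are mutually orthogonal because $J,K',JK'$ are skew with respect to $g$). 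Since $K = fK' + eJ\cdot(\text{correction})$ — more precisely $JK = fK'+e$, i.e. $K = -J(fK'+e) = -fJK' - eJ$ — the subspace $W_0$ is also $K$-invariant, and $\{J,K\}|_{W_0} = 2e\mathbb{1}$ automatically since that relation holds on all of $W$. Taking $W_0^{\perp}\cap W$, which is again $J,K'$-invariant (orthogonal complement of an invariant subspace under orthogonal maps) hence $J,K$-invariant, and on which $\{J,K\}=2e\mathbb{1}$ persists, we may repeat. Induction on $\dim W$ finishes the lemma, and reindexing the resulting four-dimensional blocks across all the $V_{e_i}$ (allowing some $W^i_{e_i}$ to be $\{0\}$ as a bookkeeping convenience, or simply listing only the nonzero ones) yields the statement.

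The main obstacle is verifying that the four-dimensional cyclic subspace $W_0 = \mathbb{H}\cdot w$ really is simultaneously $J$-, $K$-, and $g$-orthogonally well-behaved — i.e. that nothing degenerates. The two points to check carefully are: (i) the four vectors $w, Jw, K'w, JK'w$ are linearly independent and mutually $g$-orthogonal, which follows from $J,K',JK'\in\mathfrak o(V,g)$ together with $J^2=K'^2=(JK')^2=-\mathbb{1}$ and the pairwise anticommutation, via the standard computation $g(w,Jw)=g(Jw,J^2w)=-g(Jw,w)$ and $g(Jw,K'w)=g(J K' w, K'^2 w)\cdot(\pm)=\ldots=-g(Jw,K'w)$; and (ii) that $K$ preserves $W_0$, which is immediate once one rewrites $K$ in terms of $J$ and $K'$ using $K' = (JK-e)/f \Rightarrow K = -J(fK'+e)$. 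Everything else — invariance of orthogonal complements under a set of orthogonal operators, persistence of the eigenvalue relation $\{J,K\}=2e\mathbb{1}$ on subspaces, and the induction on dimension — is routine. One should also note at the end that the decomposition is not canonical (the choice of $w$ at each stage is free), but the proposition only asserts existence, so this is harmless.
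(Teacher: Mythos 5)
Your proposal is correct and follows the same route the paper indicates: it refines the eigenspace splitting of Proposition \ref{propDECOM} by using the $\mathbb{H}$-module structure on each $V_{e_{i}}$, with the quaternionic triple $(J,K',JK')$ supplied by Proposition \ref{propK'}, to peel off four-dimensional invariant blocks. The orthogonality and invariance checks you flag are exactly the ones needed, and they go through as you describe.
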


Either by using the above proposition or Proposition \ref{propDECOM}, it is straightforward to prove
\begin{prop}
\label{propISOOV}
$J$ and $K$ in $\mathcal{T}(V,g)$ induce the same orientations on $V$ if and only if $\frac{dimKer(J+K)}{2}$ is even.
\end{prop}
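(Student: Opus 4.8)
The plan is to compare the two orientations blockwise along the orthogonal, $J,K$-invariant decomposition $V = V_{e_1}\oplus\dots\oplus V_{e_l}\oplus V_1\oplus V_{-1}$ of Proposition \ref{propDECOM} (or, equally well, the finer one of Proposition \ref{propDECOM2}). I would use three elementary facts about an orthogonal complex structure $L$ on a Euclidean space $(W,h)$: (i) $L$ determines an orientation of $W$, namely the sign, relative to a fixed reference basis, of the top exterior power of the nondegenerate two-form $h(L\cdot,\cdot)$; (ii) consequently, if $L_0$ and $L_1$ can be joined by a continuous path of orthogonal complex structures on $(W,h)$, they induce the same orientation, since along the path that two-form stays nondegenerate and its Pfaffian cannot change sign; (iii) if $W$ is an orthogonal direct sum of subspaces each invariant under two complex structures $L,L'$, then the orientation each of $L,L'$ induces on $W$ is the (ordered) product of the orientations it induces on the summands, so the \emph{relative} orientation of $L$ and $L'$ on $W$ is the product of the relative orientations on the summands. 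Granting these, it suffices to compute the relative orientation of $J$ and $K$ on each summand.

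On $V_{-1}=Ker(J-K)$ we have $K=J$, so $J$ and $K$ induce the same orientation there. On $V_1=Ker(J+K)$ we have $K=-J$; sending a $J$-adapted basis $v_1,Jv_1,\dots,v_m,Jv_m$ of $V_1$ (where $2m=\dim V_1$) to the $(-J)$-adapted basis $v_1,-Jv_1,\dots,v_m,-Jv_m$ is a change of basis of determinant $(-1)^m$, so the relative orientation of $J$ and $K$ on $V_1$ is $(-1)^{m}=(-1)^{\dim\mathrm{Ker}(J+K)/2}$.

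It remains to show that on each $V_{e_i}$ with $e_i\in(-1,1)$ the structures $J$ and $K$ induce the same orientation. Restricting to $V_{e_i}$, where $\{J,K\}=2e_i\mathbb{1}$, set $f_i=\sqrt{1-e_i^2}>0$ and $K'=\tfrac{JK-e_i}{f_i}$; by Proposition \ref{propK'}, $K'\in\mathcal{T}(V_{e_i})$ with $\{J,K'\}=0$, and a one-line rearrangement of the defining relation gives $K|_{V_{e_i}}=-e_iJ-f_iJK'$. Since $J$ and $JK'$ are anticommuting $g$-skew endomorphisms squaring to $-\mathbb{1}$, one checks that for every point $(a,b)$ on the unit circle the endomorphism $K_{(a,b)}:=-aJ-bJK'$ is again an orthogonal complex structure on $V_{e_i}$ (it is $g$-skew, and $K_{(a,b)}^2=-(a^2+b^2)\mathbb{1}=-\mathbb{1}$). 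Choosing a path on the circle from $(e_i,f_i)$ to $(-1,0)$ yields a continuous path of orthogonal complex structures on $V_{e_i}$ from $K|_{V_{e_i}}$ to $J|_{V_{e_i}}$, so by fact (ii) they induce the same orientation on $V_{e_i}$.

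Multiplying the blockwise contributions, the relative orientation of $J$ and $K$ on $V$ equals $(-1)^{\dim\mathrm{Ker}(J+K)/2}$, so $J$ and $K$ induce the same orientation exactly when $\dim\mathrm{Ker}(J+K)/2$ is even, as claimed. I expect the only point requiring care — the ``hard part'' — to be the bookkeeping in the third paragraph: confirming from Proposition \ref{propK'} the identity $K|_{V_{e_i}}=-e_iJ-f_iJK'$ and checking that $K_{(a,b)}$ is genuinely an orthogonal complex structure for all $(a,b)$ on the circle, so that the path really lies in $\mathcal{T}(V_{e_i})$; the orientation and direct-sum facts (i)--(iii) are standard.
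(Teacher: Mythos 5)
Your argument is correct and is precisely the route the paper indicates: the paper offers no written proof beyond asserting that the statement follows from the decomposition of Proposition \ref{propDECOM} (or \ref{propDECOM2}), and your blockwise comparison of orientations---trivial on $Ker(J-K)$, the sign $(-1)^{\dim Ker(J+K)/2}$ on $Ker(J+K)$, and the path $-aJ-bJK'$ through $\mathcal{T}(V_{e_i})$ built from Proposition \ref{propK'} on each $V_{e_i}$---supplies exactly the missing details. The computations in your third paragraph (the identity $K|_{V_{e_i}}=-e_iJ-f_iJK'$ and the verification that $K_{(a,b)}$ is an orthogonal complex structure) check out.
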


As another corollary of Proposition \ref{propDECOM}, we can derive the orthogonal representations of $iD_{\infty}$, which is the algebra generated by two complex structures $J_{0}$ and $K_{0}$ over $\mathbb{R}$. (These representations are by definition the ones where $J_{0}$ and $K_{0}$ act by orthogonal transformations with respect to some metric.)
\begin{prop}
The irreducible, orthogonal representations of $iD_{\infty}$ are:
\[ \mathbb{R}[t]/(p) \oplus K_{0}\mathbb{R}[t]/(p), \]
where $J_{0}K_{0}$ acts by $t$ and 
\begin{align*}
1) \ &p= t\pm1 \\
2) \ &p= (t-c)(t-\overline{c}), \text{ for } c \in \mathbb{C} -\{\pm 1\}, c\overline{c}=1.
\end{align*}
\end{prop}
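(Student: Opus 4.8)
The plan is to classify the irreducible orthogonal representations of $iD_{\infty}$ by reducing to the structure theory already packaged in Proposition \ref{propDECOM}. An orthogonal representation of $iD_{\infty}$ is precisely a finite-dimensional real inner product space $(V,g)$ together with two elements $J_{0}, K_{0} \in \mathcal{T}(V,g)$, and the first move is to apply Proposition \ref{propDECOM}: any such $V$ splits orthogonally and $J_{0},K_{0}$-invariantly as $V_{e_{1}} \oplus \cdots \oplus V_{e_{l}} \oplus V_{1} \oplus V_{-1}$, with $V_{1}=\mathrm{Ker}(J_{0}+K_{0})$, $V_{-1}=\mathrm{Ker}(J_{0}-K_{0})$, and each $V_{e_{i}}$ an $\mathbb{H}$-module on which $\{J_{0},K_{0}\}=2e_{i}\mathbb{1}$. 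Irreducibility therefore forces $V$ to be a single summand, so it suffices to analyze the three cases separately: (i) $V=V_{1}$, (ii) $V=V_{-1}$, and (iii) $V=V_{e}$ a minimal $\mathbb{H}$-submodule with $e\in(-1,1)$, which by Proposition \ref{propDECOM2} (applied inside $V_{e}$) may be taken four-dimensional.

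Next I would identify each case with the stated normal form. In case (i), $K_{0}=-J_{0}$ so $J_{0}K_{0}=-J_{0}^{2}=\mathbb{1}$; the subalgebra is just $\mathbb{R}[J_{0}]\cong\mathbb{C}$, whose only irreducible real representation is two-dimensional, matching $\mathbb{R}[t]/(t-1)\oplus K_{0}\mathbb{R}[t]/(t-1)$ with $t$ acting as $+1$ (and the extra $K_{0}$-copy being identified with the original via $K_{0}=-J_{0}$ acting as an isomorphism of the two lines). Symmetrically, case (ii) gives $J_{0}K_{0}=-\mathbb{1}$, i.e. $p=t+1$. For case (iii), on a four-dimensional $V_{e}$ I would write $K_{0}'=\frac{J_{0}K_{0}-e}{f}$ as in Proposition \ref{propK'}, so that $J_{0},K_{0}'$ anticommute and together with $J_{0}K_{0}'$ generate a copy of $\mathbb{H}$ acting irreducibly on $V_{e}\cong\mathbb{H}$. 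The element $t=J_{0}K_{0}=e + fK_{0}'$ then satisfies $(t-e)^{2}=f^{2}(K_{0}')^{2}=-f^{2}\mathbb{1}$, i.e. $t^{2}-2et + (e^{2}+f^{2})=t^{2}-2et+1=0$; writing $c=e+if$ this is exactly $p=(t-c)(t-\bar c)$ with $c\bar c=1$ and $c\neq\pm1$. Conversely, one checks $\mathbb{R}[t]/(p)\oplus K_{0}\mathbb{R}[t]/(p)$ with these $p$ does carry a compatible metric and a well-defined $iD_{\infty}$-action (with $J_{0}$ the relation-respecting complex structure, $K_{0}$ the swap twisted by multiplication by $t$), and that it is irreducible — there is no proper invariant subspace because $t$ acts with no real invariant proper subspace when $p$ is $t\pm1$ (one-dimensional quotient, doubled) or the irreducible quadratic.

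The bookkeeping I expect to be the main obstacle is not the case analysis itself but pinning down the precise module structure on $\mathbb{R}[t]/(p)\oplus K_{0}\mathbb{R}[t]/(p)$: one must say exactly how $J_{0}$ and $K_{0}$ act on this abstract space, verify $J_{0}^{2}=K_{0}^{2}=-\mathbb{1}$, verify $J_{0}K_{0}$ really acts as multiplication by $t$ on the stated generators, and exhibit an invariant positive-definite metric (so that these are genuinely \emph{orthogonal} representations). The reality/conjugation constraint $c\bar c=1$ must be seen to drop out of orthogonality — it is equivalent to $J_{0}K_{0}$ preserving the metric, equivalently to the eigenvalues of $\{J_{0},K_{0}\}$ lying in $[-2,2]$, which is where Proposition \ref{propDECOM} and the compactness of $O(V,g)$ enter. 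Once the abstract model is matched to the geometric summands $V_{1}$, $V_{-1}$, $V_{e}$, irreducibility and exhaustiveness both follow from Proposition \ref{propDECOM}, so I would present that matching carefully and treat the rest as routine verification.
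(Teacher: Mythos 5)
Your proposal is correct and follows essentially the route the paper intends: the proposition is presented there as a corollary of Proposition \ref{propDECOM}, and you carry out exactly that deduction, reducing an irreducible orthogonal representation to a single summand $V_{1}$, $V_{-1}$, or a four-dimensional $V_{e}$ (via Proposition \ref{propDECOM2}) and using $K_{0}'=\frac{J_{0}K_{0}-e}{f}$ from Proposition \ref{propK'} to identify the minimal polynomial of $t=J_{0}K_{0}$ as $(t-c)(t-\overline{c})$ with $c\overline{c}=1$. The remaining verifications you flag (the explicit module structure on $\mathbb{R}[t]/(p)\oplus K_{0}\mathbb{R}[t]/(p)$ and the invariant metric) are indeed routine and match the treatment deferred to the companion representation-theory paper.
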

\begin{rmk}
In \cite{Gindi3}, we not only derive the orthogonal representations of $iD_{\infty}$ but the indecomposable ones as well. 
\end{rmk}

\subsection{On Constructing Algebroids on Twistor Spaces}
\label{secMOT}
Let us now fix a $J \in \mathcal{T}(V,g).$ In this paper we will build algebroids with integrable distributions that pointwise  interact, as elements of $\mathfrak{o}_{\{K\}}$, with the decomposition
 \[V=  V_{e_{1}} \oplus ... \oplus V_{e_{l}} \oplus V_{1} \oplus V_{-1}\] of Proposition \ref{propDECOM}. For instance, a particular integrable distribution which is associated to an algebroid and which we will introduce in Section \ref{secID} is 
 \[\mathcal{D}^{\{J,\phi\}(J+\phi)}= \{B \in \mathfrak{o}_{\{\phi\}} \ | B: KerF \longrightarrow 0, ImF\longrightarrow ImF\},\] where $F=\{J,\phi\}(J+\phi).$ 
   To build other algebroids and distributions with rich interactions, we will first use certain  combinations of the maps $J+\phi$, $J-\phi$ and $[J,\phi]$. The resulting distributions will  interact with the $V_{1}$ and $V_{-1}$ at each $K \in \mathcal{T}$. To produce distributions that interact with the $V_{e_{i}}$ we will also be using in Sections \ref{secMA} and \ref{secID} polynomials in $\{J,\phi\}$ of the form 
   \begin{equation}
   \label{eqQ}
   Q= a_{k}\{J,\phi\}^{k}+ ...+a_{1}\{J,\phi\}+ a_{0}\mathbb{1},
   \end{equation}
where the $a_{i}$ are \textit{functions} on $\mathcal{T}$ that satisfy certain differential equations. 

In the following section we will present three types of skew algebroids on $\mathcal{T}$ that will be based on  the maps $J+\phi$ and $J-\phi$. In Section \ref{secMA} we will use the $Q$s to construct various other algebroids whose associated foliations will refine the ones given in the next section.   

\section{First Three Skew Algebroids and Foliations}
\label{secFTSA}
\subsection{The $\mathcal{T}^{\#}(J)$ and Schubert Cells}
\label{secSC}
Given a $J\in \mathcal{T}(V,g)$, we will first introduce natural subsets of $\mathcal{T}$ that we will show to be the leaves associated to three types of skew algebroids on the twistor space. 

\begin{defi}
\label{defSC}
Letting $J\in\mathcal{T}$, we define
\begin{align*} 
1) \ &  \mathcal{T}^{(m_{1},*)}(J)= \{K \in \mathcal{T} | \ dimKer(J + K) = 2m_{1} \} \\
2) \ &  \mathcal{T}^{(*,m_{-1})}(J)= \{K \in \mathcal{T} | \ dimKer(J - K) = 2m_{-1} \} \\
3) \ & \mathcal{T}^{(m_{1},m_{-1})}(J)= \mathcal{T}^{(m_{1},*)} \cap  \mathcal{T}^{(*,m_{-1})}.
\end{align*}
\end{defi}

\begin{nota} When referring to the above subsets, we will at times drop the ``$(J)$" factors and will also let 
$\mathcal{T}^{\#}(J)$ or $\mathcal{T}^{\#}$ stand for any one of them.\end{nota}

In \cite{Gindi2} we proved that the $\mathcal{T}^{(m_{1},*)}(J)$ and $\mathcal{T}^{(*,m_{-1})}(J)$ are isomorphic to Schubert cells in a homogeneous maximal isotropic space. Using this point of view, they can be shown to be orbits of a group action and are thus associated to certain algebroids. The objective  of this section is to demonstrate how to use the $\mathcal{T}$ point of view to associate algebroids to all of the $\mathcal{T^{\#}}$. The main reason for focusing on this viewpoint is that the discussion in Section \ref{secMOT} has very naturally led us to consider maps of the form  $a_{k}\{J,\phi\}^{k}+ ...+a_{1}\{J,\phi\}+ a_{0}\mathbb{1}$ (Equation \ref{eqQ}) that we will use to establish many other algebroids. The foliations associated to these algebroids will richly refine the $\mathcal{T^{\#}}$ and in consequence the Schubert cells. The relations between maximal isotropics, $\mathcal{T}$ and the algebroids constructed here will be explored in \cite{Gindi4}.

We will now build two skew algebroids whose associated leaves are the $\mathcal{T}^{(m_{1},*)}(J)$ and the $\mathcal{T}^{(*,m_{-1})}(J)$. We will consider the $\mathcal{T}^{(m_{1},m_{-1})}(J)$ in Section \ref{secTSA}. 

\begin{nota}
Throughout this paper, given $(V,g)$, we will denote the adjoint of $A\in EndV$ by $A^{*}$.
\end{nota}
\subsection{First two Skew Algebroids} 
Let $J\in \mathcal{T}(V)$ and consider the map 
\begin{align*}
 \delta : \mathfrak{gl}(&\mathbb{V})  \longrightarrow T\mathcal{T} \\
 A & \longrightarrow [SA+A^{*}S, \phi],
 \end{align*}
 where $S \in \{J+\phi, J-\phi \} $.  \\ 

 \begin{defi}
 We define the following brackets on the sections of $\mathfrak{gl}(\mathbb{V})$.

 \vspace{.1cm}
 \noindent 1)\  When $S=J+\phi$: 
\[ [A,B]^{\delta}= dB_{\delta(A)} + \frac{1}{2}J\{A(J-\phi), J\}B -\frac{1}{2}B\delta(A)\phi -(A \leftrightarrow B).\]\\
 2) \  When  $S=J-\phi$: 
\[[A,B]^{\delta}= dB_{\delta(A)} + \frac{1}{2}J\{A(J+\phi), J\}B -\frac{1}{2}B\delta(A)\phi -(A \leftrightarrow B). \]

 \end{defi}
 
 We have then proved: 
 \begin{thm}
Given $S \in \{J+\phi, J-\phi \} $, $(\mathfrak{gl}(\mathbb{V}),[\cdot,\cdot]^{\delta}, \delta)$ is a skew algebroid on $\mathcal{T}(V)$.
\end{thm}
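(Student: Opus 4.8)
The plan is to verify directly the two defining conditions of a skew algebroid (the Jacobi identity is not required). Since the formula for $[\cdot,\cdot]^{\delta}$ has the shape $\Phi(A,B)-\Phi(B,A)$, it is automatically $\mathbb{R}$-bilinear and skew, and it takes values in $\Gamma(\mathfrak{gl}(\mathbb{V}))$ because $\mathfrak{gl}(\mathbb{V})=End(\mathbb{V})$ is the full endomorphism bundle. So the real content is: (i) $\delta$ maps into $T\mathcal{T}$; (ii) the Leibniz rule; (iii) the anchor identity $\delta([A,B]^{\delta})=[\delta(A),\delta(B)]_{Lie}$. For (i) I would first note that for $S\in\{J+\phi,J-\phi\}$ one has $S^{*}=-S$ pointwise (as $J$ and $\phi|_{K}=K$ are skew-adjoint), so $SA+A^{*}S\in\Gamma(\mathfrak{o}(\mathbb{V},g))$, and then observe that $\{[C,\phi],\phi\}=[C,\phi^{2}]=-[C,\mathbb{1}]=0$ for any $C$, whence $\delta(A)=[SA+A^{*}S,\phi]\in\Gamma(\mathfrak{o}_{\{\phi\}})=\Gamma(T\mathcal{T})$.

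For (ii), the key point is that the formula defining $\delta$ contains no derivatives, so $\delta$ is $C^{\infty}(\mathcal{T})$-linear, $\delta(fB)=f\,\delta(B)$. Consequently, in $[A,fB]^{\delta}$ every term other than $d(fB)_{\delta(A)}$ is algebraic in the slot being changed and merely picks up a factor $f$ (in the swapped half one uses $\delta(fB)=f\,\delta(B)$, so that $dA_{\delta(fB)}=f\,dA_{\delta(B)}$ produces no $f$-derivative), while $d(fB)_{\delta(A)}=\delta(A)[f]\,B+f\,dB_{\delta(A)}$ supplies the extra term. Hence $[A,fB]^{\delta}=f[A,B]^{\delta}+\delta(A)[f]\,B$, as required.

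Condition (iii) is the heart of the argument and I would attack it by a direct computation. Using the realization $T\mathcal{T}\subset EndV$ of Section \ref{secBTS}, the Lie bracket of the vector fields $\delta(A),\delta(B)$ is $[\delta(A),\delta(B)]_{Lie}=d_{\delta(A)}\delta(B)-d_{\delta(B)}\delta(A)$, with $d$ the ambient directional derivative along $\mathcal{T}$ (equivalently one may phrase everything through the torsionless connection $\nabla=d+\tfrac12(d\phi)\phi$, which satisfies $\nabla\phi=0$). Taking $S=J+\phi$ (the other case being symmetric), and differentiating $\delta(B)=[(J+\phi)B+B^{*}(J+\phi),\phi]$ using $d\phi_{X}=X$ together with the tangency relation $\{X,\phi\}=0$ for $X\in T\mathcal{T}$, the terms of $d_{\delta(A)}\delta(B)$ containing $dB_{\delta(A)}$ reassemble exactly into $\delta\big(dB_{\delta(A)}\big)$, and what is left is a purely algebraic expression $N(A,B)$ in $A$, $B$, $J$ and $\phi$. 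Thus $[\delta(A),\delta(B)]_{Lie}=\delta\big(dB_{\delta(A)}-dA_{\delta(B)}\big)+\big(N(A,B)-N(B,A)\big)$, and comparison with $\delta([A,B]^{\delta})$ reduces (iii) to the pointwise algebraic identity
\begin{equation*}
\delta\!\left(\tfrac12 J\{A(J-\phi),J\}B-\tfrac12 J\{B(J-\phi),J\}A-\tfrac12 B\,\delta(A)\phi+\tfrac12 A\,\delta(B)\phi\right)=N(A,B)-N(B,A).
\end{equation*}
This is exactly where the specific correction terms $\tfrac12 J\{A(J\mp\phi),J\}B$ and $-\tfrac12 B\,\delta(A)\phi$ are engineered to do their job, and I would prove the identity by expanding $\delta$ and the anticommutators and repeatedly invoking $J^{2}=\phi^{2}=-\mathbb{1}$, skew-adjointness of $J,\phi,\delta(A),\delta(B)$, the constraints $\{\delta(A),\phi\}=\{\delta(B),\phi\}=0$, and the definition of $\delta$.

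The hard part will be precisely the bookkeeping in this last identity: it involves many terms of different shapes (an entry $C$ versus its adjoint $C^{*}$, left versus right multiplication by $J$ and $\phi$), so organizing the cancellations cleanly will require a disciplined use of the anticommutation relations and of the tangency of $\delta(A),\delta(B)$ to $\mathcal{T}$; a helpful running check is that both sides are manifestly skew-adjoint and anticommute with $\phi$. Finally, the case $S=J-\phi$ is handled by the same computation with $\tfrac12 J\{A(J+\phi),J\}B$ in place of $\tfrac12 J\{A(J-\phi),J\}B$ (and is, moreover, related to the case $S=J+\phi$ by the involution $K\mapsto -K$ of $\mathcal{T}$).
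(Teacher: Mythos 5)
The paper itself offers no proof of this theorem: the stated ``proof'' merely defers to the announced updated version (see Remark \ref{rmkUPDATED}), so there is nothing in the text to compare your route against. Judged on its own, your plan has the right architecture, and the two easy thirds are essentially complete and correct: the verification that $\delta$ lands in $T\mathcal{T}=\mathfrak{o}_{\{\phi\}}$ (via $S^{*}=-S$ and $\{[C,\phi],\phi\}=[C,\phi^{2}]=0$) and the Leibniz rule (via $C^{\infty}$-linearity of $\delta$ and the product rule applied to the single derivative term $d(fB)_{\delta(A)}$) are both sound as written.

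The genuine gap is condition (iii), which is where the entire content of the theorem lives. You correctly reduce the anchor identity to a pointwise algebraic identity matching $\delta$ of the correction terms $\tfrac12 J\{A(J\mp\phi),J\}B-\tfrac12 B\,\delta(A)\phi$ against the non-derivative part $N(A,B)-N(B,A)$ of $[\delta(A),\delta(B)]_{Lie}$, but you do not carry out that verification; you only announce that you ``would prove'' it by expansion. Since the theorem is true precisely if and only if those correction terms are the right ones --- they are not forced by any general principle, and a sign error in either would break the identity while leaving your skeleton intact --- the proposal as it stands establishes nothing beyond what is already visible from the shape of the definitions. Two smaller cautions: when differentiating $\delta(B)=[SB+B^{*}S,\phi]$ along $\delta(A)$ you must also differentiate the $\phi$ appearing inside $S=J\pm\phi$ and in the outer commutator (using $d\phi_{X}=X$), and these contributions must be tracked into $N(A,B)$ rather than absorbed into $\delta(dB_{\delta(A)})$; and the claimed symmetry between $S=J+\phi$ and $S=J-\phi$ under $K\mapsto -K$ is not immediate, because the paper's two bracket formulas both carry the term $-\tfrac12 B\,\delta(A)\phi$ with the same sign even though $\phi$ changes sign under that involution, so this shortcut needs its own justification or should be replaced by redoing the computation.
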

\begin{proof}
The proof will be given in \cite{Gindi4}; please see Remark \ref{rmkUPDATED}.
\end{proof}

\begin{rmk}
In \cite{Gindi4}, we will determine whether the above yields Lie algebroids on $\mathcal{T}(V)$; please see Section \ref{secFW1}.
\end{rmk}

Consequently, the $Im\delta$ is an integrable distribution on $\mathcal{T}$. It can be described as follows.

\begin{prop}  Given $S \in \{J+\phi, J-\phi \}$, 
\[Im \delta=  \{ B\in \mathfrak{o}_{\{\phi\}} \ | B: KerS \longrightarrow ImS\}.\]
\end{prop}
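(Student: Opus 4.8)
The plan is to establish the identity fiber by fiber, over each $K \in \mathcal{T}(V,g)$; I will treat the case $S = J+\phi$, the case $S = J-\phi$ being identical after interchanging $V_{1}$ and $V_{-1}$. Write $S = J+K$ at the point $K$. The first step is to record the linear algebra of $S$. Since $J,K \in \mathfrak{o}(V,g)$ one has $S^{*} = -S$, so $Ker\,S = Ker\,S^{*}$ and hence $Im\,S = (Ker\,S)^{\perp}$; and by Proposition \ref{propDECOM}, $Ker\,S = Ker(J+K) = V_{1}$, while $S$ is invertible on every other summand $V_{e_{i}}$ and on $V_{-1}$ (indeed $(J+K)^{2} = 2(e_{i}-1)\mathbb{1}$ on $V_{e_{i}}$ and $J+K = 2J$ on $V_{-1}$). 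In particular $Ker\,S$ and $Im\,S = V_{1}^{\perp}$ are orthogonal, $K$-invariant subspaces.

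The second step is to factor $\delta$ as $\delta = [\,\cdot\,,\phi] \circ \mu$, where $\mu : \mathfrak{gl}(\mathbb{V}) \to \mathfrak{o}(\mathbb{V},g)$, $\mu(A) = SA + A^{*}S$. Using $S^{*} = -S$ one rewrites $A^{*}S = -(SA)^{*}$, so $\mu(A) = C - C^{*}$ with $C := SA$; and since $S$ maps $V$ onto $Im\,S$, the operators $C = SA$ range precisely over all endomorphisms with image contained in $Im\,S$ as $A$ ranges over $\mathfrak{gl}(V)$. Writing everything in block form with respect to the orthogonal $K$-invariant splitting $V = Im\,S \oplus Ker\,S$, a short computation gives
\[
Im\,\mu = \{M \in \mathfrak{o}(\mathbb{V},g) \ | \ M(Ker\,S) \subseteq Im\,S\},
\]
i.e. exactly the skew-adjoint $M$ whose $Ker\,S$-to-$Ker\,S$ block vanishes.

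The third step is to compute $Im\,\delta = [Im\,\mu,\phi]$. The inclusion ``$\subseteq$'' is immediate: for $M \in Im\,\mu$ the bracket $[M,K]$ automatically lies in $\mathfrak{o}_{\{K\}} = T_{K}\mathcal{T}$, and $[M,K](Ker\,S) = MK(Ker\,S) - KM(Ker\,S) \subseteq M(Ker\,S) + K(Im\,S) \subseteq Im\,S$ by $K$-invariance. For ``$\supseteq$'', let $B \in \mathfrak{o}_{\{\phi\}}$ with $B(Ker\,S) \subseteq Im\,S$. Since $[\,\cdot\,,K] : \mathfrak{o}(V,g) \to \mathfrak{o}_{\{K\}}$ is surjective, choose $M_{0}$ with $[M_{0},K] = B$. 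As $K$ is block-diagonal for the splitting above, the $Ker\,S$-to-$Ker\,S$ block of $[M_{0},K]$ is the commutator of the corresponding block of $M_{0}$ with $K|_{Ker\,S}$, and it equals the $Ker\,S$-to-$Ker\,S$ block of $B$, which is zero by hypothesis; hence that block of $M_{0}$ commutes with $K$. Letting $U \in \mathfrak{o}(V,g)$ be the operator equal to this block on $Ker\,S$ and zero elsewhere, one has $[U,K] = 0$, so $M := M_{0} - U$ still satisfies $[M,K] = B$ while having vanishing $Ker\,S$-to-$Ker\,S$ block; thus $M \in Im\,\mu$ and $B = \delta(A)$ for any $A$ with $SA - (SA)^{*} = M$, which exists by the second step.

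I expect the only genuine obstacle to be the correction step in the ``$\supseteq$'' direction: one must notice that the hypothesis $B(Ker\,S) \subseteq Im\,S$ is exactly what forces the offending block of the lift $M_{0}$ to commute with $K$, so that the correction term $U$ lies in the kernel of $[\,\cdot\,,K]$ and does not spoil $[M_{0},K] = B$. This in turn rests on the $K$-invariance of the splitting $V = Im\,S \oplus Ker\,S$ furnished by Proposition \ref{propDECOM}; everything else is routine block bookkeeping.
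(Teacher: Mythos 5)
Your argument is correct. The paper itself gives no proof of this proposition (it is one of the statements deferred to the updated version, per Remark \ref{rmkUPDATED}), so there is nothing to compare against; but your fiberwise reduction is sound and complete. The three ingredients all check out: (i) $S^{*}=-S$ gives $Im\,S=(Ker\,S)^{\perp}$, and Proposition \ref{propDECOM} gives $Ker(J+K)=V_{1}$ with $(J+K)^{2}=2(e_{i}-1)\mathbb{1}$ on $V_{e_{i}}$ and $J+K=2J$ on $V_{-1}$, so the splitting $V=Im\,S\oplus Ker\,S$ is orthogonal and $K$-invariant; (ii) the identification $Im\,\mu=\{M\in\mathfrak{o}(V,g)\ |\ M(Ker\,S)\subseteq Im\,S\}$ works because $Im\,C\subseteq Im\,S$ forces $C^{*}(Ker\,S)=0$ (as $Ker\,C^{*}=(Im\,C)^{\perp}\supseteq Ker\,S$), so $(C-C^{*})(Ker\,S)=C(Ker\,S)$, and conversely the block $M_{22}=0$ condition is solvable by taking $C_{11}=\tfrac{1}{2}M_{11}$, $C_{12}=M_{12}$; and (iii) in the surjectivity step, block-diagonality of $K$ ensures that $[M_{0},K]_{22}=[(M_{0})_{22},K|_{Ker\,S}]$, so the hypothesis $B_{22}=0$ lets you subtract the central correction $U$ without changing the bracket. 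The only stylistic suggestion is to state explicitly the observation $Ker\,C^{*}\supseteq Ker\,S$ in step two, since that is where the ``short computation'' could otherwise appear to be hiding a gap.
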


\begin{thm}
%
Consider the skew algebroids $(\mathfrak{gl}(\mathbb{V}),[\cdot,\cdot]^{\delta}, \delta)$ for $S=J+\phi$ and $S= J-\phi$. Also let $K \in \mathcal{T}(V)$ and $2m_{\pm 1}=dimKer(J\pm K)$.

\vspace{.2cm}
\noindent 1) When $S= J+\phi$,  $\mathcal{T}^{(m_{1},*)}(J)$ is the associated leaf that passes through $K$ and when $S= J-\phi$, it is $\mathcal{T}^{(*,m_{-1})}(J)$.   

\vspace{.2cm}
\noindent 2) These leaves are complex submanifolds of $\mathcal{T}(V)$. 
\end{thm}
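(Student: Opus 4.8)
The plan is to prove the two claims separately: first that the leaf through $K$ for $S=J+\phi$ (resp. $S=J-\phi$) is exactly $\mathcal{T}^{(m_1,*)}(J)$ (resp. $\mathcal{T}^{(*,m_{-1})}(J)$), and then that these subsets are complex submanifolds. For the first claim I would argue by computing the distribution $Im\,\delta$ along $\mathcal{T}^{(m_1,*)}(J)$ and showing both that it is tangent to this subset and that it spans its tangent space, so that the maximal integral manifold through $K$ coincides with the connected component of $\mathcal{T}^{(m_1,*)}(J)$ containing $K$ (and then noting $\mathcal{T}^{(m_1,*)}(J)$ is connected, being a Schubert cell by the identification from \cite{Gindi2}). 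Concretely, take $S=J+\phi$. The preceding proposition gives $Im\,\delta = \{B\in\mathfrak{o}_{\{\phi\}} \mid B: Ker\,S \to Im\,S\}$, where at a point $L\in\mathcal{T}$ we have $Ker\,S|_L = Ker(J+L)$ and $Im\,S|_L = Im(J+L) = Ker(J+L)^{\perp}$. So the claim is that the tangent directions in $\mathfrak{o}_{\{L\}}(V)$ that send $Ker(J+L)$ into its orthogonal complement are precisely the infinitesimal deformations of $L$ that preserve $\dim Ker(J+L)$.

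For the \emph{tangency} direction I would differentiate the map $L\mapsto \dim Ker(J+L)$, or more cleanly the rank of $J+L$: if $L_t$ is a curve in $\mathcal{T}$ with $\dot L_0 = B$, then $\frac{d}{dt}(J+L_t)|_0 = B$, and the rank of $J+L$ is locally constant along a curve iff $\dot{} (J+L)$ maps $Ker(J+L)$ into $Im(J+L)$ — this is the standard first-order criterion for the rank of a family of operators to be locally constant (equivalently, that the kernel dimension be locally constant). That immediately matches the description of $Im\,\delta$. For the converse (that $Im\,\delta$ is \emph{all} of the tangent space to $\mathcal{T}^{(m_1,*)}(J)$, i.e.\ the codimension is correct), I would count dimensions: using Proposition \ref{propDECOM} at the pair $(J,L)$, decompose $V = Im[J,L]\oplus Ker(J+L)\oplus Ker(J-L)$, identify $\mathfrak{o}_{\{L\}}(V)$ and the subspace $\{B \mid B: Ker(J+L)\to Ker(J+L)^{\perp}\}$ block-wise with respect to this splitting, and check that the dimension of the quotient equals the dimension of the space of complex structures on a $2m_1$-dimensional space, i.e.\ $m_1(m_1-1)$, which is exactly the "fiber direction" one loses when passing from $\mathcal{T}$ to the locus where $\dim Ker(J+L)$ is fixed. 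This dimension bookkeeping, carried out $\mathbb{H}$-module block by block as in Propositions \ref{propDECOM2} and the representation-theoretic description, is where the real work lies.

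For claim 2, that the leaves are complex submanifolds, it suffices to show $Im\,\delta$ is invariant under $I_{\mathcal{T}}$, since an integrable $I_{\mathcal{T}}$-invariant distribution has complex leaves (the integrability of $I_{\mathcal{T}}$ plus invariance of the distribution makes each leaf a complex submanifold). Recall $I_{\mathcal{T}}B = \phi B = LB$ at $L$ — wait, more precisely $I_{\mathcal{T}}$ acts on $T_L\mathcal{T}$ by $B\mapsto LB$. So I must check: if $B\in\mathfrak{o}_{\{L\}}(V)$ satisfies $B: Ker(J+L)\to Ker(J+L)^{\perp}$, then so does $LB$. Anticommutation $\{B,L\}=0$ gives $LB = -BL$; since $Ker(J+L)$ is $L$-invariant (it is $J,L$-invariant by Proposition \ref{propDECOM}) and $Ker(J+L)^{\perp} = Im[J,L]\oplus Ker(J-L)$ is also $L$-invariant, we get $(LB)(Ker(J+L)) = -B\,L(Ker(J+L)) \subseteq B(Ker(J+L)) \subseteq Ker(J+L)^{\perp}$, and $LB\in\mathfrak{o}_{\{L\}}(V)$ since $L$ preserves $\mathfrak{o}_{\{L\}}$. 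Hence $Im\,\delta$ is $I_{\mathcal{T}}$-invariant, and the leaves are complex.

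The main obstacle I anticipate is the dimension count in claim 1 — showing $Im\,\delta$ is not merely tangent to $\mathcal{T}^{(m_1,*)}(J)$ but exhausts its tangent space. This requires an explicit identification of the block structure of $\mathfrak{o}_{\{L\}}(V)$ relative to the $(J,L)$-decomposition and a careful accounting of which blocks do and do not land $Ker(J+L)$ into its complement; the $V_{e_i}$ blocks (genuine $\mathbb{H}$-modules) behave differently from the $V_{\pm1}$ pieces, and one must be sure the count gives exactly codimension $m_1(m_1-1)$ inside $\mathcal{T}$. A secondary, more bookkeeping-level issue is verifying that $\mathcal{T}^{(m_1,*)}(J)$ is an embedded submanifold of the claimed dimension in the first place (rather than just a constructible set), for which I would lean on the Schubert-cell identification from \cite{Gindi2} as already granted.
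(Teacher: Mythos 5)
The paper itself offers no proof of this theorem to compare against: it is stated without argument, and Remark \ref{rmkUPDATED} defers all such proofs to \cite{Gindi4}. So I can only assess your proposal on its own terms. Your overall strategy is sound and is the natural one: show that $Im\,\delta$ restricted to the stratum $\mathcal{T}^{(m_{1},*)}(J)$ coincides with its tangent bundle, invoke connectedness (legitimately borrowed from the Schubert-cell identification in \cite{Gindi2}) and the Stefan--Sussmann description of leaves as orbits to conclude the leaf equals the stratum, and then get holomorphicity of the leaves from $I_{\mathcal{T}}$-invariance of $Im\,\delta$, which you verify correctly using the $L$-invariance of $Ker(J+L)$ and its orthogonal complement from Proposition \ref{propDECOM}. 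Your target codimension $m_{1}(m_{1}-1)$ is also the right number: the quotient $\mathfrak{o}_{\{L\}}(V)/Im\,\delta|_{L}$ is exactly the diagonal block $\mathfrak{o}_{\{L\}}(Ker(J+L))\cong T\mathcal{T}(Ker(J+L))$, which has dimension $m_{1}(m_{1}-1)$, matching the real codimension of the stratum $\{W \mid \dim(W\cap\overline{U})=m_{1}\}$ in the isotropic Grassmannian.

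Two points deserve care. First, your ``first-order criterion'' is stated as an \emph{iff}, which is false as a pointwise statement (rank can be preserved to first order and still jump); you only use the necessary direction, namely that velocities of curves remaining in the stratum send $Ker(J+L)$ into $Im(J+L)=Ker(J+L)^{\perp}$, and that direction is correct, so the logic survives, but the claim should be weakened as stated. Second, and more substantively, the two quantitative inputs --- that $\mathcal{T}^{(m_{1},*)}(J)$ is an embedded connected submanifold of dimension $n(n-1)-m_{1}(m_{1}-1)$, and the block computation showing $\dim Im\,\delta|_{L}$ equals that number --- are asserted rather than carried out. You correctly flag the latter as the crux; without it the argument only yields $T_{L}\mathcal{T}^{(m_{1},*)}(J)\subseteq Im\,\delta|_{L}$ and hence only that the leaf \emph{contains} a neighborhood of $K$ in the stratum. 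As an outline the proposal is correct and complete in structure; as a proof it still owes the dimension bookkeeping.
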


\begin{defi}  
    \mbox{}
\vspace{.1cm}
    \begin{itemize}
    \item[1)] We will define $T_{\phi}\mathcal{T}^{(m_{1},*)}(J)$ to be the distribution that when evaluated at $K\in \mathcal{T}(V)$ is $T_{K}\mathcal{T}^{(m_{1}|_{K},*)}(J)$, where $2m_{1}|_{K}=dimKer(J+K)$. In this notation $m_{1}$ is a (non-smooth) function on $\mathcal{T}$ while in Definition \ref{defSC} it is a number.\\
    \item[2)]  Similarly, $T_{\phi}\mathcal{T}^{(*,m_{-1})}(J)$ is the distribution that when evaluated at $K\in \mathcal{T}(V)$ is $T_{K}\mathcal{T}^{(*, m_{-1}|_{K})}(J)$, where $2m_{-1}|_{K}=dimKer(J-K)$.
    \end{itemize}
 
    \end{defi}

\subsection{The Third Skew Algebroid and $\mathcal{T}^{(m_{1},m_{-1})}(J)$}
\label{secTSA}
Let us consider the $\mathcal{T}^{(m_{1},m_{-1})}= \mathcal{T}^{(m_{1},*)} \cap  \mathcal{T}^{(*,m_{-1})}$.

\begin{prop}
\begin{align*}
& 1)\  \mathcal{T}^{(m_{1},m_{-1})} \text{ is nonempty if and only if } n-m_{1}-m_{-1}=2k\ \  (k \in \mathbb{Z}_{\geq 0}).\\
& 2) \ \text{If } n-m_{1}-m_{-1}=2k, \text{ for } k \in \mathbb{Z}_{\geq 0}, \text{ then } \mathcal{T}^{(m_{1},*)} \text{ and } \mathcal{T}^{(*,m_{-1})}   \\ 
& \ \ \text{ intersect transversally}. \\
& 3) \  \mathcal{T}^{(m_{1},m_{-1})} \text{is a complex submanifold of } \mathcal{T}.
\end{align*}
\end{prop}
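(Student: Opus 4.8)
The plan is to treat the three assertions in sequence, using the fibre-by-fibre description of $\mathcal{T}^{(m_1,*)}$ and $\mathcal{T}^{(*,m_{-1})}$ together with the decompositions of Proposition \ref{propDECOM} and Proposition \ref{propDECOM2}. For part 1), I would fix $K$ with $\dim\mathrm{Ker}(J+K)=2m_1$ and $\dim\mathrm{Ker}(J-K)=2m_{-1}$ and invoke Proposition \ref{propDECOM}: $V$ splits orthogonally as $\mathrm{Im}[J,K]\oplus\mathrm{Ker}(J+K)\oplus\mathrm{Ker}(J-K)$, and by part 3b) of that proposition $\mathrm{Rank}[J,K]=4k$, i.e. $\dim\mathrm{Im}[J,K]=4k$ for some $k\in\mathbb{Z}_{\geq 0}$. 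Since $\dim V=2n$, this forces $2n - 2m_1 - 2m_{-1}=4k$, hence $n-m_1-m_{-1}=2k$. Conversely, if $n-m_1-m_{-1}=2k$ with $k\geq 0$, I would build an explicit $K$: decompose $V$ (relative to $J$) as a direct sum of $m_1$ copies of a fixed $J,K$-invariant $2$-plane on which $K=-J$, $m_{-1}$ copies on which $K=J$, and $k$ copies of a $4$-dimensional $\mathbb{H}$-module on which one chooses $K$ with $\{J,K\}=0$ (using Proposition \ref{propK'} with $e=0$ to guarantee such a $K'$ exists), arranging orthogonality and $g$-compatibility throughout. This exhibits a point of $\mathcal{T}^{(m_1,m_{-1})}$, proving nonemptiness.

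For part 2), transversality, I would use the tangent-space descriptions available from the previous subsection. We have $T_K\mathcal{T}=\mathfrak{o}_{\{K\}}(V)$, and from the two skew algebroids and the proposition identifying $\mathrm{Im}\,\delta$, the tangent space to $\mathcal{T}^{(m_1,*)}$ at $K$ (with $S=J+\phi$, so $S|_K=J+K$) is $\{B\in\mathfrak{o}_{\{K\}}\mid B:\mathrm{Ker}(J+K)\to\mathrm{Im}(J+K)\}$ and the tangent space to $\mathcal{T}^{(*,m_{-1})}$ is $\{B\in\mathfrak{o}_{\{K\}}\mid B:\mathrm{Ker}(J-K)\to\mathrm{Im}(J-K)\}$. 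Transversality amounts to showing that the sum of these two subspaces is all of $\mathfrak{o}_{\{K\}}$, equivalently that the two ``obstruction'' quotients $\mathfrak{o}_{\{K\}}/T_K\mathcal{T}^{(m_1,*)}$ and $\mathfrak{o}_{\{K\}}/T_K\mathcal{T}^{(*,m_{-1})}$ are ``independent.'' Here I would decompose a general $B\in\mathfrak{o}_{\{K\}}$ using the orthogonal $J,K$-invariant splitting $V=\mathrm{Im}[J,K]\oplus\mathrm{Ker}(J+K)\oplus\mathrm{Ker}(J-K)$ into block components, note that $\mathrm{Ker}(J+K)=V_1$ and $\mathrm{Ker}(J-K)=V_{-1}$ are $\{K\}$-anticommuting-orthogonal to each other in the relevant sense, and check that the only block of $B$ simultaneously obstructed by \emph{both} conditions is the $\mathrm{Ker}(J+K)\leftrightarrow\mathrm{Ker}(J-K)$ off-diagonal piece; but that piece is controlled by the constraint $n-m_1-m_{-1}=2k$ (it is precisely what a dimension count, using $\dim\mathcal{T}^{(m_1,m_{-1})}$ versus $\dim\mathcal{T}^{(m_1,*)}+\dim\mathcal{T}^{(*,m_{-1})}-\dim\mathcal{T}$, is forcing). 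I expect the honest bookkeeping of which off-diagonal blocks lie in which subspace, and verifying the dimension identity $\dim T_K\mathcal{T}^{(m_1,*)}+\dim T_K\mathcal{T}^{(*,m_{-1})}-\dim\mathcal{T}^{(m_1,m_{-1})}=\dim\mathcal{T}$, to be the main obstacle; the cleanest route may be to compute all three dimensions directly from the Schubert-cell picture of \cite{Gindi2} and compare.

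For part 3), that $\mathcal{T}^{(m_1,m_{-1})}$ is a complex submanifold, I would argue as follows. By parts 1) and 2), near any of its points $\mathcal{T}^{(m_1,m_{-1})}$ is the transverse intersection of the two submanifolds $\mathcal{T}^{(m_1,*)}$ and $\mathcal{T}^{(*,m_{-1})}$, hence is itself a smooth embedded submanifold of $\mathcal{T}$, with $T_K\mathcal{T}^{(m_1,m_{-1})}=T_K\mathcal{T}^{(m_1,*)}\cap T_K\mathcal{T}^{(*,m_{-1})}$. By the previous theorem each of $\mathcal{T}^{(m_1,*)}$ and $\mathcal{T}^{(*,m_{-1})}$ is a complex submanifold, i.e. each of their tangent spaces is $I_{\mathcal{T}}$-invariant; concretely, $I_{\mathcal{T}}B=KB$ and one checks directly that $B:\mathrm{Ker}(J\pm K)\to\mathrm{Im}(J\pm K)$ iff $KB$ has the same property, since $K$ preserves both $\mathrm{Ker}(J\pm K)$ and $\mathrm{Im}(J\pm K)$ (these being $J,K$-invariant). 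The intersection of two $I_{\mathcal{T}}$-invariant subspaces is $I_{\mathcal{T}}$-invariant, so $T_K\mathcal{T}^{(m_1,m_{-1})}$ is $I_{\mathcal{T}}$-invariant at every point, and since $I_{\mathcal{T}}$ is integrable on $\mathcal{T}$ this makes $\mathcal{T}^{(m_1,m_{-1})}$ a complex submanifold. The only subtlety to watch is that transversality can fail at isolated ``jumps'' — but part 2) asserts it holds whenever the intersection is nonempty, so this is not an issue.
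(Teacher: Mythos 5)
The paper does not actually prove this proposition here: it defers entirely to \cite{Gindi2}, where the $\mathcal{T}^{(m_1,*)}$ and $\mathcal{T}^{(*,m_{-1})}$ are treated as Schubert cells in a homogeneous maximal isotropic space. Your route is therefore genuinely different: you stay inside the twistor-space picture, getting part 1) from Proposition \ref{propDECOM} (in particular $\mathrm{Rank}[J,K]=4k$) plus an explicit construction of $K$, and parts 2) and 3) from the tangent-space descriptions $T_K\mathcal{T}^{(m_1,*)}=\{B\in\mathfrak{o}_{\{K\}}\mid B:\mathrm{Ker}(J+K)\to\mathrm{Im}(J+K)\}$ supplied by the $\delta$-algebroids of the preceding subsection. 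This is consistent with the paper's internal ordering and is arguably closer in spirit to what the author advertises as the ``$\mathcal{T}$ point of view''; what it costs you is reliance on the unproved leaf-identification theorem for the first two algebroids, whereas the Schubert-cell route of \cite{Gindi2} is self-contained. Part 1) and part 3) are correct as written (the one mild circularity is invoking Proposition \ref{propK'} to produce an anticommuting $K'$ on an $\mathbb{H}$-module, which is elementary anyway).

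The one concrete slip is in your transversality argument. Writing $V=\mathrm{Im}[J,K]\oplus V_1\oplus V_{-1}$ with $V_{\pm 1}=\mathrm{Ker}(J\pm K)$, and noting $\mathrm{Im}(J+K)=V_1^{\perp}$, $\mathrm{Im}(J-K)=V_{-1}^{\perp}$ (since $J\pm K$ is skew), the condition defining $T_K\mathcal{T}^{(m_1,*)}$ kills only the $V_1\to V_1$ block of $B$ and the condition defining $T_K\mathcal{T}^{(*,m_{-1})}$ kills only the $V_{-1}\to V_{-1}$ block; the off-diagonal $V_1\leftrightarrow V_{-1}$ block is \emph{unconstrained by both}, not ``simultaneously obstructed by both'' as you assert. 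Since the two obstructed blocks are disjoint, transversality is immediate: for any $B\in\mathfrak{o}_{\{K\}}$ set $B_2=P_{V_1}BP_{V_1}$ and $B_1=B-B_2$, check that $P_{V_1}BP_{V_1}$ remains skew and anticommutes with $K$ (because $K$ preserves $V_1$ and commutes with the orthogonal projection $P_{V_1}$), and observe $B_1\in T_K\mathcal{T}^{(m_1,*)}$, $B_2\in T_K\mathcal{T}^{(*,m_{-1})}$. This replaces the dimension count you flag as the main obstacle; no appeal to the Schubert-cell dimensions is needed, and the hypothesis $n-m_1-m_{-1}=2k$ enters part 2) only through nonemptiness of the intersection via part 1).
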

\begin{proof} Please see \cite{Gindi2}.
\end{proof}
Given these properties, we will now construct a skew algebroid whose associated leaves are the $\mathcal{T}^{(m_{1},m_{-1})}(J)$. 

To begin, consider the map
 \begin{align*}
 \sigma : \mathfrak{gl}(&\mathbb{V})  \longrightarrow T\mathcal{T} \\
 A & \longrightarrow [SAT-TA^{*}S, \phi],
 \end{align*}
 where $S=J+\phi$ and $T=J-\phi$.

\begin{defi} We define the following bracket on the sections of $\mathfrak{gl}(\mathbb{V})$:
\[ [A,B]^{\sigma}= dB_{\sigma(A)}+ \frac{1}{2}AS\phi S\{B,J\} -\frac{1}{2}\{A,J\}T\phi TB -(A \leftrightarrow B). \]
\end{defi}

 We have then proved: 
 \begin{thm}
$(\mathfrak{gl}(\mathbb{V}),[\cdot,\cdot]^{\sigma}, \sigma)$ is a skew algebroid on $\mathcal{T}(V)$.
\end{thm}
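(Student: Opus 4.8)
The plan is to verify the two skew-algebroid axioms of Definition \ref{defSA1} directly for $(\mathfrak{gl}(\mathbb{V}),[\cdot,\cdot]^{\sigma},\sigma)$, reducing wherever possible to the computations already needed for the $\delta$-algebroids. First I would record the basic algebraic identities governing $S=J+\phi$ and $T=J-\phi$ and the connection $\nabla=d+\tfrac12(d\phi)\phi$: namely $\{S,T\}=2(J^2-\phi^2)=-4\,\mathbb{1}$ on each fiber (using $J^2=\phi^2=-\mathbb{1}$), $S-T=2\phi$, $S+T=2J$, $S^*=-S$, $T^*=-T$ (since $J,\phi\in\mathfrak{o}(V,g)$), together with $d\phi_{X}=X$ for $X\in T\mathcal{T}=\mathfrak{o}_{\{\phi\}}$ and the Leibniz behavior of $d$ on products. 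I would also note that $\{X,\phi\}=0$ for tangent vectors $X$, which is what makes expressions like $\{A,J\}T\phi T B$ land where they should. These identities are exactly the toolkit used implicitly in the $\delta$-case, so the bracket $[\cdot,\cdot]^{\sigma}$ should be viewed as the natural analogue adapted to the ``sandwiched'' anchor $\sigma(A)=[SAT-TA^*S,\phi]$.

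The key steps, in order, are: (1) check that $\sigma(A)$ genuinely lands in $T\mathcal{T}=\mathfrak{o}_{\{\phi\}}$ — i.e. that $SAT-TA^*S$ is skew (so its bracket with $\phi$ is in $\mathfrak{o}(\mathbb{V},g)$) and that the bracket with $\phi$ automatically anticommutes with $\phi$; the skewness is immediate since $(SAT-TA^*S)^* = T^*A^*S^* - S^*A T^* = TA^*S - SAT$ up to sign. (2) Prove the Leibniz rule (b): expand $[A,fB]^{\sigma}$ using $d(fB)_{\sigma(A)} = f\,dB_{\sigma(A)} + \sigma(A)[f]\,B$, observe that every other term in the bracket formula is $C^\infty(\mathcal{T})$-linear in $B$, and check that the antisymmetrization $-(A\leftrightarrow B)$ contributes no extra derivative of $f$ because $dA_{\sigma(fB)}$ produces $\sigma(fB)=f\sigma(B)$ inside the derivative slot — here one uses that $\sigma$ is $C^\infty$-linear in its argument, so $\sigma(fB)=f\sigma(B)$, and the derivative $dA_{f\sigma(B)} = f\,dA_{\sigma(B)}$ has no $df$ term since $A$ appears undifferentiated as the ``vector field'' being transported. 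This gives axiom (b) cleanly. (3) Prove the anchor-morphism property (a): $\sigma([A,B]^{\sigma}) = [\sigma(A),\sigma(B)]_{Lie}$. This is the substantial computation — expand both sides, using $[\sigma(A),\sigma(B)]_{Lie} = d(\sigma(B))_{\sigma(A)} - d(\sigma(A))_{\sigma(B)}$ (since $T\mathcal{T}\subset \mathfrak{o}(\mathbb{V},g)$ and the Lie bracket of the induced vector fields on $\mathcal{T}$ is computed via the trivial $d$ plus correction terms from the embedding, exactly as encoded in $\nabla$ being torsionless), substitute the definition of $\sigma$, and match with $\sigma$ applied to the bracket formula. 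The terms $\tfrac12 AS\phi S\{B,J\} - \tfrac12\{A,J\}T\phi TB$ are precisely engineered to cancel the ``curvature-type'' discrepancy between the naive $d(\sigma(B))_{\sigma(A)}$ and the honest Lie bracket.

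I expect step (3), the morphism property, to be the main obstacle: it requires carefully differentiating $SAT - TA^*S$ along $\sigma(B)$ (noting $dS_X = dT_X = d\phi_X = X$ for $X\in T\mathcal{T}$, so $S$ and $T$ have nontrivial derivatives), collecting the many resulting terms, and recognizing the large cancellation. The strategy to control this is to work fiberwise at an arbitrary $K\in\mathcal{T}$, choose coordinates in which $\phi|_K = K$ and the tangent directions are represented by fixed skew endomorphisms anticommuting with $K$, and then track each monomial. A useful simplification is to first establish the identity $\sigma(A) = [SAT - TA^*S,\phi]$ can be rewritten, using $\{S,T\}=-4$ and skewness, in a form that makes the $J$-linearity manifest, and to reuse the already-proven fact (from the $\delta$ and earlier sections) that the analogous correction terms work — the $\sigma$-bracket is a ``composition'' of the two $\delta$-type constructions, and one can try to derive axiom (a) for $\sigma$ from the corresponding statements for $S=J+\phi$ and $S=J-\phi$ by a bookkeeping argument rather than a fresh brute-force expansion. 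If that structural shortcut does not close the computation, the fallback is the direct fiberwise monomial matching, which is lengthy but mechanical; since the paper defers all proofs to \cite{Gindi4}, presenting the reduction and the key cancellation pattern suffices here.
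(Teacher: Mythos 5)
The paper itself offers no proof of this theorem --- it is stated with ``We have then proved:'' and, per Remark \ref{rmkUPDATED}, all proofs are deferred to \cite{Gindi4} --- so there is no argument in the text to compare yours against. On its own terms, your plan is the natural one (verify axioms (a) and (b) of Definition \ref{defSA1} directly), and your treatment of axiom (b) is correct and complete: $d(fB)_{\sigma(A)}$ produces the $\sigma(A)[f]B$ term, every other term in $[\cdot,\cdot]^{\sigma}$ is $C^{\infty}(\mathcal{T})$-linear in $B$, and $dA_{\sigma(fB)}=f\,dA_{\sigma(B)}$ kills the would-be extra derivative from the antisymmetrization. Your step (1) (that $SAT-TA^{*}S$ is skew-adjoint and that $[\,\cdot\,,\phi]$ lands in $\mathfrak{o}_{\{\phi\}}$) is also fine.

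The genuine gap is step (3). The anchor identity $\sigma([A,B]^{\sigma})=[\sigma(A),\sigma(B)]_{Lie}$ is the entire content of the theorem, and your proposal never carries out the expansion: the assertion that the terms $\tfrac12 AS\phi S\{B,J\}-\tfrac12\{A,J\}T\phi TB$ are ``precisely engineered to cancel'' the discrepancy is exactly what has to be proved, not assumed, and the suggested shortcut of deriving it from the two $\delta$-algebroids is itself only a hope (those proofs are also absent from the paper). Worse, two of the identities you propose to compute with are wrong and would derail the monomial matching: since $J^{2}=\phi^{2}=-\mathbb{1}$ one has $\{S,T\}=2(J^{2}-\phi^{2})=0$ (indeed $ST=-[J,\phi]=-TS$), not $-4\mathbb{1}$; and $dT_{X}=d(J-\phi)_{X}=-X$, not $+X$. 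Until the differentiation of $SBT-TB^{*}S$ along $\sigma(A)$ is actually performed with the correct signs and the cancellation exhibited, the proposal is a plausible outline rather than a proof.
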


The $Im\sigma$ is then an integrable distribution on $\mathcal{T}$. It can be described as follows.

\begin{prop}
\[ Im\sigma= \{ B\in \mathfrak{o}_{\{\phi\}} \ | B: Ker(J+\phi) \longrightarrow Im(J+\phi), \ Ker(J-\phi) \longrightarrow Im(J-\phi) \}. \]
\end{prop}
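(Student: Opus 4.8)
The plan is to compute the image of $\sigma$ fiber by fiber at an arbitrary $K \in \mathcal{T}(V)$ and then match it against the right-hand side, using the decompositions from Proposition \ref{propDECOM} and the identification $T_K\mathcal{T} = \mathfrak{o}_{\{K\}}$. First I would fix $K$, write $S = J+K$ and $T = J-K$ as the fiber values of $J+\phi$ and $J-\phi$, and recall from Proposition \ref{propDECOM} that $\mathrm{Ker}\,S = V_{-1}$, $\mathrm{Ker}\,T = V_1$, while $\mathrm{Im}\,S$ and $\mathrm{Im}\,T$ are the orthogonal complements of $V_{-1}$ and $V_1$ respectively. Note that on the factor $V_{e_1}\oplus\cdots\oplus V_{e_l}$ (the image of $[J,K]$) both $S$ and $T$ are invertible, on $V_1$ one has $S = 2J$ invertible and $T = 0$, and on $V_{-1}$ one has $T = 2J$ invertible and $S = 0$. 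Since $J, K$ preserve each summand, $S, T$ act as block maps and $S^* = S$, $T^* = T$ because $J, K \in \mathfrak{o}(V,g)$ (each is $g$-skew, so $(J+K)^* = -(J+K)$... I must be careful: $J^* = -J$, so $S^* = -S$; the bracket formula must be read with this in mind, and the relevant operator sending $A$ to $SAT - TA^*S$ should be analyzed accordingly).

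Next I would show the two inclusions. For "$\subseteq$": given any $A \in \mathfrak{gl}(V)$, set $B = \sigma(A)|_K$, which after unwinding the bracket $[\,\cdot\,,\phi]$ at $K$ means $B$ is the $\mathfrak{o}_{\{K\}}$-component obtained from $SAT - TA^*S$; one checks directly that $SAT$ kills $\mathrm{Ker}\,T = V_1$ (since $T$ kills it) and lands in $\mathrm{Im}\,S$, and symmetrically $TA^*S$ kills $\mathrm{Ker}\,S = V_{-1}$ and lands in $\mathrm{Im}\,T$; combining, $B$ sends $V_1$ into $\mathrm{Im}\,S$ modulo the $TA^*S$ term and $V_{-1}$ into $\mathrm{Im}\,T$ modulo the $SAT$ term, and a short computation shows the projection to $\mathfrak{o}_{\{K\}}$ respects these containments so that $B \in \{B \in \mathfrak{o}_{\{K\}} \mid B: V_1 \to \mathrm{Im}\,S,\ V_{-1} \to \mathrm{Im}\,T\}$, which is exactly the claimed set since $\mathrm{Ker}(J+\phi)|_K = V_{-1}$ and so forth. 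For "$\supseteq$": given $B \in \mathfrak{o}_{\{K\}}$ with $B: V_{-1} \to \mathrm{Im}\,S$ and $B: V_1 \to \mathrm{Im}\,T$, I would produce an explicit preimage $A$, building it block-diagonally using that $S$ is invertible on $\mathrm{Im}\,S$-related blocks and $T$ is invertible on $\mathrm{Im}\,T$-related blocks — essentially inverting $A \mapsto SAT - TA^*S$ on the appropriate subspace of $\mathrm{Hom}(V,V)$, which is possible precisely because the target conditions are exactly the obstructions to invertibility. A dimension count, using Proposition \ref{propDECOM}(2)-(3) (so $\mathrm{Rank}[J,K] = 4k$ and the $\mathbb{H}$-module structure of the $V_{e_i}$), can be used to confirm the two sides have equal dimension as a consistency check.

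The main obstacle I expect is the bookkeeping in the "$\supseteq$" direction: one has to solve $SAT - TA^*S = (\text{prescribed } B)$ for $A$, and because $A$ and $A^*$ both appear, this is not a single linear inversion but rather requires splitting $\mathrm{End}(V)$ into pieces adapted to the $J,K$-block decomposition (and within each $V_{e_i}$, to the quaternionic structure via the auxiliary $K'$ of Proposition \ref{propK'}), solving on each piece, and checking that the ansatz genuinely lands in $\mathfrak{gl}(\mathbb{V}) = \mathrm{End}(\mathbb{V})$ with no hidden compatibility constraint. A cleaner route, which I would try first, is to avoid explicit inversion entirely: observe that $\mathrm{Im}\,\sigma$ is $J$-invariant (it comes from a skew algebroid whose leaves are complex, by the preceding structure, or directly since $I_{\mathcal{T}}$ acts by left multiplication by $\phi = K$ and the block structure is $K$-invariant), compute its dimension, and then note that the right-hand side is manifestly $J$-invariant of the same dimension and contains $\mathrm{Im}\,\sigma$ by the easy inclusion — forcing equality. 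Either way, the algebra is routine once the $V = V_{e_1}\oplus\cdots\oplus V_{e_l}\oplus V_1 \oplus V_{-1}$ decomposition is in hand; the only genuine content is verifying that the $SAT - TA^*S$ construction has no further constraints beyond the two stated $\mathrm{Ker}\to\mathrm{Im}$ conditions.
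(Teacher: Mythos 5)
The paper itself defers the proof of this proposition to \cite{Gindi4}, so there is no argument in the text to compare yours against; I can only assess your plan on its own terms. Your overall strategy --- work fiberwise at $K$, use the decomposition $V=Im[J,K]\oplus V_{1}\oplus V_{-1}$ of Proposition \ref{propDECOM}, prove the easy inclusion from the mapping properties of $SAT$ and $TA^{*}S$, and then establish surjectivity --- is the right one, and your ``$\subseteq$'' direction is essentially complete once you add that $[\cdot,\phi]|_{K}$ preserves the two containments because $K$ preserves $V_{\pm1}$ and their orthogonal complements. But there are two problems. First, a bookkeeping error: by Proposition \ref{propDECOM}(2b), $Ker(J+K)=V_{1}$ and $Ker(J-K)=V_{-1}$ (on $Ker(J+K)$ one has $K=-J$, so $\{J,K\}=+2\mathbb{1}$), which is the opposite of the identification $Ker\,S=V_{-1}$, $Ker\,T=V_{1}$ that you state at the outset; your later sentences silently use the correct assignment, so the writeup is internally inconsistent and must be fixed. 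You also correctly catch mid-sentence that $S^{*}=-S$ and $T^{*}=-T$; note this is exactly what makes $SAT-TA^{*}S$ land in $\mathfrak{o}(V,g)$, since $(SAT)^{*}=TA^{*}S$.

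Second, the ``$\supseteq$'' direction, which you rightly identify as the crux, is left as a sketch whose proposed method is both harder and more elaborate than necessary. You do not need the quaternionic structure of the $V_{e_{i}}$ or Proposition \ref{propK'} at all, and the worry that ``$A$ and $A^{*}$ both appear'' dissolves with one observation: since $S$ and $T$ are skew-adjoint with $Im\,S=(Ker\,S)^{\perp}$ and $Im\,T=(Ker\,T)^{\perp}$, the set $\{SAT \mid A\in\mathfrak{gl}(V)\}$ is exactly $\{D \mid D(Ker\,T)=0,\ Im\,D\subset Im\,S\}$, with $D$ otherwise arbitrary. Writing $C=D-D^{*}$ and computing block-by-block with respect to $V=Im[J,K]\oplus V_{1}\oplus V_{-1}$ then shows that $C$ ranges over precisely the skew-adjoint maps whose $(V_{1},V_{1})$ and $(V_{-1},V_{-1})$ blocks vanish, i.e.\ those satisfying $C(Ker\,S)\subset Im\,S$ and $C(Ker\,T)\subset Im\,T$. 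Finally, since $K$ is block-diagonal, $[\cdot,K]$ preserves this block condition and maps $\mathfrak{o}(V,g)$ onto $\mathfrak{o}_{\{K\}}$ (it kills $\mathfrak{u}(K)$ and is injective on $\mathfrak{o}_{\{K\}}$), giving exactly the right-hand side. With the label swap corrected and this argument substituted for your inversion scheme, the proof closes; your fallback via $I_{\mathcal{T}}$-invariance plus a dimension count would also work but still requires computing $\dim Im\,\sigma$, which is no less work than the blockwise computation above.
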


\begin{thm}
Consider the skew algebroid $(\mathfrak{gl}(\mathbb{V}),[\cdot,\cdot]^{\sigma}, \sigma)$ and let $K \in \mathcal{T}(V)$ and $2m_{\pm1}=dimKer(J\pm K)$. $\mathcal{T}^{(m_{1},m_{-1})}(J)$ is the associated leaf that passes through $K$. Moreover, it is a complex submanifold of $\mathcal{T}.$
\end{thm}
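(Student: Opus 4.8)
The plan is to establish the three assertions in turn: first that $(\mathfrak{gl}(\mathbb{V}),[\cdot,\cdot]^{\sigma}, \sigma)$ satisfies the two skew-algebroid axioms, then that $\mathrm{Im}\,\sigma$ is the claimed distribution, and finally that the leaf through $K$ is exactly $\mathcal{T}^{(m_1,m_{-1})}(J)$ and is complex. For the first point I would proceed exactly as in the two preceding theorems (whose proofs are deferred to \cite{Gindi4}): axiom (b), the Leibniz rule, is a direct computation — substituting $fB$ into $[A,B]^{\sigma}$, the term $dB_{\sigma(A)}$ produces $\sigma(A)[f]\,B + f\,dB_{\sigma(A)}$ and the remaining two terms are $C^\infty$-linear in $B$, so (b) follows immediately. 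Axiom (a), the bracket-compatibility $\sigma([A,B]^{\sigma}) = [\sigma(A),\sigma(B)]_{Lie}$, is the genuinely computational step: one expands $[\sigma(A),\sigma(B)]_{Lie}$ using $\sigma(A) = [SAT - TA^*S,\phi]$ and the fact that $\nabla\phi = 0$ with $\nabla$ torsionless, and checks it matches $[SCT - TC^*S,\phi]$ where $C = [A,B]^{\sigma}$. The key algebraic inputs are $S = J+\phi$, $T = J-\phi$, hence $ST + TS = 2(J^2 - \phi^2) = -4\mathbb{1} + \{\,\cdot\,\}$... more precisely $S^2 = (J+\phi)^2$, $\{S,T\} = 2(J^2+\phi^2+\ldots)$, together with $S^* = -T$ and $T^* = -S$ (since $J,\phi$ are skew), and the identity $T_K\mathcal{T} = \mathfrak{o}_{\{\phi\}}$ so that $[\,\cdot\,,\phi]$ lands in and is computed within the twistor tangent space.

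For the description of $\mathrm{Im}\,\sigma$, I would compute the image fiberwise at a point $K$. Using Proposition \ref{propDECOM} with $J$ and this $K$, decompose $V = Im[J,K] \oplus Ker(J+K) \oplus Ker(J-K)$; on these summands $J+K$ and $J-K$ act in a controlled way (one is an isomorphism on $Im[J,K]$ and on the appropriate kernel summand, and vanishes on the other). Writing $S|_K = J+K$, $T|_K = J-K$, one checks that $B \mapsto SBT - TB^*S$ maps $\mathfrak{o}_{\{K\}}$ onto precisely those $B \in \mathfrak{o}_{\{K\}}$ that send $Ker(J+K)$ into $Im(J+K)$ and $Ker(J-K)$ into $Im(J-K)$: the point is that post-composition with $S$ kills the $Ker(J+\phi)$-component of the image while pre-composition with $T$ kills the $Ker(J-\phi)$-component of the source, and conversely any $B$ with the stated mapping property lies in the image because $S,T$ are invertible on the relevant complementary summands. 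This is the same style of argument used for $Im\,\delta$ and is where Proposition \ref{propK'} and the $\mathbb{H}$-module structure of the $V_{e_i}$ make the invertibility claims precise. One should also identify $Im\,\sigma$ at $K$ with $T_K\mathcal{T}^{(m_1,m_{-1})}(J)$, i.e. with $T_\phi\mathcal{T}^{(m_1,*)}(J) \cap T_\phi\mathcal{T}^{(*,m_{-1})}(J)$, using the earlier theorem identifying $Im\,\delta$ with the tangent spaces of $\mathcal{T}^{(m_1,*)}$ and $\mathcal{T}^{(*,m_{-1})}$ and the transversality in part 2) of the Proposition on $\mathcal{T}^{(m_1,m_{-1})}$.

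With these in hand the leaf statement follows formally: $\mathrm{Im}\,\sigma$ is an integrable distribution (by the general Proposition on skew algebroids) whose leaf $L$ through $K$ has tangent space at every point $K'$ equal to $T_{K'}\mathcal{T}^{(m_1|_{K'},m_{-1}|_{K'})}(J)$. Since the function $K' \mapsto \dim Ker(J \pm K')$ is locally constant along $L$ (its differential annihilates the tangent distribution — this is essentially the content of the $Im\,\delta$ computations), $L$ stays inside the single stratum $\mathcal{T}^{(m_1,m_{-1})}(J)$, which by part 3) of the cited Proposition is a connected complex submanifold of the correct dimension $\dim Im\,\sigma|_K$; hence $L = \mathcal{T}^{(m_1,m_{-1})}(J)$. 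Complexity of the leaf is then inherited, but can also be seen directly from $I_\mathcal{T}\circ\sigma = \sigma\circ(\text{mult. by }\phi)$-type identities: because $I_\mathcal{T}B = \phi B$ on $\mathfrak{o}_{\{\phi\}}$ and $S,T,{}^*$ interact with $\phi$ compatibly, $\mathrm{Im}\,\sigma$ is $I_\mathcal{T}$-invariant, so each leaf is a complex submanifold. I expect the main obstacle to be the verification of axiom (a): the bracket $[\cdot,\cdot]^{\sigma}$ has a deliberately asymmetric-looking correction term $\tfrac12 AS\phi S\{B,J\} - \tfrac12\{A,J\}T\phi TB$, and showing its $\sigma$-image cancels exactly against the curvature/$d\sigma$ terms in $[\sigma(A),\sigma(B)]_{Lie}$ requires careful bookkeeping with the identities $\nabla = d + \tfrac12(d\phi)\phi$, $\{B,\phi\}|_K = 0$, and $(SAT)^* = -TA^*S$ — everything else is either the general skew-algebroid formalism or the linear algebra of Proposition \ref{propDECOM}.
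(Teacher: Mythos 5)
Your overall strategy---compute $\mathrm{Im}\,\sigma$ fiberwise via the decomposition of Proposition \ref{propDECOM}, identify it with $T_{K}\mathcal{T}^{(m_{1},*)}(J)\cap T_{K}\mathcal{T}^{(*,m_{-1})}(J)$ using the two $\delta$-algebroids and the transversality statement, and then argue that the leaf fills out the stratum---is the natural one, and since the paper defers all proofs of this section to \cite{Gindi4} there is no written argument to compare against. Two concrete problems remain, however. First, the ``key algebraic inputs'' you list contain sign errors that would derail the verification of axiom (a) and even the well-definedness of $\sigma$. Because $J$ and $\phi$ are \emph{orthogonal} complex structures they are skew-adjoint, so $S^{*}=-(J+\phi)=-S$ and $T^{*}=-T$, not $S^{*}=-T$ and $T^{*}=-S$; consequently $(SAT)^{*}=T^{*}A^{*}S^{*}=+TA^{*}S$, which is exactly what makes $SAT-TA^{*}S$ skew-adjoint and hence $[SAT-TA^{*}S,\phi]$ a genuine element of $T\mathcal{T}=[\mathfrak{o}(\mathbb{V},g),\phi]$; with your sign the expression would be self-adjoint and $\sigma$ would not land in the tangent bundle at all. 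Likewise $\{S,T\}=2(J^{2}-\phi^{2})=0$, not $-4\mathbb{1}$; the products you actually need are $ST=-[J,\phi]$ and $TS=[J,\phi]$, together with the intertwining relations $JS=S\phi$, $\phi S=SJ$, $JT=-T\phi$, $\phi T=-TJ$ recorded in the Lemma of Section \ref{secMA}.

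Second, your concluding step ``$L$ is open in $\mathcal{T}^{(m_{1},m_{-1})}(J)$, which is connected, hence $L=\mathcal{T}^{(m_{1},m_{-1})}(J)$'' silently assumes connectedness of the stratum, and you never establish it. This is not automatic: the analogous sets $\mathcal{T}^{QS}_{K}$ of Section \ref{secFCF} are explicitly \emph{not} connected in general (Example \ref{exTQ}), and connectedness of the $\mathcal{T}^{(m_{1},m_{-1})}(J)$ is precisely the global input that must come from outside the distribution-theoretic argument---either from the Schubert-cell description in \cite{Gindi2} or from exhibiting the stratum as an orbit of a connected group. Without it you have only shown that the leaf through $K$ is a connected component of $\mathcal{T}^{(m_{1},m_{-1})}(J)$. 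The remaining pieces---the Leibniz rule (where you should also note that the swapped term $-dA_{\sigma(fB)}$ is handled by the tensoriality of $\sigma$), the local constancy of $\dim\mathrm{Ker}(J\pm\phi)$ along the leaf, and the $I_{\mathcal{T}}$-invariance of $\mathrm{Im}\,\sigma$ (which holds because $\mathrm{Im}(J\pm K)=\mathrm{Im}[J,K]\oplus\mathrm{Ker}(J\mp K)$ is $K$-invariant)---are correct in outline.
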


\begin{defi}  
    \mbox{}
\vspace{.1cm}
   We will define $T_{\phi}\mathcal{T}^{(m_{1},m_{-1})}(J)$ to be the distribution that when evaluated  at $K\in \mathcal{T}(V)$ is $T_{K}\mathcal{T}^{(m_{1}|_{K},m_{-1}|_{K})}(J)$, where $2m_{\pm1}|_{K}=dimKer(J \pm K)$.       In this notation $m_{1}$ and $m_{-1}$ are (non-smooth) functions on $\mathcal{T}$ while in Definition \ref{defSC} they are numbers.
    \end{defi}

\begin{nota}
\label{notTT}
So far we have defined $T_{\phi}\mathcal{T}^{(m_{1},*)}(J)$, $T_{\phi}\mathcal{T}^{(*,m_{-1})}(J)$ and \ $T_{\phi}\mathcal{T}^{(m_{1},m_{-1})}(J)$. We will let $T_{\phi}\mathcal{T}^{\#}(J)$ or $T_{\phi}\mathcal{T}^{\#}$ stand for any of these distributions. 
\end{nota}

\section{More Algebroids: Refining the $\mathcal{T^{\#}}(J)$}
\label{secMA}
We will now introduce an abundance of other skew algebroids on $\mathcal{T}$. The foliations associated to these algebroids will subfoliate the $\mathcal{T^{\#}}(J)$ and, by the correspondence that was mentioned in Section \ref{secSC}, will subfoliate certain Schubert cells in a homogeneous maximal isotropic space.  The leaves of many of these foliations will be complex submanifolds of $\mathcal{T}$.
\begin{note}
From here on, though not always necessary, we will assume that $dimV\geq 4.$
\end{note}

\subsection{Some Background}
Before introducing the algebroids, we will need some background and notation. First consider

\begin{lemma}
\mbox{}

\noindent Let 
 \begin{itemize}
\item $dimV \geq 4$ 
\item $S\in \{ \mathbb{1}, J+\phi, J-\phi, [J,\phi] \}$
\item $\gamma \in \{\pm J, \pm \phi \}$.
\end{itemize}
1) There is a unique element $\tilde{\gamma} \in \{\pm J, \pm \phi \}$ that satisfies \[\gamma S= S\tilde{\gamma}.\]
2) Moreover, $\tilde{\gamma}$ is the unique element in $\{\pm J, \pm \phi \}$ that satisfies \[ S\gamma= \tilde{\gamma}S.\]
\end{lemma}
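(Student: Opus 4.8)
The plan is to verify the four cases $S\in\{\mathbb{1}, J+\phi, J-\phi, [J,\phi]\}$ directly, exploiting the algebraic relations among $J$, $\phi$, and their commutator/anticommutator. The key facts I would assemble first are: $J^2=\phi^2=-\mathbb{1}$; the decomposition of $T_K\mathcal{T}$ as $\mathfrak{o}_{\{\phi\}}$; and the identity $J\phi+\phi J = \{J,\phi\}$, which commutes with both $J$ and $\phi$ (this follows from $J\{J,\phi\} = J\phi J\phi\cdots$, more precisely from $J\cdot(J\phi+\phi J) = -\phi + J\phi J$ and $(J\phi+\phi J)\cdot J = J\phi J - \phi$, so $[J,\{J,\phi\}]=0$, and similarly $[\phi,\{J,\phi\}]=0$). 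The case $S=\mathbb{1}$ is trivial: take $\tilde\gamma=\gamma$. For $S=[J,\phi]=J\phi-\phi J$, I would compute $J[J,\phi] = J\phi J\phi - J\phi\phi = \ldots$ — better, use $J(J\phi-\phi J) = -\phi - J\phi J$ and $(J\phi-\phi J)(-J) = -J\phi J - \phi$... wait, I need to be careful: $(J\phi-\phi J)J = J\phi J - \phi\cdot(-\mathbb{1}) = J\phi J+\phi$, so $J[J,\phi] = -\phi-J\phi J$ while $[J,\phi](-J) = -J\phi J-\phi$; these agree, giving $\tilde J = -J$ for $\gamma=J$. Analogously $\widetilde{(-J)}=J$, and for $\gamma=\pm\phi$ one finds $\widetilde{\phi}=-\phi$, $\widetilde{(-\phi)}=\phi$, since $\phi[J,\phi]$ and $[J,\phi](-\phi)$ must be matched the same way.

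For the main cases $S=J\pm\phi$: compute $JS = J(J\pm\phi) = -\mathbb{1}\pm J\phi$ and $\phi S = \phi(J\pm\phi) = \phi J\mp\mathbb{1}$. I want to write each as $S\tilde\gamma = (J\pm\phi)\tilde\gamma$ for some $\tilde\gamma\in\{\pm J,\pm\phi\}$; note $(J\pm\phi)J = -\mathbb{1}\pm\phi J$ and $(J\pm\phi)\phi = J\phi\mp\mathbb{1}$. For $S=J+\phi$: $JS = -\mathbb{1}+J\phi$ and $S\phi = J\phi-\mathbb{1}$, so $\gamma=J$ gives $\tilde\gamma=\phi$; and $\phi S = \phi J-\mathbb{1}$ while $SJ = -\mathbb{1}+\phi J$, so $\gamma=\phi$ gives $\tilde\gamma=J$. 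For $S=J-\phi$: $JS=-\mathbb{1}-J\phi$ and $S(-\phi) = -J\phi+\mathbb{1}$... hmm, that has the wrong sign; try $S\phi=J\phi+\mathbb{1}$, $-S\phi = -J\phi-\mathbb{1} = JS$, so $\gamma=J\mapsto\tilde\gamma=-\phi$; similarly $\phi S = \phi J+\mathbb{1}$ and $SJ=-\mathbb{1}-\phi J$, so $-SJ = \mathbb{1}+\phi J=\phi S$, giving $\gamma=\phi\mapsto\tilde\gamma=-J$. The sign-flipped cases $\gamma=-J,-\phi$ follow by negation. This establishes part 1); for part 2) I would observe that $S\gamma = \tilde\gamma S$ is obtained from $\gamma S = S\tilde\gamma$ by left- and right-multiplying appropriately, or simply re-run the same four computations with the order reversed, since each of $J,\phi$ is invertible (indeed $J^{-1}=-J$, $\phi^{-1}=-\phi$).

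The main obstacle is purely bookkeeping: ensuring consistency of signs across all sixteen (four $S$ times four $\gamma$) instances, and in particular verifying \emph{uniqueness} of $\tilde\gamma$ in each case. Uniqueness requires knowing that the four operators $J,-J,\phi,-\phi$ remain distinct after right-multiplication by $S$, i.e. that $S$ is not a zero-divisor in the relevant sense — this is where the hypothesis $\dim V\geq 4$ enters, since for $\dim V=2$ one can have $J=\pm\phi$ forcing $S=J-\phi=0$ or $J+\phi=0$, collapsing the distinctions. So I would include a short remark that for $\dim V\geq 4$ the set $\{\pm J,\pm\phi\}$ has four elements and that right-multiplication by any of $\mathbb{1}, J\pm\phi, [J,\phi]$ is injective on this four-element set (e.g. because $JS=\phi S$ would force $(J-\phi)S=0$, hence $(J-\phi)(J+\phi)=[\phi,J]$ or $(J-\phi)^2$ vanishes, contradicting $\mathrm{Rank}[J,\phi]=4k\geq 0$ together with the eigenspace structure of $\{J,\phi\}$ from Proposition \ref{propDECOM} unless we are in a degenerate low-dimensional situation). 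With distinctness in hand, existence plus a counting argument gives uniqueness immediately.
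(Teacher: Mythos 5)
Your proposal is correct and follows essentially the same route as the paper, whose entire proof is the one-line citation of the relations $J(J\pm\phi)=\pm(J\pm\phi)\phi$ and $\phi(J\pm\phi)=\pm(J\pm\phi)J$ that you derive explicitly (together with the easy cases $S=\mathbb{1}$ and $S=[J,\phi]$). Your additional attention to uniqueness --- that $\dim V\geq 4$ guarantees the four sections $\pm J,\pm\phi$ are distinct and that one can evaluate at a point $K$ where $S|_{K}$ is invertible --- is a sensible supplement to what the paper leaves implicit, though the sketch of why the products cannot collapse could be tightened (e.g.\ by evaluating at a $K$ with $\{J,K\}=0$, where $J\pm K$ and $[J,K]$ are all invertible).
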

\begin{proof}
Use the relations: $J(J\pm \phi)= \pm(J\pm \phi)\phi$ and $\phi(J\pm \phi)= \pm(J\pm \phi)J$.
\end{proof}

Now consider
\begin{defi}
\mbox{}
\item[]
\vspace{2pt}
Let  
 \begin{itemize}
\item [a)]$S\in \{ \mathbb{1}, J+\phi, J-\phi, [J,\phi] \}$
\item [b)]$E= \{A\in \mathfrak{o}(\mathbb{V},g) | \ \gamma A= A \delta\}$,
\end{itemize}
where 
\[ \gamma \in \{\pm J, \pm \phi \} \text{\ \  and  \ \ } \delta \in \{+\gamma, -\gamma \}.\]
Then we define \[E^{S}= \{B\in \mathfrak{o}(\mathbb{V},g)| \ \tilde{\gamma} B= B \tilde{\delta\}},\]
where $\tilde{\gamma}$ and $\tilde{\delta}$ are the unique elements in $\{\pm J, \pm \phi\}$ that satisfy
  \[\gamma S=S \tilde{\gamma} \text{ \ \ and \ \ } S\delta= \tilde{\delta} S.\]
Moreover, when $E=\mathfrak{o}(\mathbb{V},g)$, we define $E^{S}=\mathfrak{o}(\mathbb{V},g).$ 
\end{defi}
\begin{example}
If $S=J+\phi$ and $E=\mathfrak{o}_{\{\phi\}}(\mathbb{V})$ then $E^{S}=\mathfrak{o}_{\{J\}}(\mathbb{V})$. 
\end{example}
The main property that the sections of $E^{S}$ satisfy is the following.

\begin{prop}
\label{propES}
\mbox{}
\item[]
\vspace{2pt}
Let  
 \begin{itemize}
\item [a)]$S\in \{ \mathbb{1}, J+\phi, J-\phi, [J,\phi] \}$
\item [b)]$E  \in \{\mathfrak{o}(\mathbb{V},g), \mathfrak{u}(\mathbb{V},J), \mathfrak{u}(\mathbb{V},\phi), \mathfrak{o}_{\{J\}}(\mathbb{V}), \mathfrak{o}_{\{\phi\}}(\mathbb{V})\}$.
\end{itemize}
If $B\in  \Gamma(E^{S})$ then $SBS\in \Gamma(E)$. 
\end{prop}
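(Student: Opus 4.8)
The plan is to reduce the statement to a finite collection of elementary algebraic identities, organized by the value of $S$ and the value of $E$. The key observation is that for $B \in \Gamma(E^{S})$, whether $SBS$ lands in $\Gamma(E)$ is a pointwise, purely algebraic condition: it amounts to checking that $SBS$ is skew (i.e.\ lies in $\mathfrak{o}(\mathbb{V},g)$) and then that it satisfies the defining commutation/anticommutation relation with $J$ or $\phi$ that characterizes $E$. So first I would dispose of the skewness: since $S \in \{\mathbb{1}, J+\phi, J-\phi, [J,\phi]\}$ and each of $J$, $\phi$ is skew while $J+\phi$, $J-\phi$, $[J,\phi]$ are all skew as well (sums and Lie brackets of skew maps), we have $S^{*} = \pm S$ in every case; combined with $B^{*}=-B$ this gives $(SBS)^{*} = S^{*}B^{*}S^{*} = (\pm S)(-B)(\pm S) = -SBS$, so $SBS \in \Gamma(\mathfrak{o}(\mathbb{V},g))$ always. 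The case $E = \mathfrak{o}(\mathbb{V},g)$ (hence $E^{S} = \mathfrak{o}(\mathbb{V},g)$) is then immediate.

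For the four remaining choices of $E$, each is cut out by a single relation of the form $\gamma A = A\delta$ with $\gamma \in \{\pm J, \pm \phi\}$ and $\delta \in \{+\gamma, -\gamma\}$ (e.g.\ $\mathfrak{u}(\mathbb{V},J)$ is $JA = AJ$, $\mathfrak{o}_{\{\phi\}}(\mathbb{V})$ is $\phi A = -A\phi$, etc.), and by definition $E^{S}$ is cut out by $\tilde\gamma B = B\tilde\delta$ where $\tilde\gamma, \tilde\delta$ are supplied by the preceding Lemma via $\gamma S = S\tilde\gamma$ and $S\delta = \tilde\delta S$. The heart of the argument is then the one-line computation
\[
\gamma (SBS) \;=\; S\tilde\gamma B S \;=\; S (B \tilde\delta) S \;=\; SB(\tilde\delta S) \;=\; SB(S\delta) \;=\; (SBS)\delta,
\]
where the first equality uses $\gamma S = S\tilde\gamma$, the second uses $B \in \Gamma(E^{S})$ i.e.\ $\tilde\gamma B = B\tilde\delta$, and the fourth uses $S\delta = \tilde\delta S$ (equivalently $\tilde\delta S = S\delta$, the second clause of the Lemma). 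This shows $SBS$ satisfies exactly the relation defining $E$, hence $SBS \in \Gamma(E)$.

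Two loose ends need care. First, I must confirm that the defining relation of each $E$ in the list can indeed be written in the normalized form $\gamma A = A\delta$ with $\gamma \in \{\pm J, \pm\phi\}$, $\delta \in \{\pm\gamma\}$ — this is a matter of inspection: $\mathfrak{u}(\mathbb{V},J) \leftrightarrow (\gamma,\delta)=(J,J)$, $\mathfrak{u}(\mathbb{V},\phi) \leftrightarrow (\phi,\phi)$, $\mathfrak{o}_{\{J\}}(\mathbb{V}) \leftrightarrow (J,-J)$, $\mathfrak{o}_{\{\phi\}}(\mathbb{V}) \leftrightarrow (\phi,-\phi)$ — and one should note the choice of $\gamma$ is not unique (e.g.\ one could use $-J$ instead of $J$) but the resulting subbundle $E$, and correspondingly $E^{S}$, is independent of this sign choice, so the construction is well defined. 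Second, the Lemma requires $\dim V \geq 4$ for the uniqueness of $\tilde\gamma$, $\tilde\delta$; this hypothesis is already in force in Section \ref{secMA}, so there is no gap. The main (and only real) obstacle is bookkeeping: making sure the pairing $(\gamma,\delta) \mapsto (\tilde\gamma,\tilde\delta)$ coming from the Lemma is applied consistently — in particular that the \emph{same} $S$ is used to pass $\gamma$ through on the left ($\gamma S = S\tilde\gamma$) and $\delta$ through on the right ($S\delta = \tilde\delta S$), which is exactly how $E^{S}$ was defined, so the computation closes up. Everything else is the display above.
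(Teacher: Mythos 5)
Your proof is correct. The paper itself gives no proof of this proposition (it is deferred to the announced updated version), but your argument — checking skewness via $S^{*}=\pm S$, and then the one-line conjugation computation $\gamma(SBS)=S\tilde\gamma BS=SB\tilde\delta S=(SBS)\delta$ using both clauses of the preceding Lemma exactly as the definition of $E^{S}$ is set up to allow — is evidently the intended derivation, and your handling of the loose ends (the normalized form $\gamma A=A\delta$ for each $E$, sign-independence of the choice of $\gamma$, and the $\dim V\geq 4$ hypothesis already in force) is accurate.
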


\subsection{The Algebroids}
\label{secALRES}
We will now introduce our major classes of skew algebroids on $\mathcal{T}(V,g)$.

\begin{thm}
\label{thmALM}
\mbox{}
\newline
\indent
Let
\begin{align*}
 \psi : E&^{S}  \longrightarrow T\mathcal{T} \\
 A & \longrightarrow [FAF, \phi],
 \end{align*}
 
 where 
\begin{itemize}
\item[a)] $F=QS \in \Gamma(\mathfrak{gl}(\mathbb{V}))$ is such that 
 \begin{itemize}
 \item[$\bullet$] $Q^{*}=Q$
 \item[$\bullet$] $[Q,J]=0$ and $[Q, \phi]= 0$
 \item[$\bullet$] $S \in \{ \mathbb{1}, J+\phi, J-\phi, [J,\phi] \}$
 \end{itemize}
\vspace{5pt}
 \item[b)] $E \in \{\mathfrak{o}(\mathbb{V},g),  \mathfrak{o}_{\{\phi\}}(\mathbb{V}), \mathfrak{u}(\mathbb{V},J), \mathfrak{u}(\mathbb{V},\phi)\}.$
 \end{itemize}
 
Moreover, suppose that 
\[ dQ_{[FCF,\phi]}= Q(dQ_{[SCS,\phi]})Q,\]
$\forall C \in \Gamma(E^{S}).$

Then there exists a skew bracket $[\cdot,\cdot]_{F}$ on $\Gamma(E^{S})$ such that $(E^{S},[\cdot,\cdot]_{F},\psi)$ is a skew algebroid on $\mathcal{T}(V)$.
\end{thm}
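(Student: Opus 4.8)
The plan is to construct the bracket $[\cdot,\cdot]_F$ explicitly by conjugation from the known structures, mimicking the pattern of the first three algebroids (where the maps $\delta$ and $\sigma$ were also of the form $A \mapsto [(\text{bilinear in }S)\cdot A \cdot(\ldots), \phi]$ and the brackets had a $dB_{\psi(A)}$ term plus correction terms antisymmetrized in $A,B$). First I would verify that $\psi$ is well-defined, i.e. that $FAF \in \Gamma(\mathfrak{o}(\mathbb{V},g))$ so that $[FAF,\phi] \in T\mathcal{T} = \mathfrak{o}_{\{\phi\}}$: since $F = QS$ with $Q^* = Q$ and $[Q,J]=[Q,\phi]=0$, and $S \in \{\mathbb{1}, J\pm\phi, [J,\phi]\}$ satisfies $S^* = \pm S$, one has $(FAF)^* = F^* A^* F^* = \pm F A^* F$; combined with $A \in \Gamma(E^S) \subset \mathfrak{o}(\mathbb{V},g)$, i.e. $A^* = -A$, this gives $FAF \in \mathfrak{o}(\mathbb{V},g)$. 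Then Proposition~\ref{propES} (with $B = A \in \Gamma(E^S)$) shows $SAS \in \Gamma(E)$, and since $Q$ commutes with $J$ and $\phi$ it preserves $E$; hence $FAF = Q(SAS)Q \in \Gamma(E)$. This is the structural input that makes the whole construction close up.

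Next I would define the bracket. Writing $\psi(A) = [FAF,\phi]$, I expect
\[
[A,B]_F = dB_{\psi(A)} + (\text{curvature/commutator correction in } A,B,F,J,\phi) - (A \leftrightarrow B),
\]
where the correction terms are chosen exactly so that conditions (a) and (b) of Definition~\ref{defSA1} hold. The Leibniz rule (b) is the easier of the two: the $dB_{\psi(A)}$ term already produces $\psi(A)[f]B$ when $B$ is scaled by $f$, and the remaining correction terms must be $C^\infty(\mathcal{T})$-linear in $B$ (no derivatives), which one arranges by inspection — this is why the corrections are algebraic expressions in $A,B$ and the fixed tensors. The anchor condition (a), $\psi([A,B]_F) = [\psi(A),\psi(B)]_{\mathrm{Lie}}$, is where the real work lies: one computes $[\psi(A),\psi(B)]_{\mathrm{Lie}} = [d(FBF)_{\psi(A)} - d(FAF)_{\psi(B)} + [FAF, FBF], \phi]$ (using $\nabla$-flatness of $\phi$ and the torsionlessness of $\nabla$ from the Preliminaries to handle the Lie bracket of vector fields on $\mathcal{T}$), and must match this against $\psi([A,B]_F) = [F[A,B]_F F, \phi]$. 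Here the hypothesis $dQ_{[FCF,\phi]} = Q\,(dQ_{[SCS,\phi]})\,Q$ is essential: it controls how the coefficient functions $a_i$ in $Q$ vary along the directions $\psi(A)$, so that $d(FBF)_{\psi(A)}$ can be re-expressed in terms of $F(\cdots)F$ plus lower-order terms, allowing the whole identity to be pulled out of the outer $[\,\cdot\,,\phi]$ bracket modulo $\mathfrak{u}(\mathbb{V},\phi)$.

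The main obstacle I anticipate is precisely this matching in condition (a): one must show that after expanding $[FAF,FBF]$ and the derivative terms, everything that is \emph{not} of the form $[F(\text{something})F,\phi]$ — i.e. the "junk" landing in $\mathfrak{u}(\mathbb{V},\phi) = \ker(\mathrm{ad}_\phi)$ — actually cancels or gets absorbed. This requires the differential constraint on $Q$ together with the algebraic identities $\gamma S = S\tilde\gamma$ from the Lemma (to move $J,\phi$ past $S$) and the defining relations of $E^S$ (to move $J,\phi$ past $A,B$); the correction terms in $[A,B]_F$ are then reverse-engineered to kill exactly what remains. I would organize the computation by first treating $S = \mathbb{1}$ (so $F = Q$, $E^S = E$) as a warm-up, then the cases $S = J\pm\phi$, and finally $S = [J,\phi]$, since the parity of $S$ under $*$ and the commutation pattern with $J,\phi$ differ in each case but the bookkeeping is uniform. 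Once (a) and (b) are checked, the Proposition following Definition~\ref{defSA1} immediately gives that $\mathrm{Im}\,\psi$ is integrable, completing the theorem.
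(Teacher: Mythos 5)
Note first that the paper does not actually prove this theorem: by Remark \ref{rmkUPDATED} and the remark immediately following the statement, both the definition of $[\cdot,\cdot]_{F}$ and the verification that it yields a skew algebroid are deferred to \cite{Gindi4}. So there is no in-paper proof to compare against, and your proposal has to be judged on its own terms.

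On those terms, your structural observations are sound and match the visible architecture of the paper: the check that $FAF\in\Gamma(\mathfrak{o}(\mathbb{V},g))$ (using $S^{*}=\pm S$, $Q^{*}=Q$, $[Q,S]=0$) and hence $[FAF,\phi]\in\mathfrak{o}_{\{\phi\}}=T\mathcal{T}$ is correct, the reduction $FAF=Q(SAS)Q$ with $SAS\in\Gamma(E)$ via Proposition \ref{propES} is the right way to see that the construction closes up, and you have correctly located where the hypothesis $dQ_{[FCF,\phi]}=Q(dQ_{[SCS,\phi]})Q$ must enter (the anchor condition, in re-expressing $d(FBF)_{\psi(A)}$ modulo terms of the form $F(\cdot)F$). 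However, the theorem is an \emph{existence} statement for the bracket, and your proposal never exhibits the bracket: the correction terms are described only as ``reverse-engineered to kill exactly what remains,'' and neither the Leibniz identity nor the anchor identity is actually verified for any candidate formula. This is not a minor omission --- it is the entire content of the theorem. In particular, it is not automatic that the ``junk'' landing in $\mathfrak{u}(\mathbb{V},\phi)=\ker(\mathrm{ad}_{\phi})$ after expanding $[\psi(A),\psi(B)]_{Lie}$ can be absorbed by corrections that are simultaneously tensorial in $B$ (as the Leibniz rule demands) and antisymmetric in $A,B$; the analogous brackets $[\cdot,\cdot]^{\delta}$ and $[\cdot,\cdot]^{\sigma}$ in Section \ref{secFTSA} show that such formulas exist in the $Q=\mathbb{1}$ cases, but extending them to general $Q$ is precisely the step your hypothesis on $dQ$ is designed to enable, and it must be carried out, case by case in $S$ and $E$, before the proof can be considered complete. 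As it stands, your text is a credible plan of attack rather than a proof.
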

\begin{rmk}
Please see \cite{Gindi4} for the definition of the bracket $[\cdot,\cdot]_{F}$.
\end{rmk}

We thus see that the above algebroids depend on a choice of a $Q$ that satisfies certain differential equations. To construct examples of such $Q$, we will use the following proposition:
\begin{prop}
\mbox{}
\newline
\indent
Let 
\begin{itemize}
\item $Q= a_{k}\{J,\phi\}^{k}+...+a_{1}\{J,\phi\}+ a_{0}\mathbb{1}$
\item $C \in \Gamma(\mathfrak{o}(\mathbb{V},g))$ 
\end{itemize}
such that each $a_{i} \in C^{\infty}(\mathcal{T})$  satisfies 
\begin{equation*}
d{a_i}_{[QCQ,\phi]}=0 \ \ \ \ \  \text{ and }  \ \ \ \ \  d{a_i}_{[C,\phi]}=0.
\end{equation*}

Then \[ dQ_{[QCQ,\phi]}= Q(dQ_{[C,\phi]})Q.\]
\end{prop}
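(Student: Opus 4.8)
The plan is to exploit the chain rule together with the fact that each $a_i$ is constant along \emph{both} of the relevant tangent directions. Write $Q = \sum_{i=0}^{k} a_i \{J,\phi\}^i$. The key structural observation is that $\{J,\phi\}$ and its powers are $\nabla$-parallel: since $\nabla J = 0$ (here $J$ is the fixed constant endomorphism, so $dJ=0$) and $\nabla \phi = 0$ by the Proposition on $\nabla$, we get $\nabla(\{J,\phi\}^i) = 0$ for every $i$. Consequently, for any tangent vector $X$ on $\mathcal{T}$,
\begin{equation*}
\nabla_X Q = \sum_{i=0}^{k} (X[a_i])\,\{J,\phi\}^i .
\end{equation*}
So the covariant derivative of $Q$ in a direction $X$ is again a polynomial in $\{J,\phi\}$ with coefficients $X[a_i]$, and in particular it commutes with $Q$, with $J$, and with $\phi$.

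First I would establish, as a lemma, that the displayed quantities $dQ_{[QCQ,\phi]}$ and $dQ_{[C,\phi]}$ appearing in the statement are to be read as the $\nabla$-covariant derivatives of the section $Q$ along the tangent vectors $[QCQ,\phi]$ and $[C,\phi]$ respectively (this is the natural reading given the paper's conventions, since $Q$ is a section of $\mathfrak{gl}(\mathbb{V})$ and these brackets are elements of $T\mathcal{T} = \mathfrak{o}_{\{\phi\}}$); if instead the ordinary differential $d$ of the trivial connection is meant, the argument is identical since the correction term $\tfrac12(d\phi)\phi$ in $\nabla$ acts by commutator on a section commuting with $\phi$ and hence contributes nothing to $\nabla Q$. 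Then I would write $X_1 = [QCQ,\phi]$ and $X_2 = [C,\phi]$ and compute, using the parallelism above,
\begin{equation*}
\nabla_{X_1} Q = \sum_i (X_1[a_i])\,\{J,\phi\}^i, \qquad \nabla_{X_2} Q = \sum_i (X_2[a_i])\,\{J,\phi\}^i .
\end{equation*}
By hypothesis $X_1[a_i] = d{a_i}_{[QCQ,\phi]} = 0$ and $X_2[a_i] = d{a_i}_{[C,\phi]} = 0$ for every $i$. Hence both $\nabla_{X_1} Q = 0$ and $\nabla_{X_2} Q = 0$, and the desired identity $\nabla_{X_1} Q = Q(\nabla_{X_2} Q)Q$ reduces to $0 = 0$.

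The only subtlety — and the step I expect to require the most care — is the bookkeeping around which connection/derivative is intended and verifying that $Q$, built as a polynomial in $\{J,\phi\}$ with \emph{function} coefficients, really is parallel for the \emph{endomorphism} part while all the non-parallelism is concentrated in the scalar coefficients $a_i$. This hinges on $J$ being a fixed constant (so $dJ = 0$), on $\nabla\phi = 0$, and on the Leibniz rule for $\nabla$ acting on products of sections; once these are in place, the powers $\{J,\phi\}^i$ pull out of the derivative cleanly and the whole statement collapses. I would also note that the argument does not actually use $Q^* = Q$ or the commutation relations $[Q,J]=[Q,\phi]=0$ in an essential way for \emph{this} proposition — those hypotheses are needed in Theorem \ref{thmALM} — so the proof here is genuinely just the chain rule plus the two vanishing hypotheses on the $a_i$.
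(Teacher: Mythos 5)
Your argument collapses at its central claim --- that the powers $\{J,\phi\}^{i}$ are parallel --- which is false for either derivative you consider. The section $\phi$ is tautological, $\phi|_{K}=K$, so for the trivial derivative one has $d\phi_{X}=X$ for $X\in T_{K}\mathcal{T}$, and hence
\[ d(\{J,\phi\})_{X}=\{J,X\}, \]
which is nonzero for generic $X$ (it vanishes only when $X$ anticommutes with $J$). Switching to $\nabla=d+\tfrac{1}{2}(d\phi)\phi$ does not help: $\nabla\phi=0$ but $\nabla J=\tfrac{1}{2}[(d\phi)\phi,J]\neq 0$, since the constant endomorphism $J$ need not commute with $(d\phi)\phi|_{X}=XK$; you cannot have both $dJ=0$ and $\nabla\phi=0$ imply parallelism of $\{J,\phi\}$ for a single connection. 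Your remark that the correction term ``contributes nothing to $\nabla Q$'' is likewise wrong: that term acts by commutator with $XK$, not with $\phi$, and $Q$ does not commute with $XK$. Consequently $dQ_{X}\neq 0$ even when every $d{a_{i}}(X)=0$, and the identity to be proved is emphatically not $0=0$.

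The content of the proposition lives precisely in the terms you discarded, and it uses precisely the commutation relations you declared inessential. Set $X=[C,\phi]$ and $Y=[QCQ,\phi]$. Since $\{J,\phi\}$ commutes with $\phi$, so does $Q$, whence $Y=QCQ\phi-\phi QCQ=Q[C,\phi]Q=QXQ$ pointwise. The hypotheses $d{a_{i}}(X)=d{a_{i}}(Y)=0$ kill only the scalar terms of the Leibniz expansion, leaving
\[ dQ_{Y}=\sum_{i}a_{i}\sum_{j+l=i-1}\{J,\phi\}^{j}\,\{J,Y\}\,\{J,\phi\}^{l}. \]
Because $\{J,\phi\}$ also commutes with $J$, so does $Q$, giving $\{J,Y\}=\{J,QXQ\}=Q\{J,X\}Q$; and since $Q$ commutes with $\{J,\phi\}$, the two factors of $Q$ slide past the outer powers to yield $dQ_{Y}=Q\,(dQ_{X})\,Q$. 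So the proof is a Leibniz computation on the endomorphism part of $Q$ driven by $[Q,J]=[Q,\phi]=0$ and the identity $[QCQ,\phi]=Q[C,\phi]Q$ --- none of which appears in your proposal --- rather than a vanishing statement about $dQ$ itself.
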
 

Using this proposition and the above theorem, we have constructed the following skew algebroids on $\mathcal{T}(V)$.
\begin{thm}
\label{thmALP}
\mbox{}
\newline
\indent
Let
\begin{align*}
 \psi : E&^{S}  \longrightarrow T\mathcal{T} \\
 A & \longrightarrow [FAF, \phi],
 \end{align*}
 
 where 
\begin{itemize}
\item[a)] $F=QS \in \Gamma(\mathfrak{gl}(\mathbb{V}))$ is such that
 \begin{itemize}
 \item[$\bullet$] $Q= a_{k}\{J,\phi\}^{k}+...+a_{1}\{J,\phi\}+ a_{0}\mathbb{1}$, \  $a_{i} \in C^{\infty}(\mathcal{T})$
 \item[$\bullet$] $S \in \{ \mathbb{1}, J+\phi, J-\phi, [J,\phi] \}$
 \end{itemize}
\vspace{5pt}
 \item[b)] $E \in \{\mathfrak{o}(\mathbb{V},g),  \mathfrak{o}_{\{\phi\}}(\mathbb{V}), \mathfrak{u}(\mathbb{V},J), \mathfrak{u}(\mathbb{V},\phi)\}.$
 \end{itemize}
 
Moreover, suppose that 
\[ d{a_i}_{[SCS,\phi]}=0,\]
$\forall C \in \Gamma(E^{S}).$

Then there exists a skew bracket $[\cdot,\cdot]_{F}$ on $\Gamma(E^{S})$ such that $(E^{S},[\cdot,\cdot]_{F},\psi)$ is a skew algebroid on $\mathcal{T}(V)$.
\end{thm}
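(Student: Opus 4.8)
The plan is to deduce Theorem \ref{thmALP} directly from Theorem \ref{thmALM} by verifying that the hypotheses of the latter hold for the specific polynomial $Q = a_{k}\{J,\phi\}^{k}+\cdots+a_{1}\{J,\phi\}+a_{0}\mathbb{1}$ under the new assumption $d{a_i}_{[SCS,\phi]}=0$ for all $C\in\Gamma(E^{S})$. There are three things to check: first, that $Q$ (hence $F=QS$) is an admissible choice for Theorem \ref{thmALM}, i.e. $Q^{*}=Q$, $[Q,J]=0$ and $[Q,\phi]=0$; second, that the differential equation $dQ_{[FCF,\phi]}=Q(dQ_{[SCS,\phi]})Q$ is a consequence of the new hypothesis; and then Theorem \ref{thmALM} supplies the bracket and the skew-algebroid structure verbatim.

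First I would establish the algebraic properties of $Q$. Since $\{J,\phi\}$ is built from $J$ and $\phi$, which are $g$-skew, the anticommutator $\{J,\phi\}=J\phi+\phi J$ is $g$-symmetric and commutes with both $J$ and $\phi$ (because $J\{J,\phi\} = J(J\phi+\phi J) = J^2\phi + J\phi J = -\phi + J\phi J$ and $\{J,\phi\}J = J\phi J + \phi J^2 = J\phi J - \phi$ agree, and similarly for $\phi$); powers $\{J,\phi\}^{j}$ inherit both properties, and the $a_{i}$ are scalar functions on $\mathcal{T}$, so $Q^{*}=Q$, $[Q,J]=0$, $[Q,\phi]=0$. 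This puts us in the situation of the Proposition immediately preceding Theorem \ref{thmALP}: that Proposition says that if each $a_{i}$ satisfies both $d{a_i}_{[QCQ,\phi]}=0$ and $d{a_i}_{[C,\phi]}=0$ for $C\in\Gamma(\mathfrak{o}(\mathbb{V},g))$, then $dQ_{[QCQ,\phi]}=Q(dQ_{[C,\phi]})Q$. The remaining task is to bridge the gap between that Proposition (stated with $S=\mathbb{1}$, so $F=Q$) and the general $S$ appearing in Theorem \ref{thmALP}.

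The key step — and the one I expect to be the main obstacle — is precisely this reduction of the $S$-twisted differential identity $dQ_{[FCF,\phi]}=Q(dQ_{[SCS,\phi]})Q$ (with $F=QS$) to the untwisted one. The hypothesis of Theorem \ref{thmALP} is $d{a_i}_{[SCS,\phi]}=0$ for $C\in\Gamma(E^{S})$. I would argue that, because $[Q,S]$ is controlled ($Q$ commutes with $J$ and $\phi$, and $S$ is a polynomial in $J,\phi$, so $Q$ and $S$ commute), one has $FCF = QS\,C\,SQ = Q(SCS)Q$, and for $C\in\Gamma(E^{S})$ Proposition \ref{propES} gives $SCS\in\Gamma(E)\subseteq\Gamma(\mathfrak{o}(\mathbb{V},g))$. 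Writing $C' := SCS$, the identity to be proved becomes $dQ_{[QC'Q,\phi]}=Q(dQ_{[C',\phi]})Q$, which is exactly the conclusion of the preceding Proposition applied to $C'$, provided $d{a_i}_{[QC'Q,\phi]}=0$ and $d{a_i}_{[C',\phi]}=0$. The second of these is the hypothesis $d{a_i}_{[SCS,\phi]}=0$. For the first, note that $[QC'Q,\phi]=[Q(SCS)Q,\phi]=[F C F,\phi]$, so $d{a_i}_{[QC'Q,\phi]}=d{a_i}_{[FCF,\phi]}$, and since $F C F = QS\,C\,SQ$ and $C\in\Gamma(E^{S})$, Proposition \ref{propES} shows $FCF\in\Gamma(E)$; hence $[FCF,\phi]$ is again of the form $[SC''S,\phi]$ with $S=\mathbb{1}$ — or more directly, one checks that the vector field $[FCF,\phi]$ lies in the image of $\psi$ for the $S=\mathbb{1}$ algebroid, and the hypothesis $d{a_i}_{[SCS,\phi]}=0$ for \emph{all} $C$ of the appropriate type forces $d{a_i}$ to vanish on this vector field too. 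Making this last closure argument airtight — i.e. that the vanishing of $d{a_i}$ on $[SCS,\phi]$-type directions propagates to $[FCF,\phi]$-type directions — is the delicate point, and it may require either strengthening the hypothesis so that the $a_{i}$ are constant along all the relevant distributions simultaneously, or invoking the explicit description (as in the cited example, $a_{i}\in\mathbb{R}[f_{1},\dots,f_{2n}]$) to see that $d{a_i}$ annihilates the whole tangent space to the leaves in question.

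Once these hypotheses are verified, I would conclude by invoking Theorem \ref{thmALM} directly: it produces the skew bracket $[\cdot,\cdot]_{F}$ on $\Gamma(E^{S})$ making $(E^{S},[\cdot,\cdot]_{F},\psi)$ a skew algebroid, with no further work needed. The only genuinely new content of Theorem \ref{thmALP} over Theorem \ref{thmALM} is the observation that the polynomial ansatz for $Q$ together with the simpler scalar condition $d{a_i}_{[SCS,\phi]}=0$ implies the operator-valued condition $dQ_{[FCF,\phi]}=Q(dQ_{[SCS,\phi]})Q$, so the proof is essentially the chain: algebraic properties of $\{J,\phi\}$-polynomials $\Rightarrow$ applicability of the preceding Proposition to $C'=SCS$ $\Rightarrow$ the $F$-twisted differential identity $\Rightarrow$ Theorem \ref{thmALM}.
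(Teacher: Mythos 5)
Your proposal matches the paper's own derivation: Theorem \ref{thmALP} is obtained exactly by feeding the Proposition immediately preceding it (applied to $C'=SCS\in\Gamma(E)\subset\Gamma(\mathfrak{o}(\mathbb{V},g))$, via Proposition \ref{propES}) into Theorem \ref{thmALM}, and the paper gives no further detail. The one point you flag as delicate in fact closes without strengthening any hypothesis: since $Q$ commutes with $S$ and is $g$-symmetric, $FCF=S(QCQ)S$ with $QCQ\in\Gamma(E^{S})$ whenever $C\in\Gamma(E^{S})$, so $d{a_i}_{[FCF,\phi]}=0$ is just the assumed condition $d{a_i}_{[SDS,\phi]}=0$ applied to $D=QCQ$.
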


\subsection{Examples of the $\{a_{i}\}$: Symmetric Polynomials in the Eigenvalues of $\{J,\phi\}$}
\label{secSPE}
 Given the setup of Theorem \ref{thmALP}, note that if the $\{a_{i}\}$ are constant functions on $\mathcal
 {T}$ then $(E^{S},[\cdot,\cdot]_{F},\psi)$ is automatically a skew algebroid. The purpose of this section is to produce nonconstant functions on $\mathcal{T}$ that satisfy the differential conditions of Theorem \ref{thmALP} when $E=\mathfrak{u}(\mathbb{V},J)$; those that satisfy the conditions of Theorem \ref{thmALP} when $E=  \mathfrak{o}_{\{\phi\}}(\mathbb{V})$ will be presented in \cite{Gindi4}.

To build these functions, first consider 
\begin{prop}
\label{propSCS}
\mbox{}
\newline
Let 
\begin{itemize}
\item[a)] $F=QS \in \Gamma(\mathfrak{gl}(\mathbb{V}))$ such that
 \begin{itemize}
 \item[$\bullet$] $Q= a_{k}\{J,\phi\}^{k}+...+a_{1}\{J,\phi\}+ a_{0}\mathbb{1}$, \  $a_{i} \in C^{\infty}(\mathcal{T})$
 \item[$\bullet$] $S \in \{ \mathbb{1}, J+\phi, J-\phi, [J,\phi] \}$
 \end{itemize}
\vspace{5pt}
 \item[b)] $E=\mathfrak{u}(\mathbb{V},J).$
 \end{itemize}
 
If  
\[ d{a_i}_{[B,\phi]}=0, \ \forall B \in \Gamma(E)\]
 then 
\[ d{a_i}_{[SCS,\phi]}=0,\ \forall C \in \Gamma(E^{S}).\]

\end{prop}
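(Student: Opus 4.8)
The plan is to reduce the claimed identity $d{a_i}_{[SCS,\phi]}=0$ for all $C\in\Gamma(E^S)$ to the hypothesis $d{a_i}_{[B,\phi]}=0$ for all $B\in\Gamma(\mathfrak{u}(\mathbb V,J))$ by showing that whenever $C\in\Gamma(E^S)$, the element $SCS$ lies in $\Gamma(\mathfrak{u}(\mathbb V,J))$, so that $[SCS,\phi]$ is one of the tangent directions already covered by the hypothesis. Concretely: by Proposition \ref{propES}, since $E=\mathfrak u(\mathbb V,J)$ and $S\in\{\mathbb 1,J+\phi,J-\phi,[J,\phi]\}$, for $B\in\Gamma(E^S)$ we have $SBS\in\Gamma(E)=\Gamma(\mathfrak u(\mathbb V,J))$. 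Thus for each fixed $C\in\Gamma(E^S)$, setting $B:=SCS$ gives $B\in\Gamma(\mathfrak u(\mathbb V,J))$, and the hypothesis applied to this $B$ reads $d{a_i}_{[SCS,\phi]}=d{a_i}_{[B,\phi]}=0$. That is essentially the whole argument.

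The one point needing care is that $[SCS,\phi]$ must actually be a tangent vector to $\mathcal T$ at each point, i.e. an element of $\mathfrak o_{\{\phi\}}=[\mathfrak o(\mathbb V,g),\phi]$, for the expression $d{a_i}_{[SCS,\phi]}$ to make sense as a directional derivative in the first place; but this is automatic since $SCS\in\Gamma(\mathfrak o(\mathbb V,g))$ and the bracket with $\phi$ lands in $[\mathfrak o(\mathbb V,g),\phi]=T\mathcal T$ by the Remark identifying $T\mathcal T$ with $\mathfrak o_{\{\phi\}}$. So first I would recall Proposition \ref{propES} in the case at hand and record that $S B S\in\Gamma(\mathfrak u(\mathbb V,J))$ for $B\in\Gamma(E^S)$; then I would note that $[SBS,\phi]\in\Gamma(T\mathcal T)$; then I would simply substitute into the hypothesis. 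It is worth also observing that the map $C\mapsto SCS$ need not be surjective onto $\Gamma(\mathfrak u(\mathbb V,J))$, but that is irrelevant — we only need the forward inclusion, since we want the conclusion for every $C\in\Gamma(E^S)$, not a converse.

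The main (and really the only) obstacle is bookkeeping about what $E^S$ is in each of the four cases for $S$ and verifying that Proposition \ref{propES} indeed applies with $E=\mathfrak u(\mathbb V,J)$; but that proposition is stated for exactly the list of $E$'s that includes $\mathfrak u(\mathbb V,J)$ and exactly the list of $S$'s used here, so there is nothing to check beyond citing it. If one wanted to be fully self-contained one could also spell out, using the Lemma on $\tilde\gamma$, that the defining relation $\tilde\gamma B=B\tilde\delta$ for $B\in E^S$ with $\gamma=\delta=J$ (so that $E=\mathfrak u(\mathbb V,J)$) forces $SBS$ to commute with $J$ — conjugation by $S$ turns the $\tilde\gamma,\tilde\delta$ relations back into the $\gamma,\delta$ relations — but invoking Proposition \ref{propES} makes this unnecessary. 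Hence the proof is a two-line deduction once the setup is in place, and I would present it as such.
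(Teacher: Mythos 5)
Your argument is correct and is exactly the paper's proof: Proposition \ref{propES} gives $SCS\in\Gamma(\mathfrak{u}(\mathbb{V},J))$ for $C\in\Gamma(E^{S})$, and substituting $B=SCS$ into the hypothesis yields the conclusion. The additional remarks about $[SCS,\phi]$ being a genuine tangent direction and about surjectivity being irrelevant are fine but not needed.
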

\begin{proof}
This follows from the result given in Proposition \ref{propES} that if $C\in \Gamma(E^{S})$ then $SCS\in \Gamma(E)$.
\end{proof}

The goal then is to construct functions $a_{i}$ that satisfy $d{a_i}_{[B,\phi]}=0, \forall B \in \mathfrak{u}(V,J).$ If we consider the $U(J)$ action on $\mathcal{T}(V)$ given by $A \cdot  L = ALA^{-1}$ then this is equivalent to producing functions that are constant on each $U(J)$ orbit.
\begin{prop}
\label{propFJC}
Let $f_{j} \in C^{\infty}(\mathcal{T})$ be defined via
\[det(\lambda \mathbb{1}- \{J,\phi\})= \lambda^{2n}-f_{1}\lambda^{2n-1}+...-f_{2n-1}\lambda+f_{2n},\]
so that $f_{j}|_{K}$ is the $j^{th}$ symmetric polynomial in the eigenvalues of $\{J,K\}$.\\

\vspace{-.2cm}
1) $f_{j}$ is constant on each $U(J)$ orbit in $\mathcal{T}.$\\

2) $d{f_j}_{[B,\phi]}=0, \ \forall B \in \mathfrak{u}(V,J).$
\end{prop}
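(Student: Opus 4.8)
The plan is to prove both parts together, deriving part 2) as the infinitesimal version of part 1). First I would set up the $U(J)$-action $A\cdot L = ALA^{-1}$ on $\mathcal{T}(V)$, whose infinitesimal generators at $L\in\mathcal{T}$ are exactly the vectors $[B,L]$ for $B\in\mathfrak{u}(V,J)$; under the identification $T_L\mathcal{T}=\mathfrak{o}_{\{L\}}(V)$ and the convention that $T\mathcal{T}$ is written as $\mathfrak{o}_{\{\phi\}}$ evaluated at $L$, this is the tangent vector $[B,\phi]|_L$. So the claim $d{f_j}_{[B,\phi]}=0$ for all $B\in\mathfrak{u}(V,J)$ is precisely the statement that each $f_j$ is annihilated by every fundamental vector field of the $U(J)$-action, which (since $U(J)\cong U(n)$ is connected) is equivalent to $f_j$ being constant on each $U(J)$-orbit. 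Thus 1) $\Leftrightarrow$ 2), and it suffices to prove 1).

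For 1), I would argue directly from the defining equation of the $f_j$. Fix $A\in U(J)\subset O(V,g)$ and $K\in\mathcal{T}(V)$, and let $K'=AKA^{-1}$. The key observation is that conjugation by $A$ is an algebra automorphism of $EndV$ that fixes $J$ (since $[A,J]=0$), so $\{J,K'\}=\{J,AKA^{-1}\} = A\{J,K\}A^{-1}$. Hence $\{J,K'\}$ and $\{J,K\}$ are conjugate endomorphisms of $V$, so they have the same characteristic polynomial:
\[
\det(\lambda\mathbb{1}-\{J,K'\}) = \det(\lambda\mathbb{1}-A\{J,K\}A^{-1}) = \det(\lambda\mathbb{1}-\{J,K\}).
\]
Comparing coefficients of $\lambda^{2n-j}$ on both sides and using the definition of $f_j$ via $\det(\lambda\mathbb{1}-\{J,\phi\})=\lambda^{2n}-f_1\lambda^{2n-1}+\dots+f_{2n}$, we get $f_j|_{K'}=f_j|_K$, i.e. $f_j(A\cdot K)=f_j(K)$. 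Since $A\in U(J)$ and $K\in\mathcal{T}$ were arbitrary, $f_j$ is constant on every $U(J)$-orbit, proving 1).

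Finally I would record the implication 1) $\Rightarrow$ 2) explicitly: for $B\in\mathfrak{u}(V,J)$, the curve $t\mapsto \exp(tB)\cdot K = \exp(tB)K\exp(-tB)$ lies in a single $U(J)$-orbit, its velocity at $t=0$ is $[B,K]=[B,\phi]|_K \in T_K\mathcal{T}$, and $f_j$ is constant along it by 1); differentiating at $t=0$ gives $d{f_j}_{[B,\phi]}|_K=0$. As this holds at every $K$, we are done. The only point requiring a little care — and the step I expect to be the main (minor) obstacle — is matching conventions: verifying that $[B,\phi]$, with $\phi|_K=K$, really is the fundamental vector field of the $U(J)$-action at $K$ under the chosen identification $T\mathcal{T}\cong\mathfrak{o}_{\{\phi\}}$, rather than, say, its negative or a rescaling; but this is exactly the identification set up in the Preliminaries, and the sign is immaterial for the vanishing statement.
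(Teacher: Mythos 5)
Your proposal is correct and follows essentially the same route as the paper, whose entire proof is the observation that $\{J,AKA^{-1}\}=A\{J,K\}A^{-1}$ for $A\in U(J)$, from which invariance of the characteristic polynomial of $\{J,\phi\}$ along $U(J)$-orbits (part 1) and its infinitesimal version (part 2) follow exactly as you describe. You have merely written out the details (conjugation-invariance of $\det$, and the identification of $[B,\phi]|_{K}=[B,K]$ with the fundamental vector field of the action) that the paper leaves implicit.
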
 
\begin{proof}
This follows from the fact that if $A\in U(J)$ and $K \in \mathcal{T}$ then $\{J,AKA^{-1}\}=A\{J,K\}A^{-1}$.
\end{proof}
Combining the above results with Theorem \ref{thmALP} leads to
\begin{thm}
\label{thmALSP}
\mbox{}
\newline
\noindent
Let
\begin{align*}
 \psi: E&^{S}  \longrightarrow T\mathcal{T} \\
 A & \longrightarrow [FAF, \phi],
 \end{align*}
 
 where 
\begin{itemize}
\item[a)] $F=QS \in \Gamma(\mathfrak{gl}(\mathbb{V}))$ is such that 
 \begin{itemize}
 \item[$\bullet$] $Q= a_{k}\{J,\phi\}^{k}+...+a_{1}\{J,\phi\}+ a_{0}\mathbb{1}$, \  $a_{i} \in C^{\infty}(\mathcal{T})$
 \item[$\bullet$] $S \in \{ \mathbb{1}, J+\phi, J-\phi, [J,\phi] \}$
 \end{itemize}
\vspace{5pt}
 \item[b)] $E =\mathfrak{u}(\mathbb{V},J).$
 \end{itemize}
 Moreover, suppose
 \[a_{i} \in \mathbb{R}[f_{1},...,f_{2n}], \]
 where each $f_{j}\in C^{\infty}(\mathcal{T})$ is defined via
\[det(\lambda \mathbb{1}- \{J,\phi\})= \lambda^{2n}-f_{1}\lambda^{2n-1}+...-f_{2n-1}\lambda+f_{2n}.\]

\vspace{.2cm}
\noindent 
Then there exists a skew bracket $[\cdot,\cdot]_{F}$ on $\Gamma(E^{S})$ such that $(E^{S},[\cdot,\cdot]_{F},\psi)$ is a skew algebroid on $\mathcal{T}(V)$.

\end{thm}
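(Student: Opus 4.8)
The plan is to derive Theorem \ref{thmALSP} as a special case of Theorem \ref{thmALP} by verifying the single outstanding hypothesis of that theorem, namely that $d{a_i}_{[SCS,\phi]}=0$ for all $C\in\Gamma(E^{S})$, under the present assumption that $E=\mathfrak{u}(\mathbb{V},J)$ and $a_i\in\mathbb{R}[f_1,\dots,f_{2n}]$. All the structural work --- existence of the bracket $[\cdot,\cdot]_F$ and the skew-algebroid axioms --- is already supplied by Theorem \ref{thmALP}, so nothing new needs to be proved there; the entire content of the reduction is the differential condition on the coefficient functions.

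First I would reduce to the generators $f_j$: since each $a_i$ is a polynomial in the $f_j$ and since the differential operator $C\mapsto d(\,\cdot\,)_{[SCS,\phi]}$ obeys the Leibniz rule, it suffices to show $d{f_j}_{[SCS,\phi]}=0$ for all $j$ and all $C\in\Gamma(E^{S})$. Next I would invoke Proposition \ref{propSCS}, whose hypotheses are exactly $E=\mathfrak{u}(\mathbb{V},J)$ and $d{a_i}_{[B,\phi]}=0$ for all $B\in\Gamma(E)$: it reduces the required identity $d{f_j}_{[SCS,\phi]}=0$ (for all $C\in\Gamma(E^S)$) to the identity $d{f_j}_{[B,\phi]}=0$ (for all $B\in\Gamma(\mathfrak{u}(\mathbb{V},J))$). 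But the latter is precisely the statement of part 2) of Proposition \ref{propFJC}. Concatenating these three steps --- Leibniz reduction to generators, Proposition \ref{propSCS} to move from $E^S$ back to $E$, and Proposition \ref{propFJC} for the base case --- verifies the hypothesis of Theorem \ref{thmALP}, and the conclusion follows at once. One should also note that for the Leibniz reduction to close one needs $d$ of a \emph{constant} in the direction $[SCS,\phi]$ to vanish, which is trivial, so constants in $\mathbb{R}[f_1,\dots,f_{2n}]$ cause no trouble.

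There is essentially no obstacle here beyond bookkeeping: the theorem is a corollary assembled from Proposition \ref{propSCS}, Proposition \ref{propFJC}, and Theorem \ref{thmALP}, and the only mildly delicate point is making sure the class of admissible $C$ is handled correctly --- Theorem \ref{thmALP} asks for the condition on all $C\in\Gamma(E^S)$, Proposition \ref{propSCS} converts this to a condition on all $B\in\Gamma(E)$ via the map $C\mapsto SCS$ (which lands in $\Gamma(E)$ by Proposition \ref{propES}), and Proposition \ref{propFJC} then disposes of it. The genuinely substantive inputs --- that $\{J,AKA^{-1}\}=A\{J,K\}A^{-1}$ for $A\in U(J)$, so that the symmetric functions $f_j$ of the eigenvalues of $\{J,\phi\}$ are $U(J)$-invariant and hence annihilated by derivatives along $[B,\phi]$ with $B\in\mathfrak{u}(V,J)$ --- have already been recorded in the proof of Proposition \ref{propFJC}, and the full details of the bracket and of Theorem \ref{thmALP} are deferred to \cite{Gindi4} in any case. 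Thus the write-up would be short, consisting of the three-line chain of implications above.
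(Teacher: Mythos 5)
Your proposal matches the paper's own route exactly: the paper derives Theorem \ref{thmALSP} by combining Proposition \ref{propFJC} (the $f_j$ satisfy $d{f_j}_{[B,\phi]}=0$ for $B\in\mathfrak{u}(V,J)$) with Proposition \ref{propSCS} (transferring this to the condition on $\Gamma(E^S)$) and then invoking Theorem \ref{thmALP}. Your added remark about the Leibniz reduction to the generators $f_j$ is a correct and harmless elaboration of a step the paper leaves implicit.
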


\begin{rmk} In \cite{Gindi4}, we will use the skew algebroids of Sections \ref{secFTSA} and \ref{secMA}  to build Lie algebroids and Poisson structures on $\mathcal{T}$. Moreover, we will present additional real algebroids as well as holomorphic algebroids. Please see Section \ref{secFW}.
\end{rmk}

\section{Integrable Distributions}
\label{secID}
In this section, we will analyze the distributions associated to each of the above skew algebroids and will explore their integrability conditions. 
\subsection{The $Im\psi$}
\label{secID2}
To describe the distributions $Im\psi$, where $\psi$ is given in Theorem \ref{thmALM}, it is important to first know the following.
\begin{prop}
\mbox{}
\newline
\indent
Let $QS \in \Gamma(\mathfrak{gl}(\mathbb{V}))$ such that 
 \begin{itemize}
 \item[$\bullet$] $Q^{*}=Q$
 \item[$\bullet$] $[Q,J]=0$ and $[Q, \phi]= 0$
 \item[$\bullet$] $S \in \{ \mathbb{1}, J+\phi, J-\phi, [J,\phi] \}$.
\end{itemize}

Then for each $K\in \mathcal{T}(V)$,
\[ V= Ker QS|_{K} \oplus ImQS|_{K}\]
is an orthogonal and $J, K-$invariant splitting.
\end{prop}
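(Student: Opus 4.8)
The plan is to fix $K \in \mathcal{T}(V)$ and work entirely inside the fiber $\mathbb{V}_K = V$, where $\phi|_K = K$. The assertion has two parts: (i) that $V = \operatorname{Ker}(QS)|_K \oplus \operatorname{Im}(QS)|_K$ is a direct sum, and (ii) that this splitting is orthogonal and $J,K$-invariant. I would establish (ii) first, since it is largely formal, and then deduce (i) from the self-adjointness that (ii) produces.

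For the $J,K$-invariance: each of the building blocks $J$, $\phi|_K = K$, and hence $\{J,\phi\}|_K = JK + KJ$ commutes with $J$ and with $K$ (the anticommutator $\{J,K\}$ commutes with both $J$ and $K$ since, e.g., $J(JK+KJ) = -K + JKJ = (JK+KJ)J$ using $J^2 = -1$, and symmetrically for $K$). Therefore $Q|_K$, being a polynomial with real (function-valued, but at $K$ just real) coefficients in $\{J,\phi\}$ — more precisely $Q$ satisfies $[Q,J]=0$, $[Q,\phi]=0$ by hypothesis — commutes with $J$ and $K$. As for $S \in \{\mathbb{1}, J+\phi, J-\phi, [J,\phi]\}$, evaluated at $K$ this is one of $\mathbb{1}, J+K, J-K, [J,K]$; using the relations $J(J\pm\phi) = \pm(J\pm\phi)\phi$ and $\phi(J\pm\phi) = \pm(J\pm\phi)J$ from the preliminary lemma (evaluated at $K$), one checks that $S|_K$ maps the $J$-eigenspaces and $K$-eigenspaces of $V_{\mathbb{C}}$ into each other in a controlled way; the cleanest route is simply: conjugation by $J$ sends $J\pm\phi$ to $J\pm\phi$ evaluated with $\phi$ replaced appropriately, and in all four cases one gets that $F|_K := Q|_K S|_K$ satisfies $J (F|_K) = (F|_K) \gamma$ and $K(F|_K) = (F|_K)\delta$ for suitable $\gamma,\delta \in \{\pm J, \pm K\}$. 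Consequently $\operatorname{Ker}(F|_K)$ and $\operatorname{Im}(F|_K)$ are preserved by $J$ and by $K$ (since $J$ and $K$ are invertible, $J(\operatorname{Ker} F) = \operatorname{Ker}(F \gamma^{-1}) = \operatorname{Ker} F$, etc.).

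For orthogonality and the direct-sum decomposition: I claim $F|_K = Q|_K S|_K$ is a \emph{normal} operator on $(V,g)$, and in fact that $(F^*F)|_K$ and $(FF^*)|_K$ agree up to the obvious symmetry. Since $Q^* = Q$ and $Q$ commutes with $J,\phi$, and since $S^* = S$ when $S = \mathbb{1}$ while $S^* = -S$ when $S \in \{J+\phi, J-\phi, [J,\phi]\}$ (because $J^* = -J$, $\phi^* = -\phi$ as elements of $\mathfrak{o}$), one computes $F^* = (QS)^* = S^* Q^* = \pm SQ = \pm QS = \pm F$ — wait, more carefully: $S$ and $Q$ commute at $K$ only up to the twist $\gamma \leftrightarrow \delta$, so $F^* F = S^* Q Q S = \pm SQ^2S$ and $FF^* = QS S^* Q = \pm QS^2 Q$; since $S^2|_K$ is a scalar (namely $-1\cdot 0$? no) — actually $S^2$ for $S = J\pm\phi$ evaluated at $K$ equals $(J\pm K)^2 = -2 \pm \{J,K\}$, which commutes with $Q$, $J$, $K$. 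Using $[Q, S^2]=0$ one gets $F^*F = \pm QS^2Q = \pm Q^2 S^2$ and similarly $FF^* = \pm Q^2 S^2$, so $F$ is normal. A normal operator on a Euclidean space satisfies $\operatorname{Ker} F = \operatorname{Ker} F^*$ and $\operatorname{Ker} F \perp \operatorname{Im} F$ with $V = \operatorname{Ker} F \oplus \operatorname{Im} F$; this gives both (i) and orthogonality at once.

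The main obstacle I anticipate is the bookkeeping in the normality computation: one must verify that the "twist" relations $J F = F\gamma$, $S Q = Q S$ (or the twisted version) interact correctly so that $S^2|_K$ genuinely commutes with $Q|_K$ and that no sign or cross-term is lost when passing $S^*$ past $Q$. The case $S = [J,\phi]$ is the fiddliest, since $S^2|_K = [J,K]^2 = \{J,K\}^2 - 4\mathbb{1}$ (using $[J,K]^2 = (JK-KJ)^2$), which again is a polynomial in $\{J,K\}$ and hence commutes with $Q|_K$ — so in fact \emph{every} case reduces to "$S^2|_K$ is a polynomial in $\{J,\phi\}|_K$, hence commutes with $Q|_K$, $J$, $K$," which is the key structural observation that makes the whole argument go through uniformly. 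Once that is in hand, normality and the decomposition follow from standard linear algebra over $\mathbb{R}$.
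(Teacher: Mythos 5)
Your proposal is correct, and since the paper defers the proof of this proposition to the updated version, there is nothing to compare it against beyond the surrounding machinery — which your argument uses in exactly the intended way (the relations $J(J\pm\phi)=\pm(J\pm\phi)\phi$, $\phi(J\pm\phi)=\pm(J\pm\phi)J$ for the $J,K$-invariance, and adjointness for the orthogonal splitting). One simplification: your ``wait, more carefully'' detour is unnecessary, because $Q$ commutes with $J$ and $\phi$ individually and hence commutes with $S$ \emph{exactly}, not merely up to a twist; combined with $Q^{*}=Q$ and $S^{*}=\pm S$ (as $J,\phi\in\mathfrak{o}(\mathbb{V},g)$), this gives $(QS)^{*}=\pm QS$ directly, so $F|_{K}$ is self- or skew-adjoint and the whole $F^{*}F=FF^{*}$ bookkeeping, including the analysis of $S^{2}|_{K}$ as a polynomial in $\{J,K\}$, can be dropped.
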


Now consider 
\begin{prop}
\mbox{}
\newline
\indent
Let 
\begin{itemize}
\item[a)] $F=QS \in \Gamma(\mathfrak{gl}(\mathbb{V}))$ such that 
 \begin{itemize}
 \item[$\bullet$] $Q^{*}=Q$
 \item[$\bullet$] $[Q,J]=0$ and $[Q, \phi]= 0$
 \item[$\bullet$] $S \in \{ \mathbb{1}, J+\phi, J-\phi, [J,\phi] \}$
 \end{itemize}
\vspace{5pt}
 \item[b)] $E \in \{\mathfrak{o}(\mathbb{V},g), \mathfrak{o}_{\{\phi\}}(\mathbb{V}),  \mathfrak{u}(\mathbb{V},J), \mathfrak{u}(\mathbb{V},\phi)\}.$
\end{itemize}
Then for each $K\in \mathcal{T}(V)$, \[\{FAF \ | A\in E^{S}|_{K}\} = \{B \in E|_{K} \ | B: KerF \longrightarrow 0, \ ImF \longrightarrow ImF \},\] where $F$ is to be evaluated at $K$.  
\end{prop}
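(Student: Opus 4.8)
The plan is to reduce the identity to a fiberwise linear-algebra statement about endomorphisms respecting an orthogonal, $J,K$-invariant splitting. Fix $K\in\mathcal{T}(V)$ and write $P_0$ and $P_1$ for the orthogonal projections onto $\operatorname{Ker}F|_K$ and $\operatorname{Im}F|_K$ respectively, so that by the preceding proposition $\mathbb{1}=P_0+P_1$, each $P_i$ commutes with $J$ and $K$, and $F|_K=P_1 F|_K P_1$ with $F|_K$ invertible on $\operatorname{Im}F|_K$. First I would establish the inclusion $\subseteq$: for $A\in E^S|_K$ the element $B=FAF$ evidently kills $\operatorname{Ker}F$ (since $F$ does on the right) and maps into $\operatorname{Im}F$ (since $F$ appears on the left), so the only real content is that $B\in E|_K$, which is exactly Proposition \ref{propES}, namely that $A\in E^S$ implies $SAS\in E$ — here applied with the extra factor $Q$, which is harmless because $Q^*=Q$, $[Q,J]=0$, $[Q,\phi]=0$ show that multiplying by $Q$ on either side preserves membership in each of the bundles $\mathfrak{o}(\mathbb{V},g)$, $\mathfrak{o}_{\{\phi\}}$, $\mathfrak{u}(\mathbb{V},J)$, $\mathfrak{u}(\mathbb{V},\phi)$ (one checks this bundle by bundle, using that $Q$ is a polynomial in $\{J,\phi\}$ with symmetric, $J$- and $\phi$-commuting coefficients).

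For the reverse inclusion $\supseteq$, take $B\in E|_K$ with $B(\operatorname{Ker}F)=0$ and $B(\operatorname{Im}F)\subseteq\operatorname{Im}F$, so $B=P_1 B P_1$. Let $G$ denote the inverse of $F|_K$ on $\operatorname{Im}F|_K$, extended by $0$ on $\operatorname{Ker}F|_K$; then $G=P_1 G P_1$, and $G$ again commutes with $J$ and $K$ and is self-adjoint up to the $S$-twist (more precisely, since $F=QS$, one has $G=S^{-1}Q^{-1}$ on $\operatorname{Im}F$ in an appropriate sense, but it is cleaner to keep $G$ abstract as the partial inverse). Set $A:=GBG$. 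Then $FAF=FGBGF=P_1 B P_1=B$, so it remains to check $A\in E^S|_K$. This is where I would apply Proposition \ref{propES} in the other direction — but note that proposition as stated only goes from $E^S$ to $E$, so the genuine step here is to verify that the partial inverse construction lands in $E^S$.

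The main obstacle, then, is precisely this last verification: showing $A=GBG\in E^S|_K$. The clean way is to observe that the assignment $B\mapsto GBG$ is a two-sided inverse to $A\mapsto FAF$ as a map between $\{B\in E|_K : B\colon\operatorname{Ker}F\to 0,\ \operatorname{Im}F\to\operatorname{Im}F\}$ and $\{A\in E^S|_K : A\colon\operatorname{Ker}F\to 0,\ \operatorname{Im}F\to\operatorname{Im}F\}$ — because $A=GBG$ automatically satisfies the endomorphism conditions with respect to the splitting (as $G=P_1 G P_1$), and the membership $A\in E^S$ follows by running Proposition \ref{propES} with the roles of $S$ and a ``partial inverse of $S$'' exchanged, using that $S\in\{\mathbb{1},J+\phi,J-\phi,[J,\phi]\}$ each has, on its image, an inverse that is again a real linear combination of $\mathbb{1},J,\phi,\{J,\phi\}$ and hence intertwines the same pairs $(\gamma,\delta)\leftrightarrow(\tilde\gamma,\tilde\delta)$ in reverse. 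Concretely: $(J+\phi)^{-1}$ on $\operatorname{Im}(J+\phi)$ is a multiple of $(J+\phi)$ composed with the spectral projection, $[J,\phi]$ is invertible on its image with inverse a polynomial in $[J,\phi]$, and $\mathbb{1}$ is trivial; in each case the intertwining relation $\gamma S=S\tilde\gamma$ inverts to $\tilde\gamma S^{-1}=S^{-1}\gamma$ on the relevant subspace, which is exactly the condition defining passage from $E$ back to $E^S$.

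Thus the proof has the shape: (i) orthogonal $J,K$-invariant splitting from the previous proposition; (ii) $\subseteq$ from Proposition \ref{propES} plus the $Q$-invariance of the four bundles; (iii) $\supseteq$ by exhibiting the explicit preimage $GBG$ and checking it lies in $E^S|_K$ via the inverted intertwining relations. I expect step (iii), and within it the bookkeeping of which $\tilde\gamma$ corresponds to which $\gamma$ under the partial inverse of $S$, to be the only place requiring care; everything else is formal once the splitting is in hand.
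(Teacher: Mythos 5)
Your argument is correct, and since the paper states this proposition without proof (deferring it to \cite{Gindi4}) there is no printed argument to compare against; judged on its own it holds up. Both inclusions check out: for $\subseteq$, since $Q$ commutes with $S$ one has $FAF=Q(SAS)Q$, with $SAS\in E$ by Proposition \ref{propES} and conjugation by $Q$ preserving each of the four bundles exactly as you say; for $\supseteq$, the partial inverse $G$ (zero on $KerF$, inverse of $F$ on $ImF$) satisfies $FG=GF=P_{1}$ and $G^{*}=\pm G$ (because $F^{*}=\pm F$), so $A=GBG$ is skew-adjoint and $FAF=P_{1}BP_{1}=B$. The one place your write-up drifts off-target is the justification that $GBG\in E^{S}$: you reach for explicit formulas for the partial inverses of the four choices of $S$, but the operator being inverted is $F=QS$, not $S$, and its partial inverse need not be a polynomial in $J$ and $\phi$ for a general admissible $Q$. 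The clean route --- which you do gesture at by ``keeping $G$ abstract'' --- is to observe that $\gamma F=F\tilde{\gamma}$ and $F\delta=\tilde{\delta}F$ (since $Q$ commutes with $\pm J,\pm\phi$), that $\gamma,\tilde{\gamma},\delta,\tilde{\delta}$ preserve the orthogonal $J,K$-invariant splitting $V=KerF\oplus ImF$ and hence commute with $P_{1}$, and then to conjugate these relations by $G$ to obtain $\tilde{\gamma}G=G\gamma$ and $\delta G=G\tilde{\delta}$; the computation $\tilde{\gamma}(GBG)=G(\gamma B)G=G(B\delta)G=(GBG)\tilde{\delta}$ then places $GBG$ in $E^{S}$ with no case analysis on $S$ at all. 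With that substitution, and the trivial case $E=\mathfrak{o}(\mathbb{V},g)$ (where $E^{S}=\mathfrak{o}(\mathbb{V},g)$ by convention) handled by the same skew-adjointness check, your proof is complete.
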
 

Using this, we have proved
\begin{thm}
\label{thmDS}
\mbox{}
\newline
\indent
Let
\begin{align*}
 \psi : E&^{S} \longrightarrow T\mathcal{T} \\
 A & \longrightarrow [FAF, \phi],
 \end{align*}
 
 where 
\begin{itemize}
\item[a)] $F=QS \in \Gamma(\mathfrak{gl}(\mathbb{V}))$ is such that
 \begin{itemize}
 \item[$\bullet$] $Q^{*}=Q$
 \item[$\bullet$] $[Q,J]=0$ and $[Q, \phi]= 0$
 \item[$\bullet$] $S \in \{ \mathbb{1}, J+\phi, J-\phi, [J,\phi] \}$
 \end{itemize}
\vspace{5pt}
 \item[b)] $E \in \{\mathfrak{o}(\mathbb{V},g), \mathfrak{o}_{\{\phi\}}(\mathbb{V}), \mathfrak{u}(\mathbb{V},J), \mathfrak{u}(\mathbb{V},\phi)\}.$
 \end{itemize}
 \vspace{.5cm}

1) For $E \in \{\mathfrak{o}(\mathbb{V},g), \mathfrak{o}_{\{\phi\}}(\mathbb{V})\}$,
\[Im\psi= \mathcal{D}^{QS}:=\{B \in \mathfrak{o}_{\{\phi\}} \ | B: KerQS \longrightarrow 0, ImQS\longrightarrow ImQS\}. \]

2) For $E= \mathfrak{u}(\mathbb{V},J)$,
 \[Im\psi= \mathcal{D}^{QS}_{\mathfrak{u}(J)}:=\{[B,\phi] \in \mathfrak{o}_{\{\phi\}} \ | B \in \mathfrak{u}(V,J), \ B : KerQS \longrightarrow 0, ImQS\longrightarrow ImQS\}. \]

3) For $E= \mathfrak{u}(\mathbb{V},\phi)$,
\[Im\psi= 0.\]
\end{thm}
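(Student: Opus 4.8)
The plan is to prove the three descriptions of $Im\psi$ by combining the two preceding propositions—the orthogonal, $J,K$-invariant splitting $V = Ker\,QS|_K \oplus Im\,QS|_K$ and the identification $\{FAF \mid A \in E^S|_K\} = \{B \in E|_K \mid B: Ker\,F \to 0,\ Im\,F \to Im\,F\}$—with the algebraic bookkeeping from Proposition \ref{propES} and the definition of $\psi$. Fix $K \in \mathcal{T}(V)$ throughout; since $\psi(A) = [FAF, \phi]$ and $T_K\mathcal{T} = \mathfrak{o}_{\{\phi\}}|_K$, the image $Im\psi|_K$ is exactly $[\,\{FAF \mid A \in E^S|_K\},\ \phi\,]$.

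First I would treat the case $E = \mathfrak{o}(\mathbb{V},g)$, which is the cleanest. By Proposition \ref{propES}, for each choice of $S$ we have $E^S \in \{\mathfrak{o}(\mathbb{V},g), \mathfrak{u}(\mathbb{V},J), \mathfrak{u}(\mathbb{V},\phi), \mathfrak{o}_{\{J\}}(\mathbb{V}), \mathfrak{o}_{\{\phi\}}(\mathbb{V})\}$ and $FAF \in \mathfrak{o}(\mathbb{V},g)$ for $A \in E^S$; the second proposition then gives $\{FAF \mid A \in E^S|_K\} = \{B \in \mathfrak{o}(V,g)|_K \mid B: Ker\,F \to 0,\ Im\,F \to Im\,F\}$. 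Applying $[\,\cdot, \phi\,]$ and using that this bracket sends $\mathfrak{o}(\mathbb{V},g)$ onto $\mathfrak{o}_{\{\phi\}}$ with the constraint that $B$ already preserves the $J,K$-invariant (hence $\phi$-invariant) splitting $V = Ker\,QS \oplus Im\,QS$, one sees that $[B,\phi]$ likewise kills $Ker\,QS$ and preserves $Im\,QS$, so $Im\psi|_K \subseteq \mathcal{D}^{QS}|_K$. For the reverse inclusion, given any $B' \in \mathcal{D}^{QS}|_K \subseteq \mathfrak{o}_{\{\phi\}}$, lift it through $[\,\cdot,\phi\,]$ (using $T_K\mathcal{T} = [\mathfrak{o}(\mathbb{V},g),\phi]$) to some $B \in \mathfrak{o}(V,g)$ with $[B,\phi] = B'$; a short argument adjusting $B$ by an element commuting with $\phi$ lets us assume $B$ itself respects the splitting, whence $B = FAF$ for a suitable $A$. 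The case $E = \mathfrak{o}_{\{\phi\}}(\mathbb{V})$ is handled identically, the only point being that $E^S$ is again among the five bundles and $FAF$ lands in $\mathfrak{o}_{\{\phi\}}$, which only shrinks the target but, because $[\,\cdot,\phi\,]$ is already surjective onto $\mathfrak{o}_{\{\phi\}}$ from $\mathfrak{o}(\mathbb{V},g)$, does not shrink $Im\psi$; this is the subtlety worth spelling out.

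For part 2, $E = \mathfrak{u}(\mathbb{V},J)$, the second proposition gives $\{FAF \mid A \in E^S|_K\} = \{B \in \mathfrak{u}(V,J)|_K \mid B: Ker\,F \to 0,\ Im\,F \to Im\,F\}$, and then $Im\psi|_K = [\,\{B \in \mathfrak{u}(V,J): B \text{ respects the splitting}\},\ \phi\,]$, which is precisely $\mathcal{D}^{QS}_{\mathfrak{u}(J)}|_K$ by definition. For part 3, $E = \mathfrak{u}(\mathbb{V},\phi)$, the point is that $E^S$ is again a bundle of endomorphisms commuting with $\phi$ (one checks via the relations $J(J\pm\phi) = \pm(J\pm\phi)\phi$ etc. that $\tilde\gamma,\tilde\delta$ conspire so that $E^S \subseteq \mathfrak{u}(\mathbb{V},\phi)$ in each case), so $FAF$ commutes with $\phi$, hence $\psi(A) = [FAF,\phi] = 0$ and $Im\psi = 0$.

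The main obstacle I anticipate is the reverse inclusion in parts 1 and 2: showing every element of $\mathcal{D}^{QS}$ (resp. $\mathcal{D}^{QS}_{\mathfrak{u}(J)}$) actually arises as $[FAF,\phi]$ and not merely as the bracket of some $B$ with the right mapping properties. Concretely, one must verify that a skew-symmetric $B$ that kills $Ker\,F$ and preserves $Im\,F$ can be written as $FAF$ with $A \in E^S$—equivalently that $F = QS$ restricts to an invertible map on $Im\,F = Im\,QS$ intertwining $E|_{Im F}$ with $E^S|_{Im F}$ correctly—and that passing to $[\,\cdot,\phi\,]$ does not lose the part of $B$ that commutes with $\phi$ (since that part is already in the kernel of the bracket, it is harmless, but this needs to be said). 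All of this is pointwise linear algebra controlled by the orthogonal $J,K$-invariant splitting, so no differential-geometric input beyond the two cited propositions is required; the bracket $[\cdot,\cdot]_F$ plays no role here since the statement concerns only $Im\psi$.
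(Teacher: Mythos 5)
Your overall strategy is exactly the one the paper intends: Theorem \ref{thmDS} is stated immediately after, and as a consequence of, the two propositions giving the orthogonal $J,K$-invariant splitting $V = Ker\,QS|_{K} \oplus Im\,QS|_{K}$ and the pointwise identification $\{FAF \mid A \in E^{S}|_{K}\} = \{B \in E|_{K} \mid B: Ker\,F \to 0,\ Im\,F \to Im\,F\}$, and the paper offers no further argument (all detailed proofs are deferred to the updated version). Your treatment of parts 1 and 2, including the surjectivity of $[\,\cdot\,,\phi]$ onto $\mathfrak{o}_{\{\phi\}}$ restricted to maps respecting the splitting (which one can make explicit by sending $C \in \mathcal{D}^{QS}|_{K}$ to $B = -\tfrac{1}{2}CK$, so that $[B,K]=C$ and $B$ inherits the mapping properties of $C$), is sound.

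The one step that is wrong as written is the justification of part 3. You claim that for $E = \mathfrak{u}(\mathbb{V},\phi)$ the bundle $E^{S}$ is again contained in $\mathfrak{u}(\mathbb{V},\phi)$ ``in each case.'' This fails for $S = J \pm \phi$: using $\phi(J+\phi) = (J+\phi)J$ and $(J+\phi)\phi = J(J+\phi)$ one finds $\tilde\gamma = \tilde\delta = J$, so $E^{S} = \mathfrak{u}(\mathbb{V},J)$, not $\mathfrak{u}(\mathbb{V},\phi)$. The conclusion is nevertheless correct, but the argument must go through Proposition \ref{propES}: for $A \in \Gamma(E^{S})$ one has $SAS \in \Gamma(E) = \Gamma(\mathfrak{u}(\mathbb{V},\phi))$, hence $FAF = Q(SAS)Q$ commutes with $\phi$ because $[Q,\phi]=0$, and therefore $\psi(A) = [FAF,\phi] = 0$. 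Replace your containment claim with this two-line deduction and part 3 is complete.
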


\begin{rmk}
In the above expressions for $Im\psi$, both sides of the equations should be evaluated at a point $K \in \mathcal{T}(V)$.
\end{rmk}

When working with the above distributions, it is useful to have some other notation:

\begin{defi}
Let $J\in \mathcal{T}(V,g)$ and let $V= W_{1} \oplus W_{2}$ be an orthogonal and $J$-invariant splitting. 
Then define

\[\mathfrak{o}^{W_{2}}_{\{J\}} = \{ A\in \mathfrak{o}_{\{J\}}(V) \ | A: W_{1}\longrightarrow 0, W_{2}\longrightarrow W_{2}\}\]
and 
 \[\mathfrak{u}(J)^{W_{2}} = \{ B\in \mathfrak{u}(V,J) \ | B: W_{1}\longrightarrow 0, W_{2}\longrightarrow W_{2}\}.\]
\end{defi}

Applying this to the distributions in Theorem \ref{thmDS}, we have:

\begin{prop}
The following holds true:
\begin{align*}
  1)\ & \mathcal{D}^{QS}= \mathfrak{o}_{\{\phi\}}^{ImQS} \\
  2)\  & \mathcal{D}^{QS}_{\mathfrak{u}(J)}= [\mathfrak{u}(J)^{ImQS}, \phi],
  \end{align*}
  where both sides of the equations are to be evaluated at a $K \in \mathcal{T}$.
\end{prop}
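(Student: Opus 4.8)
The plan is to establish each identity by unwinding the definitions and applying the two structural results already available to us: the description of $\{FAF \mid A \in E^S|_K\}$ as the set of $B \in E|_K$ with $B: \operatorname{Ker}F \to 0$ and $\operatorname{Im}F \to \operatorname{Im}F$, and the orthogonal, $J,K$-invariant splitting $V = \operatorname{Ker}QS|_K \oplus \operatorname{Im}QS|_K$. Both claims are pointwise statements at a fixed $K \in \mathcal{T}(V)$, so I would work entirely in the fiber over $K$, writing $F = QS|_K$, $W_1 = \operatorname{Ker}F$, $W_2 = \operatorname{Im}F$, and note at the outset that $\operatorname{Ker}QS = \operatorname{Ker}F$ and $\operatorname{Im}QS = \operatorname{Im}F$ coincide with the $W_1, W_2$ appearing in the definition of $\mathfrak{o}^{W_2}_{\{J\}}$ and $\mathfrak{u}(J)^{W_2}$, once we take $J$ there to be $K$ (recall $\phi|_K = K$, so $\mathfrak{o}_{\{\phi\}}|_K = \mathfrak{o}_{\{K\}}(V)$ and the defining splitting is $K$-invariant).

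For part (1): by Theorem \ref{thmDS}(1), for $E \in \{\mathfrak{o}(\mathbb{V},g), \mathfrak{o}_{\{\phi\}}(\mathbb{V})\}$ we have $\operatorname{Im}\psi|_K = \mathcal{D}^{QS}|_K = \{B \in \mathfrak{o}_{\{\phi\}}|_K \mid B: \operatorname{Ker}QS \to 0,\ \operatorname{Im}QS \to \operatorname{Im}QS\}$. Since $\mathfrak{o}_{\{\phi\}}|_K = \mathfrak{o}_{\{K\}}(V)$ and the conditions $B: W_1 \to 0$, $B: W_2 \to W_2$ are exactly the defining conditions of $\mathfrak{o}^{W_2}_{\{K\}}$ with $W_2 = \operatorname{Im}QS|_K$, the two sets are literally equal. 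The only thing requiring a word of justification is that the decomposition $V = W_1 \oplus W_2$ used in the definition of $\mathfrak{o}^{W_2}_{\{J\}}$ is orthogonal and $J$-invariant (here $J = K$), which is precisely the content of the proposition preceding Theorem \ref{thmDS} guaranteeing $V = \operatorname{Ker}QS|_K \oplus \operatorname{Im}QS|_K$ is orthogonal and $J,K$-invariant.

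For part (2): by Theorem \ref{thmDS}(2), $\operatorname{Im}\psi|_K = \mathcal{D}^{QS}_{\mathfrak{u}(J)}|_K = \{[B,\phi]|_K \mid B \in \mathfrak{u}(V,J),\ B: \operatorname{Ker}QS \to 0,\ \operatorname{Im}QS \to \operatorname{Im}QS\}$. The set of $B \in \mathfrak{u}(V,J)$ satisfying those two mapping conditions is, by definition, $\mathfrak{u}(J)^{W_2}$ with $W_2 = \operatorname{Im}QS|_K$, provided the splitting $V = W_1 \oplus W_2$ is orthogonal and $J$-invariant — which again is supplied by the splitting proposition (the $J$-invariance, not just $K$-invariance, is the relevant part here since $\mathfrak{u}(J)^{W_2}$ is defined using a $J$-invariant decomposition). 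Hence $\mathcal{D}^{QS}_{\mathfrak{u}(J)}|_K = \{[B,\phi]|_K \mid B \in \mathfrak{u}(J)^{W_2}|_K\} = [\mathfrak{u}(J)^{\operatorname{Im}QS},\phi]|_K$, as claimed.

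The main (and essentially only) obstacle is a bookkeeping one: verifying that the $J,K$-invariant splitting from the earlier proposition genuinely matches the ambient orthogonal $J$-invariant splitting required by the definition of $\mathfrak{o}^{W_2}_{\{J\}}$ and $\mathfrak{u}(J)^{W_2}$, and being careful that when we instantiate those definitions at $K$ we are allowed to take the ``$J$'' in the definition to be $K$ while the subscript $\{J\}$ / $\mathfrak{u}(J)$ also refers to the same $K$ — i.e.\ that $\mathfrak{o}_{\{\phi\}}|_K = \mathfrak{o}_{\{K\}}(V)$ and $\mathfrak{u}(\mathbb{V},J)|_K$ restricted appropriately. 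Once this identification is made explicit, both parts are immediate rewrites of Theorem \ref{thmDS} in the new notation, with no computation required.
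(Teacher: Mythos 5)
Your proposal is correct and is essentially the intended argument: the paper itself gives no proof of this proposition (proofs are deferred to the updated version), but the statement is manifestly meant to be a direct rewrite of Theorem \ref{thmDS} in the notation $\mathfrak{o}^{W_{2}}_{\{J\}}$, $\mathfrak{u}(J)^{W_{2}}$, which is exactly what you do by unwinding the definitions with $W_{1}=KerQS|_{K}$, $W_{2}=ImQS|_{K}$ and invoking the earlier proposition that this splitting is orthogonal and $J,K$-invariant. The one genuinely nontrivial bookkeeping point --- that in part (1) the generic ``$J$'' of the definition must be instantiated as $K$ (so that $\mathfrak{o}_{\{\phi\}}|_{K}=\mathfrak{o}_{\{K\}}(V)$ and $K$-invariance of the splitting is what is needed), while in part (2) it is the fixed $J$ (so $J$-invariance is what is needed) --- is the right thing to flag, and you handle it correctly.
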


Given the above distributions, an important property of the $\mathcal{D}^{QS}$ is that they are  complex with respect to the complex structure $I_{\mathcal{T}}$ on $\mathcal{T}$, which was defined in Section \ref{secBTS}.  
\begin{prop}
The  distribution $\mathcal{D}^{QS}$ is closed under $I_{\mathcal{T}}$.
\end{prop}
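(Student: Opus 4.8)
The claim is that $\mathcal{D}^{QS}|_{K} = \mathfrak{o}_{\{\phi\}}^{ImQS}|_{K}$ is invariant under $I_{\mathcal{T}}$, i.e. if $B \in T_{K}\mathcal{T} = \mathfrak{o}_{\{K\}}(V)$ sends $KerQS|_{K}$ to $0$ and $ImQS|_{K}$ to itself, then so does $I_{\mathcal{T}}B = KB$. The first step is to recall from the earlier proposition that for each $K$ the splitting $V = KerQS|_{K} \oplus ImQS|_{K}$ is orthogonal and both $J$- and $K$-invariant. This is the structural input that makes everything work: since $K$ preserves each summand, $KB$ maps $KerQS$ into $K(0) = 0$ and maps $ImQS$ into $K(ImQS) = ImQS$. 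So $KB$ has the required block structure.

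\textbf{Second step: verify $KB \in \mathfrak{o}_{\{K\}}(V)$.} We need $KB$ to remain in the tangent space $T_{K}\mathcal{T}$, equivalently $\{KB, K\} = 0$ and $KB \in \mathfrak{o}(V,g)$. The anticommutator condition is immediate: $\{KB,K\} = KBK + K^{2}B = KBK - B = K(BK + KB) K^{-1}\cdot(-1)\cdot(\cdots)$— more directly, from $\{B,K\}=0$ we get $BK = -KB$, hence $KBK = -K^{2}B = B$, so $KBK \cdot K^{-1} = BK^{-1} $; cleanly, $\{KB,K\} = KBK + K KB = KBK - KB K^{-1}K$, and using $BK=-KB$ this is $K(BK) + (KB)K = -K^{2}B + (KB)K = B + (KB)K$, and $(KB)K = K(BK) = -K^{2}B = B$, giving $B + B$? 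That is wrong, so I should instead just cite that $I_{\mathcal{T}}$ is a well-defined endomorphism of $T\mathcal{T}$: by the very definition in Section \ref{secBTS}, $I_{\mathcal{T}}A = JA$ for $A \in T_{J}\mathcal{T}$ is tangent to $\mathcal{T}$, so in particular $KB \in T_{K}\mathcal{T} = \mathfrak{o}_{\{K\}}(V)$ whenever $B \in T_{K}\mathcal{T}$. Thus no separate check of the orthogonality or anticommutator relation is needed — it is built into the fact that $I_{\mathcal{T}}$ is an almost complex structure on $\mathcal{T}$.

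\textbf{Third step: combine.} Given $B \in \mathcal{D}^{QS}|_{K}$, we have $B \in \mathfrak{o}_{\{K\}}(V)$ with the block structure relative to $V = KerQS|_{K}\oplus ImQS|_{K}$; then $I_{\mathcal{T}}B = KB \in \mathfrak{o}_{\{K\}}(V)$ by the previous step, and $KB$ still kills $KerQS|_{K}$ and preserves $ImQS|_{K}$ because $K$ preserves the splitting. Hence $I_{\mathcal{T}}B \in \mathcal{D}^{QS}|_{K}$, and since $K$ was arbitrary, $\mathcal{D}^{QS}$ is closed under $I_{\mathcal{T}}$.

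\textbf{Main obstacle.} There is essentially no analytic difficulty here; the whole content is the $J,K$-invariance of the decomposition $V = KerQS \oplus ImQS$, which is the previously stated proposition and ultimately rests on $Q$ commuting with both $J$ and $\phi$ together with $Q^{*}=Q$. The one place to be careful is purely bookkeeping: making sure that "closed under $I_{\mathcal{T}}$" is interpreted fiberwise — i.e. for each fixed $K$, the fiber $\mathcal{D}^{QS}|_{K} \subseteq T_{K}\mathcal{T}$ is a complex subspace — rather than as a statement about sections, so that one never has to differentiate $Q$ or worry about the non-smooth rank jumps of $KerQS$ across $\mathcal{T}$.
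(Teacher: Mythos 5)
Your proof is correct and uses exactly the ingredients the paper lines up immediately before this proposition: the orthogonal, $J,K$-invariant splitting $V=\mathrm{Ker}\,QS|_{K}\oplus \mathrm{Im}\,QS|_{K}$ gives the block structure of $KB$, and the fact that $I_{\mathcal{T}}$ is a well-defined almost complex structure on $\mathcal{T}$ gives $KB\in T_{K}\mathcal{T}$ (the paper states this result without proof, but this is plainly the intended fiberwise argument). The garbled anticommutator computation in your second step is harmless since you correctly fall back on the well-definedness of $I_{\mathcal{T}}$; for the record the direct check also works, namely $\{KB,K\}=KBK+K^{2}B=KBK-B$ and $KBK=K(-KB)K\cdot(-1)=B$ from $BK=-KB$, so $\{KB,K\}=0$.
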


\subsection{Integrability Conditions} 
\label{secICD}
We will now use the skew algebroids in Theorem \ref{thmALM} to determine conditions for the distributions in Theorem \ref{thmDS} to be integrable.

\begin{thm}
\label{thmICM}
\mbox{}
\newline
\indent
Let $QS \in \Gamma(\mathfrak{gl}(\mathbb{V}))$ such that 
 \begin{itemize}
 \item[$\bullet$] $Q^{*}=Q$
 \item[$\bullet$] $[Q,J]=0$ and $[Q, \phi]= 0$
 \item[$\bullet$] $S \in \{ \mathbb{1}, J+\phi, J-\phi, [J,\phi] \}$.\\
\end{itemize}

\indent
 1) $\mathcal{D}^{QS}$ is integrable if \[dQ_{QBQ}=Q(dQ_{B})Q, \ \ \ \forall B \in  \mathcal{D}^{S}.\] \  \\
 \indent
  2) $\mathcal{D}^{QS}_{\mathfrak{u}(J)}$ is integrable if \[dQ_{QCQ}=Q(dQ_{C})Q, \ \ \  \  \forall C \in  \mathcal{D}^{S}_{\mathfrak{u}(J)}.\] 

\end{thm}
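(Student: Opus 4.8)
The plan is to deduce integrability of the distributions $\mathcal{D}^{QS}$ and $\mathcal{D}^{QS}_{\mathfrak{u}(J)}$ from the existence of the skew algebroid structures of Theorem \ref{thmALM}, using the general fact (the Proposition following Definition \ref{defSA1}) that the image of the anchor of a skew algebroid is an integrable distribution. Concretely, for part 1) I would take $E = \mathfrak{o}_{\{\phi\}}(\mathbb{V})$ and $S$ as given, so that $E^{S}$ is the corresponding bundle from Proposition \ref{propES}, and recall from Theorem \ref{thmDS}, part 1), that $Im\,\psi = \mathcal{D}^{QS}$ for the anchor $\psi(A) = [FAF,\phi]$ with $F = QS$. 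The content of Theorem \ref{thmALM} is that $(E^{S},[\cdot,\cdot]_{F},\psi)$ is a skew algebroid \emph{provided} the hypothesis $dQ_{[FCF,\phi]} = Q(dQ_{[SCS,\phi]})Q$ holds for all $C \in \Gamma(E^{S})$. So the whole proof reduces to showing that the displayed condition $dQ_{QBQ} = Q(dQ_{B})Q$ for all $B \in \mathcal{D}^{S}$ implies that hypothesis.

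First I would unwind the relation between the two conditions. For $C \in \Gamma(E^{S})$, set $B := SCS$; by Proposition \ref{propES} (with $E = \mathfrak{o}_{\{\phi\}}(\mathbb{V})$) we have $B \in \Gamma(E) = \Gamma(\mathfrak{o}_{\{\phi\}})$, and in fact $[SCS,\phi] = [B,\phi] \in \mathcal{D}^{S}$ by Theorem \ref{thmDS} applied with $Q = \mathbb{1}$ (so $F = S$): indeed $\mathcal{D}^{S} = \{B' \in \mathfrak{o}_{\{\phi\}} : B': \mathrm{Ker}\,S \to 0,\ \mathrm{Im}\,S \to \mathrm{Im}\,S\}$ is exactly $Im$ of the anchor with $F = S$. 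Then I need to identify $[FCF,\phi]$ in terms of $[B,\phi]$: since $F = QS$ and $[Q,\phi]=0$, one has $FCF = QSCSQ = Q\,(SCS)\,Q = QBQ$, and again using $[Q,\phi]=0$ we get $[FCF,\phi] = [QBQ,\phi] = Q[B,\phi]Q$. Hence $dQ_{[FCF,\phi]} = dQ_{Q[B,\phi]Q}$ and $dQ_{[SCS,\phi]} = dQ_{[B,\phi]}$, so the hypothesis of Theorem \ref{thmALM} becomes precisely $dQ_{Q[B,\phi]Q} = Q\,(dQ_{[B,\phi]})\,Q$. As $C$ ranges over $\Gamma(E^{S})$, the elements $[B,\phi] = [SCS,\phi]$ range over (a spanning set of) $\mathcal{D}^{S}$, which is exactly the class over which we are assuming $dQ_{Q\,\cdot\,Q} = Q\,(dQ_{\cdot})\,Q$. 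So the hypothesis of Theorem \ref{thmALM} holds, the skew algebroid exists, and $Im\,\psi = \mathcal{D}^{QS}$ is integrable. Part 2) is the identical argument with $E = \mathfrak{u}(\mathbb{V},J)$ in place of $\mathfrak{o}_{\{\phi\}}(\mathbb{V})$: Proposition \ref{propES} gives $SCS \in \Gamma(\mathfrak{u}(\mathbb{V},J))$ for $C \in \Gamma(E^{S})$, Theorem \ref{thmDS} part 2) identifies $Im\,\psi$ with $\mathcal{D}^{QS}_{\mathfrak{u}(J)}$ and (with $Q=\mathbb{1}$) identifies $\mathcal{D}^{S}_{\mathfrak{u}(J)}$ with the image of the $S$-anchor, and the same bracket-with-$\phi$ computation using $[Q,\phi]=0$ converts the stated condition over $\mathcal{D}^{S}_{\mathfrak{u}(J)}$ into the hypothesis of Theorem \ref{thmALM}.

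The main obstacle I anticipate is the bookkeeping in the second step, specifically verifying that the map $C \mapsto [SCS,\phi]$ is \emph{surjective} onto $\mathcal{D}^{S}$ (resp. onto $\mathcal{D}^{S}_{\mathfrak{u}(J)}$), so that assuming the identity for all $B \in \mathcal{D}^{S}$ is genuinely enough — and not more than — what Theorem \ref{thmALM} needs. This is where the structural Propositions on $E^{S}$ and on the images $\{FAF\}$ do the real work: with $Q = \mathbb{1}$ the proposition preceding Theorem \ref{thmDS} says $\{SAS : A \in E^{S}|_{K}\} = \{B \in E|_{K} : B : \mathrm{Ker}\,S \to 0,\ \mathrm{Im}\,S \to \mathrm{Im}\,S\}$, and applying $[\,\cdot\,,\phi]$ gives exactly $\mathcal{D}^{S}$ (resp. $\mathcal{D}^{S}_{\mathfrak{u}(J)}$), so surjectivity is automatic. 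A secondary subtlety is that $dQ_{X}$ must be interpreted as the derivative of the endomorphism-valued function $Q$ along the tangent vector $X \in \mathcal{D}^{S} \subseteq T\mathcal{T}$, and one should check this is the same convention as in Theorem \ref{thmALM} (where the subscript is $[FCF,\phi]$, visibly a tangent vector); with that matched, no further analysis is needed and the result follows by pure substitution. One could also remark that when $Q$ is built from the $a_i \in \mathbb{R}[f_1,\dots,f_{2n}]$ as in Theorem \ref{thmALSP}, the hypotheses of parts 1) and 2) hold automatically, recovering unconditional integrability in those cases, but that is a corollary rather than part of this proof.
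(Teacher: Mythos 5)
Your proposal is correct and follows exactly the route the paper intends: the text preceding Theorem \ref{thmICM} states that the integrability conditions are obtained by applying the skew algebroids of Theorem \ref{thmALM} to the distributions identified in Theorem \ref{thmDS}, and your translation of the hypothesis via $FCF = Q(SCS)Q$ and $[QBQ,\phi] = Q[B,\phi]Q$ (using $[Q,\phi]=0$), together with the surjectivity of $C \mapsto [SCS,\phi]$ onto $\mathcal{D}^{S}$ (resp.\ $\mathcal{D}^{S}_{\mathfrak{u}(J)}$), is precisely the required reduction. No essential difference from the paper's approach.
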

 
 Using Theorem \ref{thmALP}, we have:
 \begin{thm}
 \label{thmICP}
 \mbox{}
\newline
\indent
Let $QS \in \Gamma(\mathfrak{gl}(\mathbb{V}))$ such that 
 \begin{itemize}
 \item[$\bullet$] $Q= a_{k}\{J,\phi\}^{k}+...+a_{1}\{J,\phi\}+ a_{0}\mathbb{1}$, \  $a_{i} \in C^{\infty}(\mathcal{T})$
 \item[$\bullet$] $S \in \{ \mathbb{1}, J+\phi, J-\phi, [J,\phi] \}$. \\
 \end{itemize}

\indent
1) $\mathcal{D}^{QS}$ is integrable if \[d{a_{i}}_{B}=0, \ \ \ \forall B \in  \mathcal{D}^{S}.\] \  \\
 \indent
  2) $\mathcal{D}^{QS}_{\mathfrak{u}(J)}$ is integrable if \[d{a_{i}}_{C}=0, \ \ \  \  \forall C \in  \mathcal{D}^{S}_{\mathfrak{u}(J)}.\] 
 \end{thm}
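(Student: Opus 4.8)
The plan is to derive both statements of Theorem \ref{thmICP} from Theorem \ref{thmICM} by showing that the polynomial hypothesis on $Q$ forces the ``conjugation-invariance'' equation $dQ_{QBQ} = Q(dQ_B)Q$ for the relevant classes of $B$. First I would recall that since $Q = \sum_{i} a_i \{J,\phi\}^i$ with the $a_i$ functions on $\mathcal{T}$, the directional derivative of $Q$ in a direction $B \in T\mathcal{T} = \mathfrak{o}_{\{\phi\}}$ splits by the product rule into two pieces:
\[
dQ_B = \sum_i (da_i)_B \,\{J,\phi\}^i + \sum_i a_i \, d\bigl(\{J,\phi\}^i\bigr)_B .
\]
The second piece only involves $d\phi_B$ (since $J$ is fixed), and because $\nabla\phi = 0$ for the connection $\nabla = d + \tfrac12(d\phi)\phi$, one has $d\phi_B = -\tfrac12[(d\phi_B)\phi]\phi$-type relations; more importantly, $\{J,\phi\}$ commutes with $Q$, so the ``tensorial'' part of $dQ_B$ already satisfies the conjugation identity $Q(\cdots)Q$ automatically, by the same computation that underlies the Proposition preceding Theorem \ref{thmALP} (the one asserting $dQ_{[QCQ,\phi]} = Q(dQ_{[C,\phi]})Q$ when $d{a_i}$ annihilates the relevant directions). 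Thus the only obstruction to the identity in Theorem \ref{thmICM} is the scalar part $\sum_i (da_i)_B\{J,\phi\}^i$, and the conjugation identity holds precisely when $(da_i)_{QBQ} = (da_i)_B$ for all $i$ — but if $(da_i)_B = 0$ then we need $(da_i)_{QBQ} = 0$ as well.

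Here is where the hypotheses of Theorem \ref{thmICP} enter. For part 1), the hypothesis is $d{a_i}_B = 0$ for all $B \in \mathcal{D}^S$. I would observe that $QBQ$, for $B \in \mathcal{D}^S = \mathcal{D}^{S}$ (the distribution associated to the map with $Q$ replaced by $\mathbb{1}$, i.e. $S A S$), lands again inside $\mathcal{D}^{QS}$ — indeed by Theorem \ref{thmDS} and the Proposition identifying $\mathcal{D}^{QS} = \mathfrak{o}_{\{\phi\}}^{ImQS}$, conjugating a map supported on $ImS$ by $Q$ produces a map supported on $Im(QS)\subseteq ImS$ (using the $J,K$-invariant orthogonal splitting $V = KerQS \oplus ImQS$ from the first Proposition of Section \ref{secID2} and $Q$'s invertibility on $ImQ$). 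So the natural strategy is: show that the hypothesis $d a_i$ vanishes not only on $\mathcal{D}^S$ but on a distribution large enough to contain all the elements $QBQ$, $B \in \mathcal{D}^S$, and $B$ itself — then the scalar part of $dQ$ vanishes in both directions and Theorem \ref{thmICM} applies verbatim. For part 2), the identical argument runs with $\mathcal{D}^S_{\mathfrak{u}(J)}$ in place of $\mathcal{D}^S$, using $\mathcal{D}^{QS}_{\mathfrak{u}(J)} = [\mathfrak{u}(J)^{ImQS},\phi]$ and the fact that conjugation by $Q$ preserves $\mathfrak{u}(J)$ since $[Q,J]=0$, hence preserves the $\mathfrak{u}(J)$-version of the distribution.

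The key steps in order: (i) expand $dQ_B$ via the product rule and isolate the scalar versus tensorial parts; (ii) verify the tensorial part of $dQ_B$ intertwines correctly under $Q$-conjugation — this is essentially the content of the Proposition stated just before Theorem \ref{thmALP}, applied with $[C,\phi]$ replaced by a general $B \in \mathcal{D}^S$ (or $\mathcal{D}^S_{\mathfrak{u}(J)}$); (iii) check that $QBQ \in \mathcal{D}^{QS}$ (resp.\ $\mathcal{D}^{QS}_{\mathfrak{u}(J)}$) whenever $B \in \mathcal{D}^S$ (resp.\ $\mathcal{D}^S_{\mathfrak{u}(J)}$), using the orthogonal $J,K$-invariant splitting of $V$ by $KerQS$ and $ImQS$; (iv) conclude that the vanishing hypothesis $d{a_i}_B = 0$ propagates to $d{a_i}_{QBQ} = 0$, so that $dQ_{QBQ} = Q(dQ_B)Q$ holds for all $B$ in the distribution, and invoke Theorem \ref{thmICM}.

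\textbf{Main obstacle.} I expect the crux to be step (ii)–(iii): carefully tracking how $dQ_B$ fails to be ``$Q$-conjugation-equivariant'' purely through the scalar derivatives $d a_i$, and confirming that the image $QBQ$ of a distribution element is again a distribution element of the same type. The tensorial part requires knowing that $d(\{J,\phi\}^i)_B$, which depends only on $d\phi_B \in \mathfrak{o}_{\{\phi\}}$, interacts with the $Q$-sandwich in exactly the way needed — and this in turn relies on $Q$ commuting with $\{J,\phi\}$ and with $J$, together with the torsion-free flatness properties of $\nabla$ ($\nabla\phi = 0$, $\nabla I_{\mathcal{T}} = 0$). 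Once these algebraic compatibilities are pinned down, the reduction to Theorem \ref{thmICM} is formal; the differential-geometric bookkeeping of the connection terms is the only place real work is hidden.
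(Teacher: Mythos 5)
Your proposal is correct and follows essentially the paper's own route: the paper obtains Theorem \ref{thmICP} by feeding the hypothesis $d{a_i}_{B}=0$ on $\mathcal{D}^{S}$ into Theorem \ref{thmALP} (whose anchor image, computed in Theorem \ref{thmDS}, is exactly $\mathcal{D}^{QS}$ resp.\ $\mathcal{D}^{QS}_{\mathfrak{u}(J)}$), and Theorem \ref{thmALP} is itself deduced from Theorem \ref{thmALM} via the same Proposition on $dQ_{[QCQ,\phi]}=Q(dQ_{[C,\phi]})Q$ that you invoke, so your detour through Theorem \ref{thmICM} is merely the other path around the same square. You also correctly isolate the one nontrivial verification, namely that $d{a_i}_{B}=0$ for $B\in\mathcal{D}^{S}$ propagates to $d{a_i}_{QBQ}=0$ because $[Q,J]=[Q,\phi]=0$ forces $Q$ to preserve $KerS$ and $ImS$, hence $QBQ\in\mathcal{D}^{S}$ (resp.\ $\mathcal{D}^{S}_{\mathfrak{u}(J)}$), which is precisely what the Proposition preceding Theorem \ref{thmALP} requires.
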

 
Noting that  $\mathcal{D}^{S}$ and $\mathcal{D}^{S}_{\mathfrak{u}(J)}$ are always integrable, we can reformulate the above result as follows:

  \begin{thm}
 \mbox{}
\newline
\indent
Let $QS \in \Gamma(\mathfrak{gl}(\mathbb{V}))$  such that 
 \begin{itemize}
 \item[$\bullet$] $Q= a_{k}\{J,\phi\}^{k}+...+a_{1}\{J,\phi\}+ a_{0}\mathbb{1}$, \  $a_{i} \in C^{\infty}(\mathcal{T})$
 \item[$\bullet$] $S \in \{ \mathbb{1}, J+\phi, J-\phi, [J,\phi] \}$.
 \end{itemize}
\indent
\begin{align*}
 1) \ &\mathcal{D}^{QS} \text{ is integrable if each } a_{i} \text{ is constant along the leaves of the foliation } \\ & \text{determined by } \mathcal{D}^{S}.
\end{align*}
 \begin{align*}
  2) \ &\mathcal{D}^{QS}_{\mathfrak{u}(J)} \text{ is integrable if each } a_{i} \text{ is constant along the leaves of the foliation } \\ & \text{determined by } \mathcal{D}^{S}_{\mathfrak{u}(J)}. 
  \end{align*}
  
 \end{thm}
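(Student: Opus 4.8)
The plan is to derive this statement as an immediate corollary of Theorem \ref{thmICP}, so the only real content is translating the analytic condition $d{a_i}_B = 0$ for all $B \in \mathcal{D}^S$ (resp. $\mathcal{D}^S_{\mathfrak{u}(J)}$) into the geometric statement that each $a_i$ is constant along the leaves of the foliation determined by $\mathcal{D}^S$ (resp. $\mathcal{D}^S_{\mathfrak{u}(J)}$). First I would invoke the sentence already asserted in the excerpt, namely that $\mathcal{D}^S$ and $\mathcal{D}^S_{\mathfrak{u}(J)}$ are always integrable distributions on $\mathcal{T}$; this is what guarantees that ``the foliation determined by $\mathcal{D}^S$'' is a well-defined object in the first place. (This integrability is itself a consequence of the skew algebroid structures: $\mathcal{D}^S$ arises as the image $Im\delta$ or $Im\sigma$, and $\mathcal{D}^S_{\mathfrak{u}(J)}$ as an image $Im\psi$ from Theorem \ref{thmDS} with $E = \mathfrak{u}(\mathbb{V},J)$ and $Q = \mathbb{1}$, so the Propositions of Section \ref{secFTSA} and Theorem \ref{thmALM} apply.)

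Next I would observe that for an integrable distribution $\mathcal{D}$ with associated (generalized) foliation $\mathscr{F}$, a function $a \in C^\infty(\mathcal{T})$ is constant along the leaves of $\mathscr{F}$ if and only if $da_B = 0$ for all $B \in \mathcal{D}$. One direction is trivial: if $a$ is leafwise constant then its differential annihilates every vector tangent to a leaf, and the tangent space to the leaf through $K$ is exactly $\mathcal{D}|_K$. For the converse, if $da$ annihilates $\mathcal{D}$ then $a$ is constant along any integral curve of any section of $\mathcal{D}$; since the leaf through a point is, by the (generalized Stefan–Sussmann) Frobenius theorem, path-connected by concatenations of such integral curves, $a$ is constant on each leaf. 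The one subtlety here, which I would flag as the main point requiring care, is that the leaves in this paper ``have in general varied dimensions'' (cf.\ the Remark after the Proposition in Section \ref{secBTS}), so $\mathcal{D}^S$ and $\mathcal{D}^S_{\mathfrak{u}(J)}$ are singular distributions and one must appeal to the generalized Frobenius theorem rather than the classical one; fortunately the leaves are still smooth embedded submanifolds (as asserted for the $\mathcal{T}^\#$ in Section \ref{secFTSA}), so the path-connectedness argument goes through.

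With that equivalence in hand the proof is a one-line substitution: apply it to each $a_i$ with $\mathcal{D} = \mathcal{D}^S$ to see that ``$a_i$ constant along the leaves of the foliation determined by $\mathcal{D}^S$'' is exactly the hypothesis ``$d{a_i}_B = 0$ for all $B \in \mathcal{D}^S$'' of part 1) of Theorem \ref{thmICP}, and similarly with $\mathcal{D} = \mathcal{D}^S_{\mathfrak{u}(J)}$ for part 2). Theorem \ref{thmICP} then yields the integrability of $\mathcal{D}^{QS}$ (resp. $\mathcal{D}^{QS}_{\mathfrak{u}(J)}$), which is the claim. I expect the only genuine obstacle to be making the leafwise-constancy $\Leftrightarrow$ annihilation equivalence airtight in the singular-foliation setting; the rest is bookkeeping and citing Theorem \ref{thmICP}.
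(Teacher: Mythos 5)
Your proposal is correct and matches the paper's intended route: the paper introduces this theorem with the words ``Noting that $\mathcal{D}^{S}$ and $\mathcal{D}^{S}_{\mathfrak{u}(J)}$ are always integrable, we can reformulate the above result as follows,'' i.e.\ it is exactly the corollary of Theorem \ref{thmICP} obtained by identifying the condition $d{a_i}_{B}=0$ for all $B$ in the distribution with leafwise constancy. Note that for the stated implication only the easy direction of your equivalence is needed (leafwise constancy implies the differential annihilates the leaf tangent spaces, which are the fibers of $\mathcal{D}^{S}$ resp.\ $\mathcal{D}^{S}_{\mathfrak{u}(J)}$), so the Stefan--Sussmann subtlety you flag, while worth recording, is not actually load-bearing here.
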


As in Section \ref{secSPE}, let us now consider functions $f_{j} \in C^{\infty}(\mathcal{T})$ defined via 
\[det(\lambda \mathbb{1}- \{J,\phi\})= \lambda^{2n}-f_{1}\lambda^{2n-1}+...-f_{2n-1}\lambda+f_{2n},\]
so that $f_{j}|_{K}$ is the $j^{th}$ symmetric polynomial in the eigenvalues of $\{J,K\}$. Note that it follows from Proposition \ref{propFJC} that $d{f_{j}}_{C}=0$ for all $C \in \mathcal{D}^{\mathbb{1}}_{\mathfrak{u}(J)}$ and hence, by Proposition \ref{propSCS}, for all $C \in \mathcal{D}^{S}_{\mathfrak{u}(J)}$.  By either then applying Theorem \ref{thmICP} or by using Theorem \ref{thmALSP} directly, we have

\begin{thm}
\label{thmICSPE}
\mbox{}
\newline
\indent
Let $QS \in \Gamma(\mathfrak{gl}(\mathbb{V}))$  such that
\begin{itemize}
 \item[$\bullet$] $Q= a_{k}\{J,\phi\}^{k}+...+a_{1}\{J,\phi\}+ a_{0}\mathbb{1}$, \  $a_{i} \in C^{\infty}(\mathcal{T})$
 \item[$\bullet$] $S \in \{ \mathbb{1}, J+\phi, J-\phi, [J,\phi] \}$.
 \end{itemize}
 Moreover, suppose
 \[a_{i} \in \mathbb{R}[f_{1},...,f_{2n}], \]
 where each $f_{j}\in C^{\infty}(\mathcal{T})$ is defined via
\[det(\lambda \mathbb{1}- \{J,\phi\})= \lambda^{2n}-f_{1}\lambda^{2n-1}+...-f_{2n-1}\lambda+f_{2n}.\]

\vspace{.2cm}
\noindent
Then $\mathcal{D}^{QS}_{\mathfrak{u}(J)}$ is an integrable distribution on $\mathcal{T}(V)$.
\end{thm}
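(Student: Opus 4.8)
The plan is to obtain Theorem \ref{thmICSPE} as a direct corollary of the integrability criterion in Theorem \ref{thmICP}, part 2), by verifying its hypothesis for the specific coefficients $a_i \in \mathbb{R}[f_1,\dots,f_{2n}]$. Concretely, I would show that $d{a_i}_C = 0$ for every $C \in \mathcal{D}^{S}_{\mathfrak{u}(J)}$, and then invoke Theorem \ref{thmICP} to conclude that $\mathcal{D}^{QS}_{\mathfrak{u}(J)}$ is integrable. Since $a_i$ is a polynomial in $f_1,\dots,f_{2n}$, the chain rule reduces everything to the single fact that $d{f_j}_C = 0$ for all $C \in \mathcal{D}^{S}_{\mathfrak{u}(J)}$.

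The key steps, in order: (i) Recall from the discussion preceding the theorem that Proposition \ref{propFJC}(2) gives $d{f_j}_{[B,\phi]} = 0$ for all $B \in \mathfrak{u}(V,J)$, i.e. $d{f_j}_C = 0$ for all $C \in \mathcal{D}^{\mathbb{1}}_{\mathfrak{u}(J)}$ (noting $\mathcal{D}^{\mathbb{1}}_{\mathfrak{u}(J)} \subseteq [\mathfrak{u}(V,J),\phi]$). (ii) Apply Proposition \ref{propSCS} with $E = \mathfrak{u}(\mathbb{V},J)$: since the $f_j$ satisfy $d{f_j}_{[B,\phi]} = 0$ for all $B \in \Gamma(\mathfrak{u}(\mathbb{V},J))$, they satisfy $d{f_j}_{[SCS,\phi]} = 0$ for all $C \in \Gamma(E^S)$; unwinding this through the description in Theorem \ref{thmDS}(2) of $\mathcal{D}^{S}_{\mathfrak{u}(J)} = [\mathfrak{u}(J)^{ImS},\phi]$ shows $d{f_j}_C = 0$ for all $C \in \mathcal{D}^{S}_{\mathfrak{u}(J)}$. (iii) Because $a_i$ is a polynomial in the $f_j$, write $d{a_i}_C = \sum_j \frac{\partial a_i}{\partial f_j} \, d{f_j}_C = 0$ for all such $C$. (iv) Feed this into Theorem \ref{thmICP}(2) to get integrability of $\mathcal{D}^{QS}_{\mathfrak{u}(J)}$. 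Alternatively, as the statement notes, one can bypass Theorem \ref{thmICP} entirely: Theorem \ref{thmALSP} already produces a skew algebroid $(E^S,[\cdot,\cdot]_F,\psi)$ with $Im\psi = \mathcal{D}^{QS}_{\mathfrak{u}(J)}$ by Theorem \ref{thmDS}(2), and any skew algebroid has integrable image by the Proposition in Section \ref{secBTS}.

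The main obstacle is bookkeeping rather than depth: one must be careful that the differential conditions match up across the three reformulations of the hypothesis ($d{a_i}_{[SCS,\phi]}=0$ in Theorem \ref{thmALP}, $d{a_i}_{[SCS,\phi]}=0$ with $E=\mathfrak{u}(\mathbb{V},J)$ in Proposition \ref{propSCS}, and $d{a_i}_C = 0$ for $C\in\mathcal{D}^{S}_{\mathfrak{u}(J)}$ in Theorem \ref{thmICP}), and that the identification $SCS \in \Gamma(E)$ from Proposition \ref{propES} is being applied in the right direction. The genuinely substantive input — that the elementary symmetric functions of the eigenvalues of $\{J,\phi\}$ are $U(J)$-invariant, hence annihilated by directions tangent to $U(J)$-orbits, which contain the leaves of $\mathcal{D}^{S}_{\mathfrak{u}(J)}$ — is already packaged in Propositions \ref{propFJC} and \ref{propSCS}, so the proof here is essentially an assembly of cited results. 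I would keep it to a few lines, citing Proposition \ref{propFJC}, Proposition \ref{propSCS}, and Theorem \ref{thmICP} (or, for the alternative route, Theorem \ref{thmALSP} together with Theorem \ref{thmDS}(2) and the integrable-image proposition).
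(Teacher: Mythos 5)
Your proposal matches the paper's own argument: the paper derives the theorem exactly by combining Proposition \ref{propFJC} (vanishing of $d{f_{j}}$ on $\mathcal{D}^{\mathbb{1}}_{\mathfrak{u}(J)}$) with Proposition \ref{propSCS} to get vanishing on $\mathcal{D}^{S}_{\mathfrak{u}(J)}$, then invokes Theorem \ref{thmICP} — or, as you also note, Theorem \ref{thmALSP} directly. Your assembly, including the chain-rule step for $a_{i}\in\mathbb{R}[f_{1},\dots,f_{2n}]$, is correct and essentially identical to the paper's route.
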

\begin{rmk}
We will consider examples of the above setup in Section \ref{secSPEFOL}.
\end{rmk}


 \subsection{Relating the Distributions and Refinements}
 
 We will now describe the different ways that the above integrable distributions refine both each other and the distributions $T_{\phi}\mathcal{T}^{\#}(J)$ (see Notation \ref{notTT}). This will consequently lead to various foliations and subfoliations of $\mathcal{T}$ and $\mathcal{T}^{\#}(J)$ which we will explore in Sections \ref{secFCF} and \ref{secSCF}. 
 
The first set of refinements of the $T_{\phi}\mathcal{T}^{\#}(J)$ will be given in the next two propositions.

\begin{prop}
\label{propRDR1}
\mbox{}
\vspace{.2cm}
\begin{itemize}
\item[1)] $\mathcal{D}^{J+\phi} \subset T_{\phi}\mathcal{T}^{(m_{1},*)}(J)$.\\
\item[2)] $\mathcal{D}^{J-\phi} \subset T_{\phi}\mathcal{T}^{(*,m_{-1})}(J)$.\\
\item[3)] $\mathcal{D}^{[J,\phi]} \subset T_{\phi}\mathcal{T}^{(m_{1},m_{-1})}(J)$.
\end{itemize}
\end{prop}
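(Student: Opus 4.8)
The plan is to verify each inclusion pointwise at an arbitrary $K \in \mathcal{T}(V)$ using the algebraic descriptions already established for the two kinds of distributions involved. On the one hand, $\mathcal{D}^{S}$ at $K$ is $\{B \in \mathfrak{o}_{\{\phi\}} \mid B: \mathrm{Ker}\, S \to 0,\ \mathrm{Im}\, S \to \mathrm{Im}\, S\}$ by the case $Q = \mathbb{1}$ of Theorem \ref{thmDS}(1), with $S$ evaluated at $K$. On the other hand, $T_{\phi}\mathcal{T}^{\#}(J)$ at $K$ is the honest tangent space $T_{K}\mathcal{T}^{\#}(J)$, where $\mathcal{T}^{\#}$ is the corresponding Schubert-cell-type stratum. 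So the task reduces to three concrete linear-algebra statements at $K$: an element $B \in \mathfrak{o}_{\{\phi\}}$ that kills $\mathrm{Ker}\,S|_K$ and preserves $\mathrm{Im}\,S|_K$ must be tangent to the stratum cut out by the relevant kernel-dimension condition.

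First I would identify the subspaces concretely using Proposition \ref{propDECOM}: with $S = J+\phi$ we have at $K$ that $\mathrm{Ker}(J+\phi)|_K = \mathrm{Ker}(J+K) = V_{1}$ and $\mathrm{Im}(J+\phi)|_K = V_{e_1}\oplus\dots\oplus V_{e_l}\oplus V_{-1}$; symmetrically for $S = J-\phi$ with $V_{-1}$; and for $S = [J,\phi]$, $\mathrm{Ker}[J,\phi]|_K = V_1 \oplus V_{-1}$ and $\mathrm{Im}[J,\phi]|_K = V_{e_1}\oplus\dots\oplus V_{e_l}$. Next, for the tangent-space side, I would recall that a tangent vector $B \in T_K\mathcal{T} = \mathfrak{o}_{\{K\}}$ lies in $T_K\mathcal{T}^{(m_1,\ast)}(J)$ precisely when the first-order variation of $\dim\mathrm{Ker}(J+K)$ along $B$ vanishes. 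Differentiating the condition $(J+K)v = 0$ along a path $K_t = e^{tB}Ke^{-tB}$ (so $\dot K = [B,K] = -2BK$ since $\{B,K\}=0$), one gets that the variation preserves the kernel dimension iff $[B,K]$ maps $V_1$ into $\mathrm{Im}(J+K) = V_1^{\perp\text{(inside the }J+K\text{-decomposition)}}$, i.e. iff the ``$V_1 \to V_1$'' block of $[B,K]$, equivalently of $B$, vanishes — which is exactly the condition $B: \mathrm{Ker}(J+\phi)\to 0$ packaged into the definition of $\mathcal{D}^{J+\phi}$. Running the same computation for $J-K$ gives (2), and imposing both conditions simultaneously — which is what $B: \mathrm{Ker}[J,\phi] = V_1\oplus V_{-1} \to 0$ encodes — gives (3), using that $\mathcal{T}^{(m_1,m_{-1})} = \mathcal{T}^{(m_1,\ast)}\cap\mathcal{T}^{(*,m_{-1})}$ with transversal intersection so that $T_K\mathcal{T}^{(m_1,m_{-1})} = T_K\mathcal{T}^{(m_1,\ast)}\cap T_K\mathcal{T}^{(*,m_{-1})}$.

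I expect the main obstacle to be making the ``variation of kernel dimension'' argument clean and rigorous: one must show that the vanishing of the relevant off-diagonal block of $B$ is not merely necessary but sufficient for $B$ to be genuinely tangent to the stratum (a constant-rank / implicit-function-theorem point), and one must be careful that $\mathcal{D}^{S}$ only constrains the $\mathrm{Ker}\,S \to \mathrm{Im}\,S$ block while also forcing $B$ to preserve $\mathrm{Im}\,S$; I would check that together these two conditions are equivalent to the single condition ``the $\mathrm{Ker}\,S \to \mathrm{Im}\,S$ block of $B$ vanishes'' within $\mathfrak{o}_{\{\phi\}}$, using the orthogonality of the splitting and skew-adjointness of $B$ (the $\mathrm{Im}\,S \to \mathrm{Ker}\,S$ block is minus the adjoint of the other, so it vanishes too automatically, which is why ``preserves $\mathrm{Im}\,S$'' comes for free once the first block is killed). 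Alternatively, and perhaps more cleanly, I would invoke the group-orbit description from \cite{Gindi2}: $\mathcal{T}^{\#}(J)$ is an orbit of a parabolic-type subgroup whose Lie algebra action generates precisely $\mathcal{D}^{S}$, so the inclusion of distributions is immediate from the inclusion of the generating subalgebras — this sidesteps the analytic subtlety entirely, at the cost of citing the Schubert-cell picture. Since this paper is an announcement (Remark \ref{rmkUPDATED}), I would present the orbit-based argument as the conceptual reason and defer the block-computation verification to \cite{Gindi4}.
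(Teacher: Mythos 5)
The paper itself offers no proof of this proposition --- it is one of the many statements whose proofs are deferred to the updated version (Remark \ref{rmkUPDATED}) --- so there is nothing to compare against directly; I can only assess your argument on its own terms. Your proposal is correct in substance, but it does far more work than the paper's own earlier results require. The key observation you underuse is that $\mathcal{T}^{(m_{1},*)}(J)$, $\mathcal{T}^{(*,m_{-1})}(J)$ and $\mathcal{T}^{(m_{1},m_{-1})}(J)$ have already been identified as the leaves through $K$ of the integrable distributions $Im\,\delta$ and $Im\,\sigma$, whose fibers are described explicitly: $Im\,\delta|_{K}=\{B\in\mathfrak{o}_{\{\phi\}}\mid B:Ker\,S\to Im\,S\}$ and $Im\,\sigma|_{K}=\{B\mid B:Ker(J+K)\to Im(J+K),\ Ker(J-K)\to Im(J-K)\}$. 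Hence $T_{\phi}\mathcal{T}^{\#}(J)|_{K}$ \emph{is} one of these spaces, and the inclusions become one-liners: an element of $\mathcal{D}^{S}$ sends $Ker\,S$ to $0\subset Im\,S$, and for part 3) one uses $Ker[J,K]=Ker(J+K)\oplus Ker(J-K)$ (Proposition \ref{propDECOM} 2c) to see that killing $Ker[J,\phi]$ kills both summands. Your variation-of-kernel-dimension computation essentially re-derives the tangent space to the stratum from scratch, which is exactly the content of the theorems in Section \ref{secFTSA}; the ``constant-rank / implicit-function-theorem'' worry you flag is real if one takes that route, but it is already discharged by those theorems, so citing them is both cleaner and safer. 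Two small technical points: the splittings $V=Ker\,S\oplus Im\,S$ are orthogonal because $J\pm K$ and $[J,K]$ are skew-adjoint (respectively: a commutator of skew-adjoints is symmetric, but still normal), which is what justifies your remark that ``preserves $Im\,S$'' comes for free from skew-adjointness of $B$; and in your curve computation, if $B$ anticommutes with $K$ then $[B,K]=2BK$, not $-2BK$, and more importantly you are conflating the tangent vector $B\in\mathfrak{o}_{\{K\}}$ with the generator of the conjugation path --- the curve $e^{tB}Ke^{-tB}$ has velocity $[B,K]\neq B$ at $t=0$. Neither slip is fatal, but both disappear if you adopt the shorter argument.
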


\begin{prop}
\label{propRDR2}
\[\mathcal{D}^{\mathbb{1}}_{\mathfrak{u}(J)} \subset T_{\phi}\mathcal{T}^{\#}(J). \]
\end{prop}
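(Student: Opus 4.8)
\textbf{Proof proposal for Proposition \ref{propRDR2}.}

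The plan is to show that any element of the distribution $\mathcal{D}^{\mathbb{1}}_{\mathfrak{u}(J)}$, evaluated at a point $K\in\mathcal{T}(V)$, is tangent to the submanifold $\mathcal{T}^{\#}(J)$ through $K$, for each of the three choices of $\#$. By Theorem \ref{thmDS}(2) with $S=\mathbb{1}$ and $Q=\mathbb{1}$, we have $\mathcal{D}^{\mathbb{1}}_{\mathfrak{u}(J)}|_K=\{[B,\phi]\ |\ B\in\mathfrak{u}(V,J)\}=[\mathfrak{u}(V,J),K]$; note that since $B$ commutes with $J$ and $\{[B,\phi],\phi\}=0$ automatically, this is the full image of $\mathfrak{u}(V,J)$ under the map $C\mapsto[C,K]$. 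So the real content is: for $B\in\mathfrak{u}(V,J)$, the tangent vector $[B,K]\in T_K\mathcal{T}$ does not change $\dim\mathrm{Ker}(J+K)$ nor $\dim\mathrm{Ker}(J-K)$ to first order.

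The cleanest way to see this is to integrate. First I would observe that the $U(J)$-action on $\mathcal{T}(V)$ by conjugation, $A\cdot L=ALA^{-1}$ for $A\in U(V,J)$, has infinitesimal generators exactly $L\mapsto [B,L]$ for $B\in\mathfrak{u}(V,J)$ (using the identification $T_L\mathcal{T}=[\mathfrak{o}(V,g),L]$). Hence the curve $t\mapsto e^{tB}Ke^{-tB}$ has velocity $[B,K]$ at $t=0$ and stays in the $U(J)$-orbit of $K$. Then I would compute, for $A\in U(V,J)$,
\[
J+AKA^{-1}=A(A^{-1}JA+K)A^{-1}=A(J+K)A^{-1},
\]
using $[A,J]=0$; similarly $J-AKA^{-1}=A(J-K)A^{-1}$. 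Since $A$ is invertible, $\dim\mathrm{Ker}(J\pm AKA^{-1})=\dim\mathrm{Ker}(J\pm K)$. Therefore the entire $U(J)$-orbit of $K$, and in particular the curve $e^{tB}Ke^{-tB}$, lies inside $\mathcal{T}^{(m_1,*)}(J)$, inside $\mathcal{T}^{(*,m_{-1})}(J)$, and inside their intersection $\mathcal{T}^{(m_1,m_{-1})}(J)$, where $2m_{\pm1}=\dim\mathrm{Ker}(J\pm K)$. Differentiating at $t=0$ gives $[B,K]\in T_K\mathcal{T}^{\#}(J)$ for every $\#$, which is precisely the claim $\mathcal{D}^{\mathbb{1}}_{\mathfrak{u}(J)}\subset T_\phi\mathcal{T}^{\#}(J)$.

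I do not expect a serious obstacle here; the argument is essentially the same conjugation-invariance computation already used in the proof of Proposition \ref{propFJC}. The one point requiring a little care is the identification of $\mathcal{D}^{\mathbb{1}}_{\mathfrak{u}(J)}|_K$ with the tangent space to the $U(J)$-orbit: one must check that $[B,K]$ genuinely lands in $\mathfrak{o}_{\{\phi\}}|_K=T_K\mathcal{T}$ (i.e. is skew and anticommutes with $K$), which follows since $B\in\mathfrak{o}(V,g)$ and $\{[B,K],K\}=BK^2-KBK+KBK-K^2B=-2B\cdot(-\mathbb{1})\cdot$\,\dots wait—more simply, $[B,K]K+K[B,K]=BKK-KBK+KBK-KKB=-B+B=0$ using $K^2=-\mathbb{1}$, so indeed $[B,K]\in\mathfrak{o}_{\{\phi\}}|_K$. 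For the reader wanting an infinitesimal rather than integrated proof, one can alternatively differentiate the defining rank conditions directly, but the orbit argument above is shorter and also explains why these leaves of $\mathcal{D}^{\mathbb{1}}_{\mathfrak{u}(J)}$ sit inside the leaves of the three algebroids of Section \ref{secFTSA}.
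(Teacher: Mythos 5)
Your argument is correct, and it is essentially the approach the paper itself points to: the paper defers the proof to the updated version, but its later assertion that the $U(J)$-orbits $\mathcal{T}^{\mathbb{1}}_{U(J),K}$ refine the $\mathcal{T}^{\#}(J)$ (Section \ref{secSCF}) rests on exactly your conjugation-invariance computation $J\pm AKA^{-1}=A(J\pm K)A^{-1}$, and differentiating the orbit curve $e^{tB}Ke^{-tB}$ is the right way to pass to the distributions. The only small polish needed is to delete the false start in your verification that $\{[B,K],K\}=0$ and keep the clean second computation.
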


The following gives more refinements:
\begin{prop}
\mbox{}
\vspace{.2cm}
\begin{itemize}
\item[1)] $\mathcal{D}^{S}_{\mathfrak{u}(J)} \subset \mathcal{D}^{\mathbb{1}}_{\mathfrak{u}(J)}$.
\item[2)]  $\mathcal{D}^{S}_{\mathfrak{u}(J)} \subset \mathcal{D}^{S}$.
\item[3)] $ \mathcal{D}^{S}_{\mathfrak{u}(J)}=  \mathcal{D}^{\mathbb{1}}_{\mathfrak{u}(J)} \cap\mathcal{D}^{S} $.
\end{itemize}
\end{prop}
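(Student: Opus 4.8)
The plan is to prove the three claims by unwinding the pointwise definitions at an arbitrary $K \in \mathcal{T}(V,g)$ and working inside the fixed fiber $\mathfrak{o}(V,g)$ with the decomposition machinery already established. Recall that $\mathcal{D}^{S}|_{K} = \mathfrak{o}_{\{K\}}^{ImS|_{K}}$, i.e.\ the set of $B \in \mathfrak{o}_{\{K\}}(V)$ with $B: KerS|_{K} \to 0$ and $B: ImS|_{K} \to ImS|_{K}$, while $\mathcal{D}^{S}_{\mathfrak{u}(J)}|_{K} = [\mathfrak{u}(J)^{ImS|_{K}}, K]$, the image under $\mathrm{ad}_K$ of those $C \in \mathfrak{u}(V,J)$ that kill $KerS|_{K}$ and preserve $ImS|_{K}$. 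For $S \in \{J+\phi, J-\phi, [J,\phi]\}$ one has $ImS|_{K} = Im(J+K)$, $Im(J-K)$, $Im[J,K]$ respectively, and all these spaces are simultaneously $J$- and $K$-invariant and orthogonal-complement-closed by Proposition \ref{propDECOM}.

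First I would prove item (2), $\mathcal{D}^{S}_{\mathfrak{u}(J)} \subset \mathcal{D}^{\mathbb{1}}_{\mathfrak{u}(J)}$. Here $\mathcal{D}^{\mathbb{1}}_{\mathfrak{u}(J)}|_{K}$ is just $[\mathfrak{u}(V,J), K] \cap \mathfrak{o}_{\{K\}}$, which (since $[\mathfrak{u}(V,J),K]$ automatically anticommutes with $K$) equals $[\mathfrak{u}(V,J),K]$. So this inclusion amounts to the triviality $\mathfrak{u}(J)^{W} \subset \mathfrak{u}(V,J)$ for $W = ImS|_K$, followed by applying $\mathrm{ad}_K$; nothing is needed beyond the definitions. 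Next, item (1), $\mathcal{D}^{S}_{\mathfrak{u}(J)} \subset \mathcal{D}^{S}$: given $C \in \mathfrak{u}(J)^{W}$, I must check $[C,K]$ lies in $\mathfrak{o}_{\{K\}}$ (immediate), kills $KerS|_K$, and preserves $W = ImS|_K$. Because $W$ and $KerS|_K = W^{\perp}$ are both $C$-invariant and $K$-invariant, $[C,K] = CK - KC$ visibly preserves each summand; and on $KerS|_K$ both $CK$ and $KC$ vanish since $C$ kills that summand and $K$ preserves it. Hence $[C,K] \in \mathcal{D}^{S}|_K$.

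The substance is in item (3), the reverse inclusion $\mathcal{D}^{\mathbb{1}}_{\mathfrak{u}(J)} \cap \mathcal{D}^{S} \subset \mathcal{D}^{S}_{\mathfrak{u}(J)}$; combined with (1) and (2) this gives the stated equality. So suppose $B = [C,K]$ with $C \in \mathfrak{u}(V,J)$, and suppose additionally that $B$ kills $KerS|_K = W^{\perp}$ and preserves $W = ImS|_K$. I want to produce $C' \in \mathfrak{u}(J)^{W}$ with $[C',K] = B$. The natural candidate is to decompose $C$ according to $V = W \oplus W^{\perp}$. Since $W$ is $J$-invariant, write $C = C_W \oplus C_\perp \oplus C_{\mathrm{off}}$ where $C_W = \mathrm{pr}_W C \,\mathrm{pr}_W$ etc.; each of $C_W, C_\perp$ is skew and commutes with $J$ (as $J$ preserves the summands), while $C_{\mathrm{off}}$ is the off-diagonal part. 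Since $W$ is $K$-invariant, $[C,K]$ respects the block structure: its $W^\perp$-diagonal block is $[C_\perp, K|_{W^\perp}]$, its off-diagonal blocks come only from $C_{\mathrm{off}}$. The hypothesis that $B = [C,K]$ kills $W^\perp$ forces $[C_\perp, K|_{W^\perp}] = 0$ \emph{and} $C_{\mathrm{off}}$ to act trivially from $W^\perp$ into $W$; the hypothesis that $B$ preserves $W$ forces the $W \to W^\perp$ off-diagonal block of $[C,K]$ to vanish too. Taking $C' := C_W$ (the $W$-block alone), I claim $[C',K] = B$: the point is that the off-diagonal contributions of $[C,K]$ have been shown to vanish, and the $W^\perp$-block of $[C,K]$ vanishes, so $[C,K]$ equals its $W$-block, which is $[C_W, K|_W] = [C', K]$. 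And $C' \in \mathfrak{u}(J)^W$ by construction.

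The main obstacle will be the bookkeeping in item (3): specifically, verifying carefully that the \emph{off-diagonal} blocks of $C$ contribute nothing to $[C,K]$ once we impose that $B$ both annihilates $W^\perp$ and preserves $W$ — i.e.\ that $C_{\mathrm{off}}$ can genuinely be discarded rather than merely its restriction to one summand. This uses that $K$ is block-diagonal with respect to $W \oplus W^{\perp}$ (so $[C_{\mathrm{off}}, K]$ is again purely off-diagonal, with its two blocks being $C_{\mathrm{off}}|_{W^\perp \to W}\,K|_{W^\perp} - K|_W\,C_{\mathrm{off}}|_{W^\perp \to W}$ and the symmetric expression), together with skew-symmetry of $C_{\mathrm{off}}$ relating its two off-diagonal pieces by $(-)^*$, so that one vanishing block forces the other. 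I would isolate this as a short linear-algebra lemma about $\mathrm{ad}_K$ on skew endomorphisms adapted to a $K$-invariant orthogonal splitting, apply it uniformly for $S = J+\phi, J-\phi, [J,\phi]$ (the only input being $K$-invariance of $ImS|_K$, guaranteed by Proposition \ref{propDECOM}), and then the three displayed inclusions follow by assembling the pointwise statements over all $K \in \mathcal{T}$.
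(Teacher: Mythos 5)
The paper itself gives no proof of this proposition (all proofs of the refinement statements are deferred to the announced updated version), so there is nothing to compare against; judged on its own, your argument is correct. The two inclusions are indeed immediate from the definitions $\mathcal{D}^{S}_{\mathfrak{u}(J)}=[\mathfrak{u}(J)^{ImS},\phi]$ and $\mathcal{D}^{S}=\mathfrak{o}_{\{\phi\}}^{ImS}$, and your block-decomposition argument for the reverse inclusion in (3) is sound: since $W=ImS|_{K}$ is $J,K$-invariant with $W^{\perp}=KerS|_{K}$, the hypotheses that $[C,K]$ annihilates $W^{\perp}$ and preserves $W$ kill the $W^{\perp}$-diagonal and both off-diagonal blocks of $[C,K]$, so replacing $C$ by its $W$-block $C_{W}\in\mathfrak{u}(J)^{W}$ preserves the bracket. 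Two cosmetic points to fix: you have swapped the labels of items (1) and (2) relative to the statement, and the phrase ``forces $C_{\mathrm{off}}$ to act trivially from $W^{\perp}$ into $W$'' should read that $[C_{\mathrm{off}},K]$ acts trivially there (i.e.\ $C_{\mathrm{off}}$ intertwines the restrictions of $K$), which is what your subsequent reasoning actually uses.
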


To obtain even more refinements, we will now turn on a $Q$. First consider the following proposition where no integrability conditions are imposed on $\mathcal{D}^{QS}$ and $\mathcal{D}^{QS}_{\mathfrak{u}(J)}$.
\begin{prop}
\mbox{}
\newline
\indent
Let $QS \in \Gamma(\mathfrak{gl}(\mathbb{V}))$ such that 
 \begin{itemize}
 \item[$\bullet$] $Q^{*}=Q$
 \item[$\bullet$] $[Q,J]=0$ and $[Q, \phi]= 0$
 \item[$\bullet$] $S \in \{ \mathbb{1}, J+\phi, J-\phi, [J,\phi] \}$.
 \end{itemize}
 The following holds true: 

\hspace{3.5cm}
\begin{tikzpicture}
  \matrix (m) [matrix of math nodes,row sep=1em,column sep=1em,minimum
width=2em] {
    \mathcal{D}^{QS} & \mathcal{D}^{S} \\
    \mathcal{D}^{QS}_{\mathfrak{u}(J)} & \mathcal{D}^{S}_{\mathfrak{u}(J)}. \\
  };
\path[-stealth, auto] (m-1-1) edge[draw=none] node [sloped,
auto=false,allow upside down] {$\subset$} (m-1-2);

\path[-stealth, auto] (m-2-1) edge[draw=none] node [sloped,
auto=false,allow upside down] {$\subset$} (m-1-1);

\path[-stealth, auto] (m-2-2) edge[draw=none] node [sloped,
auto=false,allow upside down] {$\subset$} (m-1-2);

\path[-stealth, auto] (m-2-1) edge[draw=none] node [sloped,
auto=false,allow upside down] {$\subset$} (m-2-2);
\end{tikzpicture}  
\end{prop}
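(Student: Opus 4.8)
The plan is to prove the four inclusions in the diagram by working fiberwise at an arbitrary $K \in \mathcal{T}(V)$ and using the pointwise descriptions of the distributions from Theorem \ref{thmDS} together with the orthogonal $J,K$-invariant splitting $V = Ker QS|_K \oplus Im QS|_K$ supplied by the preceding proposition. The top inclusion $\mathcal{D}^{QS} \subset \mathcal{D}^{S}$ and the bottom inclusion $\mathcal{D}^{QS}_{\mathfrak{u}(J)} \subset \mathcal{D}^{S}_{\mathfrak{u}(J)}$ should both follow from a single linear-algebra observation: because $Q^{*}=Q$, $[Q,J]=0$, $[Q,\phi]=0$ and $Q$ is a polynomial in $\{J,\phi\}$ (hence diagonalizable with $Q$-invariant eigenspaces that are $J,K$-invariant), one has $Ker QS|_K \supseteq Ker S|_K$ and $Im QS|_K \subseteq Im S|_K$ as $J,K$-invariant subspaces — indeed $QS$ and $S$ commute and $Q$ preserves $Im S$, so $Im QS = Q(Im S) \subseteq Im S$, while any vector killed by $S$ is killed by $QS$. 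Hence an endomorphism $B \in \mathfrak{o}_{\{\phi\}}$ sending $Ker QS \to 0$ and $Im QS \to Im QS$ need not land in $\mathcal{D}^S$ directly; the correct comparison is the opposite, so I would instead verify $Ker S \subseteq Ker QS$ and $Im QS \subseteq Im S$ and then check that $B: Ker QS \to 0$, $Im QS \to Im QS$ forces $B : Ker S \to 0$ and $Im S \to Im S$ once one also records that $B$ annihilates the ``intermediate'' subspace $Ker QS \cap Im S$ and maps $Im QS$ into itself — here the orthogonality of the splitting and the fact that $B$ is skew with respect to $g$ are what let one propagate the vanishing on the larger kernel to the invariance on the smaller image. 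This is the step I expect to require the most care, since it is exactly where the hypothesis $Q^{*}=Q$ (self-adjointness, giving an orthogonal decomposition compatible with both $S$ and $QS$) is used.

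Next, the two vertical inclusions $\mathcal{D}^{QS}_{\mathfrak{u}(J)} \subset \mathcal{D}^{QS}$ and $\mathcal{D}^{S}_{\mathfrak{u}(J)} \subset \mathcal{D}^{S}$ are immediate from the very definitions in Theorem \ref{thmDS}: an element of $\mathcal{D}^{QS}_{\mathfrak{u}(J)}$ is by definition $[B,\phi]$ with $B \in \mathfrak{u}(V,J)$ satisfying the kernel/image conditions for $F = QS$, and since $\mathfrak{u}(V,J) \subset \mathfrak{o}(V,g)$ one checks $[B,\phi] \in \mathfrak{o}_{\{\phi\}}$ and that $[B,\phi]$ inherits the condition $[B,\phi]: Ker QS \to 0$, $Im QS \to Im QS$ from $B$ together with the $J,K$-invariance of $Ker QS$ and $Im QS$ (so that $\phi = K$ preserves these subspaces and $[B,\phi] = BK - KB$ does as well). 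The identical argument with $S$ in place of $QS$ gives the right-hand vertical inclusion. I would state this as a short lemma: for a $J,K$-invariant orthogonal splitting $V = W_1 \oplus W_2$, if $B \in \mathfrak{u}(V,J)$ satisfies $B: W_1 \to 0$, $W_2 \to W_2$ then $[B,\phi] \in \mathfrak{o}_{\{\phi\}}$ satisfies the same, which is essentially the content of the ``$\mathfrak{u}(J)^{W_2}$'' notation introduced just before.

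Finally, the diagram commutes as a diagram of inclusions automatically once all four maps are established, so no further compatibility needs checking; I would simply remark that the composite $\mathcal{D}^{QS}_{\mathfrak{u}(J)} \hookrightarrow \mathcal{D}^{QS} \hookrightarrow \mathcal{D}^{S}$ agrees with $\mathcal{D}^{QS}_{\mathfrak{u}(J)} \hookrightarrow \mathcal{D}^{S}_{\mathfrak{u}(J)} \hookrightarrow \mathcal{D}^{S}$ since both are restrictions of the identity on $\mathfrak{o}_{\{\phi\}}$. The main obstacle, as noted, is the horizontal inclusions: one must be careful about the \emph{direction} of the containments $Ker \subseteq Ker$ and $Im \subseteq Im$ coming from $Q$ being a (self-adjoint, $J$- and $\phi$-commuting) factor of $F = QS$, and must verify that the kernel/image conditions defining $\mathcal{D}^{QS}$ really do imply those defining $\mathcal{D}^S$ and not merely the reverse — a point that ultimately rests on decomposing any $B$ in the distribution along the refined orthogonal splitting $V = Ker S \oplus (Ker QS \cap Im S) \oplus Im QS$ and using skew-symmetry of $B$ to kill the off-diagonal blocks.
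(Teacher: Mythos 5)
The paper itself gives no proof of this proposition (all proofs of the results in this section are deferred to the announced updated version), so there is nothing to compare against; judged on its own, your argument is essentially correct and is the natural one. The two vertical inclusions are exactly as you say: $Ker\,QS|_K$ and $Im\,QS|_K$ are $J,K$-invariant by the preceding proposition, so for $B\in\mathfrak{u}(V,J)$ with $B:Ker\,QS\to 0$, $Im\,QS\to Im\,QS$, the bracket $[B,\phi]=BK-KB$ inherits both conditions, giving $\mathcal{D}^{QS}_{\mathfrak{u}(J)}\subset\mathcal{D}^{QS}$ and likewise for $S$. For the horizontal inclusions, the facts you need are $Ker\,S\subseteq Ker\,QS$ and $Im\,QS=SQ(V)\subseteq Im\,S$ (using $[Q,S]=0$), together with the refined orthogonal splitting $V=Ker\,S\oplus(Ker\,QS\cap Im\,S)\oplus Im\,QS$, which holds because $Q$ is self-adjoint and commutes with the skew-adjoint $S$, so that $Im\,S=(Ker\,QS\cap Im\,S)\oplus Im\,QS$. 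One remark: the last third of your write-up is more worried than it needs to be. You do not need skew-symmetry of $B$ to ``propagate the vanishing'' or to ``kill off-diagonal blocks'': any $v\in Im\,S$ decomposes as $v_1+v_2$ with $v_1\in Ker\,QS\cap Im\,S\subseteq Ker\,QS$ and $v_2\in Im\,QS$, whence $Bv=Bv_2\in Im\,QS\subseteq Im\,S$ directly from the defining conditions of $\mathcal{D}^{QS}$. The only place self-adjointness of $Q$ (and normality of $S$) genuinely enters is in establishing that the three-term splitting above is a direct sum decomposition of $V$ compatible with $Im\,S$; once that is recorded, all four inclusions are immediate, and, as you note, commutativity of the square of inclusions requires no further checking.
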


Imposing integrability conditions on $\mathcal{D}^{QS}$ and $\mathcal{D}^{QS}_{\mathfrak{u}(J)}$  leads to 

\begin{prop}
\mbox{}
\newline
\indent
Let $QS \in \Gamma(\mathfrak{gl}(\mathbb{V}))$  such that 
 \begin{itemize}
 \item[$\bullet$] $Q^{*}=Q$
 \item[$\bullet$] $[Q,J]=0$ and $[Q, \phi]= 0$
 \item[$\bullet$] $S \in \{ \mathbb{1}, J+\phi, J-\phi, [J,\phi] \}$.
\end{itemize}

1) If $dQ_{QBQ}= Q(dQ_{B})Q$ for all $B \in \mathcal{D}^{S}$ then the following holds true for the specified integrable distributions:

\begin{center}
\begin{tikzpicture}
  \matrix (m) [matrix of math nodes,row sep=1em,column sep=1em,minimum
width=2em] {
    \mathcal{D}^{QS} & \mathcal{D}^{S} \\
    \mathcal{D}^{QS}_{\mathfrak{u}(J)} & \mathcal{D}^{S}_{\mathfrak{u}(J)}. \\
  };
\path[-stealth, auto] (m-1-1) edge[draw=none] node [sloped,
auto=false,allow upside down] {$\subset$} (m-1-2);

\path[-stealth, auto] (m-2-1) edge[draw=none] node [sloped,
auto=false,allow upside down] {$\subset$} (m-1-1);

\path[-stealth, auto] (m-2-2) edge[draw=none] node [sloped,
auto=false,allow upside down] {$\subset$} (m-1-2);

\path[-stealth, auto] (m-2-1) edge[draw=none] node [sloped,
auto=false,allow upside down] {$\subset$} (m-2-2);
\end{tikzpicture}
\end{center}

2) If $dQ_{QCQ}= Q(dQ_{C})Q$ for all $C \in \mathcal{D}^{S}_{\mathfrak{u}(J)}$ then the following is a series of integrable distributions that refine each other:
\[ \mathcal{D}^{QS}_{\mathfrak{u}(J)} \subset \mathcal{D}^{S}_{\mathfrak{u}(J)} \subset   \mathcal{D}^{S}.\] 
\end{prop}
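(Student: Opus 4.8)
The plan is to establish the last proposition in two parts, matching its two claims, and in each part to reduce everything to the two facts already available in the excerpt: (i) the inclusions among the \emph{non-integrable} versions $\mathcal{D}^{QS} \subset \mathcal{D}^{S}$, $\mathcal{D}^{QS}_{\mathfrak{u}(J)} \subset \mathcal{D}^{QS}$, $\mathcal{D}^{QS}_{\mathfrak{u}(J)} \subset \mathcal{D}^{S}_{\mathfrak{u}(J)}$, $\mathcal{D}^{S}_{\mathfrak{u}(J)} \subset \mathcal{D}^{S}$, which were recorded in the immediately preceding proposition with no hypotheses on $Q$; and (ii) the integrability criteria of Theorem \ref{thmICM}, which say precisely that $\mathcal{D}^{QS}$ is integrable once $dQ_{QBQ}=Q(dQ_{B})Q$ holds for all $B\in\mathcal{D}^{S}$, and $\mathcal{D}^{QS}_{\mathfrak{u}(J)}$ is integrable once the same identity holds for all $C\in\mathcal{D}^{S}_{\mathfrak{u}(J)}$. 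The remaining ingredient is the remark, stated just before this proposition (and following from Theorem \ref{thmICM} with $Q=\mathbb{1}$, or from the first three skew algebroids together with Theorem \ref{thmALM}), that $\mathcal{D}^{S}$ and $\mathcal{D}^{S}_{\mathfrak{u}(J)}$ are always integrable.

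For part 1, I would argue as follows. Assume $dQ_{QBQ}=Q(dQ_{B})Q$ for all $B\in\mathcal{D}^{S}$. By Theorem \ref{thmICM}(1) this makes $\mathcal{D}^{QS}$ integrable, and $\mathcal{D}^{S}$ is integrable by the standing remark, so both objects in the right column and the top-left entry are genuine foliations' distributions. For the bottom-left entry, I note that $\mathcal{D}^{S}_{\mathfrak{u}(J)}\subset\mathcal{D}^{S}$, so the hypothesis in particular gives $dQ_{QCQ}=Q(dQ_{C})Q$ for all $C\in\mathcal{D}^{S}_{\mathfrak{u}(J)}$; by Theorem \ref{thmICM}(2) this makes $\mathcal{D}^{QS}_{\mathfrak{u}(J)}$ integrable as well. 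Thus all four entries are integrable distributions, and the four inclusions drawn in the diagram are exactly the four inclusions from the unconditioned proposition, which hold verbatim. So the diagram commutes as a diagram of integrable distributions, which is the assertion of part 1.

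For part 2, assume only $dQ_{QCQ}=Q(dQ_{C})Q$ for all $C\in\mathcal{D}^{S}_{\mathfrak{u}(J)}$. Then Theorem \ref{thmICM}(2) gives integrability of $\mathcal{D}^{QS}_{\mathfrak{u}(J)}$, while $\mathcal{D}^{S}_{\mathfrak{u}(J)}$ and $\mathcal{D}^{S}$ are integrable unconditionally; and the chain $\mathcal{D}^{QS}_{\mathfrak{u}(J)}\subset\mathcal{D}^{S}_{\mathfrak{u}(J)}\subset\mathcal{D}^{S}$ is again a sub-chain of the inclusions already established without hypotheses. (Here one does \emph{not} get integrability of $\mathcal{D}^{QS}$, which is exactly why this weaker statement is separated out.) The only subtlety worth a sentence is that the hypothesis of part 2 is genuinely weaker than that of part 1 — it is imposed on the smaller set $\mathcal{D}^{S}_{\mathfrak{u}(J)}$ rather than on all of $\mathcal{D}^{S}$ — so part 2 is not a formal consequence of part 1; it must be deduced directly from Theorem \ref{thmICM}(2).

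I expect no real obstacle here: the whole content is bookkeeping, since the hard analytic work (the inclusions and the integrability criteria) has already been done in the cited results. The one place to be careful is the logical direction of the implication ``$\mathcal{D}^{S}_{\mathfrak{u}(J)}\subset\mathcal{D}^{S}$, hence a condition quantified over $\mathcal{D}^{S}$ implies the same condition quantified over $\mathcal{D}^{S}_{\mathfrak{u}(J)}$'' in part 1, and conversely making sure part 2 is \emph{not} claimed to give integrability of $\mathcal{D}^{QS}$. Everything else is a direct citation of Theorem \ref{thmICM} and of the preceding proposition.
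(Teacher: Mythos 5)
Your argument is correct and is exactly the intended one: the paper itself defers the proof to the updated version, but this proposition is positioned as an immediate corollary of the preceding unconditioned inclusion diagram, Theorem \ref{thmICM}, the standing integrability of $\mathcal{D}^{S}$ and $\mathcal{D}^{S}_{\mathfrak{u}(J)}$, and the restriction of the hypothesis along $\mathcal{D}^{S}_{\mathfrak{u}(J)}\subset\mathcal{D}^{S}$ in part 1. Nothing is missing.
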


We have a similar result for the case when $Q= a_{k}\{J,\phi\}^{k}+...+a_{1}\{J,\phi\}+ a_{0}\mathbb{1}$:
\begin{prop}
 \mbox{}
\newline
\indent
Let $QS \in \Gamma(\mathfrak{gl}(\mathbb{V}))$ such that 
 \begin{itemize}
 \item[$\bullet$] $Q= a_{k}\{J,\phi\}^{k}+...+a_{1}\{J,\phi\}+ a_{0}\mathbb{1}$, \  $a_{i} \in C^{\infty}(\mathcal{T})$
 \item[$\bullet$] $S \in \{ \mathbb{1}, J+\phi, J-\phi, [J,\phi] \}$.
 \end{itemize}

1) If $d{a_{i}}_{B}=0$ for all $B \in \mathcal{D}^{S}$ then the following holds true for the specified integrable distributions:

\begin{center}
\begin{tikzpicture}
  \matrix (m) [matrix of math nodes,row sep=1em,column sep=1em,minimum
width=2em] {
    \mathcal{D}^{QS} & \mathcal{D}^{S} \\
    \mathcal{D}^{QS}_{\mathfrak{u}(J)} & \mathcal{D}^{S}_{\mathfrak{u}(J)}. \\
  };
\path[-stealth, auto] (m-1-1) edge[draw=none] node [sloped,
auto=false,allow upside down] {$\subset$} (m-1-2);

\path[-stealth, auto] (m-2-1) edge[draw=none] node [sloped,
auto=false,allow upside down] {$\subset$} (m-1-1);

\path[-stealth, auto] (m-2-2) edge[draw=none] node [sloped,
auto=false,allow upside down] {$\subset$} (m-1-2);

\path[-stealth, auto] (m-2-1) edge[draw=none] node [sloped,
auto=false,allow upside down] {$\subset$} (m-2-2);
\end{tikzpicture}
\end{center}

2) If $d{a_{i}}_{C}=0$ for all $C \in \mathcal{D}^{S}_{\mathfrak{u}(J)}$ then the following is a series of integrable distributions that refine each other:
\[ \mathcal{D}^{QS}_{\mathfrak{u}(J)} \subset \mathcal{D}^{S}_{\mathfrak{u}(J)} \subset   \mathcal{D}^{S}.\] 
\end{prop}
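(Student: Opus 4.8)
The plan is to reduce the statement entirely to the previous proposition (the one with the hypothesis $dQ_{QBQ}=Q(dQ_B)Q$ and $dQ_{QCQ}=Q(dQ_C)Q$), by showing that the polynomial form of $Q$ together with the hypothesis $d{a_i}_B = 0$ forces the operator-valued identities needed there. First I would recall that $Q = \sum_i a_i \{J,\phi\}^i$ and that, by the proposition in Section \ref{secALRES}, the hypothesis $d{a_i}_{[SCS,\phi]}=0$ is exactly what yields $dQ_{[QCQ,\phi]} = Q(dQ_{[C,\phi]})Q$; so the main task is bookkeeping to match the present hypothesis (stated with directional derivatives along $\mathcal{D}^S$ and $\mathcal{D}^S_{\mathfrak{u}(J)}$) to that earlier algebraic hypothesis. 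Since $\mathcal{D}^S = Im\,\delta$ (resp. the image of the relevant $\psi$) consists precisely of vectors of the form $[SCS,\phi]$ with $C$ ranging over the appropriate $E^S$, the condition ``$d{a_i}_B = 0$ for all $B \in \mathcal{D}^S$'' is literally the condition ``$d{a_i}_{[SCS,\phi]} = 0$ for all $C \in \Gamma(E^S)$'' with $E = \mathfrak{o}(\mathbb{V},g)$ (giving the $\mathcal{D}^S$ row) or $E = \mathfrak{u}(\mathbb{V},J)$ (giving the $\mathcal{D}^S_{\mathfrak{u}(J)}$ row).

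Next I would carry out the two parts in order. For part 1: given $d{a_i}_B = 0$ for all $B \in \mathcal{D}^S$, invoke the cited proposition to conclude $dQ_{[QBQ,\phi]} = Q(dQ_{[B,\phi]})Q$; this is exactly the hypothesis of part 1 of the previous proposition (in the $[\cdot,\phi]$-transported form), so its conclusion — the commuting square of inclusions $\mathcal{D}^{QS}_{\mathfrak{u}(J)} \subset \mathcal{D}^{QS} \subset \mathcal{D}^S$ and $\mathcal{D}^{QS}_{\mathfrak{u}(J)} \subset \mathcal{D}^S_{\mathfrak{u}(J)} \subset \mathcal{D}^S$ among \emph{integrable} distributions — transfers verbatim. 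For the integrability of $\mathcal{D}^{QS}$ and $\mathcal{D}^{QS}_{\mathfrak{u}(J)}$ themselves one appeals to Theorem \ref{thmICP} (or directly to Theorem \ref{thmALM}/\ref{thmALP}). For part 2: the weaker hypothesis $d{a_i}_C = 0$ for all $C \in \mathcal{D}^S_{\mathfrak{u}(J)}$ only controls the $\mathfrak{u}(J)$-restricted objects, so one only gets $dQ_{[QCQ,\phi]} = Q(dQ_{[C,\phi]})Q$ for $C \in \mathcal{D}^S_{\mathfrak{u}(J)}$, hence only the chain $\mathcal{D}^{QS}_{\mathfrak{u}(J)} \subset \mathcal{D}^S_{\mathfrak{u}(J)} \subset \mathcal{D}^S$; here $\mathcal{D}^S$ and $\mathcal{D}^S_{\mathfrak{u}(J)}$ are unconditionally integrable (they are the images of the $\delta$- and $\sigma$-type algebroids of Section \ref{secFTSA}, or their $\mathfrak{u}(J)$-analogues), while $\mathcal{D}^{QS}_{\mathfrak{u}(J)}$ is integrable by Theorem \ref{thmICP}(2).

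The main obstacle, and the only genuinely nontrivial input, is the algebraic identity $d{a_i}_B=0 \Rightarrow dQ_{QBQ}=Q(dQ_B)Q$: one must expand $dQ_{QBQ} = \sum_i \big( (d{a_i}_{QBQ})\{J,\phi\}^i + a_i\, d(\{J,\phi\}^i)_{QBQ} \big)$ and compare with $Q(dQ_B)Q = \sum_i \big( (d{a_i}_B) Q\{J,\phi\}^i Q + a_i\, Q\, d(\{J,\phi\}^i)_B\, Q \big)$, using that $[Q,J]=[Q,\phi]=0$ (so $Q$ commutes with $\{J,\phi\}$ and with $\{J,\phi\}^i$, and $Q\{J,\phi\}^iQ = Q^2\{J,\phi\}^i$), and the chain-rule behaviour of $d(\{J,\phi\}^i)$ under the connection $\nabla$ with $\nabla\phi=0$; the hypothesis kills the $d{a_i}$ terms in the first sum (and identifies $d{a_i}_{QBQ}$ with $d{a_i}_B$ up to terms that vanish), and $\nabla$-flatness of $\phi$ matches the remaining derivative-of-$\{J,\phi\}^i$ terms. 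Since the paper states (Remark \ref{rmkUPDATED}) that full proofs are deferred to \cite{Gindi4}, I would either reproduce this computation in the detail just sketched or cite the already-proved proposition in Section \ref{secALRES} and the previous proposition, and note that everything else is an immediate specialization. Either way, no new geometric idea is required beyond what is already in the excerpt.
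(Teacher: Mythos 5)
Your reduction is correct and is essentially the route the paper intends: the inclusions in the square are the unconditional ones from the preceding proposition (the polynomial $Q=\sum a_{i}\{J,\phi\}^{i}$ automatically satisfies $Q^{*}=Q$ and $[Q,J]=[Q,\phi]=0$), the integrability of $\mathcal{D}^{QS}$ and $\mathcal{D}^{QS}_{\mathfrak{u}(J)}$ comes from Theorem \ref{thmICP} (using $\mathcal{D}^{S}_{\mathfrak{u}(J)}\subset\mathcal{D}^{S}$ to get part 2 of that theorem from the part-1 hypothesis), and the identification of ``$d{a_{i}}_{B}=0$ on $\mathcal{D}^{S}$'' with ``$d{a_{i}}_{[SCS,\phi]}=0$ for $C\in\Gamma(E^{S})$'' is exactly how the paper passes from Theorem \ref{thmALM} to Theorem \ref{thmALP}. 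The paper itself defers the detailed verification to \cite{Gindi4}, but your argument fills it in consistently with the results already stated.
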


\begin{rmk}
Other relations will be given in \cite{Gindi4}.

\end{rmk}


\section{First Class of Foliations: The $\mathcal{T}^{QS}_{K}$ }
\label{secFCF}
 In this section, we will describe the foliations associated to the integrable distributions $\mathcal{D}^{QS}$, as given in Theorem \ref{thmICP}. We will also show how they refine each other and the $\mathcal{T}^{\#}(J)$ of Section \ref{secFTSA}. In Section \ref{secSCF}, we will analyze the foliations associated to the $\mathcal{D}^{QS}_{\mathfrak{u}(J)}$.
 
  \subsection{The $\bf{\mathcal{T}^{QS}_{K}}$}
 Let $QS \in \Gamma(\mathfrak{gl}(\mathbb{V}))$ such that 
 \begin{itemize}
 \item[$\bullet$] $Q= a_{k}\{J,\phi\}^{k}+...+a_{1}\{J,\phi\}+ a_{0}\mathbb{1}$, \  $a_{i} \in C^{\infty}(\mathcal{T})$
 \item[$\bullet$] $S \in \{ \mathbb{1}, J+\phi, J-\phi, [J,\phi] \}$
 \end{itemize}
and suppose 
 \begin{equation}
 \label{eqFCF}
 d{a_{i}}_{B}=0, \ \ \ \forall B \in  \mathcal{D}^{S}.
 \end{equation}
 By Theorem \ref{thmICP},  
 \begin{align*}
 \mathcal{D}^{QS}&=\mathfrak{o}_{\{\phi\}}^{ImQS} \\&=\{C \in \mathfrak{o}_{\{\phi\}} \ | C: KerQS \longrightarrow 0, ImQS\longrightarrow ImQS\} 
 \end{align*}
 is an integrable distribution on $\mathcal{T}$.

 To describe the corresponding leaf at $K \in \mathcal{T}(V)$, consider the following orthogonal and  $K$-invariant splitting
\[ V= KerQS|_{K} \oplus ImQS|_{K} \]
and the associated one for $K$:  \[K= K^{a} \oplus K^{b}.\]
 
 \begin{defi}Given the above splittings, define \[\mathcal{T}^{QS}_{K}= \{K'= K^{a}\oplus K'^{b} \ |  K'^{b} \in \mathcal{T}(ImQS|_{K}), ImQS|_{K'}=ImQS|_{K}\}.\]
 \end{defi}
 
\begin{thm}
\label{thmFCF}
The connected component of $\mathcal{T}^{QS}_{K}$ that contains $K\in \mathcal{T}(V)$ is the leaf that is associated to the distribution $\mathcal{D}^{QS}$ and that passes through $K$. Moreover, it is a complex submanifold of $\mathcal{T}(V)$.
\end{thm}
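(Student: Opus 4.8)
The plan is to identify the leaf of $\mathcal{D}^{QS}$ through $K$ with the connected component of $\mathcal{T}^{QS}_{K}$ in three steps: first show $\mathcal{T}^{QS}_{K}$ is a smooth embedded submanifold with the expected tangent space, then verify its tangent distribution coincides with $\mathcal{D}^{QS}$ along $\mathcal{T}^{QS}_{K}$, and finally check complexity. For the first step, I would use the orthogonal, $J,K$-invariant splitting $V = KerQS|_K \oplus ImQS|_K$ (valid by the proposition preceding Theorem \ref{thmDS}, since $Q^*=Q$, $[Q,J]=[Q,\phi]=0$). Here the key subtlety is that $QS$ depends on the basepoint: one must first argue that along $\mathcal{T}^{QS}_{K}$ the subspace $ImQS|_{K'}$ stays equal to $ImQS|_{K}$ — but this is built into the definition of $\mathcal{T}^{QS}_{K}$, so the set is literally $\{K^a\} \times \mathcal{T}(ImQS|_K)$ sitting inside $\mathcal{T}(V)$ via the fixed splitting (intersected with the locus where $ImQS$ doesn't jump, which is open). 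Thus $\mathcal{T}^{QS}_{K}$ is an open subset of the twistor space $\mathcal{T}(ImQS|_K)$ of a fixed Euclidean subspace, hence a smooth embedded submanifold, and its tangent space at $K'=K^a\oplus K'^b$ is $T_{K'^b}\mathcal{T}(ImQS|_K) = \{C \in \mathfrak{o}(ImQS|_K)\mid \{C,K'^b\}=0\}$, which under the identification $T\mathcal{T}=\mathfrak{o}_{\{\phi\}}$ is exactly $\{C\in\mathfrak{o}_{\{\phi\}}\mid C:KerQS\to 0,\ ImQS\to ImQS\}=\mathcal{D}^{QS}|_{K'}$.

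Next I would invoke Theorem \ref{thmICP}(1): under hypothesis \eqref{eqFCF}, $\mathcal{D}^{QS}$ is integrable, so it has a well-defined foliation whose leaf through $K$ is the maximal connected integral submanifold. Since by the previous paragraph the connected component of $\mathcal{T}^{QS}_{K}$ through $K$ is a connected integral submanifold of $\mathcal{D}^{QS}$, it is contained in that leaf; conversely, I would show that leaf is contained in $\mathcal{T}^{QS}_{K}$ by a connectedness/maximality argument — e.g., along any path in the leaf the conditions $\{J$-invariance of $KerQS$, $ImQS|_{K'}=ImQS|_K$, $K'^a=K^a\}$ are preserved because their infinitesimal variations lie in $\mathcal{D}^{QS}$ and hence are tangent to $\mathcal{T}^{QS}_{K}$. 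Equivalently, one notes $\mathcal{T}^{QS}_{K}$ is open in the embedded submanifold $\mathcal{T}(ImQS|_K)$ on which $\mathcal{D}^{QS}$ restricts to the full tangent bundle, so the two connected integral submanifolds through $K$ agree.

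Finally, complexity: the splitting $V=KerQS|_K\oplus ImQS|_K$ is $J,K$-invariant and orthogonal, so $J$ restricts to a compatible complex structure on $ImQS|_K$, making $\mathcal{T}(ImQS|_K)$ a complex submanifold of $\mathcal{T}(V)$ in the standard way; alternatively one cites the proposition stating $\mathcal{D}^{QS}$ is closed under $I_{\mathcal{T}}$, so the leaf, being an integral manifold of an $I_{\mathcal{T}}$-invariant integrable distribution on a complex manifold, is a complex submanifold. I expect the main obstacle to be the second step — carefully proving that the leaf does not escape $\mathcal{T}^{QS}_{K}$, i.e. that the dependence of $ImQS$ on the basepoint is controlled along leaves. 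The cleanest route is to observe that the $K^a$-component is literally frozen by the definition and that $\dim ImQS$ is constant on a neighborhood, reducing everything to the classical statement that an open subset of $\mathcal{T}(W)$ for a fixed Euclidean $W$ is both a leaf of its own tangent distribution and a complex submanifold.
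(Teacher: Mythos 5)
Your overall architecture (identify $\mathcal{T}^{QS}_{K}$ with an open piece of $\mathcal{T}(ImQS|_{K})$, match tangent spaces with $\mathcal{D}^{QS}$, then invoke integrability and $I_{\mathcal{T}}$-invariance) is sensible, but the step you dismiss as "built into the definition" is precisely where the real work lies, and as stated it fails. The claim that $\mathcal{T}^{QS}_{K}$ is open in $Im\,\eta^{QS}_{K}$ is Proposition \ref{propTOS}, which the paper explicitly flags as requiring an analysis of the behavior of the $a_{i}$ on $Im\,\eta^{QS}_{K}$. Writing $K'=K^{a}\oplus K'^{b}$ and splitting $QS|_{K'}=Q^{a}S^{a}|_{K'}\oplus Q^{b}S^{b}|_{K'}$ along the fixed decomposition $V=KerQS|_{K}\oplus ImQS|_{K}$, the membership condition $ImQS|_{K'}=ImQS|_{K}$ is the conjunction of an open condition (invertibility of $Q^{b}S^{b}|_{K'}$ on $ImQS|_{K}$) and a \emph{closed} condition ($Q^{a}S^{a}|_{K'}=0$ on $KerQS|_{K}$). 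Since $Q^{a}|_{K'}=\sum_{i} a_{i}(K')\{J^{a},K^{a}\}^{i}$ and the $a_{i}$ are functions on $\mathcal{T}$, the closed condition can fail arbitrarily close to a point where it holds unless one shows the $a_{i}$ are constant along $Im\,\eta^{QS}_{K}$ near points of $\mathcal{T}^{QS}_{K}$. That is exactly what hypothesis \eqref{eqFCF} must be used for (one checks that at points of $\mathcal{T}^{QS}_{K}$ the tangent directions $0\oplus C^{b}$ to $Im\,\eta^{QS}_{K}$ lie in $\mathcal{D}^{S}$, because $KerS|_{K'}\subset KerQS|_{K'}=KerQS|_{K}$ there), yet your proposal never invokes \eqref{eqFCF} except to quote integrability from Theorem \ref{thmICP}. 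The paper's route is organized around this very issue: it first proves the theorem for $Q=\mathbb{1}$, uses that case together with \eqref{eqFCF} to conclude the $a_{i}$ are locally constant on the $\mathcal{T}^{S}_{K}$ (hence on $\mathcal{T}^{QS}_{K}\subset\mathcal{T}^{S}_{K}$), and only then establishes Proposition \ref{propTOS} and the general case.

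A secondary point: because $\mathcal{D}^{QS}$ has non-constant rank, this is a Stefan--Sussmann foliation, and "two connected integral submanifolds through $K$ agree" is not automatic --- a connected integral manifold of the correct dimension is only an \emph{open} subset of the leaf. So the containment of the leaf in $\mathcal{T}^{QS}_{K}$ still requires showing that the flows generated by sections of $Im\psi$ preserve $\mathcal{T}^{QS}_{K}$; your first formulation gestures at this correctly but is not yet an argument. The complex-submanifold part (via $I_{\mathcal{T}}$-invariance of $\mathcal{D}^{QS}$, or via the holomorphic embedding $\mathcal{T}(ImQS|_{K})\hookrightarrow\mathcal{T}(V)$) is fine.
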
 

\begin{nota}
The connected component of $\mathcal{T}^{QS}_{K}$ that contains $K\in \mathcal{T}(V)$ will be denoted by 
$\accentset{\circ}{\mathcal{T}}^{QS}_{K}$.
\end{nota}

\begin{rmk}  The $\mathcal{T}^{QS}_{K}$ are generally not connected; see Example \ref{exTQ}. \end{rmk}

\begin{rmk}
To prove Theorem \ref{thmFCF}, we first proved it for the case when $Q=\mathbb{1}$.  We then used, in particular, the fact that the $a_{i}$ are locally constant on the $\mathcal{T}^{S}_{K}$, which follows from Theorem \ref{thmFCF} and Equation \ref{eqFCF}. There are several other important steps in the proof; we will explain them in detail in \cite{Gindi4}. (See also Remark \ref{rmkOOS}.)
\end{rmk}
To determine the manifold type of ${\mathcal{T}^{QS}_{K}}$, consider the following embedding:

 \begin{align*}
 \eta^{QS}_{K} : \mathcal{T}(Im&QS|_{K})  \longrightarrow \mathcal{T}(V) \\
 &L^{b}  \longrightarrow K^{a} \oplus L^{b},
 \end{align*}
 so that $Im \eta^{QS}_{K}= \{K^{a} \oplus L^{b} \ | L^{b}\in \mathcal{T}(ImQS|_{K})\}$.

We then have:
\begin{prop}
\label{propTOS}
$\mathcal{T}^{QS}_{K}$ is an open subset of $Im \eta^{QS}_{K}$.
\end{prop}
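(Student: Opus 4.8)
The plan is to show that $\mathcal{T}^{QS}_K$ sits inside $Im\,\eta^{QS}_K$ as an open subset by unwinding the two defining conditions for membership in $\mathcal{T}^{QS}_K$. Recall that $\mathcal{T}^{QS}_K$ consists of those $K' = K^a \oplus K'^b$ with $K'^b \in \mathcal{T}(ImQS|_K)$ and $ImQS|_{K'} = ImQS|_K$. The first observation is that the very form $K' = K^a \oplus K'^b$ with $K'^b \in \mathcal{T}(ImQS|_K)$ is exactly the condition that $K' \in Im\,\eta^{QS}_K$ (compare the description $Im\,\eta^{QS}_K = \{K^a \oplus L^b \mid L^b \in \mathcal{T}(ImQS|_K)\}$). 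Hence as a \emph{set} we immediately get the inclusion $\mathcal{T}^{QS}_K \subset Im\,\eta^{QS}_K$, and what remains is precisely to check that the extra constraint $ImQS|_{K'} = ImQS|_K$ cuts out an \emph{open} subset of $Im\,\eta^{QS}_K$.

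For the openness, I would argue as follows. Identify $Im\,\eta^{QS}_K$ with $\mathcal{T}(ImQS|_K)$ via $\eta^{QS}_K$, and track how $ImQS|_{K'}$ varies as $K'^b$ ranges over $\mathcal{T}(ImQS|_K)$. Since $Q = a_k\{J,\phi\}^k + \cdots + a_0\mathbb{1}$ is a polynomial in $\{J,\phi\} = \{J,K'\}$ evaluated at $K'$, and $S$ is one of $\mathbb{1}, J\pm\phi, [J,\phi]$, the operator $QS|_{K'}$ depends continuously (indeed smoothly) on $K'$, hence on $K'^b$. The rank of a continuously-varying family of operators is lower semicontinuous, so $\dim ImQS|_{K'} \geq \dim ImQS|_K$ on a neighborhood of $K$. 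To upgrade this to the \emph{equality} of the subspaces $ImQS|_{K'} = ImQS|_K$ on a neighborhood, I would use the earlier structural input: by the proposition preceding Theorem \ref{thmDS}, $V = KerQS|_{K'} \oplus ImQS|_{K'}$ is an orthogonal, $J,K'$-invariant splitting for every $K'$, and along $Im\,\eta^{QS}_K$ we have $K'^a = K^a$ fixed on $KerQS|_K = ImQS|_K^\perp$. One then shows that for $K'^b$ near $K^b$, the subspace $ImQS|_{K'}$ — which is forced to be contained in (or complementary-compatible with) the fixed ambient splitting and to have dimension at least that of $ImQS|_K$ — must in fact coincide with $ImQS|_K$; combining lower semicontinuity of rank with the fact that $ImQS|_{K'} \subseteq ImQS|_K \oplus (\text{small perturbation})$ pins down equality on an open set.

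The step I expect to be the main obstacle is precisely this last point: passing from ``$\dim ImQS|_{K'} = \dim ImQS|_K$ near $K$'' to ``$ImQS|_{K'} = ImQS|_K$ near $K$'' as honest subspaces of $V$. A priori the image subspace could rotate within $V$ even while keeping its dimension. The resolution should come from the fact that on $Im\,\eta^{QS}_K$ the component $K^a$ on $KerQS|_K$ is held rigidly fixed, and $QS|_{K'}$ restricted to (and projected onto) that fixed summand behaves controllably — in particular $QS|_{K'}$ annihilates the fixed $K^a$-invariant piece to leading order, so its image cannot leak out of $ImQS|_K$ under small deformation. Making this precise may require examining the explicit action of each choice of $S$ together with the polynomial dependence of $Q$ on $\{J,\phi\}$, and invoking once more the orthogonal $J,K'$-invariant splitting $V = KerQS|_{K'} \oplus ImQS|_{K'}$ to conclude that the splitting varies continuously and hence stays equal to the fixed one on a neighborhood. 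Once equality of the subspaces is established on an open neighborhood of $K^b$ in $\mathcal{T}(ImQS|_K)$, that neighborhood maps under $\eta^{QS}_K$ into $\mathcal{T}^{QS}_K$, proving $\mathcal{T}^{QS}_K$ is open in $Im\,\eta^{QS}_K$.
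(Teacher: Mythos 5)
There is a genuine gap, and it is exactly the one the paper itself flags: the remark following the proposition says the proof requires ``understanding the behavior of the $\{a_{i}\}$ on $Im \eta^{QS}_{K}$,'' and your argument never engages with the coefficient functions at all. Your set-theoretic inclusion $\mathcal{T}^{QS}_{K}\subset Im\,\eta^{QS}_{K}$ is fine, but the openness argument is aimed at the wrong difficulty. For every $K'=K^{a}\oplus K'^{b}\in Im\,\eta^{QS}_{K}$, both $J=J^{a}\oplus J^{b}$ and $K'$ preserve the fixed splitting $V=KerQS|_{K}\oplus ImQS|_{K}$, hence so do $\{J,K'\}$, $S|_{K'}$ and $Q|_{K'}$; consequently $ImQS|_{K'}$ is automatically a direct sum of a subspace of $KerQS|_{K}$ and a subspace of $ImQS|_{K}$ and cannot ``rotate'' --- the worry you spend most of your effort on is moot. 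The condition $ImQS|_{K'}=ImQS|_{K}$ therefore splits into (i) invertibility of $QS|_{K'}$ on $ImQS|_{K}$, which is open, and (ii) vanishing of $QS|_{K'}$ on $KerQS|_{K}$, which is a priori a \emph{closed} condition. Lower semicontinuity of rank is useless here; indeed it is precisely the possibility of the rank jumping \emph{up} (by $QS|_{K'}$ acquiring a nonzero component on $KerQS|_{K}$) that threatens openness, and semicontinuity permits that.

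The reason (ii) can fail nearby is the variation of the $a_{i}$: on the summand $KerQS|_{K}$ the operator $\{J,K'\}$ restricts to the fixed $\{J^{a},K^{a}\}$, but $Q|_{K'}=\sum_{j}a_{j}|_{K'}\{J,K'\}^{j}$ carries scalars that move with $K'$. At $K'=K$ the polynomial $\sum_{j}a_{j}|_{K}t^{j}$ (composed with $S^{a}$) annihilates $KerQS|_{K}$; for nearby $K'$ the perturbed polynomial $\sum_{j}a_{j}|_{K'}t^{j}$ need not kill the same eigenspaces of $\{J^{a},K^{a}\}$. (Take $S=\mathbb{1}$ and $Q=\{J,\phi\}-c$ with $c|_{K}=2e_{1}$ an eigenvalue of $\{J,K\}$: if $c$ is not locally constant along $Im\,\eta^{Q}_{K}$ near $K$, then arbitrarily close to $K$ one finds $K'$ with $ImQ|_{K'}\supsetneq ImQ|_{K}$, so $K$ is not an interior point.) The proposition therefore genuinely depends on the standing hypothesis of that subsection, Equation \ref{eqFCF}, through which one must show that the $a_{i}$ are suitably constant --- or at least that $QS|_{K'}$ still annihilates $KerQS|_{K}$ --- along $Im\,\eta^{QS}_{K}$ near $K$. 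Your proof never invokes that hypothesis, so the essential step is missing rather than merely unpolished. (For the record, the paper does not supply the proof either; it defers it to the updated version, but it does correctly identify this as the crux.)
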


\begin{rmk}
To prove this proposition, we need to understand the behavior of the $\{a_{i}\}$ on $Im \eta^{QS}_{K}$. The details will be given in \cite{Gindi4}.
\end{rmk}
\begin{rmk}
\label{rmkOOS}
Proposition \ref{propTOS} is one of the steps needed to prove Theorem \ref{thmFCF}.
\end{rmk}

Here is an example of the $\mathcal{T}^{QS}_{K}$ when $dimV=4.$
\begin{example}
\label{exTQ}
\mbox
\newline

Let
\begin{itemize}
 \item $dimV=4$
 \item $I,J\in \mathcal{T}(V,g)$ such that $\{I,J\}=0$ 
 \item $L=IJ$
 \item $Q=\{J,\phi\}(\{J,\phi\}- 2e_{0}\mathbb{1})$, where $e_{0} \in (0,1)$.
 \end{itemize}
 Consider now  
 \begin{align*}
 \mathcal{T}^{+}&= \{L' \in \mathcal{T}(V) \ | L' \text{ induces the same orientation as that of } J\}\\ &= \{aI+bJ +cL \ | a^{2}+b^{2}+c^{2}=1\}.
 \end{align*}
  We will presently describe the foliation of $\mathcal{T}^{+}$ that is associated to the integrable distribution $\mathcal{D}^{Q}$. \\
  
 1) Let $K=aI +cL \in \mathcal{T}^{+}$. As $V=KerQ|_{K}$, $\mathcal{T}^{Q}_{K}=\{K\}$.\\
 
 2) Let $K=aI -e_{0}J+ cL \in \mathcal{T}^{+}$. Since $\{J,K\}=2e_{0}\mathbb{1}$, $V=KerQ|_{K}$ and hence $\mathcal{T}^{Q}_{K}=\{K\}$.\\

3) Let $K= aI +bJ +cL \in \mathcal{T}^{+}$  such that $b\notin \{0, -e_{0}\}$. Since $\{J,K\}=-2b\mathbb{1}$, $V= ImQ|_{K}$. Thus 
\begin{align*}
\mathcal{T}^{Q}_{K}&=\{K' \in \mathcal{T}(V) \ | \{J,K'\}(\{J,K'\}-2e_{0}\mathbb{1}) \text{ is invertible}\} \\&=\{K' \in \mathcal{T}(V) \ | \{J,K'\} \text{ does not have eigenvalues } 0 \text{ and } 2e_{0} \}\\
&=\mathcal{T}(V)- \{ dI+eJ+fL \in \mathcal{T}^{+} \ | e=0 \text{ or } e=-e_{0}\}.
 \end{align*}
 As this is not connected, we need to take its connected component 
 $\accentset{\circ}{\mathcal{T}}^{Q}_{K}$. Consider the following cases: \\
 
 a) If $b\in (0,1]$ then 
 \[\accentset{\circ}{\mathcal{T}}^{Q}_{K}=\{dI +eJ+ fL \in \mathcal{T}^{+} \ | \ e \in(0,1]  \}. \]
 
  b) If $b\in (-e_{0},0)$ then 
  \[\accentset{\circ}{\mathcal{T}}^{Q}_{K}=\{dI +eJ+ fL \in \mathcal{T}^{+} \ | \ e \in(-e_{0},0)  \}.\]
 
  c) If $b\in [-1,-e_{0})$ then 
   \[\accentset{\circ}{\mathcal{T}}^{Q}_{K}=\{dI +eJ+ fL \in \mathcal{T}^{+} \ | \ e \in[-1,-e_{0})  \}. \] \qed
 \end{example}

\subsection{Relating the Foliations and Refinements}
Before giving more examples, we will now relate the foliations of $\mathcal{T}$ that we have introduced so far.
We begin by providing another description of the $\mathcal{T}^{S}_{K}$.

\begin{prop}
\label{propSAD}
\begin{align*}
1)& \ \mathcal{T}^{J+ \phi}_{K}=\{ L \in \mathcal{T}(V) \ | Ker(J+L)=Ker(J+K)\}.\\ 
2)& \ \mathcal{T}^{J- \phi}_{K}=\{ L \in \mathcal{T}(V) \ | Ker(J-L)=Ker(J-K)\}.\\
3)& \  \mathcal{T}^{[J,\phi]}_{K}= \mathcal{T}^{J+\phi}_{K} \cap \mathcal{T}^{J- \phi}_{K}.
\end{align*}
\end{prop}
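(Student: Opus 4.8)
The plan is to unwind the definitions of $\mathcal{T}^{QS}_K$ (with $Q=\mathbb{1}$) and of $\mathcal{D}^{S}$, and match them against the kernel descriptions of the relevant maps $S\in\{J+\phi,\,J-\phi,\,[J,\phi]\}$. The key input is that, for $Q=\mathbb{1}$, the map $F$ in the definition of $\mathcal{T}^{QS}_K$ is just $S$ itself, so $\mathcal{T}^{S}_K=\{K'=K^a\oplus K'^b \mid K'^b\in\mathcal{T}(ImS|_K),\ ImS|_{K'}=ImS|_K\}$, where $V=KerS|_K\oplus ImS|_K$ is the orthogonal $K$-invariant splitting guaranteed by the proposition preceding Theorem~\ref{thmFCF}. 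So everything reduces to computing $KerS|_K$ and $ImS|_K$ for each $S$ and noting that, since these are orthogonal complements, fixing $ImS|_{K'}$ is the same as fixing $KerS|_{K'}$.

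For part (1), take $S=J+\phi$, so $S|_K=J+K$. Then $KerS|_K=Ker(J+K)$ and $ImS|_K=Ker(J+K)^\perp$ (using that $J+K$ is normal, or directly from $Im(J+K)=Ker(J-K)\oplus Im[J,K]$ via Proposition~\ref{propDECOM}). The condition ``$K'^b\in\mathcal{T}(ImS|_K)$ and $K'=K^a\oplus K'^b$'' forces $Ker(J+K')\supseteq KerS|_K$ and $Ker(J+K')\cap ImS|_K=\{0\}$, i.e. $Ker(J+K')=Ker(J+K)$ exactly; conversely, if $Ker(J+K')=Ker(J+K)$, then by $K'$-invariance $K'$ splits as $K^a\oplus K'^b$ along the same decomposition and $ImS|_{K'}=ImS|_K$ automatically. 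This gives $\mathcal{T}^{J+\phi}_K=\{L\in\mathcal{T}(V)\mid Ker(J+L)=Ker(J+K)\}$. Part (2) is identical with $\phi\mapsto-\phi$, i.e. $S=J-\phi$, $S|_K=J-K$, $KerS|_K=Ker(J-K)$. For part (3), $S=[J,\phi]$, $S|_K=[J,K]=JK-KJ$, and Proposition~\ref{propDECOM}(2c) gives $Ker[J,K]=Ker(J+K)\oplus Ker(J-K)$; hence $KerS|_K=Ker(J+K)\oplus Ker(J-K)$ and $ImS|_K=Im[J,K]$. Matching against the previous two cases, $Ker[J,L]=Ker[J,K]$ iff both $Ker(J+L)=Ker(J+K)$ and $Ker(J-L)=Ker(J-K)$ (one inclusion is a dimension count using $\dim Ker[J,L]=\dim Ker(J+L)+\dim Ker(J-L)$ from Proposition~\ref{propDECOM}; the sum of two subspaces equals the given direct sum and has the right dimension, forcing equality summand by summand after checking the $J,K$-eigenvalue decomposition separates the $+1$ and $-1$ pieces of $\{J,\cdot\}$). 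This yields $\mathcal{T}^{[J,\phi]}_K=\mathcal{T}^{J+\phi}_K\cap\mathcal{T}^{J-\phi}_K$.

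The main obstacle is part (3): showing that the equality $Ker[J,L]=Ker[J,K]$ of the \emph{sums} $Ker(J+L)\oplus Ker(J-L)=Ker(J+K)\oplus Ker(J-K)$ actually forces the two summands to agree separately. This is not purely formal — one needs that the $+$ and $-$ summands are intrinsically distinguished inside $Ker[J,L]$, which follows because on $Ker(J\pm L)$ one has $\{J,L\}=\pm 2\mathbb{1}$ (Proposition~\ref{propDECOM}(2b), rephrased: $Ker(J+K)=V_1$, $Ker(J-K)=V_{-1}$), so the two pieces are the $\mp$-eigenspaces of a \emph{single} operator built canonically from $J$ and $L$; since $Ker(J+K)\subseteq Ker[J,L]$ and on it $\{J,L\}$ would have to equal $\{J,K\}|_{Ker(J+K)}=-2\mathbb{1}$... — here one must be a little careful, because a priori $\{J,L\}$ need not restrict nicely to $Ker(J+K)$ unless that subspace is $L$-invariant, which it is once we know $Ker(J+L)=Ker(J+K)$, making the argument slightly circular. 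The clean route is instead: from $\mathcal{T}^{[J,\phi]}_K$'s definition we already have $L=K^a\oplus L^b$ with $L^b\in\mathcal{T}(Im[J,K])$, so $Ker(J\pm L)$ can only meet $Ker[J,K]=KerS|_K$, and restricted there $L=K^a$ is \emph{fixed}; hence $Ker(J\pm L)=Ker(J\pm K^a)=Ker(J\pm K)$ identically, giving $\mathcal{T}^{[J,\phi]}_K\subseteq\mathcal{T}^{J+\phi}_K\cap\mathcal{T}^{J-\phi}_K$, and the reverse inclusion is the dimension count above. I would present (3) in this order to sidestep the circularity.
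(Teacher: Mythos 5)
The paper does not actually prove Proposition \ref{propSAD}: consistent with Remark \ref{rmkUPDATED}, the statement appears with no argument and the details are deferred to \cite{Gindi4}, so there is no proof of the author's to compare yours against; I can only assess your proposal on its own terms, and it is correct. The essential points are all present: for $L\in\mathcal{T}(V,g)$ each of $J+L$, $J-L$, $[J,L]$ is skew-adjoint, so $Im=Ker^{\perp}$ and the defining condition $ImS|_{K'}=ImS|_{K}$ is equivalent to $KerS|_{K'}=KerS|_{K}$; on $Ker(J\pm L)$ one has $L=\mp J$, so the block $K^{a}$ is forced and the converse inclusion follows from the $J,L$-invariance of these kernels (Proposition \ref{propDECOM}); and $Ker[J,L]=Ker(J+L)\oplus Ker(J-L)$ reduces part (3) to parts (1) and (2). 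The circularity you worry about in (3) is the only delicate spot, and the ``clean route'' you settle on --- reading off $Ker(J\pm L)$ from the block form $L=K^{a}\oplus L^{b}$ rather than trying to split an equality of sums into equalities of summands --- is the right fix; it can be stated even more directly, since $Ker(J\pm L)\subseteq Ker[J,L]$ always holds, the definition of $\mathcal{T}^{[J,\phi]}_{K}$ forces $Ker[J,L]=Im[J,L]^{\perp}=Im[J,K]^{\perp}=Ker[J,K]$, and on that subspace $J\pm L=J^{a}\pm K^{a}$. Note also that the reverse inclusion in (3) needs no dimension count: $Ker(J\pm L)=Ker(J\pm K)$ gives $Ker[J,L]=Ker[J,K]$ by taking direct sums, and orthocomplements then give $Im[J,L]=Im[J,K]$.
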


As a corollary, we will show that the $\mathcal{T}^{S}_{K}$ refine the appropriate $\mathcal{T^{\#}}(J)$, as  expected from Proposition \ref{propRDR1}.

\begin{prop}
\begin{align*}
1)\ & \mathcal{T}^{J+\phi}_{K} \subset \mathcal{T}^{(m_{1},*)}(J), \text{ where } 2m_{1}=dimKer (J+K). \\
2)\ & \mathcal{T}^{J-\phi}_{K} \subset \mathcal{T}^{(*,m_{-1})}(J), \text{ where } 2m_{-1}=dimKer (J-K). \\
3)\ & \mathcal{T}^{[J,\phi]}_{K} \subset \mathcal{T}^{(m_{1},m_{-1})}(J), \text{ where } 2m_{\pm 1}=dimKer (J\pm K). 
\end{align*}
\end{prop}

\begin{rmk}
In \cite{Gindi4}, we will explore how, for instance, the $\mathcal{T}^{J+\phi}_{K}$ fit together inside the $\mathcal{T}^{(m_{1},*)}(J)$.
\end{rmk}

To obtain further refinements, we will now use the $Qs$: 

\begin{prop}
$\mathcal{T}^{QS}_{K} \subset \mathcal{T}^{S}_{K}$. 
\end{prop}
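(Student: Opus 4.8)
\textbf{Proof proposal for $\mathcal{T}^{QS}_{K} \subset \mathcal{T}^{S}_{K}$.}

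The plan is to work entirely at the algebraic level of the orthogonal, $J,K$-invariant splittings determined by $S$ and by $QS$, and to show that ``$QS$-equivalence'' of a point $L$ to $K$ forces ``$S$-equivalence.'' First I would recall that $Q$ is a polynomial in $\{J,\phi\}$ with $Q^{*}=Q$, $[Q,J]=0$, $[Q,\phi]=0$, so at any $L\in\mathcal{T}(V)$ the operator $Q|_{L}$ is a self-adjoint endomorphism commuting with both $J$ and $L$; in particular $Q|_{L}$ preserves the eigenspace decomposition of $\{J,L\}$, and $\operatorname{Ker}Q|_{L}$ and $\operatorname{Im}Q|_{L}$ are orthogonal, $J,L$-invariant, and are unions of (sub)spaces on which $\{J,L\}$ acts by a fixed eigenvalue. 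The key elementary observation is the inclusion of images: since $Q$ is a polynomial, $\operatorname{Im}(QS)|_{L}\subseteq\operatorname{Im}S|_{L}$, and more precisely $\operatorname{Im}(QS)|_{L}=Q|_{L}\big(\operatorname{Im}S|_{L}\big)$, an $S|_{L}$-invariant subspace. Dually $\operatorname{Ker}S|_{L}\subseteq\operatorname{Ker}(QS)|_{L}$. I would isolate this as the first step, valid for all $L$.

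Next I would unwind the two definitions at the base point $K$. Write the orthogonal $K$-invariant splittings $V=\operatorname{Ker}S|_{K}\oplus\operatorname{Im}S|_{K}$ and $V=\operatorname{Ker}(QS)|_{K}\oplus\operatorname{Im}(QS)|_{K}$; by Step 1 (and self-adjointness of $Q$, which makes $\operatorname{Im}(QS)|_{K}$ the orthocomplement of $\operatorname{Ker}(QS)|_{K}$ inside $\operatorname{Im}S|_{K}$) the second splitting refines the first: $\operatorname{Im}(QS)|_{K}\subseteq\operatorname{Im}S|_{K}$ and $\operatorname{Ker}(QS)|_{K}\supseteq\operatorname{Ker}S|_{K}$. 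Now take $L\in\mathcal{T}^{QS}_{K}$. By definition $L=K^{a}\oplus L^{b}$ with respect to $V=\operatorname{Ker}(QS)|_{K}\oplus\operatorname{Im}(QS)|_{K}$, i.e.\ $L$ agrees with $K$ on $\operatorname{Ker}(QS)|_{K}$, preserves $\operatorname{Im}(QS)|_{K}$, and $\operatorname{Im}(QS)|_{L}=\operatorname{Im}(QS)|_{K}$. I want to deduce that $L$ agrees with $K$ on the smaller space $\operatorname{Ker}S|_{K}$, preserves $\operatorname{Im}S|_{K}$, and satisfies $\operatorname{Im}S|_{L}=\operatorname{Im}S|_{K}$. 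The first is immediate since $\operatorname{Ker}S|_{K}\subseteq\operatorname{Ker}(QS)|_{K}$. For the other two, the decisive point is that $S\in\{\mathbb 1,J+\phi,J-\phi,[J,\phi]\}$, so $S|_{L}$ depends only on the pair $(J,L)$ in a way controlled by the decompositions of Proposition \ref{propDECOM} and Proposition \ref{propDECOM2}: e.g.\ $\operatorname{Ker}(J+L)=V_{1}(J,L)$, $\operatorname{Ker}(J-L)=V_{-1}(J,L)$, and these are determined by the restriction of $\{J,L\}$. Since $L$ agrees with $K$ on $\operatorname{Ker}(QS)|_{K}$ — which already contains $\operatorname{Ker}S|_{K}$ — and since on $\operatorname{Im}(QS)|_{K}$ the operator $Q|_{K}$ is invertible so that $\operatorname{Im}(QS)|_{L}=\operatorname{Im}(QS)|_{K}$ forces $Q|_{L}$ to be invertible there as well (hence $L$-eigenspaces of $\{J,L\}$ meeting $\operatorname{Ker}Q$ lie inside $K^{a}$), one concludes that the $(J,L)$-eigenspace data outside $\operatorname{Ker}S$ is unchanged, giving $\operatorname{Im}S|_{L}=\operatorname{Im}S|_{K}$ and $L$-invariance of $\operatorname{Im}S|_{K}$. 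Equivalently, I would use Proposition \ref{propSAD} to recast the three concrete cases $S=J\pm\phi,[J,\phi]$ as conditions on $\operatorname{Ker}(J\pm L)$ and verify those directly from the $QS$-conditions; the case $S=\mathbb 1$ is trivial since then $\mathcal{T}^{S}_{K}=\mathcal{T}(V)$.

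The main obstacle is the bookkeeping in the middle step: one must be careful that ``$L$ preserves $\operatorname{Im}(QS)|_{K}$ and restores its image'' genuinely propagates to the coarser splitting, i.e.\ that no $\{J,L\}$-eigenspace can straddle the boundary between $\operatorname{Ker}Q$ and $\operatorname{Im}Q$ in a way that breaks $S|_{L}$-invariance of $\operatorname{Im}S|_{K}$. This is handled by invoking that $Q|_{L}$ commutes with $\{J,L\}$ and with $L$, so it is scalar on each $\mathbb H$-submodule $W^{i}_{e_{i}}(J,L)$ of Proposition \ref{propDECOM2}; hence each such submodule lies entirely in $\operatorname{Ker}Q|_{L}$ or entirely in $\operatorname{Im}Q|_{L}$, and the condition $\operatorname{Im}(QS)|_{L}=\operatorname{Im}(QS)|_{K}$ pins down exactly which submodules are which — matching the picture at $K$. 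Once this ``block-diagonality of $Q$'' is in hand the inclusion $\mathcal{T}^{QS}_{K}\subseteq\mathcal{T}^{S}_{K}$ follows, and passing to connected components (if desired) is immediate. I expect the full details to be routine given Propositions \ref{propDECOM}, \ref{propDECOM2} and \ref{propSAD}, and I would defer them, in keeping with Remark \ref{rmkUPDATED}, to \cite{Gindi4}.
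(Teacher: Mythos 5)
Your proposal is sound, and it is as close to ``the paper's approach'' as can be checked: the paper gives no proof of this proposition, only a remark that one must introduce decompositions of $V$ compatible with the ones defining $\mathcal{T}^{S}_{K}$ and $\mathcal{T}^{QS}_{K}$, deferring details to \cite{Gindi4} --- and refining the $S$-splitting by the $QS$-splitting is exactly what you do. One comment: the middle step you flag as the ``main obstacle'' can be made essentially trivial, without the $\mathbb{H}$-module blocks of Proposition \ref{propDECOM2} or any eigenspace bookkeeping. Since $[Q,J]=[Q,\phi]=0$, $Q$ commutes with $S$, so $\operatorname{Ker}S|_{L}\subseteq\operatorname{Ker}(QS)|_{L}$ for every $L$; for $L\in\mathcal{T}^{QS}_{K}$ the defining condition gives $\operatorname{Ker}(QS)|_{L}=\operatorname{Ker}(QS)|_{K}$, and this subspace is $J$-invariant with $L=K$ on it, so the restrictions of $S|_{L}$ and $S|_{K}$ to it coincide; hence $\operatorname{Ker}S|_{L}=\operatorname{Ker}S|_{K}$ and, taking orthocomplements, $\operatorname{Im}S|_{L}=\operatorname{Im}S|_{K}$. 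Combined with $L|_{\operatorname{Ker}S|_{K}}=K|_{\operatorname{Ker}S|_{K}}$ and the $L$-invariance of $\operatorname{Im}S|_{K}=\bigl(\operatorname{Ker}(QS)|_{K}\cap\operatorname{Im}S|_{K}\bigr)\oplus\operatorname{Im}(QS)|_{K}$, this places $L$ in $\mathcal{T}^{S}_{K}$. So your argument is correct; it simply carries more machinery than the inclusion requires, and your closing appeal to ``block-diagonality of $Q$'' is a heavier form of the one-line observation that $\operatorname{Ker}S|_{L}$ already lives inside the subspace where $L$ and $K$ agree.
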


\begin{rmk}
To prove this proposition, we would need to introduce decompositions of $V$ that are compatible with the ones used to define $\mathcal{T}^{S}_{K} $ and $\mathcal{T}^{QS}_{K}$. Please see \cite{Gindi4}. 
\end{rmk}

\subsection{Controlling the Manifold Type of $\mathcal{T}^{QS}_{K}$}
\label{secCMTQS}
We can control the manifold type of $\mathcal{T}^{QS}_{K}$, in particular control its dimension, by choosing $Q|_{K}$ to have roots that lie in a specific subset of the eigenvalues of $\{J,K\}$. We demonstrate this in the following example which can be appropriately generalized. 

\begin{example}
\label{exCMT}
Using Proposition \ref{propDECOM}, let $J,K \in \mathcal{T}(V,g)$  such that  
\[V=V_{e_{1}} \oplus V_{e_{2}}\oplus V_{e_{3}} \oplus V_{1} \oplus V_{-1},\]
where $V_{\epsilon}= Ker (\{J,K\}-2\epsilon\mathbb{1})$ and $\ e_{i} \in (-1,1)$.

We will then choose
\begin{itemize}
\item  $Q= (\{J,\phi \}-2e_{1}\mathbb{1})(\{J,\phi \}-2e_{2}\mathbb{1})$
\item $S=J+ \phi$.
\end{itemize}
To describe $\mathcal{T}^{QS}_{K}$, first split 
\begin{align*}&V= KerQS|_{K} \oplus ImQS|_{K}\\
                     & K= K^{a} \oplus K^{b}\\
                     & J= J^{a} \oplus J^{b},
\end{align*}
noting that
\[ KerQS|_{K}= V_{e_{1}} \oplus V_{e_{2}} \oplus V_{1}\] 
and 
\[ImQS|_{K}= V_{e_{3}} \oplus V_{-1}.\]

Now consider $K'=K^{a}\oplus K'^{b}$, where $K'^{b} \in \mathcal{T}(ImQS|_{K})$. Then $K'$ is, by definition, an element of $\mathcal{T}^{QS}_{K}$ if and only if 
 \[ImQS|_{K'}=ImQS|_{K}.\]
 Since \[QS|_{K'}= 0 \oplus (\{J^{b},K'^{b}\}-2e_{1}\mathbb{1})(\{J^{b},K'^{b} \}-2e_{2}\mathbb{1})(J^{b}+K'^{b}), \]
 this is equivalent to the condition that 
 \[ (\{J^{b},K'^{b}\}-2e_{1}\mathbb{1})(\{J^{b},K'^{b} \}-2e_{2}\mathbb{1})(J^{b}+K'^{b})\\ \]
is invertible as an element of $End(ImQS|_{K})$.

Hence,
\[
\mathcal{T}^{QS}_{K}=\{K^{a}\oplus K'^{b}\ | K'^{b} \in \mathcal{T}(ImQS|_{K}), \{J^{b},K'^{b}\} \text{ does not have eigenvalues } 2e_{1}, 2e_{2} \text{ and } 2\}. 
\]
\qed
\end{example}

\section{Second Class of Foliations: The $\mathcal{T}^{QS}_{U(J),K}$}
\label{secSCF}
In this section, we will describe the leaves of the foliation associated to the distribution $\mathcal{D}^{QS}_{\mathfrak{u}(J)}$ of Theorem \ref{thmICP}.  We will then show how they lead to foliations of the leaves  introduced above.

\vspace{.1cm}
We will begin with the case when $QS=\mathbb{1}$.
\subsection{Classifying the $U(J)$ Orbits}
 Since $\mathcal{D}^{\mathbb{1}}_{\mathfrak{u}(J)}= [\mathfrak{u}(J), \phi],$ the associated leaves are the orbits of the $U(J)$ action on $\mathcal{T}(V,g)$ that is given by conjugation. The point here is to use the background on a pair of complex structures of Section \ref{secBOPCS} to classify all of these orbits. 
Before doing so, we will characterize them based on the eigenvalues of $\{J,\phi\}$. 

%

\begin{thm}
$K$ and $L$ in $\mathcal{T}(V,g)$ belong to the same $U(J)$ orbit if and only if $\{J,K\}$ and  $\{J,L\}$ have the same eigenvalues with multiplicity.
\end{thm}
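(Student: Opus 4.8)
The plan is to prove both directions by reducing to the structure theory of a pair of complex structures developed in Section \ref{secBOPCS}. The forward direction is essentially trivial: if $L = AKA^{-1}$ for some $A \in U(J)$, then $[A,J]=0$ gives $\{J,L\} = \{AJA^{-1}, AKA^{-1}\} = A\{J,K\}A^{-1}$, so $\{J,K\}$ and $\{J,L\}$ are conjugate in $EndV$ and hence have the same eigenvalues with the same multiplicities. The content is entirely in the converse.

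For the converse, suppose $\{J,K\}$ and $\{J,L\}$ have the same eigenvalues with multiplicity. First I would apply Proposition \ref{propDECOM2} to each pair $(J,K)$ and $(J,L)$ to obtain the orthogonal, $J$-invariant decompositions $V = \bigoplus_i W^i_{e_i} \oplus V_1^K \oplus V_{-1}^K$ and $V = \bigoplus_i \widehat{W}^i_{e_i} \oplus V_1^L \oplus V_{-1}^L$, where on each four-dimensional piece $\{J,\cdot\}$ acts as a scalar $2e_i$, on $V_1 = Ker(J+\cdot)$ it acts as $2\mathbb{1}$, and on $V_{-1}=Ker(J-\cdot)$ as $-2\mathbb{1}$. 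The hypothesis on eigenvalues with multiplicity forces: $\dim V_1^K = \dim V_1^L$, $\dim V_{-1}^K = \dim V_{-1}^L$, and for each value $e \in (-1,1)$ the number of four-dimensional blocks with that eigenvalue agrees between the two decompositions (here one uses Proposition \ref{propDECOM}(3b), $Rank[J,K]=4k$, and the $\mathbb{H}$-module structure of each $V_{e_i}$, to see that the $e_i$-eigenspace of $\{J,K\}$ has dimension a multiple of $4$ and decomposes into four-dimensional $J,K$-invariant blocks). The strategy is then to build $A \in U(J)$ block by block: it suffices to construct, on each matched pair of summands, a $J$-commuting orthogonal isomorphism intertwining $K$ with $L$, and then take the orthogonal direct sum.

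The block-by-block construction uses that each summand, as a module over $iD_\infty$ (equivalently over $\mathbb{R}[t]/(p)$ with $t = J\cdot(\text{the other complex structure})$), is a standard model determined up to isomorphism by its polynomial $p$ — this is exactly the classification in the Proposition on irreducible orthogonal representations of $iD_\infty$ stated in Section \ref{secBOPCS}. On $V_1$-type and $V_{-1}$-type pieces, $K = \mp J$ there, so any $J$-commuting orthogonal map between $V_{\pm 1}^K$ and $V_{\pm 1}^L$ works, and such exists since they are complex vector spaces (for the standard complex structure $J$) of the same complex dimension. On a matched pair of four-dimensional blocks $W^i_{e_i}, \widehat{W}^i_{e_i}$ with the same $e_i$, one invokes Proposition \ref{propK'}: choosing $K' = (JK - e_i)/f_i$ on the first block and $L' = (JL - e_i)/f_i$ on the second (with $e_i^2 + f_i^2 = 1$), one gets $K', L'$ that anticommute with $J$, and then $(J, K')$ and $(J, L')$ are each "orthogonal hyperkähler-type" triples on a four-dimensional space; the quaternionic structure is rigid, so there is an orthogonal map commuting with $J$ and sending $K'$ to $L'$, hence $K$ to $L$. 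Assembling these gives the desired $A \in U(J)$.

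The main obstacle I anticipate is the bookkeeping in the four-dimensional blocks: making precise that "the same $e_i$ with the same number of blocks" is exactly the invariant that controls $U(J)$-conjugacy there, and carefully producing the intertwining orthogonal transformation on each block from the $\mathbb{H}$-module structure (one must check it is genuinely orthogonal for $g$, not merely $K$-to-$L$ intertwining). This is where Propositions \ref{propDECOM}, \ref{propDECOM2}, \ref{propK'} and the $iD_\infty$ representation theory do the real work; everything else is organizing the direct sum and matching dimensions. The eigenvalue-with-multiplicity hypothesis is precisely what guarantees all the dimension counts line up so that a global $A$ can be assembled.
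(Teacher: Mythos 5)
Your proposal is correct and follows the same route the paper indicates: the paper's proof is stated to rest on the block decomposition of Proposition \ref{propDECOM2} (with details deferred to \cite{Gindi4}), and your argument --- conjugation invariance of $\{J,\phi\}$ for the forward direction, then matching $V_{\pm 1}$ and the four-dimensional $\mathbb{H}$-blocks via Proposition \ref{propK'} and the rigidity of hyperhermitian structures for the converse --- is exactly that strategy carried out. No gaps.
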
 
\begin{proof} The proof uses the decomposition in Proposition \ref{propDECOM2} and will be given in \cite{Gindi4}.
\end{proof}

We will now classify all of the $U(J)$ orbits by using the decompositions given in Proposition \ref{propDECOM}.

\begin{thm}
\label{thmCUJO}
Let $J,K \in \mathcal{T}(V,g)$ such that 
\[V=V_{e_{1}}\oplus...\oplus V_{e_{l}}\oplus V_{1}\oplus V_{-1}, \]
where
\begin{itemize}
\item $V_{\epsilon}=Ker(\{J,K\}-2\epsilon\mathbb{1})$ and $e_{i}\in (-1,1)$
\item $dimV_{e_{i}}=4k_{i}$, $dimV_{1}=2m_{1}$ and $dimV_{-1}=2m_{-1}$.
\end{itemize}
Then \[U(J)\cdot K\cong U(n)/ Sp(k_{1}) \times ...\times Sp(k_{l}) \times U(m_{1}) \times U(m_{-1}).\]
\end{thm}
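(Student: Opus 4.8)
The plan is to identify $U(J)\cdot K$ with a homogeneous space $U(J)/\mathrm{Stab}(K)$ and then compute the stabilizer. Since $U(J)\cong U(n)$ acts on $\mathcal{T}(V,g)$ by conjugation, the orbit through $K$ is $U(J)/\mathrm{Stab}_{U(J)}(K)$, where $\mathrm{Stab}_{U(J)}(K)=\{A\in U(J)\ |\ AKA^{-1}=K\}=U(J)\cap U(\phi)|_{K}$, i.e.\ the orthogonal transformations commuting with both $J$ and $K$. So the entire content of the theorem is the computation that this common commutant is isomorphic to $Sp(k_{1})\times\cdots\times Sp(k_{l})\times U(m_{1})\times U(m_{-1})$.

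The key step is to feed in the orthogonal, $J,K$-invariant decomposition of Proposition \ref{propDECOM} (or the finer one of Proposition \ref{propDECOM2}),
\[
V=V_{e_{1}}\oplus\cdots\oplus V_{e_{l}}\oplus V_{1}\oplus V_{-1},
\]
and argue that any $A$ commuting with $J$ and $K$ must preserve this decomposition, since the summands are the eigenspaces of the map $\{J,K\}$, which is a polynomial combination of $J$ and $K$ and hence commutes with $A$. Therefore $\mathrm{Stab}_{U(J)}(K)$ splits as a product of the commutants of $(J,K)$ restricted to each summand, and I would compute each factor separately. On $V_{1}=Ker(J+K)$ we have $K=-J$, so the condition ``commute with $J$ and $K$ and preserve $g$'' reduces to ``commute with $J$ and preserve $g$'', giving $U(m_{1})$ (using $\dim V_{1}=2m_{1}$); symmetrically on $V_{-1}$ where $K=J$, giving $U(m_{-1})$. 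On each $V_{e_{i}}$, Proposition \ref{propDECOM} part 3a says $V_{e_{i}}$ is an $\mathbb{H}$-module: using Proposition \ref{propK'}, the three operators $J$, $K'=\tfrac{JK-e_{i}}{f_{i}}$ (with $e_i^2+f_i^2=1$) and their product generate a quaternionic action, and since $K$ lies in the span of $J$ and $K'$, commuting with $J$ and $K$ is the same as commuting with the whole quaternionic structure; the $g$-orthogonal transformations commuting with an $\mathbb{H}$-structure on a space of real dimension $4k_{i}$ form $Sp(k_{i})$. Assembling the factors gives
\[
\mathrm{Stab}_{U(J)}(K)\cong Sp(k_{1})\times\cdots\times Sp(k_{l})\times U(m_{1})\times U(m_{-1}),
\]
and dividing $U(J)\cong U(n)$ by this yields the claim.

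The main obstacle, I expect, is the quaternionic factor: one must check carefully that ``$g$-orthogonal and commuting with $J$ and $K$'' on $V_{e_{i}}$ really is the compact symplectic group $Sp(k_{i})$ and not, say, $GL(k_{i},\mathbb{H})$ or $Sp(k_i,\mathbb{C})$. The point is that $g$ is positive definite and $J$, $K'$ are $g$-orthogonal complex structures anticommuting with each other, so $g$ together with $J$ and $K'$ defines a (quaternion-)Hermitian form whose isometry group commuting with the $\mathbb{H}$-action is the compact $Sp(k_{i})$; making this identification precise — e.g.\ by exhibiting a compatible quaternionic Hermitian inner product and invoking the standard classification of its symmetry group — is the one genuinely non-formal ingredient. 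A secondary, more bookkeeping-level point is to confirm that $A$ must preserve each individual $V_{e_{i}}$ rather than merely their direct sum; this follows because the $e_{i}$ are distinct so the $V_{e_{i}}$ are distinct eigenspaces of $\{J,K\}$, but it should be stated. Everything else — the orbit–stabilizer identification, the $U(m_{\pm1})$ factors, and the final assembly — is routine.
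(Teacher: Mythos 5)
Your proposal is correct and follows essentially the same route as the paper: orbit--stabilizer, invariance of the eigenspace decomposition of $\{J,K\}$, the $U(m_{\pm 1})$ factors on $Ker(J\pm K)$, and on each $V_{e_{i}}$ the replacement of $K$ by $K'=\tfrac{JK-e_{i}}{f_{i}}$ via Proposition \ref{propK'} so that $U(J)\cap U(K)=U(J)\cap U(K')\cong Sp(k_{i})$. One tiny imprecision: $K=-e_{i}J-f_{i}JK'$ lies in the span of $J$ and $JK'$ (not of $J$ and $K'$), but this does not affect the argument since $K$ is still in the algebra generated by $J$ and $K'$ and vice versa.
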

\begin{proof}
As $U(J)\cdot K \cong U(J)/ U(J)\cap U(K),$ we need only analyze $U(J)\cap U(K)$. First note that it preserves each subspace in the decomposition given in the theorem.
To describe $U(J)\cap U(K)|_{V_{e_{i}}}$, let $K'= \frac{JK-e_{i}}{f_{i}}|_{V_{e_{i}}}$, where $e_{i}^{2}+f_{i}^{2}=1$. Then by Proposition \ref{propK'} $K' \in \mathcal{T}(V_{e_{i}},g)$ and $\{J,K'\}=0$. Hence on $V_{e_{i}}$, $U(J)\cap U(K)=U(J)\cap U(K') \cong Sp(k_{i})$. The theorem then follows.
\end{proof}

\subsection{The $\bf{\mathcal{T}^{QS}_{U(J), K}}$}
Now let $QS \in \Gamma(\mathfrak{gl}(\mathbb{V}))$ such that 
 \begin{itemize}
 \item[$\bullet$] $Q= a_{k}\{J,\phi\}^{k}+...+a_{1}\{J,\phi\}+ a_{0}\mathbb{1}$, \  $a_{i} \in C^{\infty}(\mathcal{T})$
 \item[$\bullet$] $S \in \{ \mathbb{1}, J+\phi, J-\phi, [J,\phi] \}$ 
 \end{itemize}
 and suppose  
 \begin{equation}
 \label{eqSCF}
 d{a_{i}}_{B}=0, \ \ \  \  \forall B \in  \mathcal{D}^{S}_{\mathfrak{u}(J)}.
 \end{equation}
 By Theorem \ref{thmICP},
\begin{align*}
\mathcal{D}^{QS}_{\mathfrak{u}(J)}&=[\mathfrak{u}(J)^{ImQS}, \phi]\\&=\{[C,\phi] \in \mathfrak{o}_{\{\phi\}} \ | C \in \mathfrak{u}(J), \ C : KerQS \longrightarrow 0, ImQS\longrightarrow ImQS\} 
\end{align*}
is an integrable distribution on $\mathcal{T}(V,g)$.

To describe the corresponding leaf at $K \in \mathcal{T}$,  consider the following orthogonal and $J,K-$invariant splitting 
   \[ V= KerQS|_{K} \oplus ImQS|_{K}\]
 together with its associated splittings
\begin{align*}
& K=K^{a} \oplus K^{b}\\
&J=J^{a} \oplus J^{b}.
\end{align*}
The leaf passing through $K$ will be related to  $U(J^{b})\cdot K^{b}$, which is the orbit of $K^{b}$ associated to the action of $U(J^{b})$ on $\mathcal{T}(ImQS|_{K})$ given by conjugation.  
\begin{defi}
Given the above splittings, define 
\[\mathcal{T}^{QS}_{U(J), K}= \{ K'=K^{a}\oplus K'^{b} \ |  K'^{b} \in U(J^{b})\cdot K^{b}\}.\]
\end{defi}

\begin{thm}
\label{thmSCF}
$\mathcal{T}^{QS}_{U(J), K}$ is the leaf that is associated to the distribution $\mathcal{D}^{QS}_{\mathfrak{u}(J)}$ and that passes through $K\in \mathcal{T}(V)$. 
\end{thm}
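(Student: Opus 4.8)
\textbf{Proof plan for Theorem \ref{thmSCF}.}

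The plan is to reduce everything to the already-established case $QS=\mathbb{1}$, where the leaves of $\mathcal{D}^{\mathbb{1}}_{\mathfrak{u}(J)}=[\mathfrak{u}(J),\phi]$ are exactly the $U(J)$-orbits (this is the content of the preceding subsection), applied to the smaller twistor space $\mathcal{T}(ImQS|_{K})$. Fix $K\in\mathcal{T}(V)$ and the orthogonal $J,K$-invariant splitting $V=KerQS|_{K}\oplus ImQS|_{K}$, with $K=K^{a}\oplus K^{b}$ and $J=J^{a}\oplus J^{b}$; by the proposition on $ImQS$-splittings this decomposition is genuinely $J$- and $K$-invariant. First I would verify that $\mathcal{T}^{QS}_{U(J),K}$ is a smooth embedded submanifold of $\mathcal{T}(V)$: via the embedding $\eta^{QS}_{K}:\mathcal{T}(ImQS|_{K})\to\mathcal{T}(V)$, $L^{b}\mapsto K^{a}\oplus L^{b}$, it is precisely $\eta^{QS}_{K}(U(J^{b})\cdot K^{b})$, and $\eta^{QS}_{K}$ is a holomorphic embedding while $U(J^{b})\cdot K^{b}$ is an embedded (compact homogeneous) submanifold of $\mathcal{T}(ImQS|_{K})$ by Theorem \ref{thmCUJO}. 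Connectedness is immediate since $U(J^{b})$ is connected.

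Next I would show the tangent spaces match: at any $K'=K^{a}\oplus K'^{b}\in\mathcal{T}^{QS}_{U(J),K}$ one has, under the identification $T_{K'}\mathcal{T}=\mathfrak{o}_{\{K'\}}$, that $T_{K'}\bigl(\eta^{QS}_{K}(U(J^{b})\cdot K^{b})\bigr)=[\mathfrak{u}(J^{b},K'^{b}\text{-block}),\phi]$ supported on $ImQS|_{K}$, i.e. exactly $[\{C\in\mathfrak{u}(V,J):C:KerQS|_{K'}\to 0,\ ImQS|_{K'}\to ImQS|_{K'}\},\phi]$ evaluated at $K'$. Here I must use that on the $U(J)$-orbit through $K$ the condition \ref{eqSCF} forces each $a_{i}$ (hence $Q$, hence $KerQS$ and $ImQS$) to be constant along $\mathcal{T}^{QS}_{U(J),K}$: indeed $\mathcal{D}^{QS}_{\mathfrak{u}(J)}\subset\mathcal{D}^{\mathbb{1}}_{\mathfrak{u}(J)}$ and the $f_{j}$ (or more generally the $a_{i}$) are constant on $U(J)$-orbits, so $KerQS|_{K'}=KerQS|_{K}$ for all $K'$ in the leaf, which is what makes the two descriptions of the tangent space coincide. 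Once $T_{K'}\mathcal{T}^{QS}_{U(J),K}=\mathcal{D}^{QS}_{\mathfrak{u}(J)}|_{K'}$ for every $K'$ in the set, and the set is a connected integral submanifold of the integrable distribution $\mathcal{D}^{QS}_{\mathfrak{u}(J)}$ (integrable by Theorem \ref{thmICP}, since \ref{eqSCF} is assumed) of the right dimension, it must be the leaf through $K$.

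The main obstacle I anticipate is the tangent-space computation at points $K'$ \emph{other} than $K$ itself: a priori $ImQS|_{K'}$ could differ from $ImQS|_{K}$, and the whole identification of $\mathcal{T}^{QS}_{U(J),K}$ with $\eta^{QS}_{K}(U(J^{b})\cdot K^{b})$ hinges on it not doing so. Resolving this is exactly where the hypothesis \ref{eqSCF} enters, combined with Proposition \ref{propSCS} and Proposition \ref{propFJC}: because $C\in\mathfrak{u}(V,J)$ with the stated block behaviour gives $SCS\in\mathfrak{u}(V,J)$, the flow of any vector field in $\mathcal{D}^{QS}_{\mathfrak{u}(J)}$ stays within a single $U(J)$-orbit and therefore preserves the eigenvalue data of $\{J,\phi\}$, hence preserves $KerQS$ and $ImQS$ as fixed subspaces of $V$. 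I would also need the routine check that $\mathcal{D}^{QS}_{\mathfrak{u}(J)}$ restricted to $\eta^{QS}_{K}(\mathcal{T}(ImQS|_{K}))$ is exactly the pushforward of $\mathcal{D}^{\mathbb{1}}_{\mathfrak{u}(J^{b})}$ on $\mathcal{T}(ImQS|_{K})$, after which the identification of the leaf as $\eta^{QS}_{K}$ applied to a $U(J^{b})$-orbit is forced by the $QS=\mathbb{1}$ case. The complex-submanifold assertion, if needed, then follows because $\eta^{QS}_{K}$ is holomorphic and $U(J^{b})$-orbits in $\mathcal{T}(ImQS|_{K})$ are complex submanifolds.
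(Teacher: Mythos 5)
Your plan follows the same route the paper sketches for this theorem: first settle the base case where the leaves are genuine $U(J)$-orbits, then turn on $Q$ by using Equation \ref{eqSCF} to show the $a_{i}$ (hence $KerQS$ and $ImQS$) are constant along the putative leaf, so that everything transfers through the embedding into $\mathcal{T}(ImQS|_{K})$. Since the paper defers the remaining details to the updated version, your proposal is consistent with, and somewhat more explicit than, the argument indicated there.
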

\begin{rmk}
To prove Theorem \ref{thmSCF}, we first proved it for the case when $Q=\mathbb{1}$. We then used, in particular, the fact that the $a_{i}$ are constant on the $\mathcal{T}^{S}_{U(J), K}$, which follows from Theorem \ref{thmSCF} and Equation \ref{eqSCF}. There are several other important steps in the proof; we will explain them in detail in \cite{Gindi4}.
\end{rmk}

\subsection{Relations between the Foliations and Refinements}
Before giving examples of the $\mathcal{T}^{QS}_{U(J), K}$, we will demonstrate how they refine each other and the leaves of the previous sections.

First note that the $U(J)$ orbits refine the $\mathcal{T}^{\#}(J)$, as expected from Proposition \ref{propRDR2}.
\begin{prop} If $K \in \mathcal{T}^{\#}(J)$ then
$\mathcal{T}^{\mathbb{1}}_{U(J), K} \subset \mathcal{T}^{\#}(J).$
\end{prop}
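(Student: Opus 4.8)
The plan is to reduce the statement to its defining pieces and then invoke the classification results already established. Recall that $\mathcal{T}^{\mathbb{1}}_{U(J),K}$ is, by definition (with $QS=\mathbb{1}$, so that $Ker(QS)|_K = 0$ and $Im(QS)|_K = V$), simply the full orbit $U(J)\cdot K$ under conjugation. The three subsets $\mathcal{T}^{(m_1,*)}(J)$, $\mathcal{T}^{(*,m_{-1})}(J)$ and $\mathcal{T}^{(m_1,m_{-1})}(J)$ are each defined purely in terms of $\dim Ker(J\pm L)$. By Proposition \ref{propDECOM}, part 2b), $Ker(J+L)=V_1$ and $Ker(J-L)=V_{-1}$ where $V_{\pm 1}=Ker(\{J,L\}\mp 2\mathbb{1})$; hence $\dim Ker(J\pm L)$ is exactly the multiplicity of the eigenvalue $\pm 2$ of $\{J,L\}$. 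So all three conditions defining the $\mathcal{T}^{\#}(J)$ are determined by the eigenvalue multiplicities of $\{J,\phi\}$.

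First I would fix $K\in\mathcal{T}^{\#}(J)$ and take an arbitrary $L\in\mathcal{T}^{\mathbb{1}}_{U(J),K}=U(J)\cdot K$, so $L=AKA^{-1}$ for some $A\in U(J)$. Since $A$ commutes with $J$, one has $\{J,L\}=\{J,AKA^{-1}\}=A\{J,K\}A^{-1}$ (this is exactly the identity used in the proof of Proposition \ref{propFJC}). Therefore $\{J,L\}$ and $\{J,K\}$ are conjugate, hence have the same eigenvalues with the same multiplicities. In particular the multiplicity of the eigenvalue $2$ (resp. $-2$) is unchanged, so $\dim Ker(J+L)=\dim Ker(J+K)=2m_1$ and $\dim Ker(J-L)=\dim Ker(J-K)=2m_{-1}$.

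Finally I would read off the conclusion for each of the three cases of $\mathcal{T}^{\#}(J)$: if $K\in\mathcal{T}^{(m_1,*)}(J)$ then $\dim Ker(J+L)=2m_1$ shows $L\in\mathcal{T}^{(m_1,*)}(J)$; similarly for $\mathcal{T}^{(*,m_{-1})}(J)$ using the $-$ eigenvalue; and the case $\mathcal{T}^{(m_1,m_{-1})}(J)$ follows since it is the intersection of the two. Since $L$ was arbitrary, $\mathcal{T}^{\mathbb{1}}_{U(J),K}\subset\mathcal{T}^{\#}(J)$. (Alternatively, one can appeal directly to the preceding Theorem stating that $K$ and $L$ lie in the same $U(J)$ orbit iff $\{J,K\}$ and $\{J,L\}$ have the same eigenvalues with multiplicity, which immediately gives equality of the relevant dimensions.)

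There is essentially no obstacle here: the only mild point is to be sure that the defining quantity $\dim Ker(J\pm L)$ really is an eigenvalue multiplicity of $\{J,\phi\}$, which is Proposition \ref{propDECOM}(2b), and that conjugation by $U(J)$ preserves $\{J,\phi\}$ up to conjugacy, which is the observation in Proposition \ref{propFJC}. Everything else is bookkeeping over the three cases collected under the symbol $\mathcal{T}^{\#}(J)$.
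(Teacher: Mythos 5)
Your proof is correct and complete. The paper itself defers the proof, but the route it signals (the sentence introducing the proposition points to Proposition \ref{propRDR2}) is infinitesimal: the orbit $U(J)\cdot K$ is the leaf through $K$ of the integrable distribution $\mathcal{D}^{\mathbb{1}}_{\mathfrak{u}(J)}=[\mathfrak{u}(J),\phi]$, this distribution is contained in $T_{\phi}\mathcal{T}^{\#}(J)$, and a connected integral manifold of the smaller distribution must stay inside the leaf of the larger one, namely $\mathcal{T}^{\#}(J)$. Your argument instead works directly at the group level: $L=AKA^{-1}$ with $[A,J]=0$ gives $\{J,L\}=A\{J,K\}A^{-1}$, and since $\dim Ker(J\pm L)$ is exactly the multiplicity of the eigenvalue $\pm 2$ of $\{J,L\}$ (Proposition \ref{propDECOM}, part 2b), all three defining conditions of the $\mathcal{T}^{\#}(J)$ are conjugation invariants. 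What your approach buys is elementarity and robustness: it needs no foliation machinery, no integrability statement, and no appeal to connectedness of the orbit (the distribution argument implicitly uses that $U(n)$ is connected, or else must pass to connected components). What the paper's approach buys is uniformity: the same leaf-containment principle is what drives all the later refinement statements $\mathcal{T}^{QS}_{U(J),K}\subset\mathcal{T}^{S}_{U(J),K}$, etc., where a clean algebraic invariant is less readily available. One small point worth making explicit in your write-up: the identification $\mathcal{T}^{\mathbb{1}}_{U(J),K}=U(J)\cdot K$ requires observing that for $QS=\mathbb{1}$ one has $KerQS|_{K}=0$ and $ImQS|_{K}=V$, so the $K^{a}$ summand is absent and $J^{b}=J$; you state this correctly, and it matches the paper's own remark that the leaves of $\mathcal{D}^{\mathbb{1}}_{\mathfrak{u}(J)}$ are the $U(J)$ orbits.
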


 We can then use the $\mathcal{T}^{S}_{U(J), K}$ to foliate the $U(J)$ orbits:
\begin{prop}
$\mathcal{T}^{S}_{U(J), K} \subset \mathcal{T}^{\mathbb{1}}_{U(J), K}$.
\end{prop}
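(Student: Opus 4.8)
The plan is to unwind both sides through the explicit descriptions already established. Recall that by Theorem \ref{thmICP} (and the reformulation that follows it, plus the fact that $d{a_i}_B=0$ for all $B\in\mathcal{D}^S_{\mathfrak{u}(J)}$ is automatic when $a_i\in\mathbb{R}[f_1,\dots,f_{2n}]$, by Propositions \ref{propFJC} and \ref{propSCS}) both $\mathcal{D}^{\mathbb{1}}_{\mathfrak{u}(J)}$ and $\mathcal{D}^S_{\mathfrak{u}(J)}$ are integrable, with leaves the $U(J)$ orbits and the $\mathcal{T}^S_{U(J),K}$ respectively. So the containment $\mathcal{T}^S_{U(J),K}\subset \mathcal{T}^{\mathbb{1}}_{U(J),K}$ is really the leaf-level shadow of the distribution-level inclusion $\mathcal{D}^S_{\mathfrak{u}(J)}\subset \mathcal{D}^{\mathbb{1}}_{\mathfrak{u}(J)}$, which was proved in the preceding ``more refinements'' proposition (part 1). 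First I would invoke that: since a smaller integrable distribution has leaves contained in the leaves of the larger one (through the same point), and since $K\in\mathcal{T}^S_{U(J),K}$ and $K\in\mathcal{T}^{\mathbb{1}}_{U(J),K}=U(J)\cdot K$, the leaf of $\mathcal{D}^S_{\mathfrak{u}(J)}$ through $K$ lies in the leaf of $\mathcal{D}^{\mathbb{1}}_{\mathfrak{u}(J)}$ through $K$.

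Alternatively — and this is the route I would actually write out, since it is self-contained and does not lean on a general foliation comparison lemma — I would argue directly from the definitions. Write the orthogonal, $J,K$-invariant splitting $V=KerQS|_K\oplus ImQS|_K$ with $K=K^a\oplus K^b$, $J=J^a\oplus J^b$. By definition $\mathcal{T}^{QS}_{U(J),K}=\{K^a\oplus K'^b \mid K'^b\in U(J^b)\cdot K^b\}$. Pick such a $K'=K^a\oplus K'^b$ with $K'^b=A^bK^b(A^b)^{-1}$ for some $A^b\in U(ImQS|_K,J^b)$. Set $A=\mathbb{1}_{KerQS|_K}\oplus A^b\in O(V,g)$; then $A\in U(V,J)$ since it commutes with $J^a\oplus J^b=J$, and $AKA^{-1}=K^a\oplus K'^b=K'$. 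Hence $K'\in U(J)\cdot K=\mathcal{T}^{\mathbb{1}}_{U(J),K}$. This shows $\mathcal{T}^S_{U(J),K}\subset \mathcal{T}^{\mathbb{1}}_{U(J),K}$ directly, for each admissible $S$, and the only input needed is that the splitting is $J$-invariant (so that $A$ genuinely lies in $U(J)$), which is exactly the content of the first proposition in Section \ref{secID2}.

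The main obstacle, such as it is, is a bookkeeping one: making sure the extension of $A^b$ by the identity on $KerQS|_K$ lands in $U(V,J)$ and not just $O(V,g)$ — this is where the $J$-invariance of the decomposition $V=KerQS|_K\oplus ImQS|_K$ is essential, and I would state that explicitly rather than treat it as obvious. A secondary point worth a sentence is that one should also check the ambient subtlety that $\mathcal{T}^S_{U(J),K}$ is described relative to the splitting for $QS=S$ while $\mathcal{T}^{\mathbb{1}}_{U(J),K}$ uses $QS=\mathbb{1}$ (for which $KerQS|_K=0$, so the splitting is trivial and $\mathcal{T}^{\mathbb{1}}_{U(J),K}=U(J)\cdot K$ with no restriction); once that is noted, the containment is the one-line conjugation argument above. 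I expect no genuine difficulty, and would keep the write-up to a short paragraph built around the explicit $A=\mathbb{1}\oplus A^b$ construction, citing the $J$-invariance proposition of Section \ref{secID2} and the definition of $\mathcal{T}^{QS}_{U(J),K}$.
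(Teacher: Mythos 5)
Your proposal is correct; note that the paper itself does not prove this proposition (like most statements here, its proof is deferred to the updated version \cite{Gindi4}), so there is no argument of the author's to match yours against. Your direct route is the right one and is complete: for $QS=S$ with $Q=\mathbb{1}$ the hypotheses of Theorem \ref{thmSCF} hold trivially (the coefficients are constants, so Equation \ref{eqSCF} is automatic --- you do not even need the $f_{j}$ machinery you mention), the splitting $V=KerS|_{K}\oplus ImS|_{K}$ is orthogonal and $J,K$-invariant by the first proposition of Section \ref{secID2}, and therefore $A=\mathbb{1}_{KerS|_{K}}\oplus A^{b}$ lies in $O(V,g)$, commutes with $J=J^{a}\oplus J^{b}$, and conjugates $K$ to $K'=K^{a}\oplus K'^{b}$; since $\mathcal{T}^{\mathbb{1}}_{U(J),K}=U(J)\cdot K$ (the splitting for $QS=\mathbb{1}$ being trivial, as you correctly flag), the inclusion follows. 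You are also right to isolate the $J$-invariance of the splitting as the one point that must be cited rather than assumed. Your first route --- deducing the leaf inclusion from $\mathcal{D}^{S}_{\mathfrak{u}(J)}\subset\mathcal{D}^{\mathbb{1}}_{\mathfrak{u}(J)}$ --- is also sound, but it leans on Theorem \ref{thmSCF} identifying the $\mathcal{T}^{S}_{U(J),K}$ as leaves and on the general comparison principle for nested integrable (generalized) distributions, whereas the conjugation argument uses only the definitions; given that the paper proves the analogous refinement $\mathcal{T}^{QS}_{U(J),K}\subset\mathcal{T}^{S}_{U(J),K}$ only after introducing compatible decompositions of $V$, your elementary argument for the $Q=\mathbb{1}$ case is a genuine simplification and worth writing out as you propose.
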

The $\mathcal{T}^{S}_{U(J), K}$ also foliate the $\mathcal{T}^{S}_{K}$. In fact, we have
\begin{prop}
\[\mathcal{T}^{S}_{U(J), K}= \mathcal{T}^{\mathbb{1}}_{U(J), K} \cap \mathcal{T}^{S}_{K}. \]
\end{prop}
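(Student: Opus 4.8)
The plan is to prove the two inclusions $\mathcal{T}^{S}_{U(J), K}\subseteq \mathcal{T}^{\mathbb{1}}_{U(J), K}\cap \mathcal{T}^{S}_{K}$ and $\mathcal{T}^{\mathbb{1}}_{U(J), K}\cap \mathcal{T}^{S}_{K}\subseteq \mathcal{T}^{S}_{U(J), K}$ separately, working directly from the defining descriptions and the structural facts already established. First I would unwind the definitions: an element of $\mathcal{T}^{S}_{U(J),K}$ has the form $K'=K^{a}\oplus K'^{b}$ with respect to the splitting $V=\mathrm{Ker}\,S|_{K}\oplus \mathrm{Im}\,S|_{K}$, where $K'^{b}\in U(J^{b})\cdot K^{b}$; in particular $K'^{b}=AK^{b}A^{-1}$ for some $A\in U(J^{b})\subseteq O(\mathrm{Im}\,S|_{K})$ commuting with $J^{b}$. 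I would then observe that the $J,K$-invariance of the splitting $V=\mathrm{Ker}\,S|_{K}\oplus\mathrm{Im}\,S|_{K}$ (which is part of the hypotheses, since $S\in\{\mathbb{1},J\pm\phi,[J,\phi]\}$ and one uses the orthogonal $J,K$-invariant splitting of the corresponding Proposition) guarantees that $K^{a}\oplus K'^{b}$ is again an element of $\mathcal{T}(V)$ and that $J^{a}$ is unaffected.

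For the forward inclusion, I would show separately that such a $K'$ lies in $\mathcal{T}^{\mathbb{1}}_{U(J),K}$ and in $\mathcal{T}^{S}_{K}$. Membership in $\mathcal{T}^{\mathbb{1}}_{U(J),K}=U(J)\cdot K$ follows by extending $A$ to $\tilde A=\mathbb{1}_{\mathrm{Ker}\,S|_{K}}\oplus A\in O(V,g)$ and checking $[\tilde A,J]=0$, using that $[A,J^{b}]=0$ and that $A$ fixes $\mathrm{Ker}\,S|_{K}$ pointwise while $J=J^{a}\oplus J^{b}$ respects the splitting; then $\tilde A K\tilde A^{-1}=K^{a}\oplus AK^{b}A^{-1}=K'$. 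Membership in $\mathcal{T}^{S}_{K}$ requires showing $\mathrm{Im}\,QS|_{K'}=\mathrm{Im}\,QS|_{K}$ in the case $S$ stands for the relevant map; but here (in the description of $\mathcal{T}^{S}_{U(J),K}$) $Q=\mathbb{1}$, so I must verify $\mathrm{Im}\,S|_{K'}=\mathrm{Im}\,S|_{K}$. Since $K'^{b}\in\mathcal{T}(\mathrm{Im}\,S|_{K})$ and $K'^{a}=K^{a}$, I would use Proposition \ref{propSAD}-type computations: e.g.\ for $S=J+\phi$, $\mathrm{Ker}(J+K')=\mathrm{Ker}(J^{a}+K^{a})\oplus\mathrm{Ker}(J^{b}+K'^{b})$, and $\mathrm{Ker}(J^{b}+AK^{b}A^{-1})=A\,\mathrm{Ker}(J^{b}+K^{b})A^{-1}$; because $A$ commutes with $J^{b}$ one checks $\mathrm{Ker}(J^{b}+K^{b})\subseteq\mathrm{Ker}\,S|_{K}\cap\mathrm{Im}\,S|_{K}=0$ actually—more precisely $K^{b}$ is chosen so that $S|_{K^{b}}$ is invertible on $\mathrm{Im}\,S|_{K}$, hence $\mathrm{Ker}(J^{b}+K'^{b})=0$ as well, giving $\mathrm{Ker}(J+K')=\mathrm{Ker}(J+K)$ and therefore $\mathrm{Im}\,S|_{K'}=\mathrm{Im}\,S|_{K}$. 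This is exactly the condition for $K'\in\mathcal{T}^{S}_{K}$.

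For the reverse inclusion, suppose $K'\in\mathcal{T}^{\mathbb{1}}_{U(J),K}\cap\mathcal{T}^{S}_{K}$. From $K'\in U(J)\cdot K$ I get $K'=BKB^{-1}$ for some $B\in U(J)$; from $K'\in\mathcal{T}^{S}_{K}$ I get that $K'$ respects the splitting $V=\mathrm{Ker}\,S|_{K}\oplus\mathrm{Im}\,S|_{K}$ with $K'^{a}=K^{a}$ and $K'^{b}\in\mathcal{T}(\mathrm{Im}\,S|_{K})$ with $\mathrm{Im}\,S|_{K'}=\mathrm{Im}\,S|_{K}$. I must produce $A\in U(J^{b})$ with $K'^{b}=AK^{b}A^{-1}$. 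The natural move is to show $B$ can be taken to preserve the splitting: since $B$ conjugates $K$ to $K'$ and both have the same kernel-image decomposition for $S$ (because $S$ is built canonically from $J$ and the ambient complex structure, and $\mathrm{Im}\,S|_{K'}=\mathrm{Im}\,S|_{K}$), $B$ maps $\mathrm{Ker}\,S|_{K}$ to $\mathrm{Ker}\,S|_{K'}=\mathrm{Ker}\,S|_{K}$ and likewise on the image; hence $B=B^{a}\oplus B^{b}$ with $B^{b}\in U(J^{b})$, and restricting to $\mathrm{Im}\,S|_{K}$ gives $K'^{b}=B^{b}K^{b}(B^{b})^{-1}$, i.e.\ $K'^{b}\in U(J^{b})\cdot K^{b}$, so $K'\in\mathcal{T}^{S}_{U(J),K}$. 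The main obstacle I anticipate is precisely the claim that $B$ respects the splitting: one needs that the decomposition $V=\mathrm{Ker}\,S|_{K}\oplus\mathrm{Im}\,S|_{K}$ is determined by the pair $(J,K)$ in a way that is natural under simultaneous conjugation by elements of $U(J)$ — concretely, that $B\,\mathrm{Ker}\,S|_{K}=\mathrm{Ker}\,S|_{BKB^{-1}}$, which follows from $BS|_{K}B^{-1}=S|_{BKB^{-1}}$ since $B$ commutes with $J$ (and with $\mathbb{1}$), combined with the assumption $\mathrm{Im}\,S|_{K'}=\mathrm{Im}\,S|_{K}$ forcing $\mathrm{Ker}\,S|_{K'}=\mathrm{Ker}\,S|_{K}$ by orthogonality of the splitting. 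Once this naturality is in place, the rest is bookkeeping with direct sums. I would also remark that the identity $\mathcal{T}^{S}_{U(J),K}=\mathcal{T}^{\mathbb{1}}_{U(J),K}\cap\mathcal{T}^{S}_{K}$ is the foliation-theoretic shadow of the distribution identity $\mathcal{D}^{S}_{\mathfrak{u}(J)}=\mathcal{D}^{\mathbb{1}}_{\mathfrak{u}(J)}\cap\mathcal{D}^{S}$ proved earlier, so an alternative route is to pass through the leaves: the intersection of leaves through $K$ of two integrable distributions whose intersection is again integrable with leaf through $K$ equal to the component of that intersection, combined with connectedness of $U(J^{b})\cdot K^{b}$; but I expect the direct splitting argument above to be cleaner and to avoid circularity with Theorem \ref{thmSCF}.
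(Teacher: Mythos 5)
The paper itself offers no proof of this proposition --- it is one of the statements deferred to the updated version \cite{Gindi4} (see Remark \ref{rmkUPDATED}) --- so there is nothing to compare against directly; I can only assess your argument on its own terms, and it is correct. Both inclusions rest on exactly the two facts you isolate: first, the naturality $B\,S|_{K}B^{-1}=S|_{BKB^{-1}}$ for $B\in U(J)$ (immediate for each $S\in\{\mathbb{1},J+\phi,J-\phi,[J,\phi]\}$ since $B$ commutes with $J$), and second, the skew-adjointness of $S|_{K}$, which gives $\mathrm{Ker}\,S|_{K'}=(\mathrm{Im}\,S|_{K'})^{\perp}$ and hence lets the condition $\mathrm{Im}\,S|_{K'}=\mathrm{Im}\,S|_{K}$ force $B$ to preserve the splitting $V=\mathrm{Ker}\,S|_{K}\oplus\mathrm{Im}\,S|_{K}$ in the reverse inclusion. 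Your forward inclusion could be streamlined: once $\tilde A=\mathbb{1}\oplus A$ is checked to lie in $U(J)$, the identity $S|_{K'}=\tilde A\,S|_{K}\tilde A^{-1}$ gives $\mathrm{Im}\,S|_{K'}=\tilde A\,\mathrm{Im}\,S|_{K}=\mathrm{Im}\,S|_{K}$ in one line, without the detour through $\mathrm{Ker}(J^{b}+K'^{b})=0$. You are also right to avoid the route through $\mathcal{D}^{S}_{\mathfrak{u}(J)}=\mathcal{D}^{\mathbb{1}}_{\mathfrak{u}(J)}\cap\mathcal{D}^{S}$: an intersection of leaves of two integrable distributions need not be a leaf of the intersected distribution, so that argument would require extra work and risks circularity with Theorem \ref{thmSCF}. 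The direct splitting argument is the right one.
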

Combining this with Proposition \ref{propSAD} yields
\begin{prop}
\mbox
\newline
\vspace{.1cm}

1)\ $\mathcal{T}^{J+\phi}_{U(J), K}=\{L \in U(J) \cdot K \ | Ker(J+L)= Ker(J+K)\}$.\\

2)\ $\mathcal{T}^{J-\phi}_{U(J), K}=\{L \in U(J) \cdot K \ | Ker(J-L)= Ker(J-K)\}$.\\

3)\ $\mathcal{T}^{[J,\phi]}_{U(J), K}=\mathcal{T}^{J+\phi}_{U(J), K} \cap \mathcal{T}^{J-\phi}_{U(J), K}$.
\end{prop}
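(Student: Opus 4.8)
The plan is to obtain items 1), 2), 3) as direct corollaries of the already-established identity $\mathcal{T}^{S}_{U(J),K}= \mathcal{T}^{\mathbb{1}}_{U(J),K}\cap \mathcal{T}^{S}_{K}$, combined with the explicit descriptions of the $\mathcal{T}^{S}_{K}$ recorded in Proposition \ref{propSAD}. Recall that $\mathcal{T}^{\mathbb{1}}_{U(J),K}$ is precisely the orbit $U(J)\cdot K$ (the leaf of $\mathcal{D}^{\mathbb{1}}_{\mathfrak{u}(J)}=[\mathfrak{u}(J),\phi]$ through $K$, as noted at the start of Section \ref{secSCF}). Substituting this and the formula from Proposition \ref{propSAD}(1) gives
\[
\mathcal{T}^{J+\phi}_{U(J),K}= (U(J)\cdot K)\ \cap\ \{L\in\mathcal{T}(V)\mid Ker(J+L)=Ker(J+K)\}= \{L\in U(J)\cdot K\mid Ker(J+L)=Ker(J+K)\},
\]
which is exactly statement 1). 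Statement 2) follows identically from Proposition \ref{propSAD}(2), and statement 3) follows from Proposition \ref{propSAD}(3) together with the trivial set-theoretic fact that intersecting with $U(J)\cdot K$ distributes over the intersection $\mathcal{T}^{J+\phi}_{K}\cap\mathcal{T}^{J-\phi}_{K}$; alternatively 3) is immediate once 1) and 2) are in hand, since $\mathcal{T}^{[J,\phi]}_{U(J),K}= (U(J)\cdot K)\cap\mathcal{T}^{J+\phi}_{K}\cap\mathcal{T}^{J-\phi}_{K}= \mathcal{T}^{J+\phi}_{U(J),K}\cap\mathcal{T}^{J-\phi}_{U(J),K}$.

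The only genuine content beyond bookkeeping is making sure the hypotheses of the two input results are met: Proposition \ref{propSAD} and the identity $\mathcal{T}^{S}_{U(J),K}=\mathcal{T}^{\mathbb{1}}_{U(J),K}\cap\mathcal{T}^{S}_{K}$ are stated for $S\in\{J+\phi,\,J-\phi,\,[J,\phi]\}$ with $Q=\mathbb{1}$, which is exactly the range of $S$ appearing in the statement to be proved, so no differential condition on any $a_i$ is needed here — one simply takes $Q=\mathbb{1}$, for which Equation \ref{eqSCF} is vacuous. I would also record the one-line identification $\mathcal{T}^{\mathbb{1}}_{U(J),K}=U(J)\cdot K$ explicitly, since it is what converts the abstract leaf notation into the concrete orbit notation used on the right-hand sides of 1)–3).

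I do not expect a real obstacle: the argument is a substitution of two previously proved facts. If anything, the point requiring a half-sentence of care is the distributivity used for 3) — that $A\cap(B\cap C)=(A\cap B)\cap(A\cap C)$ — which is elementary, plus noting that by Proposition \ref{propSAD}(3) the set $\mathcal{T}^{[J,\phi]}_{K}$ is literally $\mathcal{T}^{J+\phi}_{K}\cap\mathcal{T}^{J-\phi}_{K}$, so both routes to 3) agree. Thus the proof is: apply $\mathcal{T}^{S}_{U(J),K}=\mathcal{T}^{\mathbb{1}}_{U(J),K}\cap\mathcal{T}^{S}_{K}$, rewrite $\mathcal{T}^{\mathbb{1}}_{U(J),K}$ as $U(J)\cdot K$, and insert the three formulas of Proposition \ref{propSAD}. $\qed$
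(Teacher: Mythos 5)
Your proof is correct and takes exactly the paper's route: the paper obtains this proposition with the single phrase ``Combining this with Proposition \ref{propSAD} yields,'' where ``this'' refers to the identity $\mathcal{T}^{S}_{U(J),K}=\mathcal{T}^{\mathbb{1}}_{U(J),K}\cap\mathcal{T}^{S}_{K}$ stated immediately beforehand. Your write-up merely makes explicit the identification $\mathcal{T}^{\mathbb{1}}_{U(J),K}=U(J)\cdot K$ and the elementary set-theoretic distributivity used in part 3), both of which the paper leaves implicit.
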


To obtain further refinements, we will now turn on a $Q$:
\begin{prop}
$\mathcal{T}^{QS}_{U(J), K}\subset \mathcal{T}^{S}_{U(J), K}$.
\end{prop}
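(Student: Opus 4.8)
The plan is to reduce the claimed inclusion $\mathcal{T}^{QS}_{U(J),K} \subset \mathcal{T}^{S}_{U(J),K}$ to the already-established inclusion $\mathcal{T}^{QS}_{K} \subset \mathcal{T}^{S}_{K}$ together with the description $\mathcal{T}^{S}_{U(J),K} = \mathcal{T}^{\mathbb{1}}_{U(J),K} \cap \mathcal{T}^{S}_{K}$. Concretely, I would first show that every element of $\mathcal{T}^{QS}_{U(J),K}$ lies in $\mathcal{T}^{\mathbb{1}}_{U(J),K}$, i.e.\ in the $U(J)$-orbit of $K$; then show it lies in $\mathcal{T}^{S}_{K}$; the two facts together give membership in the intersection, which is $\mathcal{T}^{S}_{U(J),K}$ by the preceding proposition.

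For the first step, recall the defining splitting $V = KerQS|_{K} \oplus ImQS|_{K}$, $J = J^{a}\oplus J^{b}$, $K = K^{a}\oplus K^{b}$, so that $K' \in \mathcal{T}^{QS}_{U(J),K}$ means $K' = K^{a}\oplus K'^{b}$ with $K'^{b} \in U(J^{b})\cdot K^{b}$. Writing $K'^{b} = A^{b}K^{b}(A^{b})^{-1}$ for some $A^{b} \in U(J^{b})$, I would set $A = \mathbb{1}_{KerQS|_{K}} \oplus A^{b}$, observe that $A \in U(J)$ since it commutes with $J^{a}\oplus J^{b} = J$ and preserves $g$, and check $AKA^{-1} = K^{a}\oplus A^{b}K^{b}(A^{b})^{-1} = K^{a}\oplus K'^{b} = K'$. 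Hence $K' \in U(J)\cdot K = \mathcal{T}^{\mathbb{1}}_{U(J),K}$.

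For the second step, I would show $\mathcal{T}^{QS}_{U(J),K} \subset \mathcal{T}^{QS}_{K}$ directly from the definitions: an element $K' = K^{a}\oplus K'^{b}$ with $K'^{b}\in U(J^{b})\cdot K^{b}$ certainly has $K'^{b}\in \mathcal{T}(ImQS|_{K})$, and the condition $ImQS|_{K'} = ImQS|_{K}$ follows because $\{J,\phi\}$ restricted along this decomposition is $\{J^{a},K^{a}\}\oplus\{J^{b},K'^{b}\}$, with $\{J^{b},K'^{b}\} = A^{b}\{J^{b},K^{b}\}(A^{b})^{-1}$ conjugate to $\{J^{b},K^{b}\}$; thus $Q$ (a fixed polynomial expression in $\{J,\phi\}$, noting the coefficients $a_i$ are constant on the relevant orbit by Equation \ref{eqSCF}) and therefore $QS$ have the same image on the $b$-block, and vanish identically on the $a$-block as before. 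Then invoking $\mathcal{T}^{QS}_{K}\subset \mathcal{T}^{S}_{K}$ gives $K' \in \mathcal{T}^{S}_{K}$. Combining, $K' \in \mathcal{T}^{\mathbb{1}}_{U(J),K}\cap\mathcal{T}^{S}_{K} = \mathcal{T}^{S}_{U(J),K}$.

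The main obstacle I anticipate is bookkeeping rather than conceptual: one must be careful that conjugating $K^b$ by $A^b\in U(J^b)$ genuinely preserves $ImQS|_K$ as a \emph{fixed} subspace (not merely up to isomorphism), which hinges on the coefficient functions $a_i$ being locally constant along $\mathcal{T}^{S}_{U(J),K}$ — this is exactly the hypothesis in Equation \ref{eqSCF} and the fact (used to prove Theorem \ref{thmSCF}) that the $a_i$ are constant on $\mathcal{T}^{S}_{U(J),K}$. A cleaner alternative, which I would actually prefer to write up, bypasses point-set arguments entirely: since $\mathcal{T}^{QS}_{U(J),K}$ and $\mathcal{T}^{S}_{U(J),K}$ are the leaves through $K$ of $\mathcal{D}^{QS}_{\mathfrak{u}(J)}$ and $\mathcal{D}^{S}_{\mathfrak{u}(J)}$ respectively (Theorem \ref{thmSCF}), and since $\mathcal{D}^{QS}_{\mathfrak{u}(J)}\subset \mathcal{D}^{S}_{\mathfrak{u}(J)}$ by the refinement proposition in Section \ref{secID}, the leaf of the smaller distribution through $K$ is contained in the leaf of the larger one through $K$; this is just the standard fact that an inclusion of integrable distributions induces an inclusion of the leaves through any common point. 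I would present this distribution-theoretic argument as the proof and relegate the explicit orbit computation to a remark.
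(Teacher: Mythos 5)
Your proposal is correct in substance, but note that the paper itself gives no proof here: it only remarks that the argument would ``introduce decompositions of $V$ that are compatible with the ones used to define $\mathcal{T}^{S}_{U(J),K}$ and $\mathcal{T}^{QS}_{U(J),K}$'' and defers the details to \cite{Gindi4}. That hint points to a direct computation built on the nesting $Ker\,S|_{K}\subseteq Ker\,QS|_{K}$ (equivalently $Im\,QS|_{K}\subseteq Im\,S|_{K}$), refining $V$ into $Ker\,S|_{K}\oplus(Ker\,QS|_{K}\cap Im\,S|_{K})\oplus Im\,QS|_{K}$ and checking that a $U(J^{b})$-conjugation on the last block extends to a $U(J)$-conjugation fixing $Ker\,S|_{K}$. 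Your first, point-set argument is essentially this: the extension $A=\mathbb{1}\oplus A^{b}$ fixes $Ker\,S|_{K}$ pointwise, so $S|_{K'}=ASA^{-1}$ and $Ker\,S|_{K'}=Ker\,S|_{K}$, giving $K'\in\mathcal{T}^{S}_{K}$ directly via Proposition \ref{propSAD} --- I would route it that way rather than through the citation of $\mathcal{T}^{QS}_{K}\subset\mathcal{T}^{S}_{K}$, since that proposition is stated in Section \ref{secFCF} under the stronger hypothesis of Equation \ref{eqFCF} ($d{a_{i}}_{B}=0$ for all $B\in\mathcal{D}^{S}$), whereas the present section assumes only Equation \ref{eqSCF}. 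Your preferred leaf-theoretic argument is a genuinely different and shorter route: it buys economy by trading the decomposition bookkeeping for two imported facts, namely Theorem \ref{thmSCF} (whose proof is itself deferred) and the containment of leaves for nested integrable distributions. Be aware that the latter is only ``standard'' for regular foliations; here the distributions are singular, so you should justify it via the orbit description of the leaves (points of the $\mathcal{D}^{QS}_{\mathfrak{u}(J)}$-leaf are reached from $K$ by composing flows of vector fields valued in $\mathcal{D}^{QS}_{\mathfrak{u}(J)}\subset\mathcal{D}^{S}_{\mathfrak{u}(J)}$, hence stay in the $\mathcal{D}^{S}_{\mathfrak{u}(J)}$-orbit of $K$). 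With that caveat made explicit, either of your arguments is acceptable; the first is the more self-contained and the closer to what the paper intends.
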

\begin{rmk}
To prove this proposition, we would need to introduce decompositions of $V$ that are compatible with the ones used to define $\mathcal{T}^{S}_{U(J), K} $ and $\mathcal{T}^{QS}_{U(J), K}$. Please see \cite{Gindi4}. 
\end{rmk}
\begin{rmk}
In Section \ref{secSRF}, we will summarize the relations and refinements between all the foliations that we have introduced in this paper. 
\end{rmk}
We will now give our main examples of the $\mathcal{T}^{QS}_{U(J), K}.$
\subsection{Symmetric Polynomials in the Eigenvalues of $\{J,\phi\}$} 
\label{secSPEFOL}
As in Sections \ref{secSPE} and \ref{secICD}, let us consider functions $f_{j} \in C^{\infty}(\mathcal{T})$ defined via 
\[det(\lambda \mathbb{1}- \{J,\phi\})= \lambda^{2n}-f_{1}\lambda^{2n-1}+...-f_{2n-1}\lambda+f_{2n},\]
so that $f_{j}|_{K}$ is the $j^{th}$ symmetric polynomial in the eigenvalues of $\{J,K\}$. Note by Proposition \ref{propFJC}, $d{f_{j}}_{B}=0$ for all $B \in  \mathcal{D}^{\mathbb{1}}_{\mathfrak{u}(J)}$ and hence for all $B \in \mathcal{D}^{S}_{\mathfrak{u}(J)}$. By then applying Theorems \ref{thmICSPE} and \ref{thmSCF}, we have 
\begin{thm}
\mbox{}
\newline
\indent
Let $QS \in \Gamma(\mathfrak{gl}(\mathbb{V}))$ such that
\begin{itemize}
 \item[$\bullet$] $Q= a_{k}\{J,\phi\}^{k}+...+a_{1}\{J,\phi\}+ a_{0}\mathbb{1}$, \  $a_{i} \in C^{\infty}(\mathcal{T})$
 \item[$\bullet$] $S \in \{ \mathbb{1}, J+\phi, J-\phi, [J,\phi] \}$.
 \end{itemize}
 Moreover, suppose
 \[a_{i} \in \mathbb{R}[f_{1},...,f_{2n}], \]
 where each $f_{j}\in C^{\infty}(\mathcal{T})$ is defined via
\[det(\lambda \mathbb{1}- \{J,\phi\})= \lambda^{2n}-f_{1}\lambda^{2n-1}+...-f_{2n-1}\lambda+f_{2n}.\]

\vspace{.2cm}
\noindent
Then $\mathcal{T}^{QS}_{U(J), K}$ is the leaf that is associated to the integrable distribution $\mathcal{D}^{QS}_{\mathfrak{u}(J)}$ and that passes through $K\in\mathcal{T}(V)$.
\end{thm}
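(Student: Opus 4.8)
The plan is to reduce the statement to the two prior theorems it explicitly invokes. First I would check that the hypotheses of Theorem \ref{thmICSPE} are met: the polynomial $Q=a_k\{J,\phi\}^k+\dots+a_0\mathbb{1}$ has coefficients $a_i\in\mathbb{R}[f_1,\dots,f_{2n}]$, and the $f_j$ are the elementary symmetric functions in the eigenvalues of $\{J,\phi\}$, defined via the characteristic polynomial $\det(\lambda\mathbb{1}-\{J,\phi\})$. By Proposition \ref{propFJC}(2) we have $d{f_j}_{[B,\phi]}=0$ for all $B\in\mathfrak{u}(V,J)$, i.e.\ $d{f_j}_{C}=0$ for all $C\in\mathcal{D}^{\mathbb{1}}_{\mathfrak{u}(J)}$; since any polynomial in the $f_j$ inherits this (the differential of a composite $P(f_1,\dots,f_{2n})$ in a direction annihilating each $df_j$ vanishes by the chain rule), each $a_i$ also satisfies $d{a_i}_{C}=0$ for $C\in\mathcal{D}^{\mathbb{1}}_{\mathfrak{u}(J)}$. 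Then Proposition \ref{propSCS} upgrades this to $d{a_i}_{[SCS,\phi]}=0$ for all $C\in\Gamma(E^S)$ with $E=\mathfrak{u}(\mathbb{V},J)$, using that $SCS\in\Gamma(E)$ by Proposition \ref{propES}; equivalently $d{a_i}_{B}=0$ for all $B\in\mathcal{D}^{S}_{\mathfrak{u}(J)}$, which is exactly Equation \eqref{eqSCF}. In particular Theorem \ref{thmICSPE} gives that $\mathcal{D}^{QS}_{\mathfrak{u}(J)}$ is an integrable distribution on $\mathcal{T}(V)$.

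The remaining content is to identify the leaf through $K$. Here I would simply apply Theorem \ref{thmSCF}: since the hypotheses of that theorem are precisely a polynomial $Q$ of the stated form together with the condition \eqref{eqSCF}, and we have just verified \eqref{eqSCF}, its conclusion applies verbatim — the leaf of $\mathcal{D}^{QS}_{\mathfrak{u}(J)}$ through $K$ is $\mathcal{T}^{QS}_{U(J),K}=\{K'=K^a\oplus K'^b\mid K'^b\in U(J^b)\cdot K^b\}$, where $V=KerQS|_K\oplus ImQS|_K$ is the orthogonal $J,K$-invariant splitting and $K=K^a\oplus K^b$, $J=J^a\oplus J^b$ the induced decompositions. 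So the proof is essentially a verification that the hypotheses of the two black-box theorems chain together.

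I would then spell out the one non-formal point: that $\mathbb{R}[f_1,\dots,f_{2n}]\subset C^\infty(\mathcal{T})$ really does land in the directional-derivative kernel, i.e.\ the closure of $\{a : d{a}_C=0\ \forall C\in\mathcal{D}^S_{\mathfrak{u}(J)}\}$ under polynomial operations. This is immediate since that set is a subalgebra of $C^\infty(\mathcal{T})$ — it is closed under sums and products because $d(ab)_C=a\,db_C+b\,da_C$ and $d(a+b)_C=da_C+db_C$ — and it contains each $f_j$; hence it contains all of $\mathbb{R}[f_1,\dots,f_{2n}]$.

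The main obstacle is not in the argument I have sketched, which is routine bookkeeping, but in the fact that it rests on Theorem \ref{thmSCF}, whose proof (as the surrounding remarks indicate) is the genuinely substantive step: one must first establish the $Q=\mathbb{1}$ case — that the leaves of $\mathcal{D}^{\mathbb{1}}_{\mathfrak{u}(J)}=[\mathfrak{u}(J),\phi]$ are the $U(J)$-orbits, with their structure given by Theorem \ref{thmCUJO} — and then show that turning on a $Q$ with coefficients constant along the $\mathcal{D}^S_{U(J)}$-leaves has the effect of restricting the orbit action to the block $ImQS|_K$ while freezing the block $KerQS|_K$. Verifying that $\mathcal{T}^{QS}_{U(J),K}$ is exactly the integral leaf (and not something larger or disconnected) requires the compatible decompositions of $V$ alluded to in the remark after the foliation-refinement proposition, together with the fact — itself a consequence of Theorem \ref{thmSCF} in the $Q=\mathbb{1}$ case combined with \eqref{eqSCF} — that the $a_i$ are \emph{constant} (not merely locally constant) on each $\mathcal{T}^S_{U(J),K}$. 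Since the present statement is allowed to assume Theorem \ref{thmSCF}, that difficulty is outside the scope of this particular proof.
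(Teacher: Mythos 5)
Your proposal is correct and follows essentially the same route as the paper: deduce $d{f_{j}}_{C}=0$ on $\mathcal{D}^{\mathbb{1}}_{\mathfrak{u}(J)}$ from Proposition \ref{propFJC}, pass to $\mathcal{D}^{S}_{\mathfrak{u}(J)}$ (via Proposition \ref{propSCS}), extend to all of $\mathbb{R}[f_{1},\dots,f_{2n}]$, and then chain Theorems \ref{thmICSPE} and \ref{thmSCF}. Your closing observation that the substantive work is deferred to Theorem \ref{thmSCF} also matches the paper's own remarks.
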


Given the data in this theorem, let $Q=(\{J,\phi\}- b_{1}\mathbb{1})...(\{J,\phi\}- b_{k}\mathbb{1})$, where $b_{i} \in \mathbb{R}[f_{1},...,f_{2n}]$, and consider the leaf $\mathcal{T}^{QS}_{U(J), K}$. Since $f_{j}|_{K}$ is the $j^{th}$ symmetric polynomial in the eigenvalues of $\{J,K\}$, the manifold type of $\mathcal{T}^{QS}_{U(J), K}$ depends, in particular, on whether these eigenvalues satisfy certain systems of polynomial equations. Here is an example.

\begin{example}
\label{exSPE}
\mbox{}

\vspace{.2cm}
\noindent
Let 
\begin{itemize}
\item $J \in \mathcal{T}(V,g)$, where $dimV=12$
\item $Q= \{J,\phi\}- tr\frac{\{J,\phi\}}{4}$
\item $S=\mathbb{1}$.
\end{itemize}
\end{example}
\noindent
We will analyze the associated foliation of \[\mathcal{T}^{+}= \{L \in \mathcal{T}(V)\ | L \text{ induces the same orientation as that of } J\}.\]

First note that if $K \in \mathcal{T}^{+}$ then it follows from Propositions \ref{propDECOM} and \ref{propISOOV} that the dimension of each eigenspace of $\{J,K\}$ is a multiple of four. Hence we can split $V$ as follows:
\[V= W^{1}_{a_{1}} \oplus W^{2}_{a_{2}} \oplus W^{3}_{a_{3}},\]
where $W^{i}_{a_{i}}$ is a four dimensional $J,K-$invariant subspace such that $\{J,K\}|_{W^{i}_{a_{i}}}=2a_{i}\mathbb{1},$ for $a_{i} \in [-1,1].$ Thus 
\[Q|_{K} = \{J,K\}- (2a_{1} + 2a_{2}+ 2a_{3})\mathbb{1} \] and 
\begin{equation} 
\label{eqSPLIT}
Ker Q|_{K}= Ker(a_{2}+a_{3})\mathbb{1}_{W^{1}_{a_{1}}} \oplus Ker(a_{1}+a_{3})\mathbb{1}_{W^{2}_{a_{2}}}\oplus Ker(a_{1}+a_{2})\mathbb{1}_{W^{3}_{a_{3}}}.
\end{equation}
To describe $\mathcal{T}^{Q}_{U(J), K}$, consider 
\[V= KerQ|_{K} \oplus ImQ|_{K}\]
and  
\begin{align*}
    &K= K_{1} \oplus K_{2}\\
   &J= J_{1} \oplus J_{2}.
\end{align*}
Then, as we know,  
\begin{align}
\label{eqORBIT}
\mathcal{T}^{Q}_{U(J), K}&= \{ K_{1}\oplus K'_{2} \ | K'_{2} \in U(J_{2})\cdot K_{2}\} \notag\\
                                         &\cong  U(J_{2})\cdot K_{2}.
\end{align}
The point here is that using Theorem \ref{thmCUJO}, we can determine the type of manifold this is. It will depend on the values and the multiplicities of the $a_{i}$ as well as on whether these eigenvalues satisfy a certain polynomial equation. Consider the following cases:\\
                   
\noindent
1) If there exist $i,j \in \{1,2,3\}$ such that  $i\neq j$ and \[a_{i}+a_{j}=0\] then for $k\in \{1,2,3\}$, where $k\neq i$ and $k\neq j$,
\[2a_{k}=2a_{i}+2a_{j}+2a_{k}= tr\frac{\{J,K\}}{4}.\] Hence $Q|_{K}= \{J,K\}- 2a_{k}\mathbb{1}$ and $KerQ|_{K}= Ker(\{J,K\}- 2a_{k}\mathbb{1})$. 

\vspace{.2cm}
a) Suppose $V= W^{1}_{a} \oplus W^{2}_{b} \oplus W^{3}_{-b}$ where $a\neq \pm b$. Then $KerQ|_{K}=W^{1}_{a}$ and $ImQ|_{K}=W^{2}_{b} \oplus W^{3}_{-b}$. Using Theorem \ref{thmCUJO} and Equation \ref{eqORBIT}, we have:
\vspace{.2cm}

   \hspace{.2cm} i) If $b=1$ or $-1$ then    
     \[\mathcal{T}^{Q}_{U(J), K} \cong U(4)/U(2) \times U(2).\]
     
    \hspace{.2cm} ii) If $b=0$ then 
          \[\mathcal{T}^{Q}_{U(J), K} \cong U(4)/Sp(2).\]
          
     \hspace{.2cm} iii) If $b\notin \{0, \pm1\}$  then          
         \[\mathcal{T}^{Q}_{U(J), K} \cong U(4)/Sp(1) \times Sp(1).\] 
   
   \vspace{.05cm}
    b) Suppose $V= W^{1}_{a} \oplus W^{2}_{a} \oplus W^{3}_{-a}$ where $a=0$. Since $KerQ|_{K}=V$, $\mathcal{T}^{Q}_{U(J), K}=\{K\}$.\\ 
    
    c)  Suppose $V= W^{1}_{a} \oplus W^{2}_{a} \oplus W^{3}_{-a}$ where $a\neq 0$. Then $KerQ|_{K}=W^{1}_{a} \oplus W^{2}_{a}$ and $ImQ|_{K}= W^{3}_{-a}.$
    
      \vspace{.2cm}
     \hspace{.2cm} i) If $a=\pm1$ then $\mathcal{T}^{Q}_{U(J), K}=\{K\}$.
      
     \hspace{.2cm} ii) If $a\neq \pm1$ then 
           \begin{align*}
           \mathcal{T}^{Q}_{U(J), K}& \cong U(2)/Sp(1)\\
                                                    &\cong S^{1}.
           \end{align*}
 \noindent
  2) If there does not exist an $i,j \in \{1,2,3\}$ such that  $i\neq j$ and \[a_{i}+a_{j}=0\]   then by Equation \ref{eqSPLIT}, $Ker Q|_{K}=\{0\}$ and hence 
   $\mathcal{T}^{Q}_{U(J), K}=U(J)\cdot K.$ All of these orbits have been classified in Theorem \ref{thmCUJO}.
    \section{Summary of Relations between the Foliations }
    \label{secSRF}
   In this section we will summarize some of the relations between the foliations that were introduced in this paper.
   
   We begin with some refinements of the $\mathcal{T}^{\#}(J)$ using only the $``S"$ terms:
   \begin{prop}
\begin{align*}
1)\ & \mathcal{T}^{J+\phi}_{K} \subset \mathcal{T}^{(m_{1},*)}(J), \text{ where } 2m_{1}=dimKer (J+K). \\
2)\ & \mathcal{T}^{J-\phi}_{K} \subset \mathcal{T}^{(*,m_{-1})}(J), \text{ where } 2m_{-1}=dimKer (J-K). \\
3)\ & \mathcal{T}^{[J,\phi]}_{K} \subset \mathcal{T}^{(m_{1},m_{-1})}(J), \text{ where } 2m_{\pm 1}=dimKer (J\pm K). 
\end{align*}
\end{prop}
   
    \begin{prop} Let $K \in \mathcal{T}^{\#}(J).$ \\
    \vspace{.1cm}
\hspace*{.4cm}1) \ $\mathcal{T}^{S}_{U(J),K} \subset \mathcal{T}^{\mathbb{1}}_{U(J), K} \subset \mathcal{T}^{\#}(J).$ 

\vspace*{.05cm}
 \noindent \hspace*{.31cm} 2) \ $\mathcal{T}^{S}_{U(J), K}= \mathcal{T}^{\mathbb{1}}_{U(J), K} \cap \mathcal{T}^{S}_{K}.$
\end{prop}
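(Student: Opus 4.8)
The plan is to reduce both statements to facts already established earlier in the paper, namely Proposition~\ref{propSAD}, the descriptions of $\mathcal{T}^{S}_{U(J),K}$ in terms of $U(J)$-orbits, the refinement $\mathcal{T}^{S}_{U(J),K}\subset\mathcal{T}^{\mathbb{1}}_{U(J),K}$, and the identification $\mathcal{T}^{\mathbb{1}}_{U(J),K}=U(J)\cdot K$ together with Theorem~\ref{thmCUJO}. For part~1, the second inclusion $\mathcal{T}^{S}_{U(J),K}\subset\mathcal{T}^{\mathbb{1}}_{U(J),K}$ is exactly the earlier proposition, so only $\mathcal{T}^{\mathbb{1}}_{U(J),K}\subset\mathcal{T}^{\#}(J)$ remains. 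For this I would argue that $\mathcal{T}^{\mathbb{1}}_{U(J),K}$ is the $U(J)$-orbit through $K$, and then observe that for $A\in U(J)$ one has $\{J,AKA^{-1}\}=A\{J,K\}A^{-1}$ (the key computation recalled in the proof of Proposition~\ref{propFJC}), so that conjugation by $A$ carries $\mathrm{Ker}(J\pm K)$ isomorphically onto $\mathrm{Ker}(J\pm AKA^{-1})$; hence $\dim\mathrm{Ker}(J\pm L)=\dim\mathrm{Ker}(J\pm K)$ for every $L$ in the orbit, which is precisely the condition defining membership in $\mathcal{T}^{(m_{1},*)}$, $\mathcal{T}^{(*,m_{-1})}$, or $\mathcal{T}^{(m_{1},m_{-1})}$ according to which $\mathcal{T}^{\#}$ we started with.

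For part~2, the inclusion $\mathcal{T}^{S}_{U(J),K}\subset\mathcal{T}^{\mathbb{1}}_{U(J),K}\cap\mathcal{T}^{S}_{K}$ follows by combining $\mathcal{T}^{S}_{U(J),K}\subset\mathcal{T}^{\mathbb{1}}_{U(J),K}$ with $\mathcal{T}^{S}_{U(J),K}\subset\mathcal{T}^{S}_{K}$; the latter is the special case $Q=\mathbb{1}$ of the refinement $\mathcal{T}^{QS}_{U(J),K}\subset\mathcal{T}^{S}_{K}$ established earlier (alternatively it is immediate from unwinding the definitions, since both sides are described using the same $J,K$-invariant splitting $V=\mathrm{Ker}\,S|_{K}\oplus\mathrm{Im}\,S|_{K}$). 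The reverse inclusion is where the content lies: given $L\in U(J)\cdot K$ with $\mathrm{Ker}(J+L)=\mathrm{Ker}(J+K)$ (taking $S=J+\phi$; the other cases are analogous via Proposition~\ref{propSAD}(2),(3)), I must produce $A\in U(J)$ fixing the decomposition $V=\mathrm{Ker}(J+K)\oplus\mathrm{Ker}(J+K)^{\perp}$ such that $AKA^{-1}=L$. Since $L\in U(J)\cdot K$ there is some $B\in U(J)$ with $BKB^{-1}=L$; by the identity above, $B$ maps $\mathrm{Ker}(J+K)$ onto $\mathrm{Ker}(J+L)=\mathrm{Ker}(J+K)$, so $B$ already preserves the splitting, and then $B$ restricted to $\mathrm{Im}(J+\phi)|_{K}$ exhibits $L^{b}$ as lying in $U(J^{b})\cdot K^{b}$, i.e.\ $L\in\mathcal{T}^{J+\phi}_{U(J),K}$ by definition. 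So the argument is: a general conjugating element, once we know it preserves the relevant kernel, automatically restricts to a conjugating element on the complementary block.

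The main obstacle I anticipate is the reverse inclusion in part~2: one must be careful that the conjugating element $B\in U(J)$ genuinely preserves $\mathrm{Ker}(J+K)$ rather than merely mapping it to an isomorphic subspace of the same dimension. This is handled by the sharper statement $B(\mathrm{Ker}(J+K))=\mathrm{Ker}(J+BKB^{-1})=\mathrm{Ker}(J+L)$, and then invoking the hypothesis $\mathrm{Ker}(J+L)=\mathrm{Ker}(J+K)$ to conclude $B(\mathrm{Ker}(J+K))=\mathrm{Ker}(J+K)$ exactly; since $B$ also commutes with $J$ it preserves the orthogonal complement as well, so it decomposes as $B=B^{a}\oplus B^{b}$ compatibly with the splitting used to define $\mathcal{T}^{J+\phi}_{U(J),K}$. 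The remaining bookkeeping — checking $B^{b}\in U(J^{b})$ and $B^{b}K^{b}(B^{b})^{-1}=L^{b}$ — is routine. Finally, part~1's claim that $\mathcal{T}^{\mathbb{1}}_{U(J),K}=U(J)\cdot K$ is already recorded (the leaves of $\mathcal{D}^{\mathbb{1}}_{\mathfrak{u}(J)}=[\mathfrak{u}(J),\phi]$ are the $U(J)$-orbits), so no new work is needed there.
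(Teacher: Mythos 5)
Your argument is correct, but there is essentially nothing in the paper to compare it against: this proposition merely collects three statements that appear earlier in Section \ref{secSCF} without proof (the inclusions $\mathcal{T}^{\mathbb{1}}_{U(J),K}\subset\mathcal{T}^{\#}(J)$ and $\mathcal{T}^{S}_{U(J),K}\subset\mathcal{T}^{\mathbb{1}}_{U(J),K}$, and the identity $\mathcal{T}^{S}_{U(J),K}=\mathcal{T}^{\mathbb{1}}_{U(J),K}\cap\mathcal{T}^{S}_{K}$), all of whose proofs are deferred to \cite{Gindi4}; so your write-up supplies content the paper omits. The route you take is the natural direct one: identify $\mathcal{T}^{\mathbb{1}}_{U(J),K}$ with the orbit $U(J)\cdot K$, use $J\pm AKA^{-1}=A(J\pm K)A^{-1}$ to see that conjugation carries $Ker(J\pm K)$ onto $Ker(J\pm AKA^{-1})$ (which gives part 1), and for the reverse inclusion in part 2 upgrade ``same dimension of kernel'' to ``same kernel'' via Proposition \ref{propSAD}, so that the conjugating element $B\in U(J)$ preserves the orthogonal splitting $V=KerS|_{K}\oplus ImS|_{K}$ and block-decomposes as $B^{a}\oplus B^{b}$. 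One point you dismiss as routine deserves an explicit sentence: membership in $\mathcal{T}^{S}_{U(J),K}$ requires the first block of $L$ to equal $K^{a}$ exactly, not merely to be conjugate to it; this holds because on $Ker(J+K)$ one has $K^{a}=-J^{a}$, which commutes with every element of $U(J^{a})$, so $B^{a}K^{a}(B^{a})^{-1}=K^{a}$ automatically (and $K^{a}=J^{a}$ on $Ker(J-K)$, which together with Proposition \ref{propDECOM}(2c) covers the cases $S=J-\phi$ and $S=[J,\phi]$). With that observation added your proof is complete; the paper's intended argument, judging from its remarks, routes through compatible decompositions of $V$ and the orbit classification of Theorem \ref{thmCUJO}, which your elementary verification bypasses.
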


    We then obtain further refinements by using the $Q$s:
    
\begin{prop}
 \mbox{}
\newline
\indent
Let $QS \in \Gamma(\mathfrak{gl}(\mathbb{V}))$ such that 
 \begin{itemize}
 \item[$\bullet$] $Q= a_{k}\{J,\phi\}^{k}+...+a_{1}\{J,\phi\}+ a_{0}\mathbb{1}$, \  $a_{i} \in C^{\infty}(\mathcal{T})$
 \item[$\bullet$] $S \in \{ \mathbb{1}, J+\phi, J-\phi, [J,\phi] \}$.
 \end{itemize}

1) If $d{a_{i}}_{B}=0$ $\forall B \in \mathcal{D}^{S}$ then the following yields foliations of $\mathcal{T}(V)$ that refine each other:

\begin{center}
\begin{tikzpicture}
  \matrix (m) [matrix of math nodes,row sep=1em,column sep=1em,minimum
width=2em] {
    \mathcal{T}^{QS}_{K} & \mathcal{T}^{S}_{K} \\
    \mathcal{T}^{QS}_{U(J),K} & \mathcal{T}^{S}_{U(J),K}. \\
  };
\path[-stealth, auto] (m-1-1) edge[draw=none] node [sloped,
auto=false,allow upside down] {$\subset$} (m-1-2);

\path[-stealth, auto] (m-2-1) edge[draw=none] node [sloped,
auto=false,allow upside down] {$\subset$} (m-1-1);

\path[-stealth, auto] (m-2-2) edge[draw=none] node [sloped,
auto=false,allow upside down] {$\subset$} (m-1-2);

\path[-stealth, auto] (m-2-1) edge[draw=none] node [sloped,
auto=false,allow upside down] {$\subset$} (m-2-2);
\end{tikzpicture}
\end{center}

2) If $d{a_{i}}_{C}=0$ $\forall C \in \mathcal{D}^{S}_{\mathfrak{u}(J)}$ then the following yields foliations of $\mathcal{T}(V)$ that refine each other:
\[ \mathcal{T}^{QS}_{U(J),K} \subset \mathcal{T}^{S}_{U(J),K} \subset   \mathcal{T}^{S}_{K}.\] 
\end{prop}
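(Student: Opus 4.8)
The plan is to deduce this last proposition from results already established in the excerpt, rather than reprove anything from scratch. The three containments in part (1) and the two in part (2) are each instances of inclusions of integrable distributions that we have already recorded, together with the identification of leaves with the explicit sets $\mathcal{T}^{QS}_{K}$, $\mathcal{T}^{S}_{K}$, $\mathcal{T}^{QS}_{U(J),K}$, $\mathcal{T}^{S}_{U(J),K}$ furnished by Theorems \ref{thmFCF} and \ref{thmSCF}. So the strategy is: first check that the stated hypotheses put us in the situation where all four (resp.\ three) distributions appearing are integrable and their leaves are the named sets; then translate each arrow in the square from a distribution inclusion to a leaf inclusion.

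First I would handle part (2). Under the hypothesis $d{a_{i}}_{C}=0$ for all $C\in\mathcal{D}^{S}_{\mathfrak{u}(J)}$, Theorem \ref{thmICP}(2) gives that $\mathcal{D}^{QS}_{\mathfrak{u}(J)}$ is integrable, and by Theorem \ref{thmSCF} its leaf through $K$ is $\mathcal{T}^{QS}_{U(J),K}$; taking $Q=\mathbb{1}$ (vacuous hypothesis) the leaf of $\mathcal{D}^{S}_{\mathfrak{u}(J)}$ through $K$ is $\mathcal{T}^{S}_{U(J),K}$, and the leaf of $\mathcal{D}^{S}$ through $K$ is $\mathcal{T}^{S}_{K}$ by Theorem \ref{thmFCF}. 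The chain of distribution inclusions $\mathcal{D}^{QS}_{\mathfrak{u}(J)}\subset\mathcal{D}^{S}_{\mathfrak{u}(J)}\subset\mathcal{D}^{S}$ is exactly the one already proved (the proposition on relating distributions, and $\mathcal{D}^{QS}_{U(J)}\subset\mathcal{D}^{S}_{U(J)}$ from the ``$Q$ turned on'' proposition). A nested inclusion of integrable distributions means each leaf of the smaller is contained in a leaf of the larger, and since all three leaves in question pass through the common point $K$, the leaf through $K$ of the smaller lies in the leaf through $K$ of the larger. This gives $\mathcal{T}^{QS}_{U(J),K}\subset\mathcal{T}^{S}_{U(J),K}\subset\mathcal{T}^{S}_{K}$, which is part (2).

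Part (1) is the same argument applied to the $2\times2$ square of distributions $\mathcal{D}^{QS}\subset\mathcal{D}^{S}$, $\mathcal{D}^{QS}_{\mathfrak{u}(J)}\subset\mathcal{D}^{QS}$, $\mathcal{D}^{S}_{\mathfrak{u}(J)}\subset\mathcal{D}^{S}$, $\mathcal{D}^{QS}_{\mathfrak{u}(J)}\subset\mathcal{D}^{S}_{\mathfrak{u}(J)}$, all of which appear in the earlier propositions of Section \ref{secID}. Under the hypothesis $d{a_{i}}_{B}=0$ for all $B\in\mathcal{D}^{S}$, Theorem \ref{thmICP}(1) makes $\mathcal{D}^{QS}$ integrable; note this hypothesis is stronger than the one in part (2), since $\mathcal{D}^{S}_{\mathfrak{u}(J)}\subset\mathcal{D}^{S}$, so $\mathcal{D}^{QS}_{\mathfrak{u}(J)}$ is integrable as well, and all four named sets are genuine leaves through $K$ by Theorems \ref{thmFCF} and \ref{thmSCF}. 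Passing from the commuting square of integrable distributions to the commuting square of leaves through $K$ then yields the $\mathcal{T}$-level diagram; being ``foliations that refine each other'' is just the statement that each smaller distribution's foliation has leaves subordinate to the larger's, which is automatic once the inclusions hold.

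\textbf{Main obstacle.} The only genuinely nontrivial point — and the one I would be most careful about — is the logical step ``inclusion of integrable distributions $\mathcal{D}_{1}\subset\mathcal{D}_{2}$ implies the leaf of $\mathcal{D}_{1}$ through $K$ is contained in the leaf of $\mathcal{D}_{2}$ through $K$.'' This is standard for honest (constant-rank) foliations, but here the $\mathcal{D}^{QS}$ and their relatives are generalized (singular) distributions whose leaves have varying dimension, so one must invoke the Stefan--Sussmann framework: the leaf through $K$ is the maximal connected integral submanifold, and any integral submanifold of $\mathcal{D}_{1}$ through $K$ is also an integral submanifold of $\mathcal{D}_{2}$ through $K$, hence lies in the latter's leaf. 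I would cite this explicitly (and observe that the leaves here are in fact embedded, as asserted in the Introduction and Theorems \ref{thmFCF}, \ref{thmSCF}), after which every arrow in both diagrams follows mechanically; full details, as elsewhere in this announcement, are deferred to \cite{Gindi4}.
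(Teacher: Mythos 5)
Your overall strategy --- reduce everything to inclusions of the underlying singular distributions and then pass to leaves through the common point $K$ --- is reasonable and does handle part (2) and most arrows of part (1), but it has a genuine gap at the top inclusion $\mathcal{T}^{QS}_{K} \subset \mathcal{T}^{S}_{K}$, and it is not the route the paper indicates. The leaf of $\mathcal{D}^{QS}$ through $K$ is only the connected component $\accentset{\circ}{\mathcal{T}}^{QS}_{K}$ (Theorem \ref{thmFCF}), and these sets are genuinely disconnected in general (Example \ref{exTQ}). The Stefan--Sussmann/orbit argument therefore yields only $\accentset{\circ}{\mathcal{T}}^{QS}_{K} \subset \accentset{\circ}{\mathcal{T}}^{S}_{K}$, whereas the proposition asserts containment of the full sets; the other components of $\mathcal{T}^{QS}_{K}$ are leaves through points $K'$ for which you have no a priori identification $\mathcal{T}^{S}_{K'}=\mathcal{T}^{S}_{K}$ --- indeed that identification is equivalent to the very inclusion being proved. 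This is precisely why the paper's remarks accompanying the propositions $\mathcal{T}^{QS}_{K}\subset\mathcal{T}^{S}_{K}$ and $\mathcal{T}^{QS}_{U(J),K}\subset\mathcal{T}^{S}_{U(J),K}$ say the proof goes through compatible decompositions of $V$: one splits $V=KerQS|_{K}\oplus ImQS|_{K}$, notes $KerS|_{K}\subset KerQS|_{K}$, writes $QS|_{K'}=Q|_{K'}S|_{K'}$ for $K'=K^{a}\oplus K'^{b}$, and checks directly (e.g.\ via Proposition \ref{propSAD}, that $Ker(J+K')=Ker(J+K)$ when $S=J+\phi$) that every element of $\mathcal{T}^{QS}_{K}$ lies in $\mathcal{T}^{S}_{K}$. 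That algebraic argument covers all components at once.

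Two smaller points. First, your phrase ``any integral submanifold of $\mathcal{D}_{1}$ through $K$ is also an integral submanifold of $\mathcal{D}_{2}$'' is incorrect as stated --- its tangent spaces are only contained in $\mathcal{D}_{2}$; for these varying-rank distributions you should instead use the orbit characterization (points reachable by flows of sections of $\psi(\Gamma(E))$), under which the containment of orbits through $K$ is immediate from $\mathcal{D}_{1}\subset\mathcal{D}_{2}$. Second, your observation that hypothesis (1) implies hypothesis (2) because $\mathcal{D}^{S}_{\mathfrak{u}(J)}\subset\mathcal{D}^{S}$, so that all four distributions are integrable and all four named sets are (unions of) leaves, is correct and matches how the paper assembles this summary from its earlier propositions. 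With the component issue repaired by the decomposition argument, the rest of your reduction goes through.
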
 Other relations were given above. In \cite{Gindi4} we will explore more relations and will study how the leaves fit together in $\mathcal{T}(V)$.

\section{Additional Results in Updated Version}
    \label{secFW}
 
 In this section, I describe some additional results that I will present in an updated version of this paper \cite{Gindi4}.      
 
   \subsection{Lie Algebroids, Poisson Structures and Holomorphicity}
    \label{secFW1}
     In \cite{Gindi4}, I will be using the skew algebroids of Sections \ref{secFTSA} and \ref{secMA} to build Lie algebroids and Poisson structures on $\mathcal{T}(V,g)$. I will also be constructing associated \textit{holomorphic} skew algebroids, Lie algebroids and Poisson structures.  
    
    Given these Lie algebroids, I will then study, for instance, their cohomology and will explore any associated Lie groupoids.
    
\subsection{Relation to Maximal Isotropics and $\mathbb{CP}^{3}$}
   As mentioned in Section \ref{secSC}, some of the $\mathcal{T}^{\#}(J)$ can be viewed as Schubert cells in a  homogeneous maximal isotropic space. In \cite{Gindi4}, I will study the other foliations and algebroids given in Sections \ref{secMA}, \ref{secFCF} and \ref{secSCF} from the maximal isotropic point of view. 
   
 Moreover, using the fact that $\mathcal{T}(V) \cong \mathbb{CP}^{3}$, where $dimV=6,$ I will transfer the algebroids in this paper to $\mathbb{CP}^{3}$ and will study the associated foliations.  
   
   \subsection{More Algebroids on Twistor Spaces}
   \mbox{}
   
   1) In \cite{Gindi4}, I will build other algebroids on twistor spaces that are certain ``direct sums" of the ones given in Section \ref{secMA}.
   
  2) The way that I constructed the algebroids in Sections \ref{secFTSA} and \ref{secMA} was to, in particular, fix one complex structure $J\in \mathcal{T}(V,g)$. In \cite{Gindi4}, I will build other algebroids by fixing \textit{two} complex structures $J,K \in \mathcal{T}(V,g)$. Both types of algebroids will have applications to bihermitian geometry \cite{Gindi5}.
  
  3) I have also produced many other algebroids on the twistor space $\mathcal{C}(V)=\{K \in EndV \ | K^{2}=-1 \}$. For example, I have built algebroids that are similar to the ones given in Theorem \ref{thmALM} but used an $F_{1}=Q_{1}S_{1}$  and an $F_{2}=Q_{2}S_{2}$ that are generally not equal. They will be presented in \cite{ Gindi4}.
  \subsection{Applications}
 The applications of the algebroids in this paper will be given in \cite{Gindi5} and \cite{Gindi6}.

\textsc{Department of Pure Mathematics, University of Waterloo, Waterloo, ON N2L 3G1, CANADA} \\

\textit{E-mail Address:} \texttt{steven.gindi@uwaterloo.ca} 

\end{large}

\begin{thebibliography}{9}
   \bibitem{Apost1}
V. Apostolov, P. Gauduchon, G. Grantcharov, \textit{Bihermitian structures on complex surfaces,} Proc. London Math. Soc. \textbf{79} (1999), 414-428. Corrigendum: \textbf{92} (2006), 200-202.

     \bibitem{Dufour}
 J. Dufour, N. Zung, \textit{Poisson Structures and their Normal Forms.} Berlin: Birkhauser Verlag, 2005.
   
   \bibitem{Rocek1}
S. J. Gates, C. M. Hull and M. Ro\v{c}ek, \textit{Twisted multiplets and new supersymmetric 
nonlinear $\sigma$ models,} Nuclear Phys. B \textbf{248} (1984), 157-186.

\bibitem{Gindi4}
S. Gindi, \textit{Algebroids and twistor spaces}, \textbf{updated version}.
   
       \bibitem{Gindi1}
   S. Gindi, \textit{Integrable complex structures on twistor spaces}, \textbf{arXiv:1212.4138}. 
     \bibitem{Gindi2}
  S. Gindi, \textit{Bihermitian geometry and the holomorphic sections of twistor space}, \\  \textbf{arXiv:1311.7416}.
  \bibitem{Gindi3}
S. Gindi, \textit{Representation theory of the algebra generated by a pair of complex structures}, \textbf{arXiv:0804.3621}.
 
\bibitem{Gindi5}
S. Gindi, \textit{ Bihermitian geometry, algebroids and twistor spaces}, in preparation. 
 \bibitem{Gindi6}
S. Gindi, \textit{Lie groups, twistor spaces and algebroids}, in preparation. 

\bibitem{Gualt1}
M. Gualtieri, \textit{Generalized complex geometry}, \textbf{math.DG/0401221}.

 \bibitem{Hitchin1}
N. Hitchin, \textit{Generalized Calabi-Yau manifolds,} Quart. J. Math. Oxford Ser.\textbf{54} (2003), 281-308. 
\end{thebibliography}
\end{document}